\documentclass{base_class}

\title[]{From harmonic analysis of translation-invariant valuations to geometric inequalities\\ for convex bodies
}

\author{Jan Kotrbat{\'y}}\thanks{JK was supported by FWF grant P31448-N35 and by DFG grant BE 2484/5-2.}
\author{Thomas Wannerer}\thanks{TW was supported by  DFG grant WA 3510/3-1.}
 
\email{kotrbaty@math.uni-frankfurt.de}
\email{thomas.wannerer@uni-jena.de}
\address{Institut für Mathematik, Goethe-Universit\"at Frankfurt, Robert-Mayer-Str. 10, 60629 Frankfurt, Germany}
\address{Fakult\"at f\"ur Mathematik und Informatik, Friedrich-Schiller-Universit\"at Jena, Ernst-Abbe-Platz 2, 07743 Jena, Germany}
\subjclass[2020]{}
\date{\today}

\begin{document}

\begin{abstract}
	The Alesker--Bernig--Schuster theorem asserts that each irreducible representation of the special orthogonal group appears with multiplicity at most one as a subrepresentation of the space of continuous translation-invariant valuations with fixed degree of homogeneity. Moreover, the theorem describes in terms of highest weights which irreducible representations appear with multiplicity one.
	In this paper, we present a refinement of this result, namely the explicit construction of a highest weight vector in each irreducible subrepresentation. We then describe how important natural operations on valuations (pullback, pushforward, Fourier transform, Lefschetz operator, Alesker--Poincaré pairing) act on these highest weight vectors. We use  this information to   prove the Hodge--Riemann relations for valuations in the case of Euclidean balls as reference bodies.
	Since special cases of the  Hodge--Riemann relations have recently been used to prove new geometric inequalities for convex bodies,  our work immediately extends  the scope of these inequalities. 
\end{abstract}

\maketitle
\tableofcontents

\section{Introduction}

A valuation is a finitely additive function defined on a fixed class  $\calS$ of geometric sets, i.e., a function $\phi \colon \mathcal S\to \CC$ satisfying
$$ \phi(A\cup B)= \phi(A)+\phi(B)-\phi(A\cap B)$$
whenever $A,B,A\cap B, A \cup B\in\calS$.  With its origins in the solution of Hilbert's third problem by Dehn and in Pick's theorem from plane geometry, the concept has---as a generalization of measure---long played an important role in convex geometry, see, e.g., the monographs by Gruber \cite{Gruber:CDG}, Hadwiger \cite{Hadwiger:Vorlesungen}, and Schneider \cite{Schneider:BM}. The (mixed) volume, the lattice point enumerator, and the Euler characteristic are fundamental examples of valuations, defined, respectively, on the classes of convex bodies, lattice polytopes, and finite unions of convex bodies.
 
With very few exceptions, the majority of the classical results on valuations are closely tied to the dissection theory of polytopes. Some of the most striking results in this direction are the proof of the $g$-theorem, which characterizes the possible numbers of faces of simple polytopes, and the proof of the Alexandrov--Fenchel inequality by McMullen \cite{McMullen:PolytopeAlgebra,McMullen:SimplePolytopes}.  In contrast, results on continuous valuations on convex bodies that are not directly implied by the corresponding discrete statements were scarce.  

This situation  changed completely with the groundbreaking work of Alesker  from the early 2000s \cite{Alesker:Irreducibility,Alesker:Product,Alesker:HLComplex,Alesker:Fourier}. The most important technical innovation introduced by Alesker was the discovery of a natural dense subspace $\Val^\infty(\RR^n)\subset \Val(\RR^n)$ of smooth valuations, see Section~\ref{s:val}  for precise definitions. Apart from much better analytical properties, the salient feature of the subspace of smooth valuations is the existence of a natural multiplicative structure, namely the Alesker product, which turns this space into a graded commutative algebra. The Alesker product and related algebraic  structures had a profound impact on  integral geometry, see, e.g., \cite{BernigFu:Hig,Fu:Unitary,AleskerFu:Barcelona,BFS,AbardiaBernig:AdditiveFlag,Faifman:Contact,Faifman:Crofton}. 

The starting point for this rapid development of valuation theory was Alesker's proof of a much stronger and more useful form  of  McMullen's conjecture \cite{Alesker:Irreducibility} known today as the irreducibility theorem. To describe this result, let us first point out that by a classical result of McMullen \cite{McMullen:EulerType} the space of continuous and translation-invariant valuations is naturally graded by the degree
$$ \Val(\RR^n)= \bigoplus_{r=0}^n\Val_r(\RR^n)$$
 and the parity of a valuation
 $$ \Val_r(\RR^n)=\Val_r^0(\RR^n)\oplus \Val_r^1(\RR^n).$$
Alesker's  irreducibility theorem  is the statement that the  natural action of the general linear group on each graded component is  irreducible. This is  a very strong property with far-reaching consequences. For example, it implies that smooth valuations can be represented by smooth differential forms. 

A related  question of great importance is how the space  $\Val(\RR^n)$ decomposes under the action of the special orthogonal group $\SO(n)$.
By the Peter--Weyl theorem from  abstract harmonic analysis,   each representation $E$ of the compact group $\SO(n)$  decomposes  into isotypic components
$\bigoplus_{\lambda} E_\lambda$, where the sum is over all equivalence classes of irreducible representations of $\SO(n)$. The question thus becomes which  $\lambda$'s  actually appear in the isotypic decomposition of $\Val_r(\RR^n)$ and with what multiplicities. A complete answer to this question is given  by the Alesker--Bernig--Schuster decomposition theorem \cite{ABS:Harmonic}. 

 Here and in the following, the equivalence classes of irreducible representations of  $\SO(n)$ are identified with their highest weights, see \S\ref{ss:SO} below.

 \begin{theorem}[Alesker--Bernig--Schuster]
	\label{thm:ABSintro}
	Let $ l=\lfloor\frac n2\rfloor$ and  $ 0\leq r \leq  l
	$.
	The non-trivial $\SO(n)$-types in $\Val_{r}(\RR^n)$ and $\Val_{n-r}(\RR^n)$ are the same and given by the following set of highest weights:
	\begin{align*}
		\Lambda_r=\{(\lambda_1,\dots,\lambda_l)\in\Lambda\mid  |\lambda_2|\leq 2,\ |\lambda_j|\neq1 \text{ for }1\leq j\leq r,\ \lambda_j=0\text{ for }r< j\leq l\}.
	\end{align*}
	Moreover, each  appears with multiplicity one.
\end{theorem}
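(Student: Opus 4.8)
The plan is to treat the even part $\Val_r^0(\RR^n)$ and the odd part $\Val_r^1(\RR^n)$ separately, and in each case to replace the space of valuations by a concrete $\SO(n)$-module of functions---or sections of a homogeneous line bundle---on a compact homogeneous space, where the harmonic analysis on symmetric (and more general homogeneous) spaces applies directly. At the outset, the hard Lefschetz isomorphism for smooth valuations---equivalently, the Alesker--Fourier transform---is $\SO(n)$-equivariant and identifies $\Val_r^{\varepsilon}(\RR^n)$ with $\Val_{n-r}^{\varepsilon}(\RR^n)$ for $\varepsilon \in \{0,1\}$; since $r \leq l \leq n-r$, it thus suffices to determine the $\SO(n)$-types of $\Val_r^0(\RR^n)$ and $\Val_r^1(\RR^n)$ for $0 \leq r \leq l$, and the assertion about $\Val_{n-r}(\RR^n)$ follows.

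For the even part I would use the Klain embedding $\mathrm{Kl} \colon \Val_r^0(\RR^n) \hookrightarrow C(\mathrm{Gr}_r(\RR^n))$, which is injective by Klain's theorem and which, by Alesker's irreducibility theorem and the resulting description of smooth even valuations as Crofton integrals, identifies the dense subspace of smooth even valuations with the image of the cosine transform $\mathcal{C}$ on $C^\infty(\mathrm{Gr}_r(\RR^n))$. Since $\mathrm{Gr}_r(\RR^n) = \SO(n)/S(\mathrm{O}(r) \times \mathrm{O}(n-r))$ is a compact symmetric space, the Cartan--Helgason theorem yields a multiplicity-free decomposition $C^\infty(\mathrm{Gr}_r(\RR^n)) = \bigoplus_\lambda V_\lambda$, the sum running over highest weights $\lambda$ all of whose entries are even and with $\lambda_{r+1} = \cdots = \lambda_l = 0$ (here $r \leq n-r$ is used). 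As $\mathcal{C}$ is $\SO(n)$-equivariant it acts on $V_\lambda$ by a scalar $c_\lambda$, which I would compute by evaluating $\mathcal{C}$ on the zonal spherical function of type $\lambda$; this reduces to a classical beta-type integral and gives $c_\lambda \neq 0$ if and only if $|\lambda_2| \leq 2$. Hence the $\SO(n)$-types of $\Val_r^0(\RR^n)$ are exactly the highest weights with all entries even, $|\lambda_2| \leq 2$, and $\lambda_j = 0$ for $j > r$, each with multiplicity one---precisely the even weights in $\Lambda_r$.

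For the odd part I would run the parallel argument with a Schneider-type embedding in place of the Klain embedding: $\Val_r^1(\RR^n)$ embeds $\SO(n)$-equivariantly into the space of sections of a homogeneous line bundle over the partial flag manifold of pairs $(x, E)$ with $x \in S^{n-1}$, $E \in \mathrm{Gr}_r(\RR^n)$ and $\RR x \subset E$---the bundle twisted by the orientation character that records the odd dependence on $x$---and the smooth valuations correspond to the image of an odd counterpart of the cosine transform. Branching for this flag manifold shows that the section space is again multiplicity free, with highest weights having a single odd entry $\lambda_1 \geq 3$ and the remaining entries even with $|\lambda_2| \leq 2$ and $\lambda_j = 0$ for $j > r$; identifying the kernel of the relevant transform by the corresponding spherical-function computation then cuts the list down to exactly the odd weights in $\Lambda_r$. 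I expect the odd case to be the principal obstacle: the transform between flag manifolds that arises here is far less standard than the cosine transform and its eigenvalues are correspondingly harder to compute---one is no longer on a symmetric space and a nontrivial line bundle is present---and some care is needed to check that the ambient section space is multiplicity free, so that the bound ``multiplicity at most one'' survives the passage to valuations. Granting the two eigenvalue computations, the even and odd lists together constitute $\Lambda_r$ with every multiplicity equal to one, which is the claim.
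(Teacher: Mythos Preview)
The paper does not give its own proof of this theorem; it is stated (as Theorem~\ref{thm:ABSintro} and again as Theorem~\ref{thm:ABS}) with attribution to \cite{ABS:Harmonic}. What the present paper contributes is a \emph{refinement}, not an independent proof: explicit highest weight vectors $\phi_{r,k,m}$ are constructed (Theorem~\ref{thm:A}, Theorem~\ref{thm:HWforms}) and shown to be non-zero via the Alesker--Poincar\'e pairing (Theorem~\ref{thm:PD}), which reproves the \emph{lower bound} that every $\lambda\in\Lambda_r$ occurs in $\Val_r(\RR^n)$. The upper bound --- multiplicity at most one and no types outside $\Lambda_r$ --- is taken as input from \cite{ABS:Harmonic} and is not reproved here.

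Your sketch follows the line of the original proofs rather than anything in this paper. The even case is exactly Alesker's argument via the Klain embedding and the Alesker--Bernstein determination of the range of the cosine transform on $\mathrm{Gr}_r(\RR^n)$; your odd case is a plausible parallel that you yourself flag as the hard part. Two comments. First, you invoke hard Lefschetz (equivalently the Fourier transform) at the outset to transfer the decomposition between degrees $r$ and $n-r$; the paper observes (Remark~\ref{re:HLinABS}) that this is circular if one later wants to \emph{deduce} hard Lefschetz from the decomposition, and that Alesker--Poincar\'e duality alone already forces $\Val_r(\RR^n)$ and $\Val_{n-r}(\RR^n)$ to have isomorphic isotypic components. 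Second, your odd-case outline is not yet a proof: the partial flag space is not symmetric, so Cartan--Helgason is unavailable off the shelf, and both the multiplicity-freeness of the ambient section module and the eigenvalue computation for the odd transform are substantive results that you have not supplied. In short, your proposal is a reasonable high-level plan for the \emph{cited} theorem, but there is no proof in the present paper to compare it against beyond the explicit existence of the highest weight vectors.
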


Theorem~\ref{thm:ABSintro} is a useful tool for many problems in valuation theory and integral geometry.
Its typical usage  is to reduce a question concerning  the existence of valuations with certain properties  to a problem in representation theory which can be solved. 
 See \cite{ABS:Harmonic,Wannerer:Extendability,BDS:Dimension} for a small sample of such applications.

Drawing a comparison with the decomposition of functions on the unit sphere into spherical harmonics, it is obvious that the information provided by Theorem~\ref{thm:ABSintro}, however useful, is only very limited. For many applications of the theory of spherical harmonics it is of crucial importance to have a  manageable description of at least one spherical harmonic in each degree,  namely the zonal harmonic, see, e.g., \cite{Groemer:Harmonics}.
One of the principal contributions of this paper is to provide something just as useful, namely a simple  explicit description of a highest weight vector in each $\SO(n)$-type of $\Val_r(\RR^n)$. A different approach to making the Alesker--Bernig--Schuster decomposition theorem more explicit was recently explored by Saienko \cite{Saienko}.

A problem of particular interest where the information provided by Theorem~\ref{thm:ABSintro} is insufficient---and our  main motivation to undertake this work---is to prove  the  Hodge--Riemann relations for valuations with Euclidean balls as reference bodies. 
Denoting by $*\colon \Val^\infty(\RR^n)\times \Val^\infty(\RR^n)\to \Val^\infty(\RR^n)$ the convolution of valuations, let us first formulate the hard Lefschetz theorem and the Hodge--Riemann relations for general reference bodies.  

Let $\calK(\RR^n)$ denote the space of convex bodies in $\RR^n$ and write $V(K_1,\ldots, K_n)$  for the mixed volume  of $K_1,\dots,K_n\in\calK(\RR^n)$.

\begin{conjecture}
	\label{con}
	Let $0\leq r\leq\lfloor\frac n2\rfloor$. Consider $C_0,\dots,C_{n-2r}\in\calK(\RR^n)$ with smooth and strictly positively curved boundary and denote for $i=0,\dots,n-2r$
	\begin{align*}
		\mu_{C_i}(K)=V(K,\dots,K,C_i),\quad K\in\calK(\RR^n).
	\end{align*}Then the following properties hold:
	\begin{enuma}
		\item \textnormal{Hard Lefschetz theorem}. The mapping $\Val_{n-r}^\infty(\RR^n)\rightarrow\Val_r^\infty(\RR^n)$ given by
		\begin{align*}
			\phi\mapsto\phi*\mu_{C_1}*\cdots *\mu_{C_{n-2r}}
		\end{align*}
		is an isomorphism of topological vector spaces.
		\item \textnormal{Hodge--Riemann relations}. The sesquilinear form
		\begin{align*}
			(\phi,\psi)\mapsto (-1)^r\,\phi*\b\psi*\mu_{C_1}*\cdots *\mu_{C_{n-2r}}
		\end{align*}
		is positive definite on
		\begin{align*}
			\left\{\phi\in\Val^\infty_{n-r}(\RR^n)\mid \phi*\mu_{C_0}*\cdots *\mu_{C_{n-2r}}=0\right\}.
		\end{align*}
	\end{enuma}
\end{conjecture}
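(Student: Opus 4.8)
The plan is to deduce the conjecture for arbitrary reference bodies from the case $C_0=\dots=C_{n-2r}=B$ treated in this paper, by a continuity and connectedness argument in the space of admissible reference bodies, following the template developed by Gromov, Timorin and Dinh--Nguyen for the mixed hard Lefschetz theorem and the mixed Hodge--Riemann relations in Kähler geometry and in the theory of simple polytopes. Write $\mathcal K^\infty_+(\RR^n)$ for the set of convex bodies with smooth, strictly positively curved boundary, endowed with the $C^\infty$ topology on support functions. Since $h_{C+C'}=h_C+h_{C'}$ and strict positive curvature amounts to positive definiteness of the operator governed by $\nabla^2h+h\,g$ on $S^{n-1}$, the segment $t\mapsto(1-t)C+tB$ stays in $\mathcal K^\infty_+(\RR^n)$; hence $\mathcal K^\infty_+(\RR^n)$, and therefore also $\mathcal K^\infty_+(\RR^n)^{\,n-2r+1}$, is path connected, with the diagonal tuple $(B,\dots,B)$ in the same component as any prescribed $(C_0,\dots,C_{n-2r})$. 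Moreover $C\mapsto\mu_C\in\Val^\infty_{n-1}(\RR^n)$ is continuous, so convolution with $\mu_{C_i}$, and every composition of such operators, depends continuously on the reference bodies.

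\textbf{Reduction to a clopen statement.} Fix $r$, put $d=n-2r$, and for a tuple $(C_0,\dots,C_d)$ set $L_0\phi=\phi*\mu_{C_0}$ and $\Pi\phi=\phi*\mu_{C_1}*\cdots*\mu_{C_d}$, so that $\Pi\colon\Val^\infty_{n-r}(\RR^n)\to\Val^\infty_r(\RR^n)$ and the primitive space in part (b) is $P=\ker(L_0\Pi)$. Since convolution is commutative and associative and $\phi*\eta$ lies in the one-dimensional space $\Val_0(\RR^n)$, realising a perfect pairing between $\Val^\infty_{n-r}(\RR^n)$ and $\Val^\infty_r(\RR^n)$, the operators $L_0$ and $\Pi$ are self-adjoint for the Alesker--Poincar\'e pairing and the Hermitian form $Q$ of part (b) is, up to the sign $(-1)^r$ and a conjugation, this pairing precomposed with $\Pi$. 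In this situation the standard Lefschetz-package linear algebra --- carried out by induction on $d$, with parts (a) and (b) for shorter sub-tuples as inductive hypothesis --- shows that once part (a) holds for $(C_1,\dots,C_d)$ and for all its sub-tuples, $Q$ is non-degenerate on $P$ and $\Val^\infty_{n-r}(\RR^n)=P\oplus L_0\bigl(\Val^\infty_{n-r+1}(\RR^n)\bigr)$ $Q$-orthogonally; hence the signature of $Q|_P$ is a locally constant invariant of the locus $\mathcal U\subseteq\mathcal K^\infty_+(\RR^n)^{\,d+1}$ on which part (a) holds for all sub-tuples. At the diagonal tuple, part (a) is the hard Lefschetz theorem for valuations and the signature of $Q|_P$ is positive by the Hodge--Riemann relations for the ball established in this paper (together with the analogous statements in lower degree). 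It therefore suffices to prove that $\mathcal U$ is \emph{open and closed} in the connected space $\mathcal K^\infty_+(\RR^n)^{\,d+1}$: being then a nonempty clopen subset, $\mathcal U$ is everything, which is part (a), and the constancy of the signature yields part (b).

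\textbf{The main obstacle and a possible route.} Openness and, above all, closedness of $\mathcal U$ are exactly what is not routine --- and the reason the statement is still only a conjecture: unlike the cohomology of a compact Kähler manifold, the spaces $\Val^\infty_k(\RR^n)$ are infinite-dimensional Fr\'echet spaces, so being a topological isomorphism is not automatically an open condition, the decomposition above need not be topological, and the signature of an indefinite Hermitian form on an infinite-dimensional space is not a homotopy invariant without extra structure; in addition, for a non-round $C$ the operator ${}*\mu_C$ is not $\SO(n)$-equivariant, so it mixes the finite-dimensional isotypic components of Theorem~\ref{thm:ABSintro} and a naive reduction to finite dimensions is unavailable. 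The route I would pursue is: first, show that in the representation of smooth translation-invariant valuations by differential forms on the sphere bundle, ${}*\mu_C$ acts as a classical (pseudo)differential operator that is \emph{elliptic} for every $C\in\mathcal K^\infty_+(\RR^n)$ (its symbol being controlled by the strictly positive curvature of $C$); this would make $\Pi$ Fredholm of index zero with finite-dimensional, upper-semicontinuously varying kernel, giving openness of $\mathcal U$, and, together with the positive semidefiniteness of $Q|_P$ that survives in any limit, also closedness. Second, bootstrap from the ball using the explicit formulas derived in this paper for the action of the Lefschetz operator and the Fourier transform on the highest weight vectors: near $B$ these quantify the off-diagonal blocks of ${}*\mu_C$ between isotypic components in terms of the (nonzero) eigenvalues of ${}*\mu_B$ computed here, which is precisely the input needed to control the Fredholm perturbation uniformly; a one-reference-body-at-a-time induction in the spirit of Timorin is a possible alternative at this stage. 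I expect the genuinely hard part to be this uniform control over the infinitely many isotypic components along the whole deformation; the case $r=1$, where part (b) is closely tied to the Alexandrov--Fenchel inequality, is the natural first test of the scheme.
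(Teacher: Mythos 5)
The statement you are proving is stated in the paper as a \emph{conjecture}: the paper itself establishes it only when $C_0=\cdots=C_{n-2r}=D^n$ (Theorem~\ref{thm:C}, proved as Theorem~\ref{thm:HLHR} via the explicit highest weight vectors, the eigenvalues of $\Lambda$ and of the Alesker--Poincar\'e pairing on them, Schur's lemma, and the Harish-Chandra/Sugiura convergence results), plus the case of Corollary~\ref{cor:D}. Your proposal does not prove the conjecture either, and to your credit you say so explicitly; but since you present it as a proof attempt, let me name where it genuinely breaks down rather than merely being unfinished. The Cattani/Timorin/Dinh--Nguyen deformation template you invoke rests on three pillars that all fail here as stated. (i) \emph{Fredholmness/openness:} the spaces $\Val^\infty_k(\RR^n)$ are Fr\'echet, not Banach, so "Fredholm of index zero with upper-semicontinuously varying kernel" is not an available black box, and the claim that $*\mu_C$ acts as an \emph{elliptic} pseudodifferential operator is unsupported --- even for $C=D^n$ the operator $\Lambda$ is the Lie derivative along the Reeb field (Proposition~\ref{pro:convLefschetz}), a degree-lowering, manifestly non-elliptic first-order operator whose invertibility in the paper comes from the multiplicity-one $\SO(n)$-decomposition and the growth rate $v_m/v_{m-1}=O(m^{1/2})$ of its eigenvalues together with Theorem~\ref{thm:decay}, not from any symbol calculus. (ii) \emph{Signature constancy:} on an infinite-dimensional primitive space $P$ the "signature of $Q|_P$" is not a well-defined locally constant invariant; in the finite-dimensional scheme one uses that the negative index is a fixed finite number preserved under continuous deformation through non-degenerate forms, and there is no substitute for this here without precisely the uniform spectral control you defer to future work. (iii) \emph{Closedness:} a limit of topological isomorphisms of Fr\'echet spaces need not be an isomorphism, so membership in $\mathcal U$ does not pass to the boundary; this is not a technicality but the actual content of the conjecture.

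There is also a structural issue: your reduction assumes the "standard Lefschetz-package linear algebra" gives the $Q$-orthogonal decomposition $\Val^\infty_{n-r}=P\oplus L_0(\Val^\infty_{n-r+1})$ from hard Lefschetz for sub-tuples, but in this infinite-dimensional setting that decomposition being \emph{topological} (closed complemented subspaces) is itself part of what must be proved, and the inductive hypothesis for "shorter sub-tuples" involves mixed statements in degrees $r+1,\dots$ that are not covered by the ball case proved in the paper. In short: the skeleton is the right heuristic (and is indeed how Corollary~\ref{cor:D} combines the ball case with the $r=1$ case from \cite{KotrbatyWannerer:MixedHR}), but every load-bearing step is missing, and the paper neither proves nor claims the general statement.
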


Conjecture \ref{con} has first been published  only last year by the first-named author \cite{Kotrbaty:HR}, but it was certainly  considered before  as both statements are formally analogous to theorems in K\"ahler geometry. Moreover, they can be viewed as the  continuous analogs of two theorems of McMullen \cite{McMullen:SimplePolytopes} on the polytope algebra.  However,  in contrast to these cases, the algebra $\Val^\infty(\RR^n)$ is infinite-dimensional which makes numerous aspects of the problem somewhat more delicate.

Still, several instances of the conjecture are known to be true. The hard Lefschetz theorem was proved for Euclidean balls as reference bodies first by Alesker~\cite{Alesker:HLComplex} for even valuations and subsequently by Bernig and Br\"ocker \cite{BernigBroecker:Rumin} without this restriction. The authors \cite{KotrbatyWannerer:MixedHR} have recently established the  hard Lefschetz theorem for $r=1$. The Hodge--Riemann relations were proved for Euclidean balls as reference bodies by the first-named author \cite{Kotrbaty:HR} for even valuations and  for $r=1$. The authors \cite{KotrbatyWannerer:MixedHR} have also recently established the  Hodge--Riemann relations  for $r=1$.

The Hodge--Riemann relations imply both classical and new geometric inequalities for convex bodies. For first steps in this direction see \cite{Alesker:Kotrbaty, KotrbatyWannerer:MixedHR, Kotrbaty:HR}  where in particular the Alexandrov--Fenchel inequality together with a novel improvement for lower-dimensional convex bodies have been deduced from the Hodge--Riemann relations.

\subsection{Main results} Our contributions in this work  are threefold. First, 
we will refine the Alesker--Bernig--Schuster theorem by explicitly constructing a non-trivial  highest weight vector for each $\SO(n)$-type  in $\Val_r(\RR^n)$. 
Second, we will describe the action of a number of  important natural operations on valuations, namely  pullback, pushforward, Fourier transform,  Lefschetz operator, and Alesker--Poincar\'e pairing, on these highest weight vectors.  
Third,  we will accomplish our original goal, proving the Hodge--Riemann relations (Conjecture~\ref{con}(b)) for Euclidean balls as reference bodies.

Let us begin by outlining the construction of the highest weight vectors. Recall that in each irreducible representation $V$ of $\SO(n)$ there exists a (unique up to the choice of certain Lie-theoretic data) one-dimensional subspace $V_\lambda\subset V$ corresponding to the highest weight of $V$. Since according to the Alesker--Bernig--Schuster theorem the decomposition into $\SO(n)$-types is multiplicity-free, we thus have to construct precisely one highest weight vector for each $\SO(n)$-type  appearing in the isotypic decomposition  of $\Val_r(\RR^n)$.

Throughout the paper, we will work in Euclidean space of  dimension $n\geq 2$ and we will denote $l=\left\lfloor\frac n2\right\rfloor$.  For  the sake of clarity, let  us define
\begin{align*}
	\lambda_{k,m}=\begin{cases}(m,\underbrace{2,\dots,2}_{k-1},0,\dots, 0)\in\ZZ^l&\text{for }k=1,\dots,l,\\(m,2,\dots,2,-2)\in\ZZ^l&\text{for }k=-l.\end{cases}
\end{align*}
With this notation,  the Alesker--Bernig--Schuster theorem asserts that the set of highest weights appearing in  $\Val_r(\RR^n)$ is
\begin{align*}
	\Lambda_r=
	\begin{cases}
		\{\lambda_{k,m}\mid m\geq2, 1\leq k\leq l\}\cup\{0\}\cup\{\lambda_{-l,m}\mid m\geq 2\}&\text{if }n=2l\text{ and }r=l,\\[1ex]
		\{\lambda_{k,m}\mid m\geq2, 1\leq k\leq r\}\cup\{0\}&\text{otherwise}.
	\end{cases}
\end{align*}

Let $v_n$  be the volume of the $n$-dimensional Euclidean unit ball $D^n$ and $s_n$ the volume of the $n$-dimensional Euclidean unit sphere  $S^n$. We write $S\RR^n=\RR^n\times S^{n-1}$ for the sphere bundle of $\RR^n$ and denote by $N(K)$ the normal cycle of a convex body $K\in\calK(\RR^n)$. As a set, the normal cycle is the disjoint union of the outward unit normals  to $K$.
\begin{theorem}\label{thm:A}
For any $r,k,m\in\NN$ with $r\leq n-1$, $k\leq \min\{r,n-r\}$, and $m\geq2$, let $\omega_{r,k,m}\in\Omega^{n-1}(S\RR^n)$ be given by formula \eqref{eq:HWomega} below. The smooth valuation
\begin{align*}
\phi_{r,k,m}(K)=\frac{(\sqrt{-1})^{\lfloor\frac n2\rfloor}(\sqrt 2)^{m-2}}{s_{n+m-r-3}}\int_{N(K)}\omega_{r,k,m},\quad K\in\calK(\RR^n),
\end{align*}
is a non-trivial highest weight vector of weight $\lambda_{k,m}$ of the $\SO(n)$-representation $\Val_{r}(\RR^n)$.
\end{theorem}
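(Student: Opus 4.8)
The plan is to verify the three assertions of the theorem separately: that $\phi_{r,k,m}$ is a well-defined nonzero element of $\Val_r(\RR^n)$, that it has weight $\lambda_{k,m}$, and that it is a highest weight vector, i.e.\ annihilated by the raising operators of $\mathfrak{so}(n)$. First I would recall from Alesker's theory that integration of an $(n-1)$-form over the normal cycle always produces a smooth translation-invariant valuation, and that homogeneity of degree $r$ is detected by the bidegree of $\omega_{r,k,m}$ with respect to the natural bigrading on $\Omega^{\bullet}(S\RR^n)$ coming from the product structure $S\RR^n = \RR^n\times S^{n-1}$; so the first task is just to read off the bidegree from the explicit formula \eqref{eq:HWomega} and check it equals $(r,n-1-r)$. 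Nontriviality I would handle at the very end, once the weight is known, by exhibiting a single convex body (a ball, or a suitable zonoid/ellipsoid adapted to the weight) on which $\phi_{r,k,m}$ evaluates nonzero; alternatively nontriviality follows automatically if one shows $\phi_{r,k,m}$ is a nonzero highest weight vector, since a nonzero vector killed by all raising operators cannot be the zero valuation.

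The heart of the matter is the $\SO(n)$-equivariance computation. The group $\SO(n)$ acts on $\Omega^{\bullet}(S\RR^n)$ by pullback along its diagonal action on $S\RR^n$, and this action is compatible with integration over the normal cycle because $N(gK) = g_*N(K)$. Hence it suffices to show that, as a form, $\omega_{r,k,m}$ is a highest weight vector of weight $\lambda_{k,m}$ for the induced action on $\Omega^{n-1}(S\RR^n)$ — the scalar prefactor $(\sqrt{-1})^{\lfloor n/2\rfloor}(\sqrt2)^{m-2}/s_{n+m-r-3}$ is $\SO(n)$-invariant and plays no role here; it is presumably chosen to normalize the behavior under the operations studied later in the paper. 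I would fix a maximal torus and the standard positive root system of $\mathfrak{so}(n)$ (distinguishing the cases $n=2l$ and $n=2l+1$), write the complexified Lie algebra in terms of the coordinate vector fields on $\RR^n$ and the induced vector fields on $S^{n-1}$, and express $\omega_{r,k,m}$ in a convenient frame of invariant forms — plausibly built from the contact form $\alpha$, its differential $d\alpha$, the pullbacks of the coordinate one-forms $dx_i$, and the components of the tautological section. The weight equation $\mathcal{L}_{H}\,\omega_{r,k,m} = \langle\lambda_{k,m},H\rangle\,\omega_{r,k,m}$ for $H$ in the Cartan subalgebra should then reduce, after using Cartan's formula $\mathcal{L}_H = d\iota_H + \iota_H d$ and the fact that the relevant building-block forms are closed or have controlled differentials, to a bookkeeping identity among the integers $m$, the $k-1$ entries equal to $2$, and the trailing zeros; and the highest weight condition $\mathcal{L}_{E_{\beta}}\,\omega_{r,k,m}=0$ for each simple positive root $E_\beta$ becomes a collection of cancellations that I expect the specific combinatorial shape of \eqref{eq:HWomega} to be engineered to produce.

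The main obstacle, and the step I would budget the most effort for, is precisely this last equivariance verification: organizing the Lie-derivative computation on the sphere bundle so that the cancellations making $\omega_{r,k,m}$ highest-weight are transparent rather than a brute-force expansion in coordinates. Two subsidiary difficulties feed into it. The first is choosing the right invariant frame: a naive coordinate computation on $S\RR^n$ is unwieldy, whereas working with Rumin's differential and the canonical contact structure — or with the $\SO(n)$-module structure of $\Omega^{\bullet}(S^{n-1})$ restricted along the fibers — should make the raising operators act in a controlled way on a small number of generators. The second is the degenerate case $k=-l$ when $n=2l$ (and more generally the interplay between the "plus" type $\lambda_{k,m}$ and the self-dual/anti-self-dual splitting in even dimensions): here the trailing $-2$ in the last coordinate means the positive root system behaves differently, and I would treat $n$ even and $n$ odd in parallel branches, being careful that the form written for general $k$ genuinely carries weight $(m,2,\dots,2,-2)$ and not its conjugate. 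Once the form-level statement is in hand, transporting it back to $\Val_r(\RR^n)$ via the normal-cycle integral is formal, and nontriviality follows either by a direct evaluation or, as noted, automatically from nonvanishing of the form together with the highest weight property and the Alesker--Bernig--Schuster multiplicity-one theorem, which guarantees that this weight space is exactly one-dimensional inside $\Val_r(\RR^n)$.
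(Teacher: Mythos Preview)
Your overall strategy—verify that $\omega_{r,k,m}$ is a highest weight vector of weight $\lambda_{k,m}$ at the level of differential forms, then transport this to $\Val_r(\RR^n)$ via the $\SO(n)$-equivariant normal-cycle map—is correct and is exactly what the paper does (Theorem~\ref{thm:HWforms}). The paper organizes the Lie-derivative computations using the double-form formalism rather than a generic ``invariant frame built from $\alpha$ and $d\alpha$'', but this is a matter of bookkeeping; the underlying verification $\calL_{\widetilde X_\alpha}\omega_{r,k,m}=0$ for the positive root vectors \eqref{eq:BorelAction} is the same.

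There is, however, a genuine gap in your treatment of non-triviality. You write that ``nontriviality follows \ldots\ automatically from nonvanishing of the form together with the highest weight property and the Alesker--Bernig--Schuster multiplicity-one theorem''. This is false: the map $\Omega^{r,n-r-1}(S\RR^n)^{\tr}\to\Val_r^\infty(\RR^n)$ has a large kernel, described by the Bernig--Br\"ocker kernel theorem (Theorem~\ref{thm:kernel}): a form $\omega$ defines the zero valuation precisely when its Rumin differential $D\omega$ vanishes. So $\omega_{r,k,m}\neq 0$ does \emph{not} imply $\phi_{r,k,m}\neq 0$, and the multiplicity-one statement of Theorem~\ref{thm:ABS} only tells you the isotypic component is one-dimensional in $\Val_r(\RR^n)$—it does not tell you that \emph{this particular} highest weight vector, coming from this particular form, is a nonzero representative. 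Your fallback of ``evaluating on a ball or zonoid'' is in principle legitimate but far from automatic; for $k\geq 2$ no obvious symmetric body will do.

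The paper closes this gap by computing $D\omega_{r,k,m}$ explicitly (Theorem~\ref{thm:RuminD}, a substantial calculation) and then using it to evaluate the Alesker--Poincar\'e pairing $\overline{\phi_{r,k,m}}*\phi_{n-r,k,m}$ (Theorem~\ref{thm:PD}), which turns out to be a nonzero explicit constant. Only then is non-triviality established. You should replace your non-triviality paragraph with a plan to show $D\omega_{r,k,m}\neq 0$; this is the step that carries real content.

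A minor point: the case $k=-l$ you discuss is not part of the statement of Theorem~\ref{thm:A} (there $k\in\NN$), so you need not worry about it here; the paper handles $\lambda_{-l,m}$ separately via the reflection $R\in\OO(n)$.
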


Observe that there are highest weights appearing in $\Val_r(\RR^n)$ that are  formally not covered by  Theorem~\ref{thm:A}, namely the trivial highest weight and $\lambda_{-l,m}$. However,  applying  to $\phi_{l,l,m}$ the reflection in the coordinate hyperplane $e_n^\perp$  yields the highest weight vectors for the latter. 
 As for the former, the valuations corresponding to the trivial highest weight are precisely the intrinsic volumes, which have been  intensively studied   in convex geometry. Since they  do not fit naturally into the family of differential forms $\omega_{r,k,m}$  we introduce and since many much simpler and geometrically more appealing  descriptions of the instrinsic volumes are known (see, e.g., \cite{Schneider:BM}),  we have preferred to omit them from Theorem~\ref{thm:A}. 

 A brilliant insight behind much of the recent progress in integral geometry is the observation of Fu \cite{Fu:Unitary}  that problems in the spirit of the kinematic formula of Blaschke--Chern--Federer \cite{Blaschke:Integralgeometrie,Chern:Kinematic,Federer:CurvatureMeasures} can be transformed into the problem of evaluating the product, the convolution, and the Fourier transform of invariant valuations.  More recently, it was discovered that also problems concerning geometric inequalities for convex bodies can be reformulated in the  algebraic language of valuation theory \cite{Alesker:Kotrbaty,KotrbatyWannerer:MixedHR,Kotrbaty:HR}.  It should therefore come as no surprise that  computing the algebraic structures explicitly is typically a challenge. 
 In this connection, the main point of Theorem~\ref{thm:A} is that it provides an expression for the highest weight vectors that is simple enough to allow the evaluation  of most of the natural  operations on valuations.

Since in the following theorem we consider valuations both on $\calK(\RR^n)$ and on  $\calK(\RR^{n-1})$, we write $\phi^{(n)}_{r,k,m}$ instead of $\phi_{r,k,m}$ to reduce the risk of confusion.

\begin{theorem}\label{thm:B} For any $r,k,m\in\NN$ with $r\leq n-1$, $k\leq \min\{r,n-r\}$, and $m\geq2$,
the highest weight vectors 
constructed in Theorem~\ref{thm:A} satisfy the following properties:
\begin{enuma}
	\item \textnormal{Alesker--Poincar\'e pairing}.
	$$	\b{\phi_{r,k,m}^{(n)}}*\phi_{n-r,k,m}^{(n)}= (-1)^k(m+k-1)(n+m-k){n-2k\choose r-k}\frac{v_{n+2m-2}}{v_{n+m-r-2}v_{r+m-2}s_{2m-3}}.$$
	\item \textnormal{Pullback along the inclusion $\iota \colon \RR^{n-1}\to \RR^{n-1}\oplus \RR$.}
	$$ 	\iota^*\phi_{r,k,m}^{(n)}=\begin{cases}\phi_{r,k,m}^{(n-1)}&\text{if }r<n-1\text{ and } k<n-r, \\[1ex] \frac 1 2\phi_{k,k-1,m}^{(n-1)} & \text{if } k=\frac n2, \\[1ex]0&\text{otherwise}.\end{cases}$$
	\item \textnormal{Pushforward along the projection $\pi\colon \RR^{n-1}\oplus \RR \to \RR^{n-1}$}.
	$$ 	\pi_* \phi_{r,k,m}^{(n)} =\begin{cases}\phi_{r-1,k,m}^{(n-1)}&\text{if }k<r,\\[1ex]-\frac 12 \phi_{k-1,k-1,m}^{(n-1)}&\text{if }k=\frac n2,\\[1ex] 0&\text{otherwise}.\end{cases}$$
	\item \textnormal{Fourier transform}.
	$$ 	\FF\phi_{r,k,m}^{(n)}=(-1)^{k-1}(\sqrt{-1})^m\phi_{n-r,k,m}^{(n)}.$$
	\item \textnormal{Lefschetz operator}.
	$$ 	\Lambda\phi_{r,k,m}^{(n)}=\begin{cases}(n-r-k+1)\frac{v_{n+m-r-1}}{v_{n+m-r-2}}\phi_{r-1,k,m}^{(n)} & \text{if }k<r,\\[1ex] 0 & \text{if }k=r.\end{cases}$$
\end{enuma}

\end{theorem}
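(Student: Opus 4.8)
The plan is to exploit that each of the five operations in Theorem~\ref{thm:B} has a concrete realization on the level of the differential forms defining the valuations, so that the whole problem reduces to manipulating the explicit form $\omega_{r,k,m}$ of \eqref{eq:HWomega} and then bookkeeping constants. For the pullback $\iota^*$, the pushforward $\pi_*$ and the Lefschetz operator $\Lambda$ one has explicit formulas producing again a form on the relevant sphere bundle; for the Fourier transform $\FF$ and the Alesker--Poincar\'e pairing it is most efficient to combine such a form-level description with $\SO(n)$-equivariance. Indeed, since by Theorem~\ref{thm:ABSintro} the decomposition of $\Val_r(\RR^n)$ into $\SO(n)$-types is multiplicity-free and $\FF$ is $\SO(n)$-equivariant, Schur's lemma forces $\FF\phi_{r,k,m}^{(n)}$ to be a scalar multiple of $\phi_{n-r,k,m}^{(n)}$; likewise the equivariant pairing restricts to a nonzero pairing between the one-dimensional highest weight spaces of a type $\lambda_{k,m}$ in $\Val_r$ and in $\Val_{n-r}$, so in both cases only a scalar remains to be pinned down.

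First I would compute the pullback~(b). Regarding $S\RR^{n-1}$ as the sub-bundle $\{(x,u)\in S\RR^n : x\in\RR^{n-1},\ u\in S^{n-1}\cap\RR^{n-1}\}$, one uses the standard description of $\iota^*$ on valuations given by forms and substitutes the explicit $\omega_{r,k,m}^{(n)}$, which is built from the contact form $\alpha$, from $d\alpha$, from the tautological forms carrying the position vector $x$, and from a scalar factor carrying the $m$-dependence. A combinatorial analysis of these building blocks identifies the result with a constant multiple of $\omega_{r,k,m}^{(n-1)}$ in the generic range, while in the boundary range $r=n-1$ or $k=n-r$ it either degenerates to zero or, precisely when $k=\frac n2$, collapses onto $\omega_{k,k-1,m}^{(n-1)}$, the normalization $1/s_{n+m-r-3}$ of Theorem~\ref{thm:A} producing the factor $\tfrac12$. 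The pushforward~(c) I would obtain either by the parallel direct computation---now a fibre integration of $\omega_{r,k,m}^{(n)}$ over the vertical $S^{n-1}$---or, more economically, from the identity relating $\iota^*$, $\pi_*$ and $\FF$, which reduces (c) to the combination of (b) and~(d).

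For the Fourier transform~(d) I would feed $\omega_{r,k,m}^{(n)}$ into the known form-level description of Alesker's Fourier transform and, writing $\FF\phi_{r,k,m}^{(n)}=c\,\phi_{n-r,k,m}^{(n)}$, extract $c$ by comparing a single convenient invariant of the two sides---the Klain function on a coordinate subspace when the underlying type is even, and a suitable reduction of the relevant form integral otherwise---which gives $c=(-1)^{k-1}(\sqrt{-1})^m$; it is reassuring that this constant is independent of $n$ and $r$, the factor $(\sqrt{-1})^{\lfloor n/2\rfloor}$ in the normalization being designed to absorb the $n$-dependence. The Lefschetz operator~(e) is convolution with a fixed multiple of $\mu_{D^n}\in\Val_{n-1}^\infty(\RR^n)$; using the form-level description of this particular convolution and again substituting $\omega_{r,k,m}^{(n)}$ produces a multiple of $\omega_{r-1,k,m}^{(n)}$ carrying the ball-volume quotient $v_{n+m-r-1}/v_{n+m-r-2}$, whereas the vanishing for $k=r$ is automatic because $\lambda_{r,m}\notin\Lambda_{r-1}$, i.e.\ the target type is not present in degree $r-1$. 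Finally, for the Alesker--Poincar\'e pairing~(a), I would either invoke the form-level formula for the convolution pairing of complementary degrees, or pass through the Fourier transform, which intertwines convolution with the Alesker product, together with the form-level formula for the product pairing; either way the pairing $\b{\phi_{r,k,m}^{(n)}}*\phi_{n-r,k,m}^{(n)}$ collapses to one definite integral over $S^{n-1}$, which a beta-function (equivalently Gamma-function) identity evaluates to the stated closed form, the binomial coefficient $\binom{n-2k}{r-k}$ emerging from the combinatorics of how $\omega_{r,k,m}$ and $\omega_{n-r,k,m}$ wedge together.

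The main obstacle is twofold. Conceptually, none of the five identities is hard once the correct form-level formula is in hand, but establishing and justifying these formulas---especially for the convolution pairing and for Alesker's Fourier transform, where the passage from the abstract definition to an explicit operation on $\Omega^{n-1}(S\RR^n)$ must be made compatible with the normal cycle---takes care. Computationally, the genuine labour is the relentless tracking of the combinatorial coefficients generated when $\omega_{r,k,m}$ is differentiated, restricted, or wedged against $\omega_{n-r,k,m}$, together with the evaluation of the surviving one-dimensional integrals; the normalizing constants $(\sqrt{-1})^{\lfloor n/2\rfloor}$, $(\sqrt2)^{m-2}$ and $1/s_{n+m-r-3}$ were evidently chosen so that these constants collapse into the clean expressions above, and verifying that the various $v$- and $s$-factors and the binomial coefficient in~(a) come out correctly is where most of the effort goes. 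The exceptional values $k=\frac n2$ in~(b) and~(c), which bridge the two ends of the weight lattice when $n$ is even, will require separate treatment.
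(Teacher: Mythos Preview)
Your overall strategy is broadly right for parts (a), (b), (c), (e)---each really does reduce to a form-level computation with $\omega_{r,k,m}$---but part (d) contains a genuine gap, and because of it your logical ordering is reversed relative to the paper.

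You propose to compute $\FF\phi_{r,k,m}^{(n)}$ by feeding $\omega_{r,k,m}$ into a ``known form-level description of Alesker's Fourier transform'' and then reading off the scalar from a Klain function or some other invariant. No such practical formula is available: a tractable form-level description of $\FF$ exists essentially only in dimension~$2$, and the Klain function determines only \emph{even} valuations, so your plan does not cover odd~$m$. The paper proceeds in the opposite direction. It first proves (b) and (c) by direct computation on the level of forms (for (b), the normal cycle $N^{(n)}(\iota K)$ is expressed as the push-forward of $N^{(n-1)}(K)\times[-\pi/2,\pi/2]$ under an explicit map and one integrates over the angular parameter; for (c), one shows that $\pi_*\phi$ is represented by $(-1)^n\iota^*\!\bigl(i_{\partial/\partial x_n}\omega\bigr)$). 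Writing $\FF\phi_{r,k,m}^{(n)}=f_{r,k,m}^{(n)}\phi_{n-r,k,m}^{(n)}$, the compatibility $\FF\circ\iota^*=\pi_*\circ\FF$ together with (b) and (c) yields $f_{r,k,m}^{(n)}=f_{r,k,m}^{(n-1)}$ (and a sign flip in the boundary case $k=\frac n2$), reducing by induction to $n=2$, where the explicit description of $\FF$ on $\Val_1^\infty(\RR^2)$ in terms of area measures gives $f_{1,1,m}^{(2)}=(\sqrt{-1})^m$ directly. Thus (b) and (c) are inputs to (d), not consequences of it; your proposed shortcut ``(c) via (b) and (d)'' would be circular here.

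A secondary point: for (a) and (e) the central technical object is the Rumin differential $D\omega_{r,k,m}$ (a second-order operator), not the ordinary~$d$. Bernig's pairing formula reads $\phi_1\cdot\phi_2=\pm\int_{S^{n-1}}\omega_1\wedge D\omega_2$, and for (e) the paper computes $\mathcal L_T(D\omega_{r,k,m})$ and invokes the kernel theorem rather than working with $\mathcal L_T\omega_{r,k,m}$. The paper devotes an entire section to the closed formula $D\omega_{r,k,m}=d\bigl(\omega_{r,k,m}+c_{r,m}\zeta_{\bar 1}^{m-2}\theta_{r,k}\alpha\bigr)$ for an auxiliary form $\theta_{r,k}$; this is where most of the combinatorial labour you anticipate actually lives, and your sketch does not identify it. Your observation that $\Lambda\phi_{r,r,m}=0$ follows immediately from $\lambda_{r,m}\notin\Lambda_{r-1}$ is, however, a clean alternative to the paper's direct verification.
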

Several special cases and consequences of Theorem~\ref{thm:B} have already appeared---sometimes in different guises---in the literature.
Regarding the Fourier transform, the special cases  $k=1$ and $k=l$ for even $m$ have been treated by Bernig and Hug 
\cite[Theorem 1]{BernigHug:Tensor}  and Bernig and Solanes \cite[Corollary 2.5]{BernigSolanes:Kinematic}, respectively.  Moreover, in \cite[Proposition 2.8]{BernigSolanes:Kinematic} the action of $\Lambda^{n-2r}\circ \FF$ on highest weight vectors was determined for even $m$ and $n$. 
For $k=1$, our result on the Alesker--Poincar\'e pairing  is equivalent to \cite[Proposition~4.11]{BernigHug:Tensor}.  \cite[Theorem 3.1]{GoodeyHugWeil:AreaMeas} by Goodey, Hug, and Weil is equivalent to the action of  $\FF\circ \Lambda\circ \FF$ on highest weight vectors with $k=1$.

We expect that Theorems~\ref{thm:A} and \ref{thm:B} will open up  new avenues in convex and  integral geometry. 
In this paper we present  two  applications. First, we give a new proof of the hard Lefschetz theorem for Euclidean balls as reference bodies. As discussed above, in this case the hard Lefschetz theorem is already known to be true;  
the  point here is that Theorem~\ref{thm:B} yields a short and transparent proof. Second,  and here our results seem indeed indispensable, we deduce from Theorem~\ref{thm:B} the Hodge--Riemann relations for Euclidean balls as reference bodies. Altogether,

\begin{theorem}\label{thm:C}
Conjecture \ref{con} is true if $C_0=\cdots=C_{n-2r}=D^n$.
\end{theorem}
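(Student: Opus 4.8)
The plan is to reduce Conjecture~\ref{con} with $C_0=\cdots=C_{n-2r}=D^n$ to a purely representation-theoretic statement about the action of the operators from Theorem~\ref{thm:B} on the explicit highest weight vectors from Theorem~\ref{thm:A}, and then to verify that statement by a direct computation of signs. Recall first that convolution with the valuation $\mu_{D^n}$ is, up to a normalizing constant, the Lefschetz operator $\Lambda$ (this is the standard dictionary between convolution by balls and the Lefschetz operator on $\Val^\infty$; cf.\ \cite{BernigBroecker:Rumin}), and that the sesquilinear form in Conjecture~\ref{con}(b) with all reference bodies equal to $D^n$ becomes, after transporting everything to $\Val_r^\infty$ via the hard Lefschetz isomorphism, a constant multiple of the Alesker--Poincar\'e pairing $(\phi,\psi)\mapsto (-1)^r\overline{\phi}*\psi$ composed with an appropriate power of $\Lambda$. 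Since $\Lambda$ and $\overline{\phantom{x}}\,*\,\phantom{x}$ are $\SO(n)$-equivariant and $\Val_r^\infty$ is multiplicity-free by Theorem~\ref{thm:ABSintro}, by Schur's lemma it suffices to evaluate all these operators on one highest weight vector per $\SO(n)$-type; but that is exactly the content of Theorem~\ref{thm:B}(a) and~(e).

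Concretely, I would proceed as follows. First, establish the hard Lefschetz theorem (Conjecture~\ref{con}(a)) for $C_i=D^n$ as a warm-up: by $\SO(n)$-equivariance it is enough to check that for each highest weight $\lambda_{k,m}\in\Lambda_{n-r}$ the iterated Lefschetz operator $\Lambda^{n-2r}$ does not annihilate the corresponding highest weight vector $\phi_{n-r,k,m}^{(n)}$, which follows immediately by iterating the formula in Theorem~\ref{thm:B}(e) and noting that all the constants $(n-j-k+1)v_{n+m-j-1}/v_{n+m-j-2}$ occurring for $j=r,\dots,n-r-1$ are nonzero (here one uses $k\le r\le n-r$, so $n-j-k+1\ge 1>0$ throughout). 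Second, for the Hodge--Riemann relations, decompose an element $\phi$ of the primitive space $\{\phi\in\Val_{n-r}^\infty:\ \Lambda^{n-2r+1}\phi=0\}$ into its isotypic components $\phi=\sum_{k,m}\phi_{k,m}$, each a multiple of $\phi_{n-r,k,m}^{(n)}$; the primitivity condition forces $k<r+1$, i.e.\ $k\le r$, in the surviving components (again by Theorem~\ref{thm:B}(e), since applying $\Lambda$ exactly $n-2r+1$ times to $\phi_{n-r,k,m}$ is nonzero precisely when $k\le r$). Then the Hodge--Riemann form evaluated on $\phi$ diagonalizes over the isotypic components, and on the $\lambda_{k,m}$-component it equals a positive constant times
\begin{align*}
(-1)^r\,(-1)^{n-2r}\,\Big(\text{product of Lefschetz constants}\Big)\cdot\overline{\phi_{n-r,k,m}^{(n)}}*\phi_{r,k,m}^{(n)},
\end{align*}
and by Theorem~\ref{thm:B}(a) this Alesker--Poincar\'e pairing carries the sign $(-1)^k$ together with the manifestly positive factor $(m+k-1)(n+m-k)\binom{n-2k}{r-k}\,v_{n+2m-2}\big/\big(v_{n+m-r-2}v_{r+m-2}s_{2m-3}\big)$. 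So the whole quantity has the sign of $(-1)^{r}\cdot(-1)^{k}\cdot(\text{sign of the Lefschetz-constant product})$, and the remaining task is bookkeeping: show this total sign is $+1$ for every admissible $(k,m)$.

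The main obstacle I anticipate is precisely this last sign computation---tracking how the factors $(-1)^k$ from the Poincar\'e pairing, $(-1)^r$ from the definition of the Hodge--Riemann form, and the signs hidden in the $n-2r$ (or $n-2r+1$) iterations of the Lefschetz operator combine, and in particular confirming that the Lefschetz constants $(n-j-k+1)v_{n+m-j-1}/v_{n+m-j-2}$ are all strictly positive in the relevant range of $j$ (which they are, since $n-j-k+1\ge n-(n-r-1)-k+1=r-k+2\ge 2>0$ and the volume ratios are positive). A secondary subtlety is checking that the hard Lefschetz isomorphism used to pull the form back to $\Val_r^\infty$ is compatible with the Fourier transform normalization, so that one may optionally use Theorem~\ref{thm:B}(d) to replace $\overline{\phi_{n-r,k,m}}*\phi_{r,k,m}$ by a pairing of two valuations of the same degree if that makes the positivity more transparent; but given Theorem~\ref{thm:B}(a) directly supplies the mixed-degree pairing, this detour is not strictly necessary. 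Once the signs are pinned down, positive-definiteness is immediate because on each isotypic line the form is a positive real scalar times $|c_{k,m}|^2$ where $c_{k,m}$ is the coordinate of $\phi$ along $\phi_{n-r,k,m}^{(n)}$, and summing over the (orthogonal, by equivariance) isotypic components finishes the proof.
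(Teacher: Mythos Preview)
Your overall strategy matches the paper's, but there is a genuine error in your characterization of the primitive space that derails the sign computation, and a separate gap in the hard Lefschetz argument.

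For the Hodge--Riemann part, you have the primitivity condition backwards. Iterating Theorem~\ref{thm:B}(e), one finds $\Lambda^{j}\phi_{n-r,k,m}\neq 0$ if and only if $k\le n-r-j$; in particular $\Lambda^{n-2r+1}\phi_{n-r,k,m}\neq 0$ precisely when $k\le r-1$, not $k\le r$. Hence primitivity forces $k=r$ exactly (and, when $n=2l$ and $r=l$, also $k=-l$, a case you omit but which is handled by applying the reflection in $e_n^\perp$). With $k=r$ the sign you worry about is simply $(-1)^r\cdot(-1)^r=+1$, and there is nothing left to bookkeep. As you wrote it, you would be trying to prove positivity on components with $k<r$, where the sign $(-1)^{r+k}$ genuinely alternates---those components, however, are not primitive, so the problem never arises. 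Note also that an isotypic component is an entire irreducible representation, not a line; the reduction to the single highest weight vector is the content of Schur's lemma (Proposition~\ref{pro:Schur}(b)), together with orthogonality of distinct types (Proposition~\ref{pro:Schur}(a)) and continuity of the pairing to pass to the infinite sum.

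For the hard Lefschetz part, checking that $\Lambda^{n-2r}$ does not annihilate any highest weight vector gives only injectivity on each finite-dimensional isotypic component; in infinite dimensions this does not yield surjectivity (nor openness) of $\Lambda^{n-2r}$ on $\Val_{n-r}^\infty$. The paper closes this gap by estimating the eigenvalues $|e_{r,k,m}|=O(m^{n/2-r})$ from the explicit formulas and invoking the decay of Fourier coefficients of smooth vectors (Theorem~\ref{thm:decay}) to show that $\sum_\lambda e_{r,k,m}^{-1}\pi_\lambda\phi$ converges in $\Val_r^\infty$, which produces an explicit preimage and, via the open mapping theorem, the topological isomorphism.
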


As pointed out to us by R.~van Handel, combining Theorem~\ref{thm:C} with the case $r=1$ of the Hodge--Riemann relations recently proved by the authors \cite{KotrbatyWannerer:MixedHR} implies  the validity of Conjecture~\ref{con} in yet another special case. 

\begin{corollary}\label{cor:D}For arbitrary $C_0$, 
Conjecture \ref{con} is true if $r=2$ and  $C_1=\cdots= C_{n-4}=D^n$. In particular, Conjecture~\ref{con} holds for $n\leq 4$.
\end{corollary}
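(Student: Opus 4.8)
The plan is to combine Theorem~\ref{thm:C} with the $r=1$ case of the Hodge--Riemann relations from \cite{KotrbatyWannerer:MixedHR} via a single Lefschetz-type reduction step. Recall that for $r=2$ we must show, for an \emph{arbitrary} body $C_0$ (smooth, strictly positively curved) and for $C_1=\cdots=C_{n-4}=D^n$, that the sesquilinear form $(\phi,\psi)\mapsto \phi*\b\psi*\mu_{C_0}*\mu_{D^n}^{*(n-4)}$ is positive definite (after the sign $(-1)^2=1$) on the primitive space $P=\{\phi\in\Val^\infty_{n-2}\mid \phi*\mu_{C_0}*\mu_{D^n}^{*(n-4)}=0\}$. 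Here I abbreviate $\mu_{D^n}^{*(n-4)}=\mu_{D^n}*\cdots*\mu_{D^n}$ ($n-4$ factors); note $\mu_{D^n}$ is (a multiple of) the first intrinsic volume, so convolution with it is, up to a positive constant, the Lefschetz operator $\Lambda$ of Theorem~\ref{thm:B}(e).

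First I would set $L_0=\,\cdot*\mu_{C_0}$ and observe that by Theorem~\ref{thm:C} applied with $r=2$ but reference body $D^n$ throughout --- more precisely by the $r=2$ Hodge--Riemann relations with all $C_i=D^n$ --- the form $(\phi,\psi)\mapsto \phi*\b\psi*\mu_{D^n}^{*(n-4)}$ is positive definite on $\{\phi\in\Val^\infty_{n-2}\mid \phi*\mu_{D^n}^{*(n-3)}=0\}$, and the hard Lefschetz theorem (Theorem~\ref{thm:C}(a), balls) tells us $\mu_{D^n}^{*(n-4)}\colon \Val^\infty_{n-2}\to\Val^\infty_2$ is injective with the standard $\mathfrak{sl}_2$ primitive decomposition in degree $n-2$. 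The genuinely new input is that one of the $n-3$ Lefschetz factors is replaced by the arbitrary body $C_0$. The key step is therefore a \emph{mixed} hard Lefschetz and Hodge--Riemann statement in the lowest nontrivial range: since after convolving down by $\mu_{D^n}^{*(n-4)}$ we land in $\Val^\infty_2(\RR^n)$, and the remaining operator $L_0$ together with one more power of $\Lambda$ maps $\Val^\infty_2\to\Val^\infty_0$, the positivity we need is exactly the $r=1$ Hodge--Riemann relations \emph{with reference bodies $C_0$ and $D^n,\dots,D^n$} --- which is precisely what \cite{KotrbatyWannerer:MixedHR} establishes (their $r=1$ result allows one arbitrary body). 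So the strategy is: push $P\subset\Val^\infty_{n-2}$ forward by $\mu_{D^n}^{*(n-4)}$ into $\Val^\infty_2$, check that its image lands inside the primitive space for the pair $(C_0,D^n)$ in degree $2$, and transport the positivity back.

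Concretely the steps are: (1) by hard Lefschetz for balls, $\Theta:=\mu_{D^n}^{*(n-4)}$ is a topological isomorphism onto its image and $P\xrightarrow{\Theta}\Val^\infty_2$ is injective; (2) identify $\Theta(P)$ with the primitive subspace $P'=\{\eta\in\Val^\infty_2\mid \eta*\mu_{C_0}*\mu_{D^n}=0\}$ --- this uses that $\phi\in P$ iff $\Theta(\phi)*\mu_{C_0}*\mu_{D^n}=0$, together with the mixed hard Lefschetz theorem of \cite{KotrbatyWannerer:MixedHR} (the $r=1$, one-arbitrary-body case) guaranteeing that the primitive decomposition in degree $2$ is the expected one and that $\Theta$ maps primitives to primitives; (3) for $\phi\in P$ write $\phi*\b\psi*\mu_{C_0}*\mu_{D^n}^{*(n-4)} = \Theta(\phi)*\b{\Theta(\psi)}*\mu_{C_0}$ up to reorganizing convolution factors --- wait, degrees must match: $\Theta(\phi),\Theta(\psi)\in\Val^\infty_2$ and $\Theta(\phi)*\b{\Theta(\psi)}\in\Val^\infty_4$ which we then convolve with $\mu_{C_0}$ once more is still wrong, so instead one keeps $\phi*\b\psi\in\Val^\infty_{2(n-2)-n}=\Val^\infty_{n-4}$ and convolves with $\mu_{C_0}*\mu_{D^n}^{*(n-4)}$ giving a degree-$0$ valuation, and the claim is that this equals the $r=1$ Hodge--Riemann form evaluated at suitable images --- the bookkeeping here is the place to be careful; (4) invoke \cite{KotrbatyWannerer:MixedHR} to conclude positive definiteness on $P'$, hence on $P$; the hard Lefschetz part (a) of Conjecture~\ref{con} for this configuration follows from the same combination, or directly from \cite{KotrbatyWannerer:MixedHR} composed with Theorem~\ref{thm:C}(a). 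Finally, for $n\le 4$ the cases $r=0$ (trivial), $r=1$ (\cite{KotrbatyWannerer:MixedHR}), and $r=2$ (just proved, with $n-4\le 0$ Lefschetz factors so $C_1,\dots,C_{n-4}$ is an empty list and the statement is literally the $r=2$ case with a single arbitrary body $C_0$) exhaust $0\le r\le\lfloor n/2\rfloor$.

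The main obstacle I anticipate is step (2)--(3): making precise that convolving a primitive class in $\Val^\infty_{n-2}$ down to $\Val^\infty_2$ by the \emph{symmetric} operator $\mu_{D^n}^{*(n-4)}$ interacts correctly with the \emph{asymmetric} remaining factor $\mu_{C_0}$, i.e. that the primitivity condition and the Hodge--Riemann form are simultaneously respected. This is not formal because $\mu_{C_0}$ and $\mu_{D^n}$ need not commute in any useful representation-theoretic sense, and the $\mathfrak{sl}_2$-structure underlying the clean primitive decomposition is only available for the ball operator $\Lambda$; reconciling the two requires the \emph{mixed} Lefschetz--Hodge--Riemann package of \cite{KotrbatyWannerer:MixedHR} rather than its ball-only special case, and one must verify that its hypotheses (smoothness and strict positive curvature of $C_0$) are exactly those imposed in Conjecture~\ref{con}. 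Once that compatibility is in place, the positivity transfer and the $n\le4$ corollary are immediate.
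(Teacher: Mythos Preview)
Your proposal has a genuine gap rooted in a misreading of the conjecture. In Conjecture~\ref{con} the Hodge--Riemann form uses only $C_1,\dots,C_{n-2r}$; the body $C_0$ enters \emph{only} in the primitive condition. Thus for $r=2$ with $C_1=\cdots=C_{n-4}=D^n$ the form to be shown positive definite is $(\phi,\psi)\mapsto \phi*\bar\psi*(\mu_{D^n})^{n-4}$, \emph{not} $\phi*\bar\psi*\mu_{C_0}*(\mu_{D^n})^{n-4}$ as you write. Your version lands in codegree $n+1$, i.e.\ is identically zero, which is why your degree bookkeeping in step~(3) never closes. Once the form is corrected, your strategy of pushing $P$ into $\Val_2^\infty$ by $\Theta=(\mu_{D^n})^{n-4}$ still does not work: the $r=1$ Hodge--Riemann relations of \cite{KotrbatyWannerer:MixedHR} are statements about $\Val_{n-1}^\infty$, not about $\Val_2^\infty$, and there is no convolution pairing $\Val_2^\infty\times\Val_2^\infty\to\Val_0^\infty$ for $n>4$ through which to ``transport positivity back''. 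So steps~(2)--(3) cannot be made precise as stated.

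The paper's argument avoids this by staying on the high-degree side. Given $\phi\in\Val_{n-2}^\infty$ with $\phi*\mu_{C_0}*(\mu_{n-1})^{n-4}=0$, use the $r=1$ hard Lefschetz theorem from \cite{KotrbatyWannerer:MixedHR} (with bodies $C_0,D^n,\dots,D^n$) to produce the unique $\eta\in\Val_{n-1}^\infty$ with $(\mu_{n-1})^{n-3}*\phi=\mu_{C_0}*(\mu_{n-1})^{n-3}*\eta$. Then $\phi-\mu_{C_0}*\eta$ is \emph{ball}-primitive in degree $n-2$, so Theorem~\ref{thm:C} gives $(\phi-\mu_{C_0}*\eta)*(\bar\phi-\mu_{C_0}*\bar\eta)*(\mu_{n-1})^{n-4}\geq 0$; the cross terms vanish by the primitive hypothesis on $\phi$. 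On the other hand, $(\mu_{C_0})^2*(\mu_{n-1})^{n-3}*\eta=0$, so the $r=1$ Hodge--Riemann relations (with bodies $C_0,C_0,D^n,\dots,D^n$) yield $\eta*\bar\eta*(\mu_{C_0})^2*(\mu_{n-1})^{n-4}\leq 0$. Adding the two inequalities gives $\phi*\bar\phi*(\mu_{n-1})^{n-4}\geq 0$, with equality forcing $\phi=0$ via the equality cases. The missing idea in your proposal is precisely this auxiliary $\eta\in\Val_{n-1}^\infty$ and the splitting $\phi=(\phi-\mu_{C_0}*\eta)+\mu_{C_0}*\eta$, which separates the problem into a ball-primitive piece (handled by Theorem~\ref{thm:C}) and a piece coming from degree $n-1$ (handled by \cite{KotrbatyWannerer:MixedHR}).
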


Let us  point out that Theorem~\ref{thm:C} and Corollary~\ref{cor:D} immediately extend the scope of the geometric inequalities that Alesker \cite{Alesker:Kotrbaty} has recently deduced from the Hodge--Riemann relations thus providing in particular a continuous analog to previously established combinatorial inequalities  \cite{McMullen:SimplePolytopes,Timorin:Analogue,vanHandel:Shepard}.

\subsection{Further discussion}

The differential forms $\omega_{r,k,m}$ of Theorem~\ref{thm:A} are defined  as certain determinantal expressions in the differentials of  complexified  coordinate functions on $\RR^n\times \RR^{n}\supset S\RR^n$.  Apart from finding these differential forms, one major challenge we faced  at the beginning of this  work was to  manipulate these bulky determinantal expressions without too much pain. The notion of double form---a concept developed by de Rham \cite{deRham} and  a key tool in the work of Gray \cite{Gray:TubesChernClasses,Gray:Tubes} in integral geometry---turned out be perfectly suited for our purposes. Double forms  allow us to cleanly split up the proof of the  main identities  into a series of simpler auxiliary statements.

Another point that deserves mention is that the proof of the special case of Theorem~\ref{thm:B}(d) established by  Bernig and Hug \cite{BernigHug:Tensor}  rested on the idea of using the action of  general linear group to move  between different $\SO(n)$-types. 
 Much to our surprise, our proof of the general result is shorter and   uses only the fact that the Fourier transform commutes with the action of $\SO(n)$ and  is compatible with the operations of pullback and pushforward.

\subsection{Acknowledgments}

The authors wish to thank Ramon van Handel for illuminating discussions on the Hodge--Riemann relations.

\section{Representation theory}

  \subsection{Abstract harmonic analysis}
  
  \label{ss:AHA}

 Let us begin by collecting general information about smooth vectors, the decomposition into $K$-types, and  the convergence of Fourier series. Our reference for this material are books by Warner \cite{Warner:HarmonicI} and Knapp \cite{Knapp:Overview} and an article by Sugiura \cite{Sugiura:Fourier}.

 Let $G$ be a Lie group and $\rho$ be a continuous representation of $G$ on a Fr\'echet space $E$. An element $v\in E$ is called a smooth vector if the map $G\to E$, $g\mapsto \rho(g)v$ is smooth. The  invariant subspace of smooth vectors  is denoted $ E^\infty$ and is equipped with a natural Fr\'echet space topology. Moreover, $(E^\infty)^\infty= E^\infty$. 
 
 Let $K\subset G$ be a compact subgroup with Haar  probability measure $dk$. Denote by $\widehat K$ the set of all $K$-types, i.e., equivalence classes of  irreducible representations of $K$. Let further $\chi_\lambda$ denote the character and $d(\lambda)$ the (finite) dimension of $\lambda\in \widehat K$. Then
\begin{align*}
\pi_\lambda = d(\lambda) \int_{K} \overline{\chi_\lambda(k)} \rho(k) dk
\end{align*}
is a continuous projection of $E$ onto the $\lambda$-isotypic component $E_\lambda\subset E$, i.e., the  closure of  the span  of all irreducible subrepresentations of $E$ of type $\lambda$.  Smooth vectors satisfy the following important property.

\begin{theorem}[Harish-Chandra]
\label{thm:H-Ch}
Let $v\in E^\infty$. Then the Fourier series
\begin{align*}
\sum_{\lambda\in \widehat{K}}  \pi_\lambda v 
\end{align*}
convergences absolutely to $v$. 
\end{theorem}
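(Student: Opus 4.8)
The plan is to use the Casimir operator of $K$ to convert the smoothness of $v$ into quantitative decay of the Fourier coefficients $\pi_\lambda v$, deduce absolute convergence of the series, and then identify its sum with $v$ via the Peter--Weyl theorem. Concretely, I would fix an $\operatorname{Ad}(K)$-invariant inner product on the Lie algebra $\mathfrak k$ of $K$, choose an orthonormal basis $X_1,\dots,X_d$ of $\mathfrak k$, and form the shifted Casimir element $\Omega=1-\sum_{i=1}^{d}X_i^{2}$ in the universal enveloping algebra $U(\mathfrak k)$. Differentiating $\rho$ gives an action $d\rho$ of $U(\mathfrak k)$ on $E^\infty$ by operators that are continuous for the natural Fréchet topology of $E^\infty$; in particular $d\rho(\Omega^{N})\colon E^\infty\to E^\infty$ is continuous for every $N\in\NN$. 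Since $\Omega$ is central, it acts on an irreducible representation of type $\lambda$ by a scalar, which by Freudenthal's formula equals $1+c(\lambda)$ with $c(\lambda)=\langle\lambda,\lambda+2\delta\rangle$, where $\delta$ is half the sum of the positive roots; for dominant $\lambda$ one has $c(\lambda)\ge\|\lambda\|^{2}\ge 0$. Hence $d\rho(\Omega)$ acts on the isotypic component $E_\lambda$ by the scalar $1+c(\lambda)$, and it commutes with every projection $\pi_\mu$.

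The key step is then to fix $v\in E^\infty$ and $N\in\NN$, set $w=d\rho(\Omega^{N})v\in E^\infty$, and observe that $\pi_\lambda w=(1+c(\lambda))^{N}\pi_\lambda v$, so that
\begin{align*}
\pi_\lambda v=(1+c(\lambda))^{-N}\pi_\lambda w.
\end{align*}
For an arbitrary continuous seminorm $q$ on $E$, the integral formula for $\pi_\lambda$ together with the Cauchy--Schwarz bound $\int_{K}|\chi_\lambda(k)|\,dk\le\big(\int_{K}|\chi_\lambda|^{2}\big)^{1/2}=1$ yields
\begin{align*}
q(\pi_\lambda v)\le (1+c(\lambda))^{-N}\,d(\lambda)\,\sup_{k\in K}q(\rho(k)w),
\end{align*}
and the supremum is finite because $k\mapsto\rho(k)w$ is continuous on the compact group $K$. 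Summing over $\widehat K$, it remains to verify that $\sum_{\lambda\in\widehat K}(1+c(\lambda))^{-N}d(\lambda)<\infty$ once $N$ is large enough. This I would obtain from standard polynomial estimates: the Weyl dimension formula gives $d(\lambda)=O(\|\lambda\|^{p})$ with $p$ the number of positive roots, one has $c(\lambda)\ge\|\lambda\|^{2}$, and $\#\{\lambda\in\widehat K:\|\lambda\|\le R\}=O(R^{\operatorname{rk}K})$ since $\widehat K$ consists of dominant weights inside a lattice of rank $\operatorname{rk}K$; summing over dyadic shells then gives convergence for $N$ large. Thus $\sum_\lambda q(\pi_\lambda v)<\infty$ for every $q$, and since $E$ is Fréchet the series $\sum_\lambda\pi_\lambda v$ converges absolutely in $E$.

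Finally, to see that the sum equals $v$, set $v'=\sum_\lambda\pi_\lambda v$. Continuity of $\pi_\mu$ and the relations $\pi_\mu\pi_\lambda=\delta_{\mu\lambda}\pi_\lambda$ give $\pi_\mu v'=\pi_\mu v$, hence $\pi_\mu(v-v')=0$ for every $\mu\in\widehat K$. For any $\xi\in E^{*}$, the continuous function $k\mapsto\xi(\rho(k)(v-v'))$ on $K$ then has vanishing projection onto every isotypic component of $C(K)$, so it is identically zero by the Peter--Weyl theorem; evaluating at the identity gives $\xi(v-v')=0$, and since $E$ is Fréchet the Hahn--Banach theorem forces $v=v'$. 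I expect the only genuinely delicate point to be the bookkeeping in this convergence estimate---combining the Weyl dimension formula, the quadratic lower bound on the Casimir eigenvalues $c(\lambda)$, and the lattice-point count for $\widehat K$ so as to guarantee $\sum_\lambda(1+c(\lambda))^{-N}d(\lambda)<\infty$ for large $N$---whereas the remaining steps are formal consequences of the Casimir trick and Peter--Weyl.
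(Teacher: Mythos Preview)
The paper does not actually give a proof of this theorem: it is stated as a classical result of Harish-Chandra, with the books of Warner and Knapp cited as references. There is therefore no ``paper's own proof'' to compare against directly. That said, your argument is correct and is essentially the same Casimir-trick mechanism that the paper does spell out in its proof of the closely related quantitative decay result (Theorem~\ref{thm:decay}). A few remarks on the comparison:

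\begin{itemize}
\item In bounding $q(\pi_\lambda w)$ the paper uses the cruder estimate $|\chi_\lambda(k)|\le d(\lambda)$, obtaining a factor $d(\lambda)^{2}$, whereas you use Cauchy--Schwarz to get only $d(\lambda)$. Either bound is polynomial in $\|\lambda\|$ by the Weyl dimension formula, so the difference is immaterial.
\item Your proof implicitly assumes $K$ is connected (highest weights, positive roots, Weyl dimension formula, the lattice-point count for $\widehat K$). The paper states Theorem~\ref{thm:H-Ch} before introducing this hypothesis, but explicitly imposes connectedness immediately afterwards for Theorem~\ref{thm:decay}; in the application $K=\SO(n)$, so this is harmless.
\item The identification step $v'=v$ via Hahn--Banach and Peter--Weyl is something the paper does not need (it only proves decay, not convergence of the series), and your argument there is fine: for $u=v-v'$ one has $(\pi_\mu^{C(K)}f)(g)=\xi(\rho(g)\pi_\mu u)=0$ under the right regular representation, so $f=0$ in $L^{2}(K)$ and hence in $C(K)$.
\end{itemize}

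In short: your proof is correct, and the paper's proof of Theorem~\ref{thm:decay} is precisely the ``bookkeeping'' portion of your argument, carried out with slightly different constants.
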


We  will need more precise information on the decay of the Fourier coefficients.  To this end, let us assume $K$ is connected in which case $\widehat K$ is in one-to-one correspondence with the set of highest weights. Given a maximal torus $T\subset K$ with Lie algebra $\ftt$ and a system of positive roots $\Delta^+\subset\ftt^*_\CC$, the complexification of the Lie algebra $\fkk$ of $K$ decomposes as
\begin{align*}
\fkk_\CC=\fnn^-\oplus\ftt_\CC\oplus\fnn^+,
\end{align*}
where $\fnn^\pm=\bigoplus_{\alpha\in\Delta^+}\fkk_{\pm\alpha}$ is the direct sum of positive (negative) root spaces. Let $(V,\sigma)\in\lambda$ be an irreducible representation of $K$. There exist $\lambda\in\ftt_\CC^*$, called the highest weight, and a one-dimensional subspace $V_\lambda\subset V$, both unique, such that
\begin{align*}
d\sigma(H)v&=\lambda(H)v \quad \text{and}\quad\sigma(X)v=0
\end{align*}
holds for any $H\in\ftt_\CC$, $X\in\fnn^+$, and $v\in V_\lambda$. In fact, the highest weight characterizes the corresponding $K$-type uniquely, which justifies the abuse of notation. From now on these two meanings of $\lambda$ will be used interchangeably. The elements of $V_\lambda$ are referred to as highest weight vectors.

Fix an $\Ad(K)$-invariant complex bilinear form on $\fkk_\CC$ such that its restriction to $\sqrt{-1}\ftt$ is real and positive definite. For example, if   $K\subset U(n)$, then the trace form $B_0(X,Y)=\trace(XY)$ has this property. For every $\lambda\in \ftt_\CC^*$ which is  imaginary on $\ftt$ (such as a weight) define $H_\lambda \in \sqrt{-1}\ftt$ by 
$ \lambda(H)= \langle H, H_\lambda\rangle $ for all $H\in \sqrt{-1}\ftt$. Observe that $\langle \lambda, \mu\rangle = \langle  H_\lambda, H_\mu\rangle$  defines a positive definite inner product on $\sqrt{-1}\ftt^*\subset \ftt_\CC^*$ and denote the induced norm by $\norm\Cdot$. The following theorem is then essentially due to Sugiura \cite{Sugiura:Fourier}; although the results of \cite{Sugiura:Fourier} concern functions on $K$, the argument generalizes to an arbitrary continuous representation. For the sake of completeness, we will prove this more general statement here.

\begin{theorem}[Sugiura]  \label{thm:decay} Let $v\in E^\infty$. Then for every integer $q\geq 0$ and every continuous seminorm $|\cdot|$ on $E^\infty$, 
\begin{align*} \norm \lambda ^{2q}|\pi_\lambda v| \to 0 \quad \text{as } \norm\lambda \to \infty. 
\end{align*}
\end{theorem}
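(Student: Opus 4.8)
The plan is to adapt Sugiura's argument from functions on $K$ to an arbitrary continuous representation on a Fréchet space, using the Casimir operator of $\fkk$ to gain arbitrarily many derivatives. First I would introduce the Casimir element $\Omega\in U(\fkk_\CC)$ associated to the fixed $\Ad(K)$-invariant form on $\fkk_\CC$, i.e.\ $\Omega = -\sum_i X_i X^i$ with $\{X_i\},\{X^i\}$ dual bases of $\fkk$. Its key property is that it acts on the $K$-type $\lambda$ by the scalar $c_\lambda = \langle \lambda,\lambda+2\delta\rangle$, where $\delta$ is half the sum of the positive roots; hence $c_\lambda = \norm\lambda^2 + O(\norm\lambda)$ and in particular $c_\lambda \to \infty$ as $\norm\lambda\to\infty$. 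Since $v\in E^\infty$, the representation differentiates to an action of $U(\fkk_\CC)$ on $E^\infty$ by continuous operators, and $\rho(\Omega)$ commutes with every $\pi_\mu$ because $\Omega$ is $\Ad(K)$-invariant and $\pi_\mu$ is an average of $\rho(k)$'s; consequently $\pi_\lambda\bigl(\rho(\Omega)^q v\bigr) = c_\lambda^q\,\pi_\lambda v$ for every integer $q\geq 0$.

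Next I would combine this with the Harish-Chandra convergence theorem (Theorem~\ref{thm:H-Ch}). Fix a continuous seminorm $|\cdot|$ on $E^\infty$ and an integer $q\geq 0$. Applying Theorem~\ref{thm:H-Ch} to the smooth vector $w = \rho(\Omega)^{q}v \in E^\infty$ (note $(E^\infty)^\infty = E^\infty$, so $\Omega$ may be applied repeatedly) gives that $\sum_{\mu} |\pi_\mu w|$ converges; in particular $|\pi_\lambda w| = c_\lambda^{q}\,|\pi_\lambda v|\to 0$ as $\norm\lambda\to\infty$. Since $c_\lambda = \norm\lambda^2 + O(\norm\lambda)$, for all sufficiently large $\norm\lambda$ we have $c_\lambda \geq \tfrac12\norm\lambda^2$, and therefore
\begin{align*}
\norm\lambda^{2q}\,|\pi_\lambda v| \leq 2^q\, c_\lambda^{q}\,|\pi_\lambda v| = 2^q\,|\pi_\lambda w| \longrightarrow 0
\end{align*}
as $\norm\lambda\to\infty$, which is the assertion.

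The routine points to verify carefully are: that $\Omega$ indeed acts on $E^\infty$ by a continuous operator (this follows from the fact that the $\fkk$-action on $E^\infty$ is by continuous operators and $E^\infty$ is a Fréchet space stable under all of them); that $\rho(\Omega)$ commutes with $\pi_\mu$ (differentiate the identity $\rho(k)\pi_\mu = \pi_\mu\rho(k)$, or directly use $\mathrm{Ad}(K)$-invariance of $\Omega$ under the integral defining $\pi_\mu$); and the eigenvalue computation $\rho(\Omega)|_{E_\mu^\infty} = c_\mu\cdot\mathrm{id}$, which is the standard Casimir eigenvalue formula transported to the isotypic component via the isomorphism of $E_\mu^\infty$ with a (Hilbert completion of a) multiple of $(V_\mu,\sigma_\mu)$. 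The main obstacle I anticipate is none of these individually but rather the bookkeeping needed to make the reduction from functions on $K$ to a general Fréchet representation fully rigorous: one must know a priori that $\pi_\lambda v \in E^\infty$ and that the $\fkk$-action preserves $E^\infty$, and one must invoke Harish-Chandra's theorem in the general (not scalar-valued) form, which is already available as Theorem~\ref{thm:H-Ch}. Once these are in place, the Casimir trick delivers the polynomial-order decay immediately, and iterating $q$ times (rather than using a single operator) gives the statement for every $q$.
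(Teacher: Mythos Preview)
Your argument is correct, and it shares with the paper the key device of applying powers of the Casimir element to $v$ and using that it acts on each isotypic component by the scalar $\langle\lambda,\lambda+2\delta\rangle$. The route to the conclusion, however, is different. You invoke Theorem~\ref{thm:H-Ch} applied to $w=\rho(\Omega)^q v$ to obtain $|\pi_\lambda w|\to 0$ and then divide by $c_\lambda^q$. The paper never appeals to Theorem~\ref{thm:H-Ch}; instead it estimates $|\pi_\lambda v|$ directly from the integral formula $\pi_\lambda=d(\lambda)\int_K\overline{\chi_\lambda(k)}\rho(k)\,dk$, using equicontinuity of $\{\rho(k):k\in K\}$ on $E^\infty$, the character bound $|\chi_\lambda|\le d(\lambda)$, and the polynomial growth $d(\lambda)\le c\|\lambda\|^j$ coming from Weyl's dimension formula. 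This yields $\|\lambda\|^{2i}|\pi_\lambda v|\le c^2\|\lambda\|^{2j}|d\rho(\Delta)^i v|_0$, and choosing $i>j+q$ concludes. Your approach is shorter, but it leans on a theorem whose standard proof already contains exactly these estimates, so you are in effect importing the paper's argument through a black box; the paper's proof keeps Theorem~\ref{thm:decay} logically independent of Theorem~\ref{thm:H-Ch}.

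Two small remarks. First, for dominant $\lambda$ one has $\langle\lambda,\delta\rangle\ge 0$, hence $\langle\lambda,\lambda+2\delta\rangle\ge\|\lambda\|^2$ (this is what the paper uses), so your factor $2^q$ is unnecessary. Second, your appeal to Theorem~\ref{thm:H-Ch} must deliver absolute convergence with respect to an arbitrary continuous seminorm on $E^\infty$, not just on $E$; to justify this cleanly you should apply the theorem with the Fr\'echet representation $E^\infty$ in place of $E$, using $(E^\infty)^\infty=E^\infty$.
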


\begin{proof}
First, consider the extension of the representation $d\rho$ of $\fkk$ to the universal enveloping algebra $U(\fkk)$ and the Casimir operator $\Delta= \sum_{i,j=1}^r g^{ij} X_iX_j\in U(\fkk)$, where $X_1,\ldots, X_r$ is a basis of $\fkk_\CC$ and $(g^{ij})=(g_{ij})^{-1}$ with $g_{ij}=\langle X_i,X_j\rangle$. It is well known that $d\rho(\Delta)$ acts on each isotypic component $E_\lambda$ by multiplication by $\langle \lambda ,\lambda +2\delta\rangle$, where $\delta=\frac 12 \sum_{\alpha\in \Delta^+}\alpha$, and that
\begin{align*}
\langle \lambda ,\lambda + 2\delta\rangle\geq\norm\lambda^2,
\end{align*}
see, e.g., Lemma 1.1 and (1.10), respectively, in \cite{Sugiura:Fourier}. Second, because $K$ is compact, the family of operators $\{\rho(k)\mid k\in K\}$ is equicontinuous. Consequently, for every continuous seminorm $|\Cdot|$ on $E^\infty$, there exists another continuous seminorm $|\Cdot|_0$ such that 
\begin{align*}
|\rho(k)v| \leq |v|_0
\end{align*}
holds for all $k\in K$ and $v\in E^\infty$. Third, recall that since the absolute value of the trace of a unitary matrix cannot exceed its rank, we have for any $\lambda\in\widehat K$ and $k\in K$
\begin{align*}
|\chi_\lambda(k)|\leq d(\lambda).
\end{align*}
Finally, according to Weyl's dimension formula, there exist $c,j\in\NN$ with
\begin{align*}
d(\lambda)\leq c\norm\lambda^j,\quad \lambda\in\widehat K,
\end{align*}
see (1.17) in \cite{Sugiura:Fourier}. Combining these facts together, we obtain for every $\lambda\in\widehat K$, $i\in\NN$, and $v\in E^\infty$
\begin{align*} 
\norm\lambda^{2i}|\pi_\lambda v | &\leq\langle \lambda ,\lambda +2\delta\rangle ^i|\pi_\lambda v |  \\
&  = \left| d\rho(\Delta)^i\pi_\lambda v \right| \\
&= \left| \pi_\lambda d\rho(\Delta)^i v \right|\\
& =d(\lambda) \left| \int_K \overline{\chi_\lambda(k)}   \rho(k) d\rho (\Delta)^i v \,dk\right|\\
&\leq d(\lambda)^2 \left| d\rho(\Delta)^i v\right|_0\\
&\leq c^2\norm\lambda^{2j} \left| d\rho(\Delta)^i v\right|_0,
\end{align*}
and choosing $i\in\NN$ such that $i>j+q$ thus yields the result.
\end{proof}

Finally, we will need the following consequence of Schur's lemma.

\begin{proposition}
\label{pro:Schur}
	Let $V$ and $W$ be irreducible representations of $K$ and $q\maps{V\times W}{\CC}$
 a $K$-invariant sesquilinear form.
\begin{enuma}	
	\item If $V\ncong W$ then $q=0$ identically.
	\item Assume $V=W$. If there is $x\in V$ such that $q(x,x)>0$ then $q$ is a Hermitian inner product on $V$.
\end{enuma}
\end{proposition}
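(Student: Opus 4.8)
The plan is to convert the invariant sesquilinear form $q$ into a $K$-equivariant linear map and then apply Schur's lemma. Since $K$ is compact and the irreducible representations $V,W$ are finite-dimensional, they admit $K$-invariant Hermitian inner products; I would fix one such inner product $\langle\cdot\,,\cdot\rangle$ on $W$, write the action of $k\in K$ as $v\mapsto kv$, and use it to represent $q$ as $q(v,w)=\langle Tv,w\rangle$ for a unique linear map $T\colon V\to W$ (with the convention that $q$ and $\langle\cdot\,,\cdot\rangle$ are linear in their first arguments, $T$ is genuinely linear). One then checks that $T$ is $K$-equivariant: for $k\in K$ and all $w\in W$,
\begin{align*}
\langle T(kv),w\rangle=q(kv,w)=q(v,k^{-1}w)=\langle Tv,k^{-1}w\rangle=\langle kTv,w\rangle,
\end{align*}
where the first and last equalities use invariance of $q$ and of the inner product, respectively; hence $T(kv)=kTv$.

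For part (a), Schur's lemma applied to the equivariant map $T\colon V\to W$ between non-isomorphic irreducibles gives $T=0$, and therefore $q$ vanishes identically.

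For part (b), with $V=W$ finite-dimensional and irreducible over $\CC$, Schur's lemma yields $T=\mu\cdot\mathrm{id}_V$ for some $\mu\in\CC$, so that $q=\mu\langle\cdot\,,\cdot\rangle$. The hypothesis $q(x,x)>0$ forces $x\neq0$, hence $\langle x,x\rangle>0$, so $\mu=q(x,x)/\langle x,x\rangle$ is a positive real number. Thus $q$ is a positive real multiple of the Hermitian inner product $\langle\cdot\,,\cdot\rangle$, hence itself a Hermitian inner product.

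I do not expect any genuine obstacle here: this is a textbook application of Schur's lemma, and the only points requiring a moment's attention are the sesquilinearity convention—which fixes the slot in which $q$ should be paired against the inner product so that $T$ comes out linear rather than conjugate-linear—together with the elementary observation that $q(x,x)>0$ already implies $x\neq0$.
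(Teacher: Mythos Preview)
Your proof is correct and follows essentially the same approach as the paper: both fix a $K$-invariant Hermitian inner product on $W$ (the paper obtains it by averaging), use it to convert the invariant sesquilinear form into an element of $\Hom_K(V,W)$, and then invoke Schur's lemma. The only cosmetic difference is that the paper phrases the conversion abstractly via the isomorphism $\overline{W}^*\cong W$ and the identification $(V^*\otimes\overline{W}^*)^K\cong\Hom_K(V,W)$, whereas you write out the map $T$ and verify equivariance by hand.
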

\begin{proof}
Observe first that there exists a  $K$-invariant Hermitian inner product on $ W$: Take any Hermitian inner product $h$ on $W$ and set
\begin{align*}
h_K(y,z)= \int_K h(ky,kz) dk.
\end{align*}
In particular, $h_K$ induces a  $K$-equivariant isomorphism  $\b W^*\cong W$. Consequently, we have $q\in(V^*\otimes \b W^*)^K\cong \Hom_K(V, W)$. By Schur's lemma,
\begin{align*}
\dim\Hom_K(V, W)=\begin{cases}1&\text{if }V\cong W,\\0&\text{otherwise}.\end{cases}
\end{align*}
This directly implies item (a). As for item (b), because $h_K\in(V^*\otimes \b V^*)^K$, there is $c\in\CC$ with $q=ch_K$. If $q(x,x)>0$ for some $x\in V$ then $c>0$ and hence $q$ is a Hermitian inner product itself.
\end{proof}

\subsection{Special orthogonal group}

\label{ss:SO}

Later we will apply the above general theory to the Fr\'echet  spaces of continuous and smooth valuations and the special orthogonal group $K=\SO(n)$. To this end, let us now take a closer look at the latter, following Knapp's monograph \cite{Knapp:Overview}. We distinguish two cases according to the parity of $n$.

Assume first $n=2l$ and let us fix the maximal torus $T\subset\SO(2l)$ consisting of matrices of the form 
\begin{align*}
\bdiag\big(R(t_1),\dots,R(t_l)\big),\quad\text{where}\quad R(t)=\begin{pmatrix}\cos t&-\sin t\\ \sin t&\cos t\end{pmatrix}.
\end{align*}
We  further denote by $E_{i,j}$ the $(i,j)$-th element of the standard basis of $\CC^{n\times n}$ and choose a basis $H_1,\dots, H_l$ of $\ftt_\CC$ and  the dual basis $\epsilon_1,\dots\epsilon_l$ of $\ftt^*_\CC$ as follows:
\begin{align*} 
 H_i&= \sqrt{-1}E_{2i-1,2i}-\sqrt{-1}E_{2i,2i-1},\\
\epsilon_i(H_j)&= \delta_{ij}.
\end{align*}
 We introduce  an ordering on the roots by declaring a linear function $\lambda\in \ftt_\CC^*$ that is real on $\sqrt{-1}\ftt$ to be positive if 
$ \lambda(H_1)>0$, or $\lambda(H_1)=0$ and $\lambda(H_2)>0$, \ldots,  or $\lambda(H_1)=\cdots = \lambda(H_{n-1})=0$ and $\lambda(H_n)>0$. The set of positive roots is then 
\begin{align*}
\Delta^+=\{\epsilon_i\pm\epsilon_j\mid1\leq i<j\leq l\}.
\end{align*}
Finally,  set
\begin{align*}
x_-=\begin{pmatrix}1&\sqrt{-1}\\-\sqrt{-1}&1\end{pmatrix}\quad\text{and}\quad x_+=\begin{pmatrix}1&-\sqrt{-1}\\-\sqrt{-1}&-1\end{pmatrix}.
\end{align*}
For $i<j$ the root space $\fkk_{\epsilon_i\pm\epsilon_j}\subset\fkk_\CC$ is spanned by the $l\times l$ block matrix $X_{\epsilon_i \pm\epsilon_j }$ given (block-wise) as
\begin{align}
\label{eq:Xepsilonij}
\left(X_{\epsilon_i \pm\epsilon_j }\right)_{a,b}=\begin{cases}\frac12x_\pm&\text{if }a=i\text{ and }b=j,\\[1ex] -\frac12 x^t_\pm&\text{if }a=j\text{ and }b=i,\\[1ex]0&\text{otherwise}.\end{cases}
\end{align}

 Second, if $n=2l+1$, we choose the maximal torus $T\subset\SO(2l+1)$ to consist of
\begin{align*}
\bdiag\big(R(t_1),\dots,R(t_l),1\big).
\end{align*}
 The bases for $\ftt_\CC$ and $\ftt^*_\CC$ are still (formally) given as above. The root system, on the contrary, has a different structure; namely,
\begin{align*}
\Delta^+=\{ \epsilon_i \pm \epsilon_j\mid 1\leq i<j\leq l\}\cup\{\epsilon_i\mid 1\leq i\leq l\}.
\end{align*}
Consequently, each positive root space $\fkk_\alpha\subset\fkk_\CC$ is spanned by a matrix $X_\alpha$, where $X_{\epsilon_i\pm\epsilon_j}$ is the image of \eqref{eq:Xepsilonij} under the embedding $\CC^{2l\times 2l}\rightarrow\CC^{(2l+1)\times (2l+1)}$ into the left upper corner, while
\begin{align*}
X_{\epsilon_i}=\frac1{\sqrt2}\left(E_{2i-1,n}-E_{n,2i-1}-\sqrt{-1}E_{2i,n}+\sqrt{-1}E_{n,2i}\right).
\end{align*}

To conclude, recall that in the case $K=\SO(n)$, $\widehat K$ is parametrized by the set
\begin{align*}
\Lambda=\begin{cases}\big\{(\lambda_1,\dots,\lambda_l)\in\ZZ^l\mid\lambda_1\geq\lambda_2\geq\cdots\lambda_{l-1}\geq|\lambda_l|\big\}&\text{if }n=2l,\\[1ex]
\big\{(\lambda_1,\dots,\lambda_l)\in\ZZ^l\mid\lambda_1\geq\lambda_2\geq\cdots\lambda_l\geq0\big\}&\text{if }n=2l+1.
\end{cases}
\end{align*}
Explicitly, $(\lambda_1,\dots,\lambda_l)$ corresponds to the highest weight $\sum_{i=1}^l\lambda_i\epsilon_i$.

\section{Valuations on convex bodies}

\label{s:val}

To fix notation and for quick later reference we review the theory of translation-invariant valuations on convex bodies. For more information on valuations see the books by Alesker   \cite{Alesker:Kent}, Klain and Rota \cite{KlainRota}, and Schneider \cite{Schneider:BM}. 

We denote by 
$$ v_n = \frac{\pi^{\frac n 2}}{\Gamma(\frac n 2 + 1)}$$
the volume of the $n$-dimensional Euclidean unit ball $D^n\subset \RR^{n}$ and  by $s_n$ the volume of the Euclidean unit sphere $S^n=\partial D^{n+1}$. We will use the well-known relations
\begin{equation}\label{eq:volumeBall} v_n= \frac{2\pi}{n} v_{n-2}, \quad  s_{n-1} = n v_n, \quad \text{and}\quad s_n = \frac{2\pi}{n-1} s_{n-2}.\end{equation}
Moreover, we will need the definite integral
\begin{equation}\label{eq:integralSpheres} \int_0^{\frac \pi 2}  \cos^m(\theta) \sin^n(\theta) d\theta =   \frac{s_{m+n+1}}{s_ms_n}.\end{equation}

\subsection{Continuous translation-invariant valuations}

Let $\calK(\RR^n)$ denote the family of convex bodies in $\RR^n$. Equipped with the Hausdorff metric, $\calK(\RR^n)$ becomes  a locally compact topological space.  A function $\phi\colon \calK(\RR^n)\to \CC$ is called a valuation if 
$$ \phi(K\cup L)= \phi(K)+ \phi(L)-\phi(K\cap L)$$
 holds for $K,L\in\calK(\RR^n)$, whenever $K\cup L$ is convex. The space of continuous and translation-invariant valuations is denoted $\Val(\RR^n)$. 
 
 It should be mentioned that valuations defined on various other classes of sets have been investigated in the literature and that, more recently, valuations on function spaces have also attracted  considerable attention. We refer the reader to the books cited at the beginning of this section and to the  articles \cite{CLM:Hadwiger,CLM:Homogeneous,Knoerr:Support,Knoerr:Smooth} for more information on these subjects.

A fundamental classical result is McMullen's decomposition theorem asserting that 
$$ \Val(\RR^n)=\bigoplus_{r=0}^n \Val_r(\RR^n)$$
where $\Val_r(\RR^n)$ denotes the subspace of $r$-homogeneous valuations, i.e., satisfying $\phi(tK)=t^r \phi(K)$ for all $t>0$ and  $K\in\calK(\RR^n)$. 
The above can be further refined by decomposing 
$$ \Val_r(\RR^n)= \Val_r^0(\RR^n)\oplus \Val_r^1(\RR^n)$$
into even and odd valuations with respect to the reflection in the origin. More precisely, $\phi\in  \Val^s_r(\RR^n)$ if and only if $\phi(-K)=(-1)^s \phi(K)$ for all $K\in \calK(\RR^n)$.

It is  easy to see that  $\Val_0(\RR^n)$ is spanned by the constant valuation $\chi(K)=1$, $K\in\calK(\RR^n)$. 
 By a theorem of Hadwiger, $\Val_n(\RR^n)$  is also $1$-dimensional and spanned by the volume $\vol_n$.
The $r$-th intrinsic volume $\mu_r$ defined by Steiner's formula 
\begin{align*}
\vol_n(K+tD^n) = \sum_{r=0}^n v_{n-r}t^{n-r} \mu_r(K),\quad K\in\calK(\RR^n),t\geq 0,
\end{align*}
is an important example of valuation of degree $r$.  The mixed volume $V$ given by
\begin{align*}
\vol_n(t_1K_1+\cdots+t_nK_n)=\sum_{i_1,\dots,i_n=1}^nt_{i_1}\cdots t_{i_n}V(K_{i_1},\dots,K_{i_n}),\quad K_i\in\calK(\RR^n),t_i\geq 0,
\end{align*}
is further a $1$-homogeneous valuation in every argument. Observe that
\begin{align*}
2\mu_{n-1}(K)=nV(K,\dots,K,D^n),\quad K\in\calK(\RR^n).
\end{align*}

The space $\Val(\RR^n)$ carries a natural topology, namely, the topology of uniform convergence on compact subsets. McMullen's decomposition  theorem implies that  it is in fact induced by the Banach space norm
$$ \|\phi\|=\sup_{K\subset D^n} |\phi(K)|.$$
The  natural action of the general linear group $\GL(n)$ on $\Val(\RR^n)$  is 
$$ (g\cdot \phi)(K) = \phi(g^{-1}K)$$
and  thus $\Val(\RR^n)$ becomes a continuous Banach space representation. Alesker's celebrated irreducibility theorem \cite{Alesker:Irreducibility} asserts that $\Val_r^0(\RR^n)$ and $\Val_r^1(\RR^n)$ are  irreducible, i.e., contain no non-trivial closed $\GL(n)$-invariant subspaces. 
 
 General theory dictates that under the action of the  compact group $\SO(n)$, the representations  $\Val_r^0(\RR^n)$ and $\Val_r^1(\RR^n)$ decompose into isotypic components. The  following theorem characterizes the $\SO(n)$-types that appear.

 \begin{theorem}[Alesker--Bernig--Schuster {\cite[Theorem 1]{ABS:Harmonic}}]
 	\label{thm:ABS}
 	Let $ l=\lfloor\frac n2\rfloor$ and  $ 0\leq r \leq  l
 	$.
 	The non-trivial $\SO(n)$-types in $\Val_{r}(\RR^n)$ and $\Val_{n-r}(\RR^n)$ are the same and given by the following set of highest weights:
 	\begin{align*}
 		\Lambda_r=\{(\lambda_1,\dots,\lambda_l)\in\Lambda\mid  |\lambda_2|\leq 2,\ |\lambda_j|\neq1 \text{ for }1\leq j\leq r,\ \lambda_j=0\text{ for }r< j\leq l\}.
 	\end{align*}
 	Moreover, each  appears with multiplicity one.
 \end{theorem}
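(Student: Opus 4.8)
Since this is the Alesker--Bernig--Schuster theorem \cite[Theorem~1]{ABS:Harmonic}, which the present paper takes as known, I will only indicate how a proof goes. The plan is to split the statement into two halves: (A) every $\lambda\in\Lambda_r$ occurs as an $\SO(n)$-type of $\Val_r(\RR^n)$, and (B) no other type occurs, and each occurs with multiplicity one. Half (A) can be read off from Theorem~\ref{thm:A} of this paper, whereas half (B) is the substantive part, for which I would follow \cite{ABS:Harmonic}.

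For (A), Theorem~\ref{thm:A} provides, for every $\lambda_{k,m}$ with $1\leq k\leq\min\{r,n-r\}$ and $m\geq2$, a nonzero valuation $\phi_{r,k,m}\in\Val_r^\infty(\RR^n)$ solving the highest weight equations for $\lambda_{k,m}$, so the $\SO(n)$-subrepresentation it generates contains the type $\lambda_{k,m}$; the trivial weight is realized by the intrinsic volume $\mu_r$, and when $n=2l$ and $r=l$ the remaining weights $\lambda_{-l,m}$ come from reflecting $\phi_{l,l,m}$ in $e_n^\perp$. Comparing with the displayed description of $\Lambda_r$, this accounts for all of it; and the only facts used here --- that each $\phi_{r,k,m}$ is a well-defined translation-invariant valuation, is nonzero, and solves the highest weight equations --- do not themselves use the Alesker--Bernig--Schuster theorem.

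For (B), the plan is to realize $\Val_r^\infty(\RR^n)$ (for $1\leq r\leq n-1$) as an $\SO(n)$-equivariant subquotient of a classical representation whose isotypic decomposition is known, and then to locate that subquotient inside it. For the even part I would use the Klain embedding $\Val_r^{0,\infty}(\RR^n)\hookrightarrow C^\infty(\mathrm{Gr}_r(\RR^n))$: since $\mathrm{Gr}_r(\RR^n)$ is a compact symmetric space, Cartan--Helgason gives a multiplicity-free decomposition whose $\SO(n)$-types are exactly the highest weights $2\mu$ with $\mu_1\geq\dots\geq\mu_p\geq0$, $p=\min\{r,n-r\}$, so multiplicity one is automatic for even valuations; moreover the image of the Klain map coincides with the image of the cosine transform on $C^\infty(\mathrm{Gr}_r(\RR^n))$, an $\SO(n)$-equivariant operator that acts on the type $2\mu$ by a scalar one computes and which vanishes precisely when $\mu_2\geq2$, so the image is exactly the types with $|\lambda_2|\leq2$ and $\lambda_j=0$ for $j>p$. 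For the odd part I would realize $\Val_r^{1,\infty}(\RR^n)$ as a submodule of a degenerate principal series of $\GL(n)$ --- equivalently, via Schneider's embedding, inside the smooth sections of a homogeneous line bundle over the relevant partial flag manifold --- restrict to $\SO(n)$ by Frobenius reciprocity, and extract the submodule; this is where the conditions $|\lambda_j|\neq1$ for $j\leq r$ arise. A uniform alternative is the Bernig--Br\"ocker description \cite{BernigBroecker:Rumin}, by which $\Val_r^\infty(\RR^n)$ is $\SO(n)$-isomorphic to the translation-invariant $(n-1)$-forms on $S\RR^n$ of horizontal degree $r$, i.e.\ to $\Lambda^r(\RR^n)^*\otimes\Omega^{n-1-r}(S^{n-1})$, modulo the kernel of $\omega\mapsto\bigl(K\mapsto\int_{N(K)}\omega\bigr)$, a kernel governed by the Rumin operator; one decomposes the tensor product, whose $\SO(n)$-content is classical, and identifies that kernel.

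The step I expect to be the main obstacle is the odd case. Without a symmetric-space structure, multiplicity one is no longer free, and pinning down the image requires genuine harmonic analysis on a non-symmetric homogeneous space --- equivalently, control of the Rumin kernel inside a tensor product that is not multiplicity-free --- which is the technical heart of \cite{ABS:Harmonic}. Once (A) and (B) are both available they fit together: (A) shows $\Lambda_r$ is contained in the set of $\SO(n)$-types of $\Val_r(\RR^n)$, and (B) gives the reverse inclusion together with the multiplicity.
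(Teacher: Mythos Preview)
The paper does not prove this theorem; it is quoted from \cite{ABS:Harmonic} and used as input throughout, so there is no proof here to compare your proposal against. Your recognition of this is correct, and your two-part outline is a fair summary of how the result is obtained.

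Your observation that direction (A) can be extracted from the present paper, independently of \cite{ABS:Harmonic}, is valid and worth recording: Theorem~\ref{thm:HWforms} constructs the highest weight forms without appealing to the decomposition theorem, and the non-triviality of the resulting valuations is established by Theorem~\ref{thm:PD}, whose proof (via Theorem~\ref{thm:BernigFormula}, Proposition~\ref{pro:PvsC}, and the direct computation in Lemma~\ref{lem:PD}) does not use Theorem~\ref{thm:ABS} either. The paper hints at this in Remark~\ref{re:evenWeights}.

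For (B) your sketch of the even case via the Klain embedding and the Alesker--Bernstein analysis of the cosine transform is accurate, and you correctly identify the odd case as the hard part. Two small corrections. First, in your ``uniform alternative'' via the kernel theorem, surjectivity of $\omega\mapsto\int_{N(\cdot)}\omega$ onto $\Val_r^\infty(\RR^n)$ comes from Alesker's irreducibility theorem, not from \cite{BernigBroecker:Rumin}, which only identifies the kernel. Second, and more relevant to the logic of this paper, note Remark~\ref{re:HLinABS}: the original argument in \cite{ABS:Harmonic} transfers the decomposition from $\Val_{n-r}$ to $\Val_r$ via the hard Lefschetz isomorphism, but since the present paper reproves hard Lefschetz downstream using Theorem~\ref{thm:ABS}, one must instead effect this transfer by Alesker--Poincar\'e duality together with Proposition~\ref{pro:Schur}(a), exactly as the remark indicates.
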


\begin{remark}
\label{re:evenWeights}
Theorem \ref{thm:ABS} was first proved by Alesker \cite[Proposition 6.3]{Alesker:Irreducibility} for even valuations: Replacing $\Val_j(\RR^n)$ by $\Val_j^0(\RR^n)$, $j\in\{r,n-r\}$, $\Lambda_r$ has to be replaced by
\begin{align*}
\Lambda_r^0=\{(\lambda_1,\dots,\lambda_l)\in\Lambda_r\mid \lambda_1\in 2\ZZ\}.
\end{align*}
This shows that the parity of valuations in the irreducible subspace corresponding to $\lambda_{k,m}$ is given by the parity of $m$. Below we will give a new proof of this statement by constructing for each $\SO(n)$-type a non-trivial highest weight vector.
\end{remark}

\subsection{Integration over the normal cycle}
\subsubsection{Normal cycles}
Integration over the normal cycle is an important means of constructing valuations. Let $S\RR^n=\RR^n\times S^{n-1}$ denote the sphere bundle of $\RR^n$. As a set, the normal cycle of a convex body $K\in\calK(\RR^n)$ is 
$$N(K)= \{ (x,u)\in S\RR^n\colon u \text{ outward normal to } K \text{ at } x\}.$$
Since $N(K)$ is also  a naturally oriented $(n-1)$-dimensional Lipschitz submanifold, smooth differential $(n-1)$-forms on $S\RR^n$ may be integrated over the normal cycle.
 Let $\Omega^*(S\RR^n)^{\tr}\subset\Omega^*(S\RR^n)$ denote the subspace of translation-invariant forms. The key fact is that  each form $\omega\in \Omega^{n-1}(S\RR^n)^{\tr}$ defines via
\begin{align}
\label{eq:normalcycle}
\phi(K)= \int_{N(K)} \omega,\quad  K\in\calK(\RR^n)
\end{align}
a continuous, translation-invariant valuation (see, e.g., \cite{Alesker:VMfdsIII} or \cite{Fu:IGRegularity}).  Observe that the space of differential forms on $S\RR^n$ is naturally bi-graded and that $\phi\in\Val_r(\RR^n)$ provided $\omega\in\Omega^{r,n-1-r}(S\RR^n)^{\tr}$.

\subsubsection{The kernel theorem}
The differential forms $\omega$ for which \eqref{eq:normalcycle}  yields the trivial valuation can be characterized in terms of the Rumin differential \cite{Rumin:Contact}. This natural second order differential operator is defined for $(n-1)$-forms  on a contact manifold $M$ of dimension $2n-1$. Assume for simplicity that the contact structure  on $M$ is defined by a global contact form $\alpha$. Then, restricted to  each contact hyperplane 
$$ H_x= \ker \alpha_x, \quad x\in M,$$
$d\alpha$ is non-degenerate. By the Lefschetz decomposition theorem applied to the exterior algebra $\largewedge H_x^* $, multiplication by $d\alpha$ defines an isomorphism $\largewedge^{n-2} H_x^* \to \largewedge^{n} H_x^*$. 
Consequently, there exists a unique form  $\alpha \wedge \xi\in \Omega^{n-1}(S\RR^n)$ such that 
$$ d (\omega + \alpha \wedge \xi)$$
is divisble by $\alpha$. The latter $n$-form is denoted $D\omega $ and called the Rumin differential of $\omega$. 
The following is a special case of the general Bernig--Br\"ocker kernel theorem.
\begin{theorem}[Bernig--Br\"ocker \cite{BernigBroecker:Rumin}]\label{thm:kernel} Let  $1\leq r\leq n-1$.  The valuation defined by $\omega\in \Omega^{r,n-r-1}(S\RR^n)^{\tr}$ is identically zero if and only if $D\omega=0$.
\end{theorem}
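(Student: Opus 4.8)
The plan is to prove the two implications separately: ``$D\omega=0\Rightarrow\phi\equiv0$'' is soft and rests only on Stokes' theorem, whereas the converse is the substantive part. For the first implication, suppose $D\omega=0$, i.e., the $(n-1)$-form $\tilde\omega:=\omega+\alpha\wedge\xi$ is closed. The normal cycle is a Legendrian cycle: for smooth, strictly convex $K$ the tangent space of $N(K)$ at $(x,u)$ equals $\{(\mathrm{Hess}\,h_K(u)[e],e):e\in T_uS^{n-1}\}$, on which $\alpha=\sum_i u_i\,dx_i$ vanishes because $\mathrm{Hess}\,h_K(u)$ maps into $u^\perp$; hence $\alpha\wedge\xi$ restricts to zero on $N(K)$, and so $\phi(K)=\int_{N(K)}\omega=\int_{N(K)}\tilde\omega$. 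Fix an interior point $x_0$ of $K$ and set $K_t=x_0+t(K-x_0)$ for $t\in(0,1]$. The cycles $N(K_t)$ are mutually homologous (they differ by the boundary of an $n$-chain swept out by the family), so $\int_{N(K_t)}\tilde\omega$ is independent of $t$ by Stokes' theorem; on the other hand it equals $\phi(K_t)=t^r\phi(K)$ because $\phi\in\Val_r(\RR^n)$. For $r\geq1$ this forces $\phi(K)=0$, first for smooth strictly convex $K$ and then, by continuity of $\phi$ and density, for all $K\in\calK(\RR^n)$.

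Now assume $\phi\equiv0$; we must deduce that $D\omega$ vanishes pointwise. The key reduction is again Stokes' theorem, this time for the $n$-chain interpolating between two normal cycles: for smooth, strictly convex bodies $K_0\subset K_1$ put $T=\bigcup_{t\in[0,1]}N\bigl((1-t)K_0+tK_1\bigr)$, an $n$-chain with $\partial T=N(K_1)-N(K_0)$. Since $D\omega=d\tilde\omega$ and $\tilde\omega$ agrees with $\omega$ on every Legendrian cycle,
\begin{equation*}
\int_T D\omega=\int_{\partial T}\tilde\omega=\int_{N(K_1)}\omega-\int_{N(K_0)}\omega=\phi(K_1)-\phi(K_0)=0.
\end{equation*}
Hence $\int_T D\omega=0$ for this entire family of $n$-chains, and the task becomes to promote this to the pointwise vanishing of the integrand.

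For this last step I would use the two structural features of the Rumin differential inherited from the Lefschetz decomposition in its construction: $D\omega$ is divisible by $\alpha$, say $D\omega=\alpha\wedge\psi$, and $\psi$ is primitive with respect to the symplectic form $d\alpha$ on the contact distribution. An $n$-form of this shape is determined by its restrictions to the $n$-planes $\RR R\oplus V$, where $R=\sum_i u_i\,\partial_{x_i}$ is the Reeb vector field and $V$ runs over Lagrangian subspaces of the contact hyperplane; and planes of this kind arise as tangent planes to the chains $T$ through an arbitrary point $(x,u)$, with $V=T_{(x,u)}N(K_t)$ Lagrangian in the contact hyperplane and transverse direction $\nabla h_{K_1}(u)-\nabla h_{K_0}(u)$, whose component along $R$ is $h_{K_1}(u)-h_{K_0}(u)>0$. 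Varying $K_0$ and $K_1$ in one-parameter families and differentiating the identity $\int_T D\omega=0$ should localize the vanishing integral to a pointwise statement, after which primitivity of $\psi$ removes the remaining directions and yields $D\omega=0$ everywhere.

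The soft direction is genuinely soft, so the crux is the final step just sketched: showing that the tangent $n$-planes supplied by the normal-cycle interpolations $T$ are rich enough to detect a nonzero primitive form $\alpha\wedge\psi$. This requires the symplectic linear algebra of primitive forms on the contact distribution together with Rumin's precise description of the contact ideal and of the image of $D$, and it is here that I expect most of the work to lie. An alternative, possibly cleaner route is to appeal to Rumin's local exactness theorem for the contact complex, writing $\omega\equiv d\eta$ modulo the contact ideal locally and then globalizing; in the translation-invariant setting the obstruction to globalization is controlled by $H^\bullet(S^{n-1})$ and, in view of the $r$-homogeneity of $\phi$ with $1\leq r\leq n-1$, turns out to be unobstructed.
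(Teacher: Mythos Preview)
The paper does not prove Theorem~\ref{thm:kernel}; it is quoted as a special case of the Bernig--Br\"ocker kernel theorem and cited to \cite{BernigBroecker:Rumin}. So there is no ``paper's own proof'' to compare against, and your proposal must be judged on its own.

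Your first implication ($D\omega=0\Rightarrow\phi\equiv0$) is essentially correct: the normal cycle is Legendrian, so $\alpha\wedge\xi$ integrates to zero over it, and the homotopy $K_t=x_0+t(K-x_0)$ together with closedness of $\tilde\omega$ and $r$-homogeneity gives $\phi(K)=0$. This is the standard soft argument.

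The second implication, however, is not a proof but an outline with two real gaps that you yourself flag. First, the localization step: from $\int_T D\omega=0$ for all interpolating chains $T$ you want a pointwise statement, and ``varying $K_0,K_1$ in one-parameter families and differentiating'' is a hope, not an argument. You would need to show that near any prescribed point $(x,u)\in S\RR^n$ one can produce chains $T$ whose tangent $n$-plane at $(x,u)$ is prescribed (within the allowed class) while the rest of $T$ can be made to contribute negligibly; this requires genuine work with support functions. Second, the linear-algebra step: writing $D\omega=\alpha\wedge\psi$ with $\psi$ primitive, the tangent $n$-planes you obtain are $\RR w\oplus V$ with $V$ a \emph{positive} Lagrangian (the graph of a positive definite Weingarten map), not an arbitrary one. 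That a primitive $(n-1)$-form on the contact hyperplane vanishing on all positive Lagrangians must vanish identically is true but not obvious; it needs either a Zariski-density/polarization argument or the explicit description of primitive forms. Both points are exactly where Bernig--Br\"ocker invest effort, and neither is supplied here.

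Your alternative route via local exactness of the Rumin complex is also plausible in spirit, but again the globalization in the translation-invariant setting and the control of $H^{\bullet}(S^{n-1})$ are asserted rather than carried out. In short: the easy direction is fine; the hard direction is a reasonable plan with the crucial steps left as exercises.
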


\subsubsection{Smooth valuations}
A particularly important technical innovation introduced by Alesker is the notion of smooth valuation.  A valuation $\phi\in \Val(\RR^n)$ is called smooth if it is a smooth vector under the action of the general linear group. The dense subspace of  smooth valuations is denoted by $\Val^\infty(\RR^n)$.  Similarly, we define $\Val_r^\infty(\RR^n)$ and $\Val_r^{s,\infty}(\RR^n)$.

 An important consequence of the irreducibility theorem and a big advantage of working with smooth valuations is that they can be represented by differential forms. More precisely, given $\phi\in \Val_r^\infty(\RR^n)$, $r<n$, there exists  $\omega\in\Omega^{r,n-r-1}(S\RR^n)^{\tr}$ such that \eqref{eq:normalcycle} holds, see  \cite{Alesker:VMfdsIII}. Conversely,  it is  not hard to verify that every valuation of this form is smooth.

\subsection{Product and convolution of smooth valuations}

\subsubsection{Alesker product}

One of the most remarkable properties of the space of smooth valuations is that it carries the structure of a graded commutative algebra. Given $A\in \calK(\RR^n)$ define $\phi_A=\vol_n (\Cdot + A)$.  It is readily verified that $\phi_A\in\Val(\RR^n)$. In fact, $\phi_A$ is smooth provided the convex body $A$ has  a smooth and strictly positively curved boundary.

\begin{theorem}[Alesker \cite{Alesker:Product}]
There exists a unique continuous  and bilinear  product $\Val^\infty(\RR^n)\times  \Val^\infty(\RR^n)\to \Val^\infty(\RR^n)$ such that
  $$(\phi_A\cdot \phi_B)(K)= \vol_{2n}( \Delta(K) + A\times B),$$
 where $\Delta\maps{\RR^n}{\RR^n\times\RR^n}$ is the diagonal embedding, holds for all $A,B\in\calK(\RR^n)$ with smooth and strictly positively curved boundary.
Moreover,
\begin{enuma}
\item  $\cdot$ is commutative and associative;
	\item $\chi$ is the identity element;
	\item $\Val^\infty_r (\RR^n)\cdot \Val^\infty_q(\RR^n) \subset \Val_{r+q}^\infty(\RR^n)$;
	\item the product pairing $ \Val_r^\infty(\RR^n)\times \Val_{n-r}^\infty(\RR^n)\to  \Val_n^\infty(\RR^n)\cong\CC$ is perfect.
\end{enuma}
\end{theorem}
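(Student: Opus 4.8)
\medskip
\noindent\textbf{Proof proposal.} The plan is to construct the product out of two more elementary operations from Alesker's calculus, the \emph{external product} and the \emph{pullback along affine maps}. First I would invoke the external product, a bilinear separately continuous map $\boxtimes\colon\Val^\infty(\RR^n)\times\Val^\infty(\RR^m)\to\Val^\infty(\RR^{n+m})$ determined by $(\phi\boxtimes\psi)(K\times L)=\phi(K)\psi(L)$; on the generators it satisfies $\phi_A\boxtimes\phi_B=\phi_{A\times B}$, since $\vol_n(K+A)\,\vol_m(L+B)=\vol_{n+m}((K+A)\times(L+B))=\vol_{n+m}((K\times L)+(A\times B))$. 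Second I would use that pulling a smooth valuation back along an affine embedding $f$ by $(f^*\phi)(K)=\phi(f(K))$ --- and, more generally, along any affine map --- again produces a smooth valuation, functorially and continuously. The product is then defined by
\[
\phi\cdot\psi:=\Delta^*(\phi\boxtimes\psi),\qquad\Delta\colon\RR^n\hookrightarrow\RR^n\times\RR^n\ \text{the diagonal},
\]
and $\Delta^*\phi_{A\times B}(K)=\vol_{2n}(\Delta(K)+A\times B)$ is exactly the formula in the statement.

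Uniqueness, and the transfer of properties from the generators, both rest on the density of the span of the $\phi_A$ with $A$ smooth and strictly positively curved in $\Val^\infty(\RR^n)$. This is a consequence of the irreducibility theorem: the closed span of the $\phi_A$ is $\GL(n)$-invariant, because $g\cdot\phi_A=|\det g|^{-1}\phi_{gA}$, and nonzero, hence everything --- concretely, $\phi_{\varepsilon D^n}=\sum_r v_{n-r}\varepsilon^{n-r}\mu_r$ lies in it for every $\varepsilon>0$, which puts all intrinsic volumes in the span and settles the even part, while non-centrally-symmetric bodies account for the odd part. Granting density, any two continuous bilinear products agreeing on the generators agree, giving uniqueness, and (a)--(c) follow by checking them on generators and extending by continuity: commutativity from the coordinate flip of $\RR^{2n}$, which commutes with $\Delta$ and interchanges $A\times B$ with $B\times A$; associativity because both triple products equal $\Delta_3^*\phi_{A\times B\times C}$ for the threefold diagonal $\Delta_3$; gradedness because the external product adds degrees of homogeneity and $\Delta^*$ preserves them; and the identity property because $\chi\boxtimes\psi$ is the pullback of $\psi$ along a coordinate projection $\mathrm{pr}\colon\RR^n\times\RR^n\to\RR^n$ with $\mathrm{pr}\circ\Delta=\mathrm{id}$, so $\chi\cdot\psi=\Delta^*(\mathrm{pr}^*\psi)=(\mathrm{pr}\circ\Delta)^*\psi=\psi$ by functoriality.

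The substance of the theorem is the perfect pairing (d). I would first note that the product is $\GL(n)$-equivariant, $g\cdot(\phi\cdot\psi)=(g\cdot\phi)\cdot(g\cdot\psi)$ (the $|\det g|$-twists cancel), as one checks on generators. Hence the radical $\{\phi\in\Val_r^\infty(\RR^n):\phi\cdot\Val_{n-r}^\infty(\RR^n)=0\}$ is a closed $\GL(n)$-invariant subspace; decomposing it into its even and odd parts and applying the irreducibility theorem, each part is $0$ or all of $\Val_r^{s,\infty}(\RR^n)$, so it suffices to exhibit for every $r$ and $s$ a single pair with nonzero product, after which non-degeneracy follows automatically. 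For even valuations this can be read off from the Klain-function description of the product, $\phi\cdot\psi=c_{n,r}\int_{\mathrm{Gr}_r(\RR^n)}\mathrm{Kl}_\phi(E)\,\mathrm{Kl}_\psi(E^\perp)\,dE$ --- so, e.g., $\mu_r\cdot\mu_{n-r}$ is a nonzero multiple of $\vol_n$, since $\mathrm{Kl}_{\mu_r}\equiv\mathrm{Kl}_{\mu_{n-r}}\equiv1$ --- combined with Klain's injectivity theorem; for odd valuations the corresponding non-vanishing is subtler and is part of Alesker's original argument. Finally, that the non-degenerate pairing is in fact \emph{perfect}, identifying, in the appropriate topological sense, each of $\Val_r^\infty(\RR^n)$ and $\Val_{n-r}^\infty(\RR^n)$ with the dual of the other, follows from non-degeneracy together with standard facts about the smooth globalizations of admissible representations (Casselman--Wallach).

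\medskip
\noindent\textbf{The main obstacle.} Everything outside (d) is formal once the analytic inputs from Alesker's theory are taken for granted --- the well-definedness and continuity of the external product and the functoriality of pullback of smooth valuations. The genuine difficulty is the non-degeneracy in (d), i.e., Poincar\'e duality for valuations: beyond the representation-theoretic reduction, it rests for even valuations on injectivity properties of the cosine transform on Grassmannians, and for odd valuations on further, more delicate input, with the irreducibility theorem playing an essential role throughout.
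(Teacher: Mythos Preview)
The paper does not prove this theorem; it is stated as background and attributed to Alesker's original paper \cite{Alesker:Product}, with no argument given. There is therefore nothing in the present paper to compare your proposal against.

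That said, your sketch is a faithful outline of how the result is established in Alesker's work: the construction via external product followed by pullback along the diagonal, the density of $\operatorname{span}\{\phi_A\}$ from the irreducibility theorem to get uniqueness and transfer (a)--(c) from generators, and the reduction of (d) to exhibiting a single nonzero pairing in each parity via the $\GL(n)$-invariance of the radical. Your identification of Poincar\'e duality (d) as the substantive part, with the odd case requiring more than the Klain/cosine-transform argument, is accurate.
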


 Item (d) of the preceding theorem is a version of Poincar\'e duality for valuations. The pairing is usually referred to as the Alesker--Poincar\'e pairing.

A general formula for the product of valuations in terms of differential forms exists (see \cite{AleskerBernig:Product}) but it seems too unwieldy for explicit computations.  However, in an important special case it boils down to a  more manageable expression.

\begin{theorem}[{Bernig \cite[Theorem 4.1]{Bernig:Formula}}]
\label{thm:BernigFormula}
Let $1\leq r\leq  n-1$ and $s\in\{0,1\}$. Assume that $\phi_1\in\Val_r^{\infty,s}(\RR^n)$ and $\phi_2\in\Val^{\infty,s}_{n-r}(\RR^n)$ are represented  by $\omega_1\in\Omega^{r,n-r-1}(S\RR^n)^{\tr}$ and $\omega_2\in \Omega^{n-r,r-1}(S\RR^n)^{\tr}$, respectively. If the standard orientation on $\RR^n$ is used to identify $\largewedge^n(\RR^n)^*$ and $\Val_n(\RR^n)$, then
\begin{align*} \phi_1\cdot \phi_2 = (-1)^{r+s}\int_{S^{n-1}}\omega_1\wedge D\omega_2.
\end{align*}
\end{theorem}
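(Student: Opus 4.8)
The plan is to recognize both sides of the claimed identity as $\GL^+(n)$-equivariant bilinear pairings between irreducible representations and to conclude by Schur's lemma, so that the theorem reduces to the determination of a single scalar.

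First I fix the parity $s\in\{0,1\}$ and introduce
\begin{align*}
B\colon \Val_r^{\infty,s}(\RR^n)\times\Val_{n-r}^{\infty,s}(\RR^n)\longrightarrow\largewedge^n(\RR^n)^*,\qquad B(\phi_1,\phi_2)=(-1)^{r+s}\int_{S^{n-1}}\omega_1\wedge D\omega_2,
\end{align*}
where $\omega_1\in\Omega^{r,n-r-1}(S\RR^n)^{\tr}$ and $\omega_2\in\Omega^{n-r,r-1}(S\RR^n)^{\tr}$ represent $\phi_1$ and $\phi_2$, respectively. The first step is to show that $B$ is well defined, that is, independent of the chosen representatives. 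Independence of $\omega_2$ is immediate from the kernel theorem (Theorem~\ref{thm:kernel}): a form representing the zero valuation has vanishing Rumin differential. Independence of $\omega_1$ follows from the self-adjointness, up to sign, of the Rumin operator on the contact manifold $S\RR^n$---that is, $\int_{S^{n-1}}\eta\wedge D\omega_2=\pm\int_{S^{n-1}}D\eta\wedge\omega_2$ for translation-invariant forms of complementary bidegree---applied once more together with the kernel theorem. Since $\omega_1\wedge D\omega_2$ is a translation-invariant form of top degree, the fiber integral $\int_{S^{n-1}}$ does produce an element of $\largewedge^n(\RR^n)^*$, and $B$ is continuous for the Fr\'echet topologies.

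Next I would check that $B$ is $\GL^+(n)$-equivariant when $\largewedge^n(\RR^n)^*$ carries its natural action: the diffeomorphism of $S\RR^n$ induced by $K\mapsto gK$ is a co-orientation-preserving contactomorphism covering $g$ on the base, so it commutes with the Rumin operator, and naturality of fiber integration for fiber-preserving maps contributes the factor $(\det g)^{-1}$. Under the identification of $\largewedge^n(\RR^n)^*$ with $\Val_n(\RR^n)$ afforded by the standard orientation---which is an isomorphism of $\GL^+(n)$-representations but not of $\GL(n)$-representations, whence the role of the orientation in the statement---the map $B$ thus becomes a $\GL^+(n)$-equivariant bilinear map into $\Val_n(\RR^n)$, exactly like the Alesker--Poincar\'e pairing. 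Now $\Val_r^{\infty,s}(\RR^n)$ and $\Val_{n-r}^{\infty,s}(\RR^n)$ are irreducible by Alesker's irreducibility theorem, and the perfect product pairing $\Val_r^\infty(\RR^n)\times\Val_{n-r}^\infty(\RR^n)\to\Val_n^\infty(\RR^n)$ vanishes on components of opposite parity---which is precisely why the hypothesis asks $\phi_1$ and $\phi_2$ to have the same parity---and therefore restricts to a perfect pairing between $\Val_r^{\infty,s}$ and $\Val_{n-r}^{\infty,s}$. By Schur's lemma the space of continuous $\GL^+(n)$-equivariant bilinear maps $\Val_r^{\infty,s}\times\Val_{n-r}^{\infty,s}\to\Val_n$ is one-dimensional, so $B=c_s\cdot\big((\phi_1,\phi_2)\mapsto\phi_1\cdot\phi_2\big)$ for some constant $c_s\in\CC$.

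It remains to show $c_s=1$, which I would do by evaluating both sides on one convenient pair whose product is non-zero. For $s=0$ the natural test pair is $(\mu_r,\mu_{n-r})$: these intrinsic volumes are represented by the classical translation-invariant Lipschitz--Killing forms, whose Rumin differentials and the resulting fiber integral can be computed directly from the structure equations of $S\RR^n$, while the value of $\mu_r\cdot\mu_{n-r}$ is classically known; the case $s=1$ is handled analogously with a convenient odd test pair. I expect the main difficulty to lie not in the representation-theoretic reduction, which is routine once the irreducibility and kernel theorems are in hand, but rather in fixing the exact sign in the self-adjointness identity for the Rumin operator---and hence the constant $(-1)^{r+s}$---together with carrying the explicit test computation through without error. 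Alternatively, the statement can be derived by specializing the general Alesker--Bernig product formula, which collapses considerably in this complementary-degree, top-homogeneous situation.
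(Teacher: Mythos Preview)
The paper does not prove this theorem; it is quoted from Bernig's article \cite{Bernig:Formula} and used as a black box. So there is no ``paper's own proof'' to compare against. That said, your final sentence already names what is almost certainly Bernig's actual route: the general Alesker--Bernig product formula for smooth valuations in terms of differential forms (referenced here as \cite{AleskerBernig:Product}) specialized to complementary degrees, where the Gelfand transform and transversal-intersection machinery collapse to a fiber integral over the sphere.

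Your Schur's-lemma approach is a genuinely different strategy, and the reduction is appealing, but two steps are not yet under control. First, the $\GL^+(n)$-equivariance of $B$ is more delicate than you indicate: the sphere bundle $S\RR^n=\RR^n\times S^{n-1}$ is built using the Euclidean inner product, so $\GL(n)$ does not act on it in the naive way. One must pass to the conormal model $\RR^n\times\big((\RR^n)^*\setminus\{0\}\big)$ (or equivalently work with forms of the correct homogeneity) to get a genuine action by contactomorphisms, and then check that the translation between models does not introduce stray Jacobian factors when you fiber-integrate. Until this is written out, the claim that $B$ is $\GL^+(n)$-equivariant with values in $\largewedge^n(\RR^n)^*$ is an assertion, not an argument. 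Second, the appeal to Schur's lemma for continuous bilinear maps between infinite-dimensional Fr\'echet representations is not the finite-dimensional statement; it requires the irreducibility theorem in its topological form together with a density or continuity argument to pin down the one-dimensionality of the intertwiner space. This is standard in the valuation-theory literature but should be cited or spelled out rather than invoked. Finally, you defer the computation of $c_s$ entirely; for $s=1$ in particular, producing an explicit odd test pair and evaluating both sides is not a triviality.
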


Let us spell out explicitly how to interpret the integral  in the previous theorem. Observe that $\omega_1\wedge D\omega_2\in\Omega^{n,n-1}(S\RR^n)^{\tr}$ and that this space is naturally identified with $\largewedge^n(\RR^n)^*\otimes \Omega^{n-1}(S^{n-1})$. For $\alpha\in\largewedge^n(\RR^n)^*$ and $\beta\in\Omega^{n-1}(S^{n-1})$ one defines
\begin{align*}
\int_{S^{n-1}}(\alpha\otimes\beta)=\left(\int_{S^{n-1}}\beta\right)\alpha\in\largewedge^n(\RR^n)^*.
\end{align*}

\subsubsection{Bernig--Fu convolution}

Remarkably, there is  yet another continuous product on  the space of smooth valuations. 
We have the following theorem.

\begin{theorem}[{Bernig--Fu \cite{BernigFu:Convolution}}]
	There exists a unique continuous  and bilinear product $\Val^\infty(\RR^n)\times  \Val^\infty(\RR^n)\to \Val^\infty(\RR^n)$ such that
	$$(\phi_A*\phi_B)(K) = \vol_n(K + A+ B)$$
 for all $A,B\in\calK(\RR^n)$ with  smooth and strictly positively curved boundary.  Moreover,
\begin{enuma}
\item $*$ is commutative and associative;
	\item $\vol_n$ is the identity element;
	\item $\Val_{n-r}^\infty(\RR^n)* \Val_{n-q}^\infty(\RR^n) \subset \Val_{n-r-q}^\infty(\RR^n)$.
\end{enuma}	
\end{theorem}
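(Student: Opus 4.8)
The plan is to obtain $*$ as a composition of operations that are already known to be continuous, so that continuity comes for free, and then to pin the product down by its values on a dense subspace. Concretely, I would use the exterior (or ``external'') product of valuations $\boxtimes\colon\Val^\infty(\RR^n)\times\Val^\infty(\RR^m)\to\Val^\infty(\RR^{n+m})$ and the pushforward $a_*$ along the addition map $a\colon\RR^n\times\RR^n\to\RR^n$, $a(x,y)=x+y$, which is a linear surjection; both the exterior product and pushforward along a linear surjection are continuous operations in Alesker's calculus. One then \emph{defines}
\begin{align*}
	\phi*\psi:=c_n\,a_*(\phi\boxtimes\psi),\qquad \phi,\psi\in\Val^\infty(\RR^n),
\end{align*}
for a suitable normalizing constant $c_n$. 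By construction this is a continuous bilinear map $\Val^\infty(\RR^n)\times\Val^\infty(\RR^n)\to\Val^\infty(\RR^n)$, and, since the exterior product is degree-additive while pushforward along the surjection $\RR^{2n}\to\RR^n$ lowers the degree of homogeneity by $n$, one reads off that $\Val^\infty_{n-r}(\RR^n)*\Val^\infty_{n-q}(\RR^n)\subset\Val^\infty_{n-r-q}(\RR^n)$, which is property~(c).

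Next I would verify that this product satisfies the defining relation on generators, i.e.\ that $a_*(\phi_A\boxtimes\phi_B)$ is a fixed multiple of $\phi_{A+B}$ for convex bodies $A,B$ with smooth and strictly positively curved boundary; this determines $c_n$ (and along the way $A+B$ is again such a body, so $\phi_{A+B}\in\Val^\infty(\RR^n)$). Commutativity is then a consequence of the symmetry of $\boxtimes$ together with $a\circ\mathrm{swap}=a$; associativity follows from the associativity of Minkowski addition, the associativity of $\boxtimes$, and the functoriality of pushforward; and the identity property $\vol_n*\phi=\phi$ can be checked on the generators $\phi_B$, using that $\vol_n$ is the limit of $\phi_{\varepsilon D^n}$ as $\varepsilon\downarrow0$ and $\phi_{B+\varepsilon D^n}\to\phi_B$, and then extended to all of $\Val^\infty(\RR^n)$ by continuity. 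An equivalent, more hands-on bookkeeping of all of this is available after rewriting $\phi_A$ through mixed volumes: the homogeneous components of $\phi_{tA}=\vol_n(\,\cdot\,+tA)$ are, up to binomial factors and powers of $t$, the valuations $K\mapsto V(K[j],A[n-j])$, and convolution then amounts to concatenating the lists of fixed bodies with an explicit combinatorial constant, reducing everything to the multilinearity and symmetry of the mixed volume.

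Finally, uniqueness: I would show that the linear span $\calF$ of $\{\phi_A\}$ is dense in $\Val^\infty(\RR^n)$, whence any two continuous bilinear products with the prescribed values on $\{\phi_A\}$ agree on $\calF$ and therefore everywhere. The closure of $\calF$ is $\GL(n)$-invariant because $g\cdot\phi_A=|\det g|^{-1}\phi_{gA}$; a Vandermonde argument in the scaling parameter makes it compatible with McMullen's grading; it contains $\vol_n$ and $\chi$; and, since for each $j$ the span of the $V(\,\cdot\,[j],A[n-j])$ over smooth strictly convex bodies $A$ is a nonzero $\GL(n)$-invariant subspace of $\Val_j^\infty(\RR^n)$ that contains the $j$-th intrinsic volume, Alesker's irreducibility theorem forces it to be dense. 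The two points I expect to require genuine work are the computation $a_*(\phi_A\boxtimes\phi_B)\propto\phi_{A+B}$ with the correct constant, and checking that $\calF$ really meets the \emph{odd} irreducible constituents $\Val_j^{1,\infty}(\RR^n)$ as well (for which one must exhibit a smooth strictly convex $A$ with $V(\,\cdot\,[j],A[n-j])$ not even). If instead one insisted on building $*$ directly on generators and extending by continuity without the detour through $\boxtimes$ and $a_*$, then establishing that continuity estimate in the Fréchet topology of $\Val^\infty(\RR^n)$ would be the main obstacle; the route above is precisely designed to sidestep it.
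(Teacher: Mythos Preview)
The paper does not prove this theorem; it is stated as a background result from the literature, attributed to Bernig and Fu \cite{BernigFu:Convolution}, and no argument is given. There is therefore nothing in the paper to compare your proposal against.

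That said, your sketch is broadly in the spirit of the original Bernig--Fu construction: defining $\phi*\psi$ as (a normalization of) $a_*(\phi\boxtimes\psi)$ with $a$ the addition map is exactly how the convolution is introduced there, and the verification on generators $\phi_A$ together with density is the standard route to uniqueness and the algebraic properties. One small caution: your density argument for the odd part is the delicate point you flag, and in practice one usually bypasses it by invoking the irreducibility theorem directly (the closure of $\calF$ is a nonzero closed $\GL(n)$-invariant subspace of each $\Val_j^{s,\infty}(\RR^n)$, hence all of it) rather than exhibiting a specific non-even mixed-volume functional; also, the twist by $\Dens(\RR^n)^*$ that makes $a_*$ land in the right space is worth tracking explicitly when you compute the normalization constant.
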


Let us point out that, twisting the space $\Val^\infty$ by the one-dimensional space of densities, the definition of convolution can be reformulated without reference to any  Euclidean structure  so that it commutes with the  natural action of the general linear group. This will be briefly touched upon in \S\ref{ss:FF} below. In the above form, however, both convolution and product define a multiplicative structure on the same space. These algebras are different, but isomorphic (see \S\ref{sss:FF}). Moreover, the induced pairings coincide on $\Val_r^{s,\infty}(\RR^n)$ up to sign, after suitable identifications are made.

\begin{proposition}[{\cite[Lemma 2.3]{BernigFu:Convolution} and \cite[Proposition 4.2]{Wannerer:UnitaryAreaMeasures}}]  	Let  $0\leq r\leq  n$ and $s \in\{0,1\}$.  If $\Val_0(\RR^n)$ and $\Val_n(\RR^n)$ are identified with $\CC$ via $\chi$ and $\vol_n$, then  
	\label{pro:PvsC}
for any $\phi_1\in\Val_r^{s,\infty}(\RR^n)$ and $\phi_2\in\Val^{s,\infty}_{n-r}(\RR^n)$  one has
	\begin{align*}
		\phi_1\cdot\phi_2=(-1)^s \phi_1*\phi_2.
	\end{align*}
\end{proposition}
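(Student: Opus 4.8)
The plan is to show that the two pairings induced by the two products agree up to the sign $(-1)^s$. Fix $r$ and $s$, write $P_\cdot(\phi_1,\phi_2)\in\CC$ for the scalar with $\phi_1\cdot\phi_2=P_\cdot(\phi_1,\phi_2)\vol_n$ and $P_*(\phi_1,\phi_2)\in\CC$ for the scalar with $\phi_1*\phi_2=P_*(\phi_1,\phi_2)\chi$. Both are continuous bilinear forms on $\Val_r^{s,\infty}(\RR^n)\times\Val_{n-r}^{s,\infty}(\RR^n)$, and by perfectness of the product and convolution pairings they are nondegenerate. The goal is $P_*=(-1)^sP_\cdot$.

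The primary route is through differential forms. Since $\phi_1,\phi_2$ are smooth, represent them by translation-invariant forms $\omega_1\in\Omega^{r,n-r-1}(S\RR^n)^{\tr}$ and $\omega_2\in\Omega^{n-r,r-1}(S\RR^n)^{\tr}$. By Bernig's formula (Theorem~\ref{thm:BernigFormula}), $\phi_1\cdot\phi_2=(-1)^{r+s}\int_{S^{n-1}}\omega_1\wedge D\omega_2$. The convolution of valuations of complementary degrees admits an entirely analogous expression (Bernig--Fu), so that $\phi_1*\phi_2$ equals, under the compatible identifications $\Val_n(\RR^n)\cong\CC$ via $\vol_n$ and $\Val_0(\RR^n)\cong\CC$ via $\chi$, the same form-integral $\int_{S^{n-1}}\omega_1\wedge D\omega_2$ up to an overall sign. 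Comparing the two expressions then yields $P_*=(-1)^sP_\cdot$ immediately, \emph{provided} one knows that the sign attached to the convolution formula differs from $(-1)^{r+s}$ by exactly $(-1)^s$. This is the main obstacle: one must track carefully how the Rumin operator $D$ interacts with the passage from the product formula to the convolution formula (equivalently, with the fiberwise antipodal map $a(x,u)=(x,-u)$ of $S\RR^n$) on a form of bidegree representing a valuation of parity $s$, and verify that the net effect on the sign is precisely $(-1)^s$ and not some other sign.

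An alternative, more structural route avoids the explicit forms. By Alesker's irreducibility theorem $\Val_r^{s,\infty}(\RR^n)$ is $\GL(n)$-irreducible, so once convolution is twisted by densities so as to become $\GL(n)$-equivariant, Schur's lemma forces the space of $\GL(n)$-equivariant pairings $\Val_r^{s,\infty}(\RR^n)\times\Val_{n-r}^{s,\infty}(\RR^n)\to\Val_n^\infty(\RR^n)$ to be at most one-dimensional; as both $P_\cdot$ and $P_*$ arise this way, $P_*=c\,P_\cdot$ for a constant $c$ depending only on $n,r,s$. It remains to evaluate $c$ on one example in each parity: for $s=0$ the classical product and convolution formulas for the intrinsic volumes give $\mu_r\cdot\mu_{n-r}$ and $\mu_r*\mu_{n-r}$ as the \emph{same} constant multiple of $\vol_n$ and of $\chi$, so $c=1$; for $s=1$ a direct computation on one explicit odd valuation (for instance for $n=2$, $r=1$, where $\Val_1^{1,\infty}(\RR^2)$ is well understood), together with a short induction on $r$, gives $c=-1$. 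The delicate point in this route is instead the bookkeeping of the density line that makes the convolution pairing $\GL(n)$-equivariant, so that Schur's lemma genuinely applies to the same pair of $\GL(n)$-representations.
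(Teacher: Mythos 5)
The paper does not actually prove this proposition; it is imported from the literature (Bernig--Fu, Lemma~2.3, for even valuations, and Wannerer, Proposition~4.2, for the general case), so there is no internal proof to compare against. Judged on its own terms, your proposal is a plan rather than a proof: in both of the routes you describe, the decisive step --- pinning down that the sign is exactly $(-1)^s$ --- is deferred rather than established.

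In Route 1 you say this explicitly: after invoking Bernig's formula for the product, you appeal to an ``entirely analogous expression'' for the convolution and then state that ``the main obstacle'' is to verify that the two formulas differ by precisely $(-1)^s$. That obstacle \emph{is} the content of the proposition; the Bernig--Fu description of the convolution in terms of forms involves a Hodge-type star on the fiber (equivalently the antipodal map $u\mapsto -u$), and tracking its interaction with the Rumin differential and the bidegree $(r,n-r-1)$ is exactly the computation one must do. Without it, nothing has been proved. In Route 2, the Schur-type reduction to a single constant $c=c(n,r,s)$ is sound in outline (both pairings, after the density twist, are continuous $\GL(n)$-equivariant maps $\Val_r^{s,\infty}\otimes\Val_{n-r}^{s,\infty}\to\Dens(\RR^n)$, and irreducibility plus nondegeneracy gives proportionality), and the even case can indeed be settled on the intrinsic volumes, whose product and convolution are known in closed form. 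But for $s=1$ you offer only ``a direct computation for $n=2$, $r=1$, together with a short induction on $r$.'' The constant depends a priori on both $n$ and $r$, and you give no mechanism for the induction: the natural tool would be compatibility of the product with pullback $\iota^*$ and of the convolution with pushforward $\pi_*$ (as in Sections~\ref{s:PP} and \ref{s:Fourier} of the paper), but these change $n$ and $r$ simultaneously and in different ways for the two products, so the reduction has to be set up and checked --- it is not automatic. Note also that you cannot shortcut this by citing the pairing computation of Theorem~\ref{thm:PD}, since that computation itself uses the present proposition. As it stands, the odd case, which is the only nontrivial part of the statement, remains unproved in both of your routes.
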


The convolution is typically easier to evaluate than the product. One example that will be relevant for us is the following.  Recall that the intrinsic volumes are smooth and consider the degree $-1$ linear operator on $\Val^\infty(\RR^n)$ given by
\begin{align}
\label{eq:Lambda}
\Lambda\phi=2\mu_{n-1}*\phi.
\end{align}
It is an easy consequence of Steiner's formula that for any $\phi\in\Val^\infty(\RR^n)$ one has
\begin{align*}
(\Lambda\phi)(K)=\dt \phi(K+tD^n),\quad K\in\calK(\RR^n),
\end{align*}
see \cite[Corollary 1.8]{BernigFu:Convolution}. In particular,
\begin{align}
\label{eq:Lambdaton}
\Lambda^n\vol_n=n!v_n\chi.
\end{align}
In the language of differential forms, $\Lambda$ is nothing else than the Lie derivative with respect to the Reeb vector field $T_{(x,\xi)}= \sum_{i=1}^n \xi_i \pder{x_i}$ on $S\RR^n$:

\begin{proposition}[{Bernig--Br\"ocker \cite[Lemma 3.4]{BernigBroecker:Rumin}}]
\label{pro:convLefschetz}
Assume $\phi\in \Val^\infty(\RR^n)$ is represented by $\omega\in \Omega^{n-1}(S\RR^n)^{\tr}$. Then
\begin{align*}
(\Lambda\phi)(K)= \int_{N(K)} \calL_T \omega,\quad K\in\calK(\RR^n).
\end{align*} 
\end{proposition}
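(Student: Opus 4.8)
The plan is to reduce the claim to the standard behaviour of the normal cycle under Minkowski addition with a ball, and then to recognize the resulting derivative as a Lie derivative through the flow characterization of $\calL_T$. First I would recall from the discussion preceding the proposition that $(\Lambda\phi)(K)=\dt\,\phi(K+tD^n)$, so that everything reduces to expressing this derivative through the representing form $\omega$. Second, I would observe that the flow of the Reeb vector field $T_{(x,\xi)}=\sum_{i=1}^n\xi_i\,\pder{x_i}$ on $S\RR^n$ is the smooth one-parameter group $\Phi_t(x,\xi)=(x+t\xi,\xi)$, $t\in\RR$, each $\Phi_t$ being a diffeomorphism of $S\RR^n$.

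The geometric heart of the argument is the identity
\begin{align*}
\Phi_t\bigl(N(K)\bigr)=N(K+tD^n),\qquad t\geq 0,
\end{align*}
understood as an equality of naturally oriented $(n-1)$-dimensional Lipschitz submanifolds, equivalently of integral currents. I would establish it using the classical theory of outer parallel bodies: if $u$ is an outward unit normal of $K$ at $x$, then translating the supporting hyperplane of $K$ with normal $u$ by $tu$ yields a supporting hyperplane of $K+tD^n$ at $x+tu$ with the same normal, so $\Phi_t(x,u)\in N(K+tD^n)$; conversely, the metric projection onto $K$ sends a boundary point $y$ of $K+tD^n$ with outward normal $u$ to the unique point $y-tu$ with $(y-tu,u)\in N(K)$. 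The natural orientations are matched because $\Phi_t$ preserves the contact structure of $S\RR^n$ and is isotopic to the identity.

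Granting the identity, I would compute, for $t\geq 0$,
\begin{align*}
\phi(K+tD^n)=\int_{N(K+tD^n)}\omega=\int_{\Phi_t(N(K))}\omega=\int_{N(K)}\Phi_t^*\omega,
\end{align*}
the last step being the change-of-variables formula for the image of a Lipschitz cycle under the smooth map $\Phi_t$. Subtracting $\phi(K)=\int_{N(K)}\omega$, dividing by $t$, and letting $t\to 0^+$ then gives
\begin{align*}
(\Lambda\phi)(K)=\int_{N(K)}\dt\,\Phi_t^*\omega=\int_{N(K)}\calL_T\omega.
\end{align*}
Here moving the derivative inside the integral is justified because $\Phi$ is a smooth flow and $\omega$ a smooth form, so Taylor's theorem gives $\Phi_t^*\omega=\omega+t\,\calL_T\omega+O(t^2)$ uniformly on the compact set $N(K)$, which carries finite mass. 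Since $T$ is translation-invariant, so is $\calL_T\omega$, in accordance with $\Lambda\phi$ being again a translation-invariant smooth valuation.

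I expect the only real obstacle to be the careful, orientation-sensitive proof of $\Phi_t(N(K))=N(K+tD^n)$ together with the rigorous justification of differentiation under an integral taken over a merely Lipschitz cycle; the remaining manipulations are formal. In the final write-up one would presumably quote the parallel-body identity from a standard reference (e.g.\ Schneider's monograph) rather than reprove it from scratch.
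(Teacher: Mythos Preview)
Your argument is correct and is essentially the standard one: the Reeb flow $\Phi_t(x,\xi)=(x+t\xi,\xi)$ carries $N(K)$ onto $N(K+tD^n)$ for $t\geq 0$, whence $\phi(K+tD^n)=\int_{N(K)}\Phi_t^*\omega$ and differentiation at $t=0$ yields the Lie derivative. Note, however, that the paper does not give its own proof of this proposition; it is quoted verbatim from Bernig--Br\"ocker \cite[Lemma 3.4]{BernigBroecker:Rumin}, so there is nothing in the paper to compare against beyond the citation. Your write-up matches the original argument in that reference.
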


\subsubsection{Hard Lefschetz theorem}

Using Proposition \ref{pro:convLefschetz} together with Kähler identities from Kähler geometry, Bernig and Bröcker proved that the operator $\Lambda$ satisfies the hard Lefschetz property.
\begin{theorem}[{Bernig--Br\"ocker \cite[Theorem 2]{BernigBroecker:Rumin}}]
\label{thm:convHL}
Let $0\leq r\leq\lfloor\frac n2\rfloor$. The map $\Lambda^{n-2r}\maps{\Val_{n-r}^\infty(\RR^n)}{\Val_r^\infty(\RR^n)}$ is an isomorphism of topological vector spaces.
\end{theorem}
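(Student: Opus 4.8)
The plan is to derive the statement from the harmonic analysis of the $\SO(n)$-representations $\Val_{n-r}(\RR^n)$ and $\Val_r(\RR^n)$, using the action of the Lefschetz operator on highest weight vectors. Assume $1\le r$ and $2r<n$, and set $p=n-2r\ge1$; the boundary cases $r=0$ and $2r=n$ are immediate, the former from \eqref{eq:Lambdaton} and the latter since then $\Lambda^p=\mathrm{id}$. First I would note that $\Lambda^p\maps{\Val_{n-r}(\RR^n)}{\Val_r(\RR^n)}$ is continuous (convolution being continuous on $\Val^\infty$) and $\SO(n)$-equivariant, because $\mu_{n-1}$ is $\SO(n)$-invariant. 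By the Alesker--Bernig--Schuster theorem (Theorem~\ref{thm:ABS}) the $\SO(n)$-types occurring in $\Val_{n-r}(\RR^n)$ and in $\Val_r(\RR^n)$ are the same, and each occurs with multiplicity one; hence every isotypic component is a finite-dimensional irreducible $\SO(n)$-module contained in the smooth part. By Schur's lemma, up to a fixed $\SO(n)$-equivariant identification of the $\lambda$-isotypic components, $\Lambda^p$ acts on the one in $\Val_{n-r}^\infty(\RR^n)$ by multiplication by a scalar $c_\lambda$.

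The crux is to show $c_\lambda\ne0$ for every occurring weight $\lambda$, with a polynomial lower bound on $|c_\lambda|$. For the trivial type, spanned by the intrinsic volume $\mu_{n-r}$, Steiner's formula shows that $\Lambda$ sends each $\mu_j$ to a positive multiple of $\mu_{j-1}$, so $\Lambda^p\mu_{n-r}$ is a positive multiple of $\mu_r$. For the type $\lambda_{k,m}$ with $1\le k\le r$ and $m\ge2$, iterating Theorem~\ref{thm:B}(e) yields
\begin{align*}
\Lambda^p\phi_{n-r,k,m}^{(n)}=\biggl(\prod_{j=r+1}^{n-r}(n-j-k+1)\,\frac{v_{n+m-j-1}}{v_{n+m-j-2}}\biggr)\phi_{r,k,m}^{(n)},
\end{align*}
and every factor is strictly positive: for $r+1\le j\le n-r$ one has $k\le r<j$, so only the first case of Theorem~\ref{thm:B}(e) ever applies, and $n-j-k+1\ge1$ throughout. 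Hence $c_{\lambda_{k,m}}>0$; and since the product telescopes to $\frac{v_{n+m-r-2}}{v_{m+r-2}}$ times a positive integer independent of $m$, one sees that $c_{\lambda_{k,m}}$ decays like $m^{-p/2}$ while $\norm{\lambda_{k,m}}$ grows linearly in $m$, so $|c_\lambda|^{-1}$ is bounded by a polynomial in $\norm\lambda$. (If $n=2l$ and $r=l$ there are additional weights $\lambda_{-l,m}$; these are handled by the same computation after reflecting in $e_n^\perp$, but in that case $p=0$ and there is nothing to prove.)

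Nonvanishing of the $c_\lambda$ makes $\Lambda^p$ injective on $\Val_{n-r}^\infty(\RR^n)$: from $\Lambda^p\phi=0$ we get $c_\lambda\,\pi_\lambda\phi=\pi_\lambda(\Lambda^p\phi)=0$ for all $\lambda$, so every Fourier component of $\phi$ vanishes and therefore $\phi=0$ by Theorem~\ref{thm:H-Ch}. For surjectivity, given $\psi\in\Val_r^\infty(\RR^n)$ I would set $\phi=\sum_\lambda\bigl(\Lambda^p|_{(\Val_{n-r}^\infty)_\lambda}\bigr)^{-1}\pi_\lambda\psi$; since $|c_\lambda|^{-1}$ grows at most polynomially in $\norm\lambda$, Sugiura's decay estimate (Theorem~\ref{thm:decay}) guarantees that this series converges absolutely in $\Val_{n-r}^\infty(\RR^n)$, and by construction $\Lambda^p\phi=\psi$. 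Thus $\Lambda^p$ is a continuous linear bijection between Fr\'echet spaces, hence a topological isomorphism by the open mapping theorem.

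The step I expect to be the main obstacle is the nonvanishing of the scalars $c_\lambda$ together with the polynomial lower bound on them: this is precisely the point where a sufficiently explicit and tractable description of the highest weight vectors and of the Lefschetz operator acting on them---Theorems~\ref{thm:A} and~\ref{thm:B}(e)---is indispensable. The original argument of Bernig and Br\"ocker avoids these computations, instead realizing $\Lambda$ as the Lie derivative $\calL_T$ along the Reeb field (Proposition~\ref{pro:convLefschetz}) and deducing the hard Lefschetz property from the K\"ahler identities on the contact distribution of $S\RR^n$.
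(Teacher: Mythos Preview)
Your proposal is correct and follows essentially the same strategy as the paper's alternative proof in Theorem~\ref{thm:HLHR}(a): use the Alesker--Bernig--Schuster decomposition, compute the action of $\Lambda^{n-2r}$ on each $\SO(n)$-type via Theorem~\ref{thm:actionLambda}, deduce injectivity from nonvanishing of the scalars, and establish surjectivity by inverting typewise with convergence controlled by Theorem~\ref{thm:decay} and the open mapping theorem. The one refinement worth noting is that for surjectivity the paper works with the \emph{endomorphism} $S_r=\Lambda^{n-2r}\circ\FF$ of $\Val_r^\infty(\RR^n)$ rather than with $\Lambda^{n-2r}$ directly; since $S_r$ acts by honest scalars $e_{r,k,m}$ on a single Fr\'echet space, Sugiura's estimate applies verbatim to $\sum_\lambda e_{r,k,m}^{-1}\pi_\lambda\phi$, whereas your formulation implicitly requires comparing seminorms of the inverse map across two different spaces $\Val_{n-r}^\infty(\RR^n)$ and $\Val_r^\infty(\RR^n)$---a point that is easily patched (indeed, precisely by composing with $\FF$) but that your write-up glosses over.
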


In \S\ref{s:HLHR} below we will give an alternative prove of this result and supplement it with the Hodge--Riemann relations. Our proof relies on Proposition \ref{pro:convLefschetz} as well as on the Alesker--Bernig--Schuster decomposition theorem.

\begin{remark}
\label{re:HLinABS}
Strictly speaking, the proof of Theorem \ref{thm:ABS} given in \cite{ABS:Harmonic} depends on Theorem \ref{thm:convHL}; namely, the Lefschetz isomorphism is used to extend the validity of the statement from $\Val_{n-r}(\RR^n)$ to $\Val_r(\RR^n)$. Observe, however, that the use of the hard Lefschetz theorem can easily be avoided, using only Alesker--Poincar\'e duality  instead: For any $\lambda\in\Lambda$ let $\wt E_\lambda\subset\Val_r(\RR^n)$ and $E_\lambda\subset\Val_{n-r}(\RR^n)$ be the $\lambda$-isotypic components. By Alesker--Poincar\'e duality and Proposition \ref{pro:Schur} (a), there is a non-degenerate $\SO(n)$-invariant sesquilinear pairing $\wt E_\lambda\times E_\lambda\to\CC$. If Theorem \ref{thm:ABS} holds for $\Val_{n-r}(\RR^n)$, we have $\dim E_\lambda<\infty$ and consequently $\wt E_\lambda\cong \b E^*_\lambda\cong E_\lambda$.
\end{remark}

\subsection{Pullback, pushforward, and the Fourier transform}

\label{ss:FF}

\subsubsection{Fourier transform}

\label{sss:FF}

The Fourier transform for valuations discovered by Alesker is a linear isomorphism that respects  the  action of the general linear group, intertwines the  product and the convolution, and  satisfies an analog of the Fourier inversion theorem. The behavior with respect to the action of $\GL(n)$ will  be critical for evaluating the  Fourier transform of highest weight vectors. To trace the correct behavior under the group action, we first do not fix an Euclidean inner product  and work with an abstract $n$-dimensional real vector space $V$ instead. As a consequence, there exists no canonical choice of volume  on $V$, but a one-dimensional space of Haar measures. The complexification of this space is  denoted by $\Dens(V)$  and its elements are called densities on $V$. Note that $\Val_n(V)\cong \Dens(V)$  by Hadwiger's theorem.

\begin{theorem}[{Alesker \cite{Alesker:Fourier}}]
\label{thm:AFT}
	There exists an isomorphism of topological vector spaces 
	$$ \FF_V \colon \Val^\infty(V)\to \Val^\infty(V^*)\otimes \Dens(V)$$
	which satisfies the following properties:
	\begin{enuma}
		\item $\FF_V$ commutes with the natural action of $\GL(V)$ on both spaces;
		\item $\FF_V$ is an isomorphism of algebras when the source is equipped with the product and the target with the convolution;
		\item after the canonical identification $ \Dens(V^*)\otimes \Dens(V)\cong\CC$, the composition
		\begin{align*}
		\calE_V=\left(\FF_{V^*}\otimes Id_{\Dens(V)}\right)\circ\FF_V\maps{\Val^\infty(V)}{\Val^\infty(V)}
		\end{align*}
		satisfies 
		  $$ (\calE_V\phi)(K)= \phi(-K).$$ 
	\end{enuma}
\end{theorem}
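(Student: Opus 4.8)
The plan is to work one homogeneity degree at a time and, within a fixed degree $0<k<n$, to build $\FF_V$ separately on the even and odd parts. The extremes $k\in\{0,n\}$ carry no real content: $\Val_0(V)=\CC\chi$ and $\Val_n(V)\cong\Dens(V)$ by Hadwiger's theorem, so the required isomorphisms $\Val_0(V)\cong\Val_n(V^*)\otimes\Dens(V)$ and $\Val_n(V)\cong\Val_0(V^*)\otimes\Dens(V)$ are canonical up to a scalar, and that scalar is pinned down by (b) (the product unit $\chi$ must go to the convolution unit $\vol_n$ on $V^*$), while (a) and (c) are immediate there. So one fixes $0<k<n$ and decomposes $\Val_k^\infty(V)=\Val_k^{+,\infty}(V)\oplus\Val_k^{-,\infty}(V)$.

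For the even part I would use the Klain embedding, which sends $\phi\in\Val_k^{+,\infty}(V)$ to the section $E\mapsto(\text{the density on }E\text{ representing }\phi|_{\calK(E)}\in\Val_k(E)\cong\Dens(E))$ of a natural density bundle over the Grassmannian $\mathrm{Gr}_k(V)$; this map is injective and realizes $\Val_k^{+,\infty}(V)$ as a closed $\GL(V)$-submodule. The assignment $E\mapsto E^{\perp}$ (annihilator in $V^*$) is a canonical $\GL(V)$-equivariant diffeomorphism $\mathrm{Gr}_k(V)\cong\mathrm{Gr}_{n-k}(V^*)$, and pulling sections back along it identifies, after the twist by $\Dens(V)$, the ambient bundle for $\Val_{n-k}^{+,\infty}(V^*)\otimes\Dens(V)$ with the one for $\Val_k^{+,\infty}(V)$. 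The crux of the even case is that this identification carries the image of the one Klain embedding exactly onto the image of the other; both images are irreducible $\GL(V)$-submodules by the irreducibility theorem and abstractly isomorphic, and to see they actually coincide inside the ambient function space I would use the cosine transform, whose range characterizes the Klain image and which intertwines the two annihilator identifications.

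The odd part is where the real difficulty lies: the Klain embedding vanishes identically on $\Val_k^{-,\infty}(V)$, so a different model is needed. Here I would represent odd smooth valuations either by translation-invariant differential forms on the sphere bundle modulo the Rumin kernel (Theorem~\ref{thm:kernel}) or by smooth sections of a line bundle over a suitable partial flag manifold refining $\mathrm{Gr}_k(V)$, construct the duality map in that model by dualizing $1$-dimensional to $(n-1)$-dimensional subspaces, and then verify that the construction descends modulo the relevant kernel and lands in the corresponding space for $V^*$. Controlling this kernel under the dualization, and reconciling the even and odd normalizations so that the two halves glue into a single map with all the stated properties, is the main obstacle I foresee.

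Property (a) is built into the construction. For (c) one applies $\FF_{V^*}\otimes\mathrm{id}$ and uses that $E\mapsto(E^{\perp})^{\perp}$ is the identity both on Grassmannians and on the flag model: on the even part this gives $\calE_V=\mathrm{id}$, matching $\phi(-K)=\phi(K)$, while the odd construction introduces an extra sign, matching $\phi(-K)=-\phi(K)$, and combining the two yields $(\calE_V\phi)(K)=\phi(-K)$. For (b) it suffices, by continuity and the density of the span of the valuations $\phi_A$, to check compatibility of $\FF_V$ with products on such generators; alternatively one verifies that $\FF_V$ intertwines the product pairing with the convolution pairing and matches the two multiplications on algebra generators in low degree, using Theorem~\ref{thm:BernigFormula} on the product side. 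Besides the odd case, I expect this verification of (b) to be the other genuinely laborious point, since the geometric definitions of product and convolution are combinatorially quite different.
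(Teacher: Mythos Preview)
The paper does not give its own proof of this theorem: it is stated as a result of Alesker, with a citation to \cite{Alesker:Fourier}, and is used throughout the paper as a black box (for instance in Section~\ref{s:Fourier}). There is therefore no proof in the present paper to compare your proposal against.

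As brief feedback on the outline itself: your sketch is broadly in the spirit of Alesker's original construction---Klain embedding and the annihilator identification for the even part, a more elaborate model involving partial flags for the odd part, with the odd case being substantially harder. However, the actual treatment of the odd case and the verification of the algebra-isomorphism property (b) require a good deal more machinery than you indicate (in particular, nontrivial input from the representation theory of $\GL(V)$ beyond the irreducibility theorem), and the full argument occupies a significant portion of \cite{Alesker:Fourier}; what you have written is a plausible road map rather than a proof.
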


 We will reduce our computation of the Fourier transform recursively to the case $V=\RR^2$ where an explicit description is available as follows. First, by an enhancement of a classical theorem of Hadwiger,  every valuation $\phi\in \Val^\infty(\RR^2)$ can be  (uniquely) expressed as 
$$ \phi(K) = c_0\chi(K)  + \int_{S^1} f(u) dS_1(K,u)+ c_2 \vol_2(K),$$
where $c_0,c_2\in \CC$ are constants and $f\colon S^1\to \CC$ is a smooth function orthogonal to the  two-dimensional space of linear functionals restricted to $S^1$. $S_1(K,\Cdot)$ denotes here the area measure of  $K\in\calK(\RR^2)$.  Decompose $f=f_0+f_1$ into the even and odd part and the latter $f_1=f_{1,h}+f_{1,a}$ into the holomorphic and anti-holomorphic part, where $\RR^2\cong\CC$ as usual. We further use the inner product on $\RR^2$ to identify $\Val^\infty\big((\RR^2)^*\big)\otimes \Dens(\RR^2) \cong \Val^\infty(\RR^2)$. Then one has
\begin{align*}
(\FF\phi)(K)=c_0\vol_2(K)+\int_{S^1} (f_0+ f_{1,h}- f_{1,a})(Ju)  dS_1(K,u)+c_2\chi(K),
\end{align*}
where $J\colon \RR^2\to \RR^2$ denotes  the counter-clockwise rotation by $\pi/2$.  

\subsubsection{Pullback and pushforward}

To evaluate the Fourier transform on highest weight vectors we will make use of its compatibility properties with respect to the pullback and pushforward of valuations. Since we will need the pullback under monomophisms and the pushforward along epimorphisms exclusively, we only define them in these special cases here. For the general case see \cite{Alesker:Fourier}.

The pullback is easy to define. If $f\colon V\to W$ is an injective linear map of vector spaces, then $f^*\colon \Val^\infty(W)\to \Val^\infty(V)$ is a morphism of algebras with respect to the product defined by
\begin{align*}
(f^* \phi)(K)= \phi \big(f(K)\big).
\end{align*}

The definition of the pushforward, on the contrary, is  slightly more involved. Let $f\colon V\to W$ be a surjective linear map and  choose $U\subset V$ such that $V=\ker f \oplus U $. $g=f|_U\maps UW$ is an isomorphism which yields $\Dens(U)\cong\Dens(W)$. Observe further that there is a natural isomorphism $\Dens(V^*)\cong \Dens(V)^*$ and that
$$ \Dens(V^*)\cong \Dens(\ker f)^* \otimes \Dens(W^*).$$
Then $f_* \colon \Val^\infty(V)\otimes \Dens(V^*)\to \Val^\infty(W)\otimes \Dens(W^*)$ is a morphism of algebras with respect to the convolution defined as follows. For
\begin{align*}
\phi\otimes \vol_{\ker f}^* \otimes\vol_{W^*}\in \Val^\infty(V)\otimes \Dens(\ker f)^* \otimes \Dens(W^*)\cong   \Val^\infty(V)\otimes \Dens(V^*)
\end{align*}
and any $K\in\calK(W)$ we set
\begin{align*}
(f_* \phi) (K) = \frac{1}{k!}\left. \frac{d^k}{dt^k} \right|_{t=0} \phi\big(t S+ g^{-1} K\big) \otimes \vol_{W^*},
\end{align*}
where $k=\dim \ker f$ and $S\in \calK(\ker f)$ is chosen so that $\vol_{\ker f} (S)=1$. One can check that the definition does not depend on the choices  of $U$ and $S$. 

\begin{theorem}[{Alesker \cite{Alesker:Fourier}}]
\label{thm:FFf}
	Let $f\colon V\to W$ be an injective linear map and let $f^\vee\colon W^*\to V^*$ denote the dual map. Then 
	$$ \FF_W\circ f^* = f^\vee _* \circ \FF_V.$$
\end{theorem}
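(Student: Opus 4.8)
The plan is to reduce the identity of Theorem~\ref{thm:FFf} to the functoriality of the annihilator operation on Grassmannians in the even case, and to bootstrap from there in the odd case. Write $n=\dim W$. Both sides of the asserted identity are continuous linear maps between the same pair of Fréchet spaces of smooth valuations, and each of the three building blocks $\FF$, $f^*$, $f^\vee_*$ preserves degree of homogeneity and parity; by the McMullen decomposition it therefore suffices to establish the identity after restricting to valuations of a fixed degree $k$ and fixed parity $\varepsilon\in\{0,1\}$.

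\emph{Even case.} For $\varepsilon=0$ I would use the Klain embedding $\mathrm{Kl}\colon\Val_k^{\infty,+}(U)\hookrightarrow C^\infty(\mathrm{Gr}_k(U))$, which is injective and $\GL(U)$-equivariant and under which all three operations become transparent. Precisely: $\FF$ corresponds to precomposition with the natural identification $\mathrm{Gr}_k(U)\cong\mathrm{Gr}_{\dim U-k}(U^*)$, $E\mapsto\mathrm{Ann}(E)$ (the $\Dens$-twist being pure bookkeeping); the pullback $f^*$ corresponds to precomposition with $E\mapsto f(E)$, $\mathrm{Gr}_k(V)\to\mathrm{Gr}_k(W)$ (and both sides of the identity vanish when $k>\dim V$); and the pushforward along the surjection $f^\vee$ corresponds, on Klain functions, to $h\mapsto\bigl(G\mapsto c\,h\bigl((f^\vee)^{-1}(G)\bigr)\bigr)$, a Fubini-type formula sending a $(\dim V-k)$-plane $G\subset V^*$ to its $(n-k)$-dimensional preimage, with $c$ an explicit constant coming from the density data. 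Granting these three dictionaries, the even case collapses to the elementary identity $f^\vee\bigl(\mathrm{Ann}(f(E))\bigr)=\mathrm{Ann}(E)$ for $E\in\mathrm{Gr}_k(V)$ together with a direct comparison of the density factors. The one ingredient deserving its own lemma is the Fubini formula for the Klain function of a pushforward.

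\emph{Odd case.} Here the Klain map is identically zero, so a substitute is needed, and this is the step I expect to be the main obstacle. The robust approach is to repeat the argument of the even case with an injective, $\GL$-equivariant model of $\Val_k^{\infty,1}(U)$ in its place --- for instance the Schneider-type embedding into translation-invariant data on a partial flag manifold, or the description via Goodey--Weil distributions and the odd cosine--sine transform --- and to trace how $\FF$, $f^*$, and $f^\vee_*$ act in that model; the geometric recipe is again governed by annihilator functoriality, but tracking the signs and density normalizations is delicate. Before resorting to this, I would try to sidestep the odd embedding via property~(b) of Theorem~\ref{thm:AFT}: since $\FF$ intertwines product and convolution, $f^*$ is a morphism of the product algebras, and $f^\vee_*$ is a morphism of the convolution algebras, the set of valuations on which the identity holds is a subalgebra, and since a product of two odd valuations is even, the even case already yields the identity on the subalgebra generated by the odd valuations. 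Whether that subalgebra exhausts $\Val^\infty$ is precisely one of the delicate points alluded to in the introduction, so I would not rely on this shortcut and would carry the embedding-based argument through.

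\emph{Normalization.} The geometric identifications above determine the identity up to the explicit density constant $c$; to pin it down --- and as an independent cross-check --- I would evaluate both sides on a single convenient test valuation, for instance the volume or an intrinsic volume pulled back along a coordinate inclusion $\RR^{n-1}\hookrightarrow\RR^n$, using the explicit formulas for $\FF$ on $\Val^\infty(\RR^2)$, for the pullback, and for the pushforward collected in \S\ref{ss:FF}.
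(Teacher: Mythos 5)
The paper does not prove this statement: Theorem~\ref{thm:FFf} is quoted from Alesker \cite{Alesker:Fourier}, where both the Fourier transform and its compatibility with pullback and pushforward are established. So your proposal must stand on its own, and as written it only carries the even half. That half is sound in outline and is essentially Alesker's own argument: by Klain injectivity it suffices to compare Klain functions; on even valuations $\FF$ is characterized by $\mathrm{Kl}_{\FF\phi}(F)=\mathrm{Kl}_{\phi}(\mathrm{Ann}\,F)$ up to the density twist, the Klain function of $f^*\phi$ at $E$ is that of $\phi$ at $f(E)$, and the Klain function of a pushforward at $G$ is that of the original valuation at $(f^\vee)^{-1}(G)$ --- your Fubini lemma, which does require a short argument from the derivative definition of the pushforward but is true; combined with $(f^\vee)^{-1}(\mathrm{Ann}\,E)=\mathrm{Ann}\bigl(f(E)\bigr)$ this closes the even case, modulo the density bookkeeping you defer to a normalization check.

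The odd half is a genuine gap, which you locate correctly but do not fill. The Klain map vanishes identically on odd valuations, and the substitutes you name (a Schneider-type embedding, Goodey--Weil distributions, the cosine--sine transform) are never actually deployed: no model is fixed and the action of $\FF$, $f^*$, $f^\vee_*$ in any such model is not computed. This is not a routine verification that can be postponed: in Alesker's development the Fourier transform on odd valuations is \emph{constructed} by a dimension induction in which compatibility with pullback and pushforward enters the construction and uniqueness argument itself, so one cannot treat $\FF$ as a black box and verify the identity afterwards without essentially reproducing that construction. The algebraic shortcut you float is moreover logically inverted: closure of the set where the identity holds under products propagates the identity from factors to products, so knowing it on even valuations (which arise as products of odd ones) yields nothing about the odd factors; generating the algebra by odd valuations would only give you the identity on the even elements of that subalgebra. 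A minor further inaccuracy: neither $\FF$ (which sends degree $k$ to degree $\dim V-k$) nor $f^\vee_*$ (which lowers degree by $\dim\ker f^\vee$) preserves degree of homogeneity; the reduction to homogeneous components of the source space is still legitimate, but not for the reason you give.
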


\section{Construction of highest weight vectors}
The following two sections constitute the technical heart of this paper. Namely, we explicitly construct in each $\SO(n)$-type appearing in $\Val_r(\RR^n)$ a  non-zero highest weight vector. To this end, and this is the goal of the current section, we first construct a highest weight vector in $ \Omega^{r,n-r-1}(S\RR^n)^{\tr}$. Integration over the normal cycle then  yields a highest weight vector in $\Val_r(\RR^n)$. To prove the latter is non-trivial, it suffices by the Bernig--Br\"ocker kernel theorem to show that Rumin differential of the corresponding differential form is non-zero. This step of our argument, to be completed in the next section, is combinatorially challenging  but the formalism of double forms allows us to conveniently handle the rather complicated determinantal expressions appearing below.

\subsection{Double forms}

Let $M$ be a smooth manifold and let $V$ be a  finite-dimensional, real vector space.  We denote by $\Omega^k(M,V)$ the space of smooth  $V$-valued differential $k$-forms, i.e., the smooth sections of the bundle $\CC\otimes \largewedge^k T^*M\otimes V $, where the tensor products are taken over $\RR$. The  complex conjugation, pullback, exterior derivative, and Lie derivative of a $V$-valued form are defined in the usual way. Recall  also that there exists a natural  product 
$$ \Omega^k(M,V) \otimes \Omega^l(M,W)\to \Omega^{k+l}(M,V\otimes W)$$
of $V$- and $W$-valued forms. In the following we will make use of differential forms with values in $\largewedge V$, the exterior algebra of a vector space $V$. In this setting, the composition of the above general product with the wedge product on $\largewedge V$ yields a wedge product on $ \Omega^*(M,\largewedge V)= \bigoplus_k \Omega^k(M,\largewedge V)$. We will refer to elements of  this algebra as double forms.
A double form of type $(p,q)$ is a smooth section of the  bundle $ \CC\otimes\largewedge^pT^*M\otimes\largewedge^q V$. Note that 
\begin{align*}
	\omega \theta=(-1)^{pr+qs}\theta\omega
\end{align*}
if $\omega$ is of type $(p,q)$ and $\theta$ of $(r,s)$, where here and in the following we suppress the wedge symbol. For any $(1,1)$-double form $\beta$ and $j\in\NN_0$ we denote
\begin{align*}
	\beta^{[j]}=\frac1{j!}\beta^j.
\end{align*}
Then for any such double forms $\beta_1,\beta_2$ and $m\in\NN_0$ one has
\begin{align}
	\label{eq:binomial}
	(\beta_1+\beta_2)^{[m]}=\sum_{j=0}^m\beta_1^{[j]}\beta_2^{[m-j]}.
\end{align}
Moreover, it will be convenient and natural to set $\beta^{[-1]}=0$ and ${i\choose j}=0$ if $i<j$.

{ For our purposes, $M$ will usually be either $\RR^n\times\RR^n$ or $S\RR^n$ and the double forms will take values in $V= \CC\otimes \largewedge( \RR^n\times \RR^n)^*$. In this connection, let us fix once and for all the following notation. First, the canonical coordinates in $\RR^n \times \RR^n$ are denoted by $x_1,\dots,x_n,\xi_1,\dots,\xi_n$. Second, put $l=\lfloor \frac n2\rfloor$ and define  for $j\in \{1,\ldots, l\}$
\begin{align*}
z_j &= \frac{1}{\sqrt2}\left(x_{2j-1} + \sqrt{-1}  x_{2j}\right),\\
 z_{\b j} &= \frac{1}{\sqrt2}\left(x_{2j-1} -\sqrt{-1}  x_{2j}\right),\\
\zeta_j &= \frac{1}{\sqrt2}\left(\xi_{2j-1} + \sqrt{-1}  \xi_{2j}\right),\\
 \zeta_{\b j} &= \frac{1}{\sqrt2}\left(\xi_{2j-1} -\sqrt{-1}  \xi_{2j}\right).
\end{align*}
If $n=2l+1$, then we set in addition
\begin{align*}
 z_{l+1} &= x_{2l+1}, \\
\zeta_{l+1}&=\xi_{2l+1}.
\end{align*}
Further, consider the set  of indices
\begin{align*}
	\calI=
	\begin{cases}
		\{1,\b 1,\dots, l, \b l\}\quad&\text{if }n=2l,\\[2ex]
		\{1,\b 1,\dots, l, \b l,l+1\}\quad&\text{if }n=2l+1,
	\end{cases}
\end{align*}
 and equip it with the total order $1\prec\b 1\prec\cdots\prec l\prec\b l  \prec l+1$. For any subset $I=\{i_1,\dots,i_j\}\subset\calI$ with $i_1\prec\cdots \prec i_j$ we define the following (double) forms:
\begin{align*}
\Theta_{1,I}&=dz_{i_1}\cdots dz_{i_j},\\
\Theta_{2,I}&=d\zeta_{i_1}\cdots d\zeta_{i_j},\\
	\Theta_I&=\Theta_{1,I}\,\Theta_{2,I},\\
	\alpha_I&=\sum_{i\in I}\zeta_{\b i}dz_i,\\
	\gamma_I&=\sum_{i\in I}\zeta_{\b i}d\zeta_i,\\
	\nu_I&=\sum_{i\in I}\zeta_{\b i}\zeta_i,\\
	\zeta_I&=\sum_{i\in I}\zeta_i\otimes dz_i,\\
	z_I&=\sum_{i\in I}z_i\otimes dz_i,\\
	\eta_I&=\sum_{i\in I}\zeta_i\otimes d\zeta_i,\\
	w_I&=\sum_{i\in I}z_i\otimes d\zeta_i,
\end{align*}
where $\bar{\bar i}=i$ and $\b{l+1}=l+1$. Notice that $\alpha=\alpha_\calI$ is the contact form on $S\RR^n$. Similarly, we set
\begin{align*}
\Theta_1=\Theta_{1,\calI},\quad\Theta_2=\Theta_{2,\calI},\quad\Theta=\Theta_\calI,\quad	\gamma=\gamma_\calI,\quad\text{and}\quad\nu=\nu_\calI.
\end{align*} 
Assume $k\in\NN$, $k\leq l$. Of particular interest will be the following subsets of $\calI$:
\begin{align*}
	K=\{1,2,\dots, k\}, \quad J=\calI\setminus K,\quad\text{and}\quad L=J\setminus \b K,
\end{align*}
where $\b{\{i_1,\dots,i_j\}}=\{\b{i_1},\dots,\b{i_j}\}$; for $n=2l$ we will use in addition
\begin{align*}
	M=\{1,2,\dots,l-1, \b l\}.
\end{align*}
Observe that $\calI=K\cup\b K\cup L$, $ J=\b K\cup L$, and $\calI=M\cup\b M$ (if $n=2l$) are disjoint unions. Finally, for any $I\subset\calI$ and $a_1,\dots,a_p\in I$ we will denote
\begin{align*}
	I_{a_1,\dots,a_p}=I\setminus\{a_1,\dots,a_p\}.
\end{align*}

In general, we will  not notationally distinguish  (double) forms on $\RR^{n}\times\RR^n$ from their restrictions to $S\RR^n$; however, an explicit distinction will be made whenever this difference is relevant. In this connection, let us state for later use the following obvious fact.
\begin{lemma} \label{lemma:RestrictionSphere}
Let $\iota\colon \RR^n\times \RR^n \hookrightarrow S\RR^n$ denote the inclusion map and let $\omega\in \Omega^*(\RR^n\times \RR^n)$. If $\omega \wedge \gamma =0$, then 
$\iota^*\omega=0$. 
\end{lemma}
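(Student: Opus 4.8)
The plan is to recognize $\gamma$ as, up to a constant, the differential of the defining function of the hypersurface $S\RR^n\subset\RR^n\times\RR^n$, and then to finish by an elementary piece of exterior algebra. (Here $\iota$ is the inclusion of $S\RR^n$ into $\RR^n\times\RR^n$, so that $\iota^*\omega$ is the restriction of $\omega$ to the sphere bundle.) The first step is to verify the identity of $1$-forms on $\RR^n\times\RR^n$
\begin{align*}
	\gamma=\gamma_\calI=\sum_{i\in\calI}\zeta_{\b i}\,d\zeta_i=\frac12\,d\bigl(|\xi|^2\bigr)=\sum_{k=1}^n\xi_k\,d\xi_k.
\end{align*}
This is a short computation: grouping the summands of $\gamma_\calI$ into conjugate pairs, one has $\zeta_{\b j}\,d\zeta_j+\zeta_j\,d\zeta_{\b j}=d(\zeta_j\zeta_{\b j})=\frac12\,d(\xi_{2j-1}^2+\xi_{2j}^2)$ for $1\le j\le l$, and, when $n=2l+1$, the leftover summand is $\zeta_{l+1}\,d\zeta_{l+1}=\xi_{2l+1}\,d\xi_{2l+1}=\frac12\,d(\xi_{2l+1}^2)$; adding these up gives the claim.

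Setting $g=\frac12(|\xi|^2-1)$ we then have $dg=\gamma$ and $S\RR^n=g^{-1}(0)$, and on the open set $U=\RR^n\times(\RR^n\setminus\{0\})$, which contains $S\RR^n$, the function $g$ is a submersion, i.e.\ $\gamma$ is nowhere vanishing on $U$, while $\iota^*\gamma=d(\iota^*g)=0$. The key step is then a pointwise argument. Fix $p\in S\RR^n$. Since $S\RR^n$ is the regular level set $g^{-1}(0)$ and $dg_p=\gamma_p\neq0$, the tangent space is $T_pS\RR^n=\ker\gamma_p$, so $(\iota^*\omega)_p$ is simply the restriction of $\omega_p\in\largewedge^k T_p^*(\RR^n\times\RR^n)$ to the hyperplane $\ker\gamma_p$. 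The hypothesis $\omega\wedge\gamma=0$ gives $\omega_p\wedge\gamma_p=0$; because $\gamma_p$ is a nonzero covector this forces $\omega_p=\gamma_p\wedge\beta$ for some $\beta$, and the restriction of any such form to $\ker\gamma_p$ vanishes. Hence $(\iota^*\omega)_p=0$ for every $p\in S\RR^n$, that is, $\iota^*\omega=0$. Equivalently, one may first invoke the division lemma for the nowhere-vanishing $1$-form $\gamma$ on $U$ to write $\omega=\gamma\wedge\eta$ locally near $S\RR^n$ with $\eta$ smooth, and then $\iota^*\omega=\iota^*\gamma\wedge\iota^*\eta=0$.

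I do not expect a genuine obstacle here. The only points requiring mild care are the bookkeeping in the identity for $\gamma$ (the index set $\calI$, the conventions that double conjugation is the identity and that $l+1$ is self-conjugate, and the extra summand in odd dimension) and the elementary observation that one must pass to the open set $U$ where $\xi\neq0$ before calling $\gamma$ nowhere vanishing; the underlying linear-algebra fact — a nonzero covector $\varphi$ with $\varphi\wedge\alpha=0$ satisfies $\alpha=\varphi\wedge\beta$ — is completely standard.
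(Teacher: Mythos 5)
Your proof is correct; the paper offers no argument at all for this lemma (it is introduced as an ``obvious fact''), and your writeup — identifying $\gamma=\tfrac12\,d(|\xi|^2)$ as the differential of the defining function of $S\RR^n$ and then using the standard division property of a nonzero covector on the tangent hyperplane $\ker\gamma_p$ — is exactly the intended reasoning, stated with the right care about $\xi\neq0$ and the conjugation conventions on $\calI$.
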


\subsection{Action of the orthogonal group}

\label{ss:SOaction}

Recall that if $G\times M\to M$ is a smooth left action of a Lie group  on a smooth manifold and if $X$ is an element of the Lie algebra $\fgg$ of $G$, then 
$$ \widetilde X_p = \dt \exp(-tX) \cdot p$$ 
is called the fundamental vector field induced by $X$. On smooth differential forms $\omega\in \Omega^k(M,V)$ the group  acts by $ g\cdot \omega =  (g^{-1})^*\omega$ . Thus the infinitesimal action of $\fgg$ on differential forms is given by 
$$ X\cdot \omega= \calL_{\widetilde X} \omega,$$
where $\calL$ denotes the Lie derivative. By linearity, this can be extended to an action of $\fgg_\CC$, the complexification of $\fgg$. 

The natural left action of $\SO(n)$ on  $\RR^n\times \RR^n$ and $S\RR^n$ is $g\cdot (x,\xi)=  (gx, g\xi)$. A direct computation shows that the corresponding fundamental vector fields (on either $\RR^n\times \RR^n$ or $S\RR^n$) induced by the basis vectors of $\fnn^+\oplus\ftt_\CC\subset\so(n)_\CC$ introduced in \S\ref{ss:SO} are given by
\begin{align}
\label{eq:BorelAction}
\begin{split}
\widetilde X_{\epsilon_i -\epsilon_j } & = z_{\b i} \pder{z_{\b j}}  - z_j \pder{ z_i}+\zeta_{\b i} \pder{\zeta_{\b j}}  - \zeta_j \pder{ \zeta_i},\\
\widetilde  X_{\epsilon_i +\epsilon_j} & = z_{\b i} \pder{ z_j}  - z_{\b j} \pder{ z_i}+ \zeta_{\b i} \pder{ \zeta_j}  - \zeta_{\b j} \pder{ \zeta_i},\\
\widetilde  X_{\epsilon_i} & = z_{\b i} \pder{ z_{l+1}}  - z_{l+1} \pder{ z_i}+ \zeta_{\b i} \pder{ \zeta_{l+1}}  - \zeta_{l+1} \pder{ \zeta_i}\quad\text{(only if $n=2l+1$ is odd)},\\
\widetilde H_i&=  z_{\b i}\pder{z_{\b i}}-z_i\pder{z_i} +\zeta_{\b i}\pder{\zeta_{\b i}}-\zeta_i\pder{\zeta_i},
\end{split}
\end{align}
 where 
\begin{align*} \pder{z_j} & = \frac{1}{\sqrt 2} \left( \pder{x_{2j-1}} - \sqrt{-1} \pder{x_{2j}}\right), & \pder{z_{\b j}} &= \frac{1}{\sqrt 2} \left( \pder{x_{2j-1}} + \sqrt{-1} \pder{x_{2j}}\right),\\
 \pder{\zeta_j} & = \frac{1}{\sqrt 2} \left( \pder{\xi_{2j-1}} - \sqrt{-1} \pder{\xi_{2j}}\right), & \pder{\zeta_{\b j}} &= \frac{1}{\sqrt 2} \left( \pder{\xi_{2j-1}} + \sqrt{-1} \pder{\xi_{2j}}\right)
\end{align*}
for $j\in \{ 1,\ldots, l\}$ and, provided $n=2l+1$, 
$$ \pder{z_{l+1}}= \pder{x_{2l+1} }, \quad    \pder{\zeta_{l+1}}= \pder{\xi_{2l+1} }.$$

\subsection{Highest weight vectors}

We  will now describe a family of highest weight vectors in $\Omega^{r,n-r-1}(S\RR^n)^{\tr}$.} Already here double forms are a convenient tool for organizing our computation and stating our results.

\begin{theorem}
\label{thm:HWforms}
 For any $r,k,m\in\NN$ with $r\leq n-1$, $k\leq \min\{r,n-r\}$, and $m\geq2$,
\begin{align}
\label{eq:HWomega}
\omega_{r,k,m}=\zeta_{\b1}^{m-2}\omega_{r,k}\in\Omega^{r,n-r-1}(S\RR^n)^{\tr}
\end{align}
is a highest weight vector of weight $\lambda_{k,m}$, where
\begin{align*}
\omega_{r,k}\otimes\Theta_1=\zeta_J(d\zeta_J)^{[n-r-1]}(dz_J)^{[r-k]}(\b{dz_K})^{[k]}.
\end{align*}
If $n=2l$, then in addition
\begin{align*}
\omega_{l,-l,m}=\zeta_{\b1}^{m-2}\omega_{l,-l}\in\Omega^{l,l-1}(S\RR^n)^{\tr}
\end{align*}
is a highest weight vector of weight $\lambda_{-l,m}$, where
\begin{align*}
\omega_{l,-l}\otimes\Theta_1=\zeta_{\b M}(d\zeta_{\b M})^{[l-1]}(\b{dz_M})^{[l]}.
\end{align*}
\end{theorem}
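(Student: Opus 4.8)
The plan is to verify directly that $\omega_{r,k,m}$ satisfies the defining equations of a highest weight vector of weight $\lambda_{k,m}$, namely that $\calL_{\widetilde H_i}\omega_{r,k,m} = (\lambda_{k,m})_i\,\omega_{r,k,m}$ for all $i$ and that $\calL_{\widetilde X_\alpha}\omega_{r,k,m}=0$ for every simple positive root $\alpha$ (it then automatically vanishes under all of $\fnn^+$). Since all the ingredients $\zeta_J$, $d\zeta_J$, $dz_J$, $\b{dz_K}$ are built from the complexified coordinate $1$-forms $dz_i, d\zeta_i, \b{dz_i}$, the key preliminary step is to record how each fundamental vector field in \eqref{eq:BorelAction} acts on these building blocks via the Lie derivative (Cartan's formula $\calL_{\widetilde X} = d\circ\iota_{\widetilde X} + \iota_{\widetilde X}\circ d$, and the fact that $\calL_{\widetilde X}$ is a derivation commuting with $d$). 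Because $\widetilde X_\alpha$ is a linear vector field, $\calL_{\widetilde X_\alpha} dz_i$ is again a constant-coefficient combination of $dz_j$'s, so each $\calL_{\widetilde X_\alpha}$ acts on the relevant span of $1$-forms as a fixed matrix; the weight computation is then just bookkeeping with these matrices.

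First I would compute the torus action. From $\widetilde H_i = z_{\b i}\partial_{z_{\b i}} - z_i\partial_{z_i} + \zeta_{\b i}\partial_{\zeta_{\b i}} - \zeta_i\partial_{\zeta_i}$ one reads off $\calL_{\widetilde H_i} dz_j = -\delta_{ij}dz_j$, $\calL_{\widetilde H_i} dz_{\b j} = \delta_{ij}dz_{\b j}$, and likewise for the $d\zeta$'s and for $\zeta_{\b 1}$ itself (which has weight $+\epsilon_1$). Tallying the contributions of $\zeta_{\b 1}^{m-2}$, of $\zeta_J = \sum_{i\in J}\zeta_i\otimes dz_i$, of $(d\zeta_J)^{[n-r-1]}$, of $(dz_J)^{[r-k]}$, and of $(\b{dz_K})^{[k]}$ — keeping in mind $J = \b K\cup L$ and $K=\{1,\dots,k\}$ — should produce exactly the weight vector $\lambda_{k,m} = (m,2,\dots,2,0,\dots,0)$ with $k-1$ twos, since the forms indexed by $\b K$ carry positive $\epsilon_i$-weight for $i\le k$ while the $L$-part contributes zero in those slots. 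The $\zeta_{\b 1}^{m-2}$ factor together with the one $\zeta_{\b 1}dz_{\b 1}$ and one $d\zeta_{\b 1}$ appearing in the other factors accounts for the first coordinate being $m$.

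Next I would check the nilpotent part. It suffices to kill $\omega_{r,k,m}$ under the simple root vectors: $\widetilde X_{\epsilon_i-\epsilon_{i+1}}$ for $1\le i\le l-1$, and then $\widetilde X_{\epsilon_{l-1}+\epsilon_l}$ in the even case or $\widetilde X_{\epsilon_l}$ in the odd case. The heart of the matter is that each of the four building blocks is, by construction, an \emph{extremal} object for the raising operators: e.g. $d\zeta_J = \sum_{i\in J} d\zeta_i$ wedged to top degree in the $J$-directions is annihilated by any $\widetilde X_\alpha$ that only moves indices \emph{within} $J$ or maps into $J$, because a repeated $d\zeta$ appears; the factor $(\b{dz_K})^{[k]}$ is the top power of $\b{dz_1}\cdots\b{dz_k}$ so it absorbs any raising that lands inside $\b K$; and the coefficient $\zeta_{\b 1}^{m-2}\zeta_J$ handles the remaining derivative terms. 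I would organize this as a short lemma computing $\calL_{\widetilde X_\alpha}$ on each of $\zeta_J$, $d\zeta_J$, $dz_J$, $\b{dz_K}$, $\zeta_{\b 1}$ separately, then apply the Leibniz rule to the product \eqref{eq:HWomega} and observe that every resulting term contains either a squared $1$-form or a factor already present to its maximal multiplicity, hence vanishes. The case $\omega_{l,-l,m}$ is handled identically with $M$ in place of $K$ and $\b M$ in place of $J$, the only change being that the last simple root $\widetilde X_{\epsilon_{l-1}+\epsilon_l}$ acts trivially for the mirror-image reason, producing the weight $\lambda_{-l,m}$ with a $-2$ in the last slot.

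The main obstacle I anticipate is purely combinatorial rather than conceptual: carefully tracking signs and the top-degree/repeated-factor cancellations when $\calL_{\widetilde X_\alpha}$ is distributed over a product of high exterior powers of $(1,1)$-double forms, especially near the "boundary" index $k$ where the $K$, $\b K$, and $L$ blocks interact (for instance under $\widetilde X_{\epsilon_k - \epsilon_{k+1}}$, which straddles $K$ and $L$, one must see that the term moving a $dz_{k}$-type object is killed because $k\notin J$ while the term moving a $\zeta$-coefficient is killed because $\zeta_J$ has no $\zeta_k$). The double-form formalism, together with the conventions $\beta^{[-1]}=0$ and \eqref{eq:binomial}, is precisely what makes this bookkeeping tractable, so I would lean on it heavily and present the verification as a sequence of one-line identities for each generator of $\fnn^+ \oplus \ftt_\CC$.
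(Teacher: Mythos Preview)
Your plan is correct and is essentially the paper's own argument: compute the Lie derivative of the double-form building blocks with respect to the generators in \eqref{eq:BorelAction}, apply the Leibniz rule, and organize the vanishing by where the indices sit relative to $K$, $\bar K$, $L$. Two small remarks. First, the paper streamlines your weight computation by observing at the outset that $\zeta_{\bar 1}^{m-2}$ is itself a highest weight vector of weight $\lambda_{1,m-2}$, so one may assume $m=2$ throughout and multiply back at the end; this saves carrying the extra factor through every Leibniz expansion. Second, your intuition that each block is ``extremal'' and that everything dies by a repeated-factor argument is accurate for the within-block simple roots ($i,i{+}1$ both in $K$ or both in $L$), but at the boundary root $\epsilon_k-\epsilon_{k+1}$ (and likewise $\epsilon_k+\epsilon_{k+1}$ when $k+1\in L$) it is not: the paper first splits $\omega_{r,k}\otimes\Theta_1$ via the decomposition in Proposition~\ref{pro:HWf} into two summands $\Omega_1+\Omega_2$, and one finds $\calL_{\widetilde X}\Omega_1=-\calL_{\widetilde X}\Omega_2\neq 0$ individually, so a genuine cancellation---not a squared $1$-form---is what makes the sum vanish. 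You already flagged this boundary case as the delicate one, so just be prepared for that mechanism rather than the one you described.
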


 Before we will proceed to the proof of this theorem, let us first show  two simple auxiliary statements. 

\begin{proposition}
\label{pro:HWf}
For  any $r,k\in\NN$ with $r\leq n-1$ and $k \leq \min\{ r,n-r\}$,
\begin{align} 
\label{eq:HWf01}
\begin{split}
\omega_{r,k}\otimes\Theta_1&=\zeta_{\b K}(d\zeta_{\b K})^{[k-1]}(d\zeta_L)^{[n-r-k]}(dz_L)^{[r-k]}(\b{dz_K})^{[k]}\\
&\quad+\zeta_{L}(d\zeta_{\b K})^{[k]}(d\zeta_L)^{[n-r-k-1]}(dz_L)^{[r-k]}(\b{dz_K})^{[k]}.
\end{split}
\end{align}
\end{proposition}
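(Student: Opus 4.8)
The plan is to derive \eqref{eq:HWf01} directly from the definition
\begin{align*}
\omega_{r,k}\otimes\Theta_1=\zeta_J(d\zeta_J)^{[n-r-1]}(dz_J)^{[r-k]}(\b{dz_K})^{[k]}
\end{align*}
of Theorem~\ref{thm:HWforms} by splitting everything along the disjoint decomposition $J=\b K\sqcup L$. Since $\zeta_I$, $d\zeta_I$, and $dz_I$ are by definition sums of double forms indexed by the elements of $I$, they are additive in the index set, so that $\zeta_J=\zeta_{\b K}+\zeta_L$, $d\zeta_J=d\zeta_{\b K}+d\zeta_L$, and $dz_J=dz_{\b K}+dz_L$. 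Substituting these and expanding the powers of the $(1,1)$-double forms $d\zeta_J$ and $dz_J$ by means of the binomial identity \eqref{eq:binomial} presents $\omega_{r,k}\otimes\Theta_1$ as a triple sum over the choice of $\zeta_{\b K}$ versus $\zeta_L$, over the exponent $a$ in the expansion of $(d\zeta_J)^{[n-r-1]}=\sum_a(d\zeta_{\b K})^{[a]}(d\zeta_L)^{[n-r-1-a]}$, and over the exponent $b$ in the expansion of $(dz_J)^{[r-k]}=\sum_b(dz_{\b K})^{[b]}(dz_L)^{[r-k-b]}$.

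The main step is then to discard all but two of these summands. In its differential-form slot the factor $(\b{dz_K})^{[k]}$ is the product of all $k$ one-forms $dz_{\b 1},\dots,dz_{\b k}$, and these are exactly the differential-form components of $dz_{\b K}$; hence any summand carrying a factor $(dz_{\b K})^{[b]}$ with $b\geq1$ contains a repeated one-form and vanishes, so only $b=0$ survives, and there $(dz_{\b K})^{[0]}(dz_L)^{[r-k]}=(dz_L)^{[r-k]}$. For the remaining sum over $a$ I invoke two nilpotency facts, both consequences of the antisymmetry of the wedge product: first, $\zeta_{\b K}(d\zeta_{\b K})^{[a]}=0$ for $a\geq k=|\b K|$, because the $\largewedge V$-slot would then have to accommodate more than $k$ one-forms with indices in $\b K$; second, the $L$-indexed one-forms that $(d\zeta_L)^{[n-r-1-a]}$ and $(dz_L)^{[r-k]}$ deposit in the $\largewedge V$-slot must be pairwise distinct, hence their number cannot exceed $|L|=n-2k$. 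Since the $\largewedge V$-slot of the whole expression must realize the top form $\Theta_1$, these two bounds sandwich $a$ to the single value $a=k-1$ in the branch containing $\zeta_{\b K}$ and $a=k$ in the branch containing $\zeta_L$ --- the latter value making that branch vanish identically exactly when $k=n-r$, in agreement with the convention $\beta^{[-1]}=0$ and with the right-hand side of \eqref{eq:HWf01}. Because the binomial expansions and the splitting of $\zeta_J$ introduce no signs and preserve the left-to-right order of the factors throughout, reading off the two surviving summands yields \eqref{eq:HWf01}.

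The argument is entirely combinatorial, and the only point requiring genuine care --- and the main obstacle I anticipate --- is the precise bookkeeping of which factor contributes its indices to the differential-form slot over $S\RR^n$ and which to the $\largewedge V$-slot, since it is exactly this that pins down the surviving exponents $a$ and $b$. This is also where the double-form formalism proves its worth, by keeping the two slots cleanly apart.
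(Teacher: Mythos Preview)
Your proposal is correct and follows essentially the same approach as the paper: split along the disjoint union $J=\b K\sqcup L$, expand via the binomial identity \eqref{eq:binomial}, and discard all but two terms using that $\largewedge^aW=0$ for $a>\dim W$. The paper presents this as a short chain of equalities (first reducing $(dz_J)^{[r-k]}$ to $(dz_L)^{[r-k]}$, then splitting $\zeta_J$, then pinning down the exponents in $(d\zeta_J)^{[n-r-1]}$), whereas you expand everything simultaneously and then count; the underlying argument is the same.
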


\begin{proof}
We have
\begin{align*}
&\zeta_J(d\zeta_J)^{[n-r-1]}(dz_J)^{[r-k]}(\b{dz_K})^{[k]}\\
&\quad=\zeta_J(d\zeta_J)^{[n-r-1]}(dz_L)^{[r-k]}(\b{dz_K})^{[k]}\\
&\quad=\zeta_{\b K}(d\zeta_J)^{[n-r-1]}(dz_L)^{[r-k]}(\b{dz_K})^{[k]}+\zeta_L(d\zeta_J)^{[n-r-1]}(dz_L)^{[r-k]}(\b{dz_K})^{[k]}\\
&\quad=\zeta_{\b K}(d\zeta_{\b K})^{[k-1]}(d\zeta_L)^{[n-r-k]}(dz_L)^{[r-k]}(\b{dz_K})^{[k]}\\
&\qquad+\zeta_L(d\zeta_{\b K})^{[k]}(d\zeta_L)^{[n-r-k-1]}(dz_L)^{[r-k]}(\b{dz_K})^{[k]}.
\end{align*}
\end{proof}

\begin{remark}
\label{re:HWf}
\begin{enuma}
\item
Since some version of the same argument will appear numerous times throughout the following pages, the previous proof is perhaps worth a brief comment: Observe that three elementary facts are used here, namely the binomial formula \eqref{eq:binomial}, the disjointness of the union $J=\b K\cup L$, and that one has $\bigwedge^aV=0$  for any vector space $V$ and any integer  $a>\dim V$.
\item Observe that one can define $\omega_{r,k}$ also for $ k>n-r$. However, the same argument as above shows that in this case $\omega_{r,k}\otimes\Theta_1 =0$.
\end{enuma}
\end{remark}

\begin{lemma}
\label{lem:HWf}
Let $I\subset\calI$ be non-empty; choose $a\in I$ and $b\in\calI$ with $a\neq b$; and denote $q=|I|$. Let  $Z_{a,b}=z_a\pder{z_b}+ \zeta_a \pder{\zeta_b}$. Then  
\begin{align}
\label{eq:HWf02}
\calL_{Z_{a,b}}(d\zeta_I)^{[q]}&=\calL_{Z_{a,b}}(dz_I)^{[q]}=0,\\
\label{eq:HWf03}
\calL_{Z_{a,b}}\zeta_I(d\zeta_I)^{[q-1]}&=0.
\end{align}
\end{lemma}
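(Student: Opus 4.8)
The plan is to exploit that $\calL_{Z_{a,b}}$ is a bidegree-preserving derivation of the algebra of double forms which commutes with $d$ and acts only on the $M$-component (the coefficients in $\largewedge V$ being parallel). Since on functions $Z_{a,b}z_i=\delta_{bi}z_a$ and $Z_{a,b}\zeta_i=\delta_{bi}\zeta_a$, where $\delta_{bi}=1$ precisely when $b=i$ as elements of $\calI$, one computes at once
\begin{align*}
\calL_{Z_{a,b}}\zeta_I&=\sum_{i\in I}\delta_{bi}\,\zeta_a\otimes dz_i,\\
\calL_{Z_{a,b}}d\zeta_I&=\sum_{i\in I}\delta_{bi}\,d\zeta_a\otimes dz_i,\\
\calL_{Z_{a,b}}dz_I&=\sum_{i\in I}\delta_{bi}\,dz_a\otimes dz_i,
\end{align*}
each of which is $0$ if $b\notin I$ and equals $\zeta_a\otimes dz_b$, resp.\ $d\zeta_a\otimes dz_b$, resp.\ $dz_a\otimes dz_b$, if $b\in I$. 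If $b\notin I$ all three vanish, and \eqref{eq:HWf02}--\eqref{eq:HWf03} follow immediately from the Leibniz rule together with the identity $\calL_{Z_{a,b}}(\beta^{[m]})=(\calL_{Z_{a,b}}\beta)\beta^{[m-1]}$, valid for $(1,1)$-double forms $\beta$. So from now on I assume $b\in I$, whence $q=|I|\geq 2$ because $a\neq b$.

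The second ingredient is the square-free expansion $(d\zeta_I)^{[p]}=\sum_{S\subseteq I,\ |S|=p}\prod_{i\in S}(d\zeta_i\otimes dz_i)$, which follows from \eqref{eq:binomial} and $(d\zeta_i\otimes dz_i)^2=0$ (indeed $d\zeta_i\wedge d\zeta_i=0$); the analogous formula holds for $(dz_I)^{[p]}$. For \eqref{eq:HWf02} I would write $\calL_{Z_{a,b}}(d\zeta_I)^{[q]}=(d\zeta_a\otimes dz_b)(d\zeta_I)^{[q-1]}$ and note that a monomial $\prod_{i\in S}(d\zeta_i\otimes dz_i)$ with $|S|=q-1$ survives multiplication by $d\zeta_a\otimes dz_b$ only if $a\notin S$ (else $d\zeta_a$ occurs twice in the form part) and $b\notin S$ (else $dz_b$ occurs twice in the vector part); but $I\setminus\{a,b\}$ has only $q-2$ elements, so no such $S$ exists and the whole expression vanishes. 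The argument for $(dz_I)^{[q]}$ is identical.

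For \eqref{eq:HWf03} the Leibniz rule gives
\begin{align*}
\calL_{Z_{a,b}}\big(\zeta_I(d\zeta_I)^{[q-1]}\big)=(\zeta_a\otimes dz_b)(d\zeta_I)^{[q-1]}+\zeta_I\,(d\zeta_a\otimes dz_b)(d\zeta_I)^{[q-2]}.
\end{align*}
Running the same avoid-$a$/avoid-$b$ bookkeeping, the only surviving monomial in the first summand is $\prod_{i\in I\setminus\{b\}}(d\zeta_i\otimes dz_i)$, and in the factor multiplying $\zeta_I$ in the second summand only $\prod_{i\in I\setminus\{a,b\}}(d\zeta_i\otimes dz_i)$; moreover the vector part of the latter already involves $dz_i$ for every $i\in I\setminus\{a\}$, so only the $i=a$ term of $\zeta_I=\sum_{i\in I}\zeta_i\otimes dz_i$ contributes. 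Writing $\theta=\prod_{i\in I\setminus\{a,b\}}(d\zeta_i\otimes dz_i)$ and using that these $(1,1)$-factors all commute, the two summands become $(\zeta_a\otimes dz_b)(d\zeta_a\otimes dz_a)\,\theta$ and $(\zeta_a\otimes dz_a)(d\zeta_a\otimes dz_b)\,\theta$. Since the product of two decomposable double forms carries a sign depending only on the bidegrees of the factors, and the two pairs of factors here have the same bidegrees, both summands equal one and the same double form multiplied by $dz_b\wedge dz_a$, resp.\ $dz_a\wedge dz_b$; as $dz_b\wedge dz_a+dz_a\wedge dz_b=0$ in $\largewedge V$, their sum vanishes, which proves \eqref{eq:HWf03}. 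I expect the only real work to sit in this last step: carefully checking that after all the cancellations the two surviving terms genuinely share the factor $\theta$ and appear with opposite signs -- once the combinatorics of which monomials survive is sorted out, the antisymmetry $dz_a\wedge dz_b=-dz_b\wedge dz_a$ closes the argument.
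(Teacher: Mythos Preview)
Your proof is correct and follows essentially the same route as the paper's. Both reduce to the case $b\in I$, dispatch \eqref{eq:HWf02} by a pigeonhole/skew-symmetry count, and for \eqref{eq:HWf03} apply the Leibniz rule, isolate the two surviving monomials, and cancel them via $dz_a\wedge dz_b+dz_b\wedge dz_a=0$. The only cosmetic difference is that the paper works with the compressed expressions $(d\zeta_{I_b})^{[q-1]}$ and $(d\zeta_{I_{a,b}})^{[q-2]}$ and reabsorbs the factor $(d\zeta_a\otimes dz_a)$ back into $(d\zeta_{I_b})^{[q-1]}$, whereas you unpack everything into explicit monomial products before matching terms.
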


\begin{proof}
We may assume $b\in I$; otherwise the claim is obvious. In this case, \eqref{eq:HWf02} follows easily from skew-symmetry. As for \eqref{eq:HWf03}, we have
\begin{align*}
\calL_{Z_{a,b}}\zeta_I(d\zeta_I)^{[q-1]}&=(\zeta_a\otimes dz_b)(d\zeta_{I_b})^{[q-1]}+\zeta_{I_b}(d\zeta_{I_{a,b}})^{[q-2]}(d\zeta_a\otimes dz_b)\\
&=(\zeta_a\otimes dz_b)(d\zeta_{I_b})^{[q-1]}+(\zeta_a\otimes dz_a)(d\zeta_{I_{a,b}})^{[q-2]}(d\zeta_a\otimes dz_b)\\
&=(\zeta_a\otimes dz_b)(d\zeta_{I_b})^{[q-1]}-(\zeta_a\otimes dz_b)(d\zeta_{I_{a,b}})^{[q-2]}(d\zeta_a\otimes dz_a)\\
&=(\zeta_a\otimes dz_b)(d\zeta_{I_b})^{[q-1]}-(\zeta_a\otimes dz_b)(d\zeta_{I_b})^{[q-1]}\\
&=0.
\end{align*}
\end{proof}

\begin{proof}[Proof of Theorem \ref{thm:HWforms}]
First of all, observe that $\zeta_{\b1}^{m-2}$ is obviously a highest weight vector of weight $\lambda_{1,m-2}$ and there is, thus, no loss of generality in assuming $m=2$. There are several cases to be considered separately.

First, it follows at once from Proposition \ref{pro:HWf} and Lemma \ref{lem:HWf} that $X_{\epsilon_i\pm\epsilon_j}\omega_{r,k}=0$ for any $1\leq i<j\leq k$. Furthermore, from Lemma \ref{lem:HWf} we also easily infer that $X_{\epsilon_i\pm\epsilon_j}\omega_{l,-l}=0$ for any $1\leq i<j\leq l$ provided $n=2l$.

Second, let $\omega_{r,k}\otimes\Theta_1=\Omega_1+\Omega_2$ be the decomposition \eqref{eq:HWf01}. If $a\in K$ and $b\in L$, for the vector field $Y_{a,b}=z_{\b a}\pder{z_{\b b}}-z_b\pder{z_a}+\zeta_{\b a}\pder{\zeta_{\b b}}-\zeta_b\pder{\zeta_a}$ one has
\begin{align*}
\calL_{Y_{a,b}}\Omega_1&=\zeta_{\b K}(d\zeta_{\b K})^{[k-1]}(d\zeta_L)^{[n-r-k-1]}(d\zeta_{\b a}\otimes dz_{\b b})(dz_L)^{[r-k]}(\b{dz_K})^{[k]}\\
&=(\zeta_{\b a}\otimes dz_{\b a})(d\zeta_{\b K_{\b a}})^{[k-1]}(d\zeta_L)^{[n-r-k-1]}(d\zeta_{\b a}\otimes dz_{\b b})(dz_L)^{[r-k]}(\b{dz_K})^{[k]}\\
&=-(\zeta_{\b a}\otimes dz_{\b b})(d\zeta_{\b K})^{[k]}(d\zeta_L)^{[n-r-k-1]}(dz_L)^{[r-k]}(\b{dz_K})^{[k]}\\
&=-\calL_{Y_{a,b}}\Omega_2.
\end{align*}
This shows that $X_{\epsilon_i\pm\epsilon_j}\omega_{r,k}=X_{\epsilon_i}\omega_{r,k}=0$ for $1\leq i \leq k < j\leq l$.

Finally, let $a,b\in L$ with $a\neq b$ and consider the vector field $Z_{a,b}$ from Lemma \ref{lem:HWf}. If $ n-r-k\geq 1$, then 
\begin{align*}
\calL_{Z_{a,b}}(d\zeta_L)^{[n-r-k]}(dz_L)^{[r-k]}&=(d\zeta_{L_{a,b}})^{[n-r-k-1]}(d\zeta_a\otimes dz_b)(dz_{L_b})^{[r-k]}\\
&\quad+(d\zeta_{L_b})^{[n-r-k]}(dz_a\otimes dz_b)(dz_{L_{a,b}})^{[r-k-1]}\\
&=(d\zeta_{L_{a,b}})^{[n-r-k-1]}(d\zeta_a dz_a\otimes dz_bdz_a)(dz_{L_{a,b}})^{[r-k-1]}\\
&\quad+(d\zeta_{L_{a,b}})^{[n-r-k-1]}(d\zeta_adz_a\otimes dz_adz_b)(dz_{L_{a,b}})^{[r-k-1]}\\
&=0.
\end{align*}
If $n-r-k=0$, we get the same result directly from Lemma~\ref{lem:HWf}.
If $n-r-k\geq 2$, we similarly have
\begin{align*}
& \calL_{Z_{a,b}}\zeta_L(d\zeta_L)^{[n-r-k-1]}(dz_L)^{[r-k]}\\
&\quad=(\zeta_{a}\otimes dz_b)(d\zeta_{L_b})^{[n-r-k-1]}(dz_{L_b})^{[r-k]}\\
&\qquad+\zeta_{L_b}(d\zeta_{L_{a,b}})^{[n-r-k-2]}(d\zeta_a\otimes dz_b)(dz_{L_b})^{[r-k]}\\
&\qquad+\zeta_{L_b}(d\zeta_{L_{b}})^{[n-r-k-1]}(dz_a\otimes dz_b)(dz_{L_{a,b}})^{[r-k-1]}\\
&\quad=(\zeta_{a}\otimes dz_b)(d\zeta_{L_{a,b}})^{[n-r-k-1]}(dz_a\otimes dz_a)(dz_{L_{a,b}})^{[r-k-1]}\\
&\qquad+(\zeta_{a}\otimes dz_b)(d\zeta_{L_{a,b}})^{[n-r-k-2]}(d\zeta_a\otimes dz_a)(dz_{L_{a,b}})^{[r-k]}\\
&\qquad+(\zeta_a\otimes dz_a)(d\zeta_{L_{a,b}})^{[n-r-k-2]}(d\zeta_a\otimes dz_b)(dz_{L_{a,b}})^{[r-k]}\\
&\qquad+\zeta_{L_{a,b}}(d\zeta_{L_{a,b}})^{[n-r-k-2]}(d\zeta_adz_a\otimes dz_bdz_a)(dz_{L_{a,b}})^{[r-k-1]}\\
&\qquad+(\zeta_a\otimes dz_a)(d\zeta_{L_{a,b}})^{[n-r-k-1]}(dz_a\otimes dz_b)(dz_{L_{a,b}})^{[r-k-1]}\\
&\qquad+\zeta_{L_{a,b}}(d\zeta_{L_{a,b}})^{[n-r-k-2]}(d\zeta_a dz_a\otimes dz_a dz_b)(dz_{L_{a,b}})^{[r-k-1]}\\
&\quad=0.
\end{align*}
If $n-r-k=1$, we get the same directly from Lemma~\ref{lem:HWf}.
Hence $X_{\epsilon_i\pm\epsilon_j}\omega_{r,k}=0$ for $k<i<j\leq l$ as well as $X_{\epsilon_i}\omega_{r,k}=0$ for $k<i\leq l$.

Altogether, we have shown that both $\omega_{r,k}$ and $\omega_{l,-l}$ are highest weight vectors in fact.  Using Proposition \ref{pro:HWf}, one immediately verifies that their weights are $\lambda_{k,2}$ and $\lambda_{-l,2}$, respectively.
\end{proof}

\begin{definition}
\label{def:hwv}
For any  $r,k,m\in\NN$ with $r\leq n-1$, $k\leq \min\{r,n-r\}$, and $m\geq2$, let us define a valuation by
\begin{align*}
\phi_{r,k,m}(K)=\frac{(\sqrt{-1})^{\lfloor\frac n2\rfloor}(\sqrt 2)^{m-2}}{s_{n+m-r-3}}\int_{N(K)}\omega_{r,k,m},\quad K\in\calK(\RR^n).
\end{align*}
 If $n=2l$, then we define in addition
\begin{align*}
\phi_{l,-l,m}(K)=\frac{(\sqrt{-1})^l (\sqrt 2)^{m-2}}{s_{l+m-3}}\int_{N(K)}\omega_{l,-l,m},\quad K\in\calK(\RR^n).
\end{align*}
\end{definition}
 Clearly, the mapping $\Omega^{n-1}(S\RR^n)^{\tr}\to \Val^\infty(\RR^n)$ given by \eqref{eq:normalcycle} commutes with the $\SO(n)$-action on the two spaces. Hence, by Theorem~\ref{thm:HWforms}, $\phi_{r,k,m}\in\Val^\infty_r(\RR^n)$, is a highest weight vector of weight $\lambda_{k,m}$ and similarly $\phi_{l,-l,m}\in\Val^{\infty}_l(\RR^{2l})$ is a highest weight vector of weight $\lambda_{-l,m}$.  Observe that $\phi_{r,k,m}\in\Val_r^s(\RR^n)$ as well as $\phi_{l,-l,m}\in\Val_r^s(\RR^{2l})$ for $m\equiv s\mod2$, cf.\ Remark \ref{re:evenWeights}. However, it is a priori not clear that these valuations are not identically zero; this will only become evident from Theorem~\ref{thm:PD} below.

\section{The Rumin differential}

\label{ss:RuminDI}

We will now compute the action of the Rumin differential $D$ on the highest weight vectors $\omega_{r,k,m}$  constructed in the previous section. We will assume
\begin{align}
\label{eq:assumption}
r,k,m\in\NN,\quad r\leq n-1,\quad k\leq \min\{r,n-r\},\quad\text{and}\quad m\geq2
\end{align}
throughout the entire section. Let us also define more (double) forms as follows:
\begin{align*}
\delta_{r,k}\otimes\Theta_1&=(d\zeta_J)^{[n-r]}(dz_J)^{[r-k]}(\b{dz_K})^{[k]},\\
\theta_{r,k}\otimes\Theta_1&=\b{\zeta_K}(\b{d\zeta_K})^{[k-1]}\zeta_J(d\zeta_J)^{[n-r-k]}(dz_J)^{[r-1]},\\
\sigma_{r,k}\otimes\Theta_1&=(\b{d\zeta_K})^{[k]}\zeta_J(d\zeta_J)^{[n-r-k]}(dz_J)^{[r-1]},\\
\tau_{r,k}\otimes\Theta_1&=\b{\zeta_K}(\b{d\zeta_K})^{[k-1]}(d\zeta_J)^{[n-r-k+1]}(dz_J)^{[r-1]}.
\end{align*}

\begin{theorem}
\label{thm:RuminD}
Denote $c_{r,m}=(-1)^{n+1}(n+m-r-2)$. Then
\begin{align*}
D\omega_{r,k,m}&=d\left(\omega_{r,k,m}+c_{r,m}\zeta_{\b1}^{m-2}\theta_{r,k}\alpha\right)\\
&=c_{r,m}\zeta_{\b 1}^{m-2}\left[(m+k-1)\sigma_{r,k}+(-1)^{k+1}(n-r-k+1)\tau_{r,k}\right]\alpha.
\end{align*}
\end{theorem}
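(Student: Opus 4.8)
The plan is to compute $D\omega_{r,k,m}$ directly from the definition of the Rumin differential, exploiting the factorization $\omega_{r,k,m}=\zeta_{\b1}^{m-2}\omega_{r,k}$ and the double-form formalism. Recall that $D\omega$ is characterized by: there is a unique $(n-1)$-form $\alpha\wedge\xi$ so that $d(\omega+\alpha\wedge\xi)$ is divisible by the contact form $\alpha=\alpha_\calI$. So the heart of the argument is twofold: (i) identify the correcting term explicitly as $c_{r,m}\zeta_{\b1}^{m-2}\theta_{r,k}\,\alpha$, and (ii) compute $d$ of the sum and check that the result is a multiple of $\alpha$, simplifying it to the claimed combination of $\sigma_{r,k}$ and $\tau_{r,k}$. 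Since $\zeta_{\b1}$ is $\SO(n)$-equivariant up to weight shift, there is no real loss in carrying the $\zeta_{\b1}^{m-2}$ factor along formally; the differential $d\zeta_{\b1}$ will contribute terms proportional to $(m-2)$, which is where the $m$-dependence in the coefficients enters.

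The key computational steps, in order, would be: First, record the exterior derivatives of the building blocks in the double-form notation: $d\alpha_I=\gamma_I\wedge(\text{something})+\sum d\zeta_{\b i}\,dz_i$ — more precisely one needs $d\alpha = -\gamma + \sum_{i\in\calI} d\zeta_{\b i}\wedge dz_i$ type identities, together with the facts that $d$ of $(d\zeta_J)^{[p]}$, $(dz_J)^{[p]}$, $(\b{dz_K})^{[p]}$ all vanish, and that $d\zeta_J = \sum_{i\in J} d\zeta_{\b i}\wedge dz_i$ appears (note $\zeta_J=\sum_{i\in J}\zeta_i\otimes dz_i$ has $d(\zeta_J) = \sum d\zeta_i\otimes dz_i$, a $(1,1)$ double form). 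Second, differentiate $\omega_{r,k}$: because the factors $(d\zeta_J)^{[n-r-1]}(dz_J)^{[r-k]}(\b{dz_K})^{[k]}$ are closed, $d(\omega_{r,k}\otimes\Theta_1) = (\sum d\zeta_i\otimes dz_i)(d\zeta_J)^{[n-r-1]}(dz_J)^{[r-k]}(\b{dz_K})^{[k]}$, and one splits the sum over $i\in\calI = K\cup\b K\cup L$, using disjointness and the vanishing $\largewedge^a V = 0$ for $a>\dim V$ (exactly the mechanism of Proposition~\ref{pro:HWf} and Remark~\ref{re:HWf}) to kill most terms. Third, the surviving terms will either already be divisible by $\alpha$ or must be cancelled against $d(\zeta_{\b1}^{m-2}\theta_{r,k}\alpha)$; computing the latter uses $d\alpha$ and the derivative of $\theta_{r,k}$, and the constant $c_{r,m}=(-1)^{n+1}(n+m-r-2)$ is pinned down by requiring the non-$\alpha$-divisible parts to cancel. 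Fourth, collect the coefficient of $\alpha$ in the remainder, identify it (again by the disjoint-union/degree-count bookkeeping) as $c_{r,m}\zeta_{\b1}^{m-2}[(m+k-1)\sigma_{r,k}+(-1)^{k+1}(n-r-k+1)\tau_{r,k}]$, where the numerical coefficients $m+k-1$ and $n-r-k+1$ arise as multiplicities counting how many ways a given monomial is produced (the $k-1$ from the $(\b{d\zeta_K})^{[k-1]}$ slot, the $+m$ contribution from $d\zeta_{\b1}$, the $n-r-k+1$ from the $(d\zeta_J)$ slot), and the sign $(-1)^{k+1}$ from reordering $\b{dz_K}$ factors past $dz_J$ and $\alpha$.

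The main obstacle I expect is the bookkeeping in the third and fourth steps: getting the signs right when commuting the $(1,1)$ double forms past one another (each swap of a $(p,q)$ form past an $(r,s)$ form costs $(-1)^{pr+qs}$), correctly tracking which terms are ``automatically'' divisible by $\alpha$ versus which require the $\theta_{r,k}\alpha$ correction, and verifying that the combinatorial multiplicities assemble into precisely $m+k-1$ and $n-r-k+1$. A secondary subtlety is handling the boundary/degenerate cases — e.g.\ $k=r$, $k=n-r$, or small $n-r-k$ — where some of the forms $\sigma_{r,k},\tau_{r,k}$ or intermediate expressions degenerate (here the conventions $\beta^{[-1]}=0$ and $\binom{i}{j}=0$ for $i<j$, together with Remark~\ref{re:HWf}, make the formulas uniformly valid). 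Once the identity $D\omega_{r,k,m}=d(\cdots)$ and the closed form of the $\alpha$-multiple are both verified, the theorem follows; the verification that $d(\omega_{r,k,m}+c_{r,m}\zeta_{\b1}^{m-2}\theta_{r,k}\alpha)$ is genuinely divisible by $\alpha$ is exactly the statement that this correcting term is the one demanded by the Lefschetz-decomposition characterization of $D$, so no separate uniqueness argument is needed beyond exhibiting it.
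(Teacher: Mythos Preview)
Your overall strategy---add the correction $c_{r,m}\zeta_{\b1}^{m-2}\theta_{r,k}\alpha$, differentiate, and check divisibility by $\alpha$---is exactly the paper's, and your identification of the auxiliary derivatives (e.g.\ $d\omega_{r,k}=(n-r)\delta_{r,k}$, $d\theta_{r,k}=k\sigma_{r,k}+(-1)^{k+1}(n-r-k+1)\tau_{r,k}$, and the $d\zeta_{\b1}$-relations) is on target. However, you have mislocated the main difficulty. When you expand $d(\theta_{r,k}\alpha)$ you get a term $(-1)^{n-1}\theta_{r,k}\,d\alpha$, and this is \emph{not} dealt with by sign-and-multiplicity bookkeeping alone: one needs the identity
\[
\theta_{r,k}\,d\alpha=\delta_{r,k}+(-1)^n\sigma_{r,k}\,\alpha
\]
on $S\RR^n$. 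This is the heart of the computation and is genuinely nontrivial: on $\RR^n\times\RR^n$ it is false, and one must instead show that the form $(\delta_{r,k}\nu-\theta_{r,k}d\alpha+(-1)^n\sigma_{r,k}\alpha)\gamma$ vanishes there (splitting $\alpha,\gamma,\nu$ over $K,\b K,L$ yields roughly ten separate vanishing statements), then invoke Lemma~\ref{lemma:RestrictionSphere} and the relation $\nu=1$ on the sphere bundle. Your sketch never mentions $\delta_{r,k}$, the function $\nu$, or the passage from $\RR^n\times\RR^n$ to $S\RR^n$, and your remark that $d\alpha$ involves ``$-\gamma$'' is incorrect ($d\alpha=\sum_{i\in\calI}d\zeta_{\b i}\wedge dz_i$; the form $\gamma=\sum\zeta_{\b i}d\zeta_i$ enters only through the restriction argument). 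Without this identity you cannot see that the $\delta_{r,k}$-terms coming from $d\omega_{r,k}$, from $d\zeta_{\b1}\wedge\omega_{r,k}$, and from $\theta_{r,k}d\alpha$ cancel with coefficient $m-2+n-r+(-1)^nc_{r,m}=0$, which is precisely what pins down $c_{r,m}$ and leaves only the $\alpha$-multiple you want.
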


The key ingredient in the proof of this result will be the relation \eqref{eq:RuminD1} among $\delta_{r,k}$, $\theta_{r,k}$, and $\sigma_{r,k}$ below whose proof, in turn, decays into a series of auxiliary identities. Let us begin with stating and verifying those.

\begin{proposition}
One has
\begin{align}
\label{eq:RuminD01}
\b{\zeta_K}(\b{d\zeta_K})^{[k-1]}d\zeta_{\b K}&=(-1)^{k-1}(\b{d\zeta_K})^{[k]} \zeta_{\b K},\\[2ex]
\label{eq:RuminD02}
\b{\zeta_K}(\b{d\zeta_K})^{[k-1]}\gamma_{\b K}&=(-1)^{k-1}(\b{d\zeta_K})^{[k]} \nu_K,\\[2ex]
\label{eq:RuminD03}
\delta_{r,k}\otimes\Theta_1&=(-1)^k(\b{d\zeta_K})^{[k]}(d\zeta_L)^{[n-r-k]}(dz_L)^{[r-k]}(dz_{\b K})^{[k]},\\[2ex]
\label{eq:RuminD04}
\begin{split}
\sigma_{r,k}\otimes\Theta_1&=(\b{d\zeta_K})^{[k]}\zeta_{\b K}(d\zeta_L)^{[n-r-k]}(dz_L)^{[r-k]}(dz_{\b K})^{[k-1]}\\
&\quad+(\b{d\zeta_K})^{[k]}\zeta_L(d\zeta_L)^{[n-r-k]}(dz_L)^{[r-k-1]}(dz_{\b K})^{[k]},
\end{split}\\[2ex]
\label{eq:RuminD05}
(d\zeta_L)^{[n-r-k]}(dz_L)^{[r-k]}\alpha_L&=-(d\zeta_L)^{[n-r-k-1]}(dz_L)^{[r-k+1]}\gamma_L,\\[2ex]
\label{eq:RuminD06}
\begin{split}
\zeta_L(d\zeta_L)^{[n-r-k]}(dz_L)^{[r-k-1]}\alpha_L&=(-1)^{n-1}(d\zeta_L)^{[n-r-k]}(dz_L)^{[r-k]}\nu_L\\
&\quad-\zeta_L(d\zeta_L)^{[n-r-k-1]}(dz_L)^{[r-k]}\gamma_L,
\end{split}\\[2ex]
\label{eq:RuminD07}
\zeta_J(d\zeta_J)^{[n-r-k]}(dz_J)^{[r-1]}d\alpha_L&=(-1)^{n-k}(d\zeta_J)^{[n-r-k]}(dz_J)^{[r]}\gamma_L,\\[2ex]
\label{eq:RuminD08}
\begin{split}
\theta_{r,k}d\alpha_{\b K}\otimes\Theta_1&=(-1)^{n-k}\b{\zeta_K}(\b{d\zeta_K})^{[k-1]}(d\zeta_L)^{[n-r-k]}(dz_J)^{[r]}\gamma_K\\
&\quad+(-1)^n(\b{d\zeta_K})^{[k]}\zeta_L(d\zeta_L)^{[n-r-k-1]}(dz_J)^{[r]}\gamma_K.
\end{split}
\end{align}
\end{proposition}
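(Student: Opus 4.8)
The plan is to establish the eight identities one after another; each is a purely algebraic consequence of the double-form calculus introduced above, and none needs more than the following tools: the Leibniz rule for the exterior derivative; the binomial identity \eqref{eq:binomial}; the graded-commutativity $\omega\theta=(-1)^{pr+qs}\theta\omega$ for double forms of types $(p,q)$ and $(r,s)$; the relation $\beta^{[j-1]}\beta=j\,\beta^{[j]}$ valid for a $(1,1)$-form $\beta$; the disjoint decompositions $\calI=K\sqcup\b K\sqcup L$ and $J=\b K\sqcup L$; and the vanishing of a wedge power once the relevant form- or value-subspace is saturated, in particular $\largewedge^aW=0$ whenever $a>\dim W$ (cf.\ Remark \ref{re:HWf}). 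It pays to isolate a few ``atomic'' moves --- expanding a power of a sum along a disjoint union, transposing two factors and collecting the sign, absorbing a one-form factor into a higher power --- to prove each once, and then to assemble the eight identities from them rather than passing to coordinates.

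Identities \eqref{eq:RuminD01}--\eqref{eq:RuminD04} are reindexing statements. For \eqref{eq:RuminD01} one transposes $\b{\zeta_K}$ to the far right, picking up the sign $(-1)^{k-1}$, and uses that the remaining product is saturated on the value side, so that only the ``diagonal'' term survives; \eqref{eq:RuminD02} is the same transposition-and-saturation argument with $\gamma_{\b K}$ and $\nu_K$ in place of $d\zeta_{\b K}$ and $\zeta_{\b K}$. For \eqref{eq:RuminD03} one expands $(d\zeta_J)^{[n-r]}$ via \eqref{eq:binomial} along $J=\b K\sqcup L$, retains only the summand $(d\zeta_{\b K})^{[k]}(d\zeta_L)^{[n-r-k]}$ --- every other one being annihilated by saturation against the surrounding powers --- and transposes into the stated order; \eqref{eq:RuminD04} is, for $\sigma_{r,k}$, the exact analogue of the splitting in Proposition \ref{pro:HWf}, obtained by writing $\zeta_J=\zeta_{\b K}+\zeta_L$ and again using $J=\b K\sqcup L$. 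Apart from the signs, none of these is delicate.

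Identities \eqref{eq:RuminD05} and \eqref{eq:RuminD06} bring in the contact form. Writing $\alpha_L=\sum_{i\in L}\zeta_{\b i}\,dz_i$ and expanding, the same transposition-and-saturation arguments reduce $(d\zeta_L)^{[\ast]}(dz_L)^{[\ast]}\alpha_L$ to the stated expressions built from $\gamma_L$; in \eqref{eq:RuminD06} the extra term $\nu_L$ is the contribution arising when the prefactor $\zeta_L$ collides with the summand of $\alpha_L$ carrying the same index. Finally, \eqref{eq:RuminD07} and \eqref{eq:RuminD08} are the crux, because here the exterior derivative enters through $d\alpha_L=\sum_{i\in L}d\zeta_{\b i}\wedge dz_i$ (respectively $d\alpha_{\b K}$). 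The strategy is to expand this sum over $i$ and to check, term by term, that after wedging with $(d\zeta_J)^{[n-r-k]}(dz_J)^{[r-1]}$ and the prefactor $\zeta_J$ --- respectively the $\theta_{r,k}$-prefactor, whose definition already carries $\b{\zeta_K}(\b{d\zeta_K})^{[k-1]}$, so that \eqref{eq:RuminD01}--\eqref{eq:RuminD02} are available --- all but one contribution per index $i$ vanish, by repetition of a $dz$- or $d\zeta$-factor, and that the survivors resum into $\gamma_L$ (respectively $\gamma_K$); tracking the sign $(-1)^{n-k}$ through the transpositions then gives \eqref{eq:RuminD07}, and for \eqref{eq:RuminD08} one splits the $d\zeta_J$-powers once more along $J=\b K\sqcup L$ to separate the two terms on the right-hand side.

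The main obstacle is this last pair. Conceptually there is nothing new, but $d\alpha$ produces a cloud of terms indexed by its summation index together with the slot into which each of its factors lands among the ambient powers, and one must dispose of all of them except the diagonal ones while carrying the global sign correctly through many transpositions of degree-one factors. Organizing the computation around the atomic moves above --- so that, say, ``absorb a one-form into a power'' and ``the relevant subspace is saturated, hence only the diagonal survives'' are each proved once and then quoted --- is precisely what keeps this tractable, and is the role the double-form formalism plays here.
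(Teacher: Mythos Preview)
Your plan is correct and matches the paper's own proof essentially step for step: the paper too proves \eqref{eq:RuminD01}--\eqref{eq:RuminD02} by the transposition-and-saturation move you describe (expanding over $i\in K$), handles \eqref{eq:RuminD03}--\eqref{eq:RuminD04} by the binomial expansion along $J=\b K\sqcup L$ and by the argument of Proposition~\ref{pro:HWf}, and treats \eqref{eq:RuminD05}--\eqref{eq:RuminD08} by expanding $\alpha_L$, $d\alpha_L$, $d\alpha_{\b K}$ index by index and discarding repeated factors. One small refinement worth anticipating in \eqref{eq:RuminD07}: after the termwise manipulation two contributions per $i\in L$ survive, and the unwanted one vanishes only after summing over $i$ (since $\b L=L$ forces $\sum_{i\in L}d\zeta_i\,d\zeta_{\b i}=0$), not by repetition of a single factor.
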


\begin{proof}
To show \eqref{eq:RuminD01}, observe that for each $i\in K$ we have
\begin{align*}
\b{\zeta_K}(\b{d\zeta_K})^{[k-1]}(d\zeta_{\b i}\otimes dz_{\b i})&=\b{\zeta_K}(\b{d\zeta_{K_i}})^{[k-1]}(d\zeta_{\b i}\otimes dz_{\b i})\\
&=(\zeta_{\b i}\otimes dz_i)(\b{d\zeta_{K_i}})^{[k-1]}(d\zeta_{\b i}\otimes dz_{\b i})\\
&=(-1)^{k-1}(\b{d\zeta_{K_i}})^{[k-1]}(d\zeta_{\b i}\otimes dz_i)(\zeta_{\b i}\otimes dz_{\b i})\\
&=(-1)^{k-1}(\b{d\zeta_K})^{[k]}(\zeta_{\b i}\otimes dz_{\b i})
\end{align*}
and sum over $i$. The proof of \eqref{eq:RuminD02} is completely analogous. As for \eqref{eq:RuminD03}, one has
\begin{align*}
(d\zeta_J)^{[n-r]}(dz_J)^{[r-k]}(\b{dz_K})^{[k]}&=(d\zeta_J)^{[n-r]}(dz_L)^{[r-k]}(\b{dz_K})^{[k]}\\
&=(d\zeta_{\b K})^{[k]}(d\zeta_L)^{[n-r-k]}(dz_L)^{[r-k]}(\b{dz_K})^{[k]}
\end{align*}
from which the claim follows since $(d\zeta_{\b K})^{[k]}(\b{dz_K})^{[k]}=(-1)^k(\b{d\zeta_K})^{[k]}(dz_{\b K})^{[k]}$. \eqref{eq:RuminD04} is proven in precisely the same way as \eqref{eq:HWf01} above. Further, for any $i\in L$ we have
\begin{align*}
(d\zeta_L)^{[n-r-k]}(dz_L)^{[r-k]}\zeta_{\b i}dz_i&=(d\zeta_{L_i})^{[n-r-k-1]}(d\zeta_i\otimes dz_i)(dz_{L_i})^{[r-k]}\zeta_{\b i}dz_i\\
&=-(d\zeta_L)^{[n-r-k-1]}(dz_L)^{[r-k+1]}\zeta_{\b i}d\zeta_i
\end{align*}
and
\begin{align*}
&\zeta_L(d\zeta_L)^{[n-r-k]}(dz_L)^{[r-k-1]}\zeta_{\b i}dz_i \\
&\quad=(\zeta_i\otimes dz_i)(d\zeta_{L_i})^{[n-r-k]}(dz_{L_i})^{[r-k-1]}\zeta_{\b i}dz_i\\
&\qquad+\zeta_{L_i}(d\zeta_{L_i})^{[n-r-k-1]}(d\zeta_i\otimes dz_i)(dz_{L_i})^{[r-k-1]}\zeta_{\b i}dz_i \\
&\quad=(-1)^{n-1}(d\zeta_{L_i})^{[n-r-k]}(dz_L)^{[r-k]}\zeta_i\zeta_{\b i}-\zeta_{L_i}(d\zeta_{L_i})^{[n-r-k-1]}(dz_L)^{[r-k]}\zeta_{\b i}d\zeta_i \\
&\quad=(-1)^{n-1}(d\zeta_L)^{[n-r-k]}(dz_L)^{[r-k]}\zeta_i\zeta_{\b i}-(\zeta_i\otimes dz_i)(d\zeta_{L_i})^{[n-r-k-1]}(dz_L)^{[r-k]}\zeta_{\b i}d\zeta_i\\
&\qquad-\zeta_{L_i}(d\zeta_{L_i})^{[n-r-k-1]}(dz_L)^{[r-k]}\zeta_{\b i}d\zeta_i \\
& \quad = (-1)^{n-1}(d\zeta_L)^{[n-r-k]}(dz_L)^{[r-k]}\zeta_i\zeta_{\b i}-\zeta_{L}(d\zeta_{L})^{[n-r-k-1]}(dz_L)^{[r-k]}\zeta_{\b i}d\zeta_i,
\end{align*}
which proves \eqref{eq:RuminD05} and \eqref{eq:RuminD06}, respectively. Similarly, for any $i\in L$ we have
\begin{align*}
&\zeta_J(d\zeta_J)^{[n-r-k]}(dz_J)^{[r-1]}d\zeta_{\b i}dz_i\\
&\quad =(\zeta_i\otimes dz_i)(d\zeta_J)^{[n-r-k]}(dz_{J_i})^{[r-1]}d\zeta_{\b i}dz_i+\zeta_{J_i}(d\zeta_J)^{[n-r-k]}(dz_{J_i})^{[r-1]}d\zeta_{\b i}dz_i \\
&\quad =(-1)^{n-k}(d\zeta_{J_i})^{[n-r-k]}(dz_{J_i})^{[r-1]}(dz_i\otimes dz_i)\zeta_id\zeta_{\b i}\\
&\qquad+\zeta_{J_i}(d\zeta_{J_i})^{[n-r-k-1]}(dz_{J_i})^{[r-1]}(dz_i\otimes dz_i)d\zeta_id\zeta_{\b i} \\
&\quad =(-1)^{n-k}(d\zeta_{J_i})^{[n-r-k]}(dz_J)^{[r]}\zeta_id\zeta_{\b i}+\zeta_{J_i}(d\zeta_{J_i})^{[n-r-k-1]}(dz_J)^{[r]} d\zeta_id\zeta_{\b i} \\
&\quad =(-1)^{n-k}(d\zeta_J)^{[n-r-k]}(dz_J)^{[r]}\zeta_id\zeta_{\b i}\\
&\qquad-(-1)^{n-k}(d\zeta_{J_i})^{[n-r-k-1]}(d\zeta_i\otimes dz_i)(dz_J)^{[r]}\zeta_id\zeta_{\b i}\\
&\qquad+\zeta_J(d\zeta_{J_i})^{[n-r-k-1]}(dz_J)^{[r]} d\zeta_id\zeta_{\b i} \\
&\qquad-(\zeta_i\otimes dz_i)(d\zeta_{J_i})^{[n-r-k-1]}(dz_J)^{[r]} d\zeta_id\zeta_{\b i} \\
&\quad =(-1)^{n-k}(d\zeta_J)^{[n-r-k]}(dz_J)^{[r]}\zeta_id\zeta_{\b i}+\zeta_J(d\zeta_J)^{[n-r-k-1]}(dz_J)^{[r]} d\zeta_id\zeta_{\b i}.
\end{align*}
Taking the sum over $i\in L$, the second term in the resulting expression clearly does not contribute while the first yields \eqref{eq:RuminD07}. Finally, let us prove \eqref{eq:RuminD08}. First, for any $i\in K$ we have
\begin{align*}
&\b{\zeta_K}(\b{d\zeta_K})^{[k-1]}\zeta_J(d\zeta_J)^{[n-r-k]}(dz_J)^{[r-1]}d\zeta_i dz_{\b i}\\
&\quad=\b{\zeta_K}(\b{d\zeta_K})^{[k-1]}\zeta_J(d\zeta_L)^{[n-r-k]}(dz_{J_{\b i}})^{[r-1]}d\zeta_i dz_{\b i}\\
&\qquad+\b{\zeta_K}(\b{d\zeta_K})^{[k-1]}\zeta_Jd\zeta_{\b K}(d\zeta_L)^{[n-r-k-1]}(dz_{J_{\b i}})^{[r-1]}d\zeta_i dz_{\b i}.
\end{align*}
Concerning the first term,
\begin{align*}
&\b{\zeta_K}(\b{d\zeta_K})^{[k-1]}\zeta_J(d\zeta_L)^{[n-r-k]}(dz_{J_{\b i}})^{[r-1]}d\zeta_i dz_{\b i}\\
&\quad=\b{\zeta_K}(\b{d\zeta_K})^{[k-1]}(\zeta_{\b i}\otimes dz_{\b i})(d\zeta_L)^{[n-r-k]}(dz_{J_{\b i}})^{[r-1]}d\zeta_i dz_{\b i}\\
&\quad=(-1)^{n-k}\b{\zeta_K}(\b{d\zeta_K})^{[k-1]}(d\zeta_L)^{[n-r-k]}(dz_{J_{\b i}})^{[r-1]}(dz_{\b i}\otimes dz_{\b i})\zeta_{\b i}d\zeta_i \\
&\quad=(-1)^{n-k}\b{\zeta_K}(\b{d\zeta_K})^{[k-1]}(d\zeta_L)^{[n-r-k]}(dz_J)^{[r]}\zeta_{\b i}d\zeta_i.
\end{align*}
As for the second, by \eqref{eq:RuminD01} we have
\begin{align*}
&\b{\zeta_K}(\b{d\zeta_K})^{[k-1]}\zeta_Jd\zeta_{\b K}(d\zeta_L)^{[n-r-k-1]}(dz_{J_{\b i}})^{[r-1]}d\zeta_i dz_{\b i}\\
&\quad= (-1)^k(\b{d\zeta_K})^{[k]}\zeta_{\b K}\zeta_L(d\zeta_L)^{[n-r-k-1]}(dz_{J_{\b i}})^{[r-1]}d\zeta_i dz_{\b i}\\
&\quad= (-1)^k(\b{d\zeta_K})^{[k]}(\zeta_{\b i}\otimes dz_{\b i})\zeta_L(d\zeta_L)^{[n-r-k-1]}(dz_{J_{\b i}})^{[r-1]}d\zeta_i dz_{\b i}\\
&\quad= (-1)^n(\b{d\zeta_K})^{[k]}\zeta_L(d\zeta_L)^{[n-r-k-1]}(dz_J)^{[r]}\zeta_{\b i}d\zeta_i
\end{align*}
and summing over $i$ thus finishes the proof.
\end{proof}

\begin{lemma}
\label{lem:RuminDRelations}
The following forms are trivial on $\RR^{n}\times\RR^n$:
\begin{enuma}
\item $-\theta_{r,k}d\alpha_K+(-1)^n\sigma_{r,k}\alpha_K$,
\item $\delta_{r,k}\nu_K+(-1)^n\sigma_{r,k}\alpha_{\b K}$,
\item $\theta_{r,k}d\alpha_{\b K}\gamma_K$,

\item $\sigma_{r,k}\gamma_{\b K}$,
\item $\delta_{r,k}\gamma_{\b K}$,

\item $\delta_{r,k}\nu_K\gamma_K-\theta_{r,k}d\alpha_{\b K}\gamma_{\b K}$,
\item $\delta_{r,k}\nu_L\gamma_L+(-1)^n\sigma_{r,k}\alpha_L\gamma_L$,
\item $\delta_{r,k}\nu_K\gamma_L-\theta_{r,k}d\alpha_L\gamma_{\b K}$,
\item $\theta_{r,k}d\alpha_L\gamma_L$,
\item $\left(\delta_{r,k}\nu_L-\theta_{r,k}d\alpha_L+(-1)^n\sigma_{r,k}\alpha_L\right)\gamma_K-\theta_{r,k}d\alpha_{\b K}\gamma_L$.
\end{enuma}
\end{lemma}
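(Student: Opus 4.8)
The plan is to establish the ten vanishing statements (a)--(j) one at a time. In each case the recipe is the same: expand the ``small'' factors $\alpha_\bullet$, $d\alpha_\bullet$, $\gamma_\bullet$, $\nu_\bullet$ into their defining sums and simplify the result with the auxiliary relations \eqref{eq:RuminD01}--\eqref{eq:RuminD08} just proved. Besides these, the only ingredients needed are the elementary facts already used repeatedly (cf.\ Remark~\ref{re:HWf}): the binomial identity \eqref{eq:binomial}, the disjointness of the unions $\calI=K\cup\b K\cup L$ and $J=\b K\cup L$, the vanishing of $\bigwedge^a V$ above $\dim V$, and two further evident observations---that $\alpha_I$ and $\gamma_I$ are ordinary $1$-forms, so $\alpha_I\alpha_I=\gamma_I\gamma_I=0$, and that any product in which a form-valued differential $d\zeta_i$ or $dz_i$ occurs twice vanishes.

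Several of the identities collapse at once. In $\sigma_{r,k}$---and, via \eqref{eq:RuminD03}, in $\delta_{r,k}$---the block $(\b{d\zeta_K})^{[k]}$ already uses up in the form slot all the differentials out of which $\gamma_{\b K}$ is built, so wedging with $\gamma_{\b K}$ annihilates the product; this gives (d) and (e). Similarly, \eqref{eq:RuminD08} and \eqref{eq:RuminD07} exhibit $\theta_{r,k}d\alpha_{\b K}$ and $\theta_{r,k}d\alpha_L$ as expressions carrying a factor $\gamma_K$, resp.\ $\gamma_L$, so a further multiplication by the same $1$-form produces a square, hence $0$, which is (c) and (i). The identities (a) and (b) are the substantive ones and call for a genuine termwise calculation in the style of the proofs of \eqref{eq:RuminD01}--\eqref{eq:RuminD08}: one expands $d\alpha_K=\sum_{i\in K}d\zeta_{\b i}\,dz_i$ and $\alpha_K$ for (a), respectively $\nu_K$ and $\alpha_{\b K}$ for (b), slides the scalar or $1$-form factor past the determinantal blocks, applies the relevant one of \eqref{eq:RuminD01}--\eqref{eq:RuminD04} term by term, and checks that the two sides agree up to the sign $(-1)^n$.

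The four remaining identities (f), (g), (h), (j) couple two of the small forms and are obtained by chaining together two or three of \eqref{eq:RuminD01}--\eqref{eq:RuminD08}---and, where it shortens matters, the identities already established above. Thus (f) uses \eqref{eq:RuminD08} and \eqref{eq:RuminD02}; (g) uses \eqref{eq:RuminD03}--\eqref{eq:RuminD06} together with $\gamma_L\gamma_L=0$; and (h) uses \eqref{eq:RuminD02}, \eqref{eq:RuminD03} and \eqref{eq:RuminD07}. The last identity (j) is the most laborious step: here one must gather all the $\gamma_K$- and $\gamma_L$-contributions coming from \eqref{eq:RuminD05}--\eqref{eq:RuminD08}, substitute (a), (g) and (i), and verify that the resulting terms cancel in pairs. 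I expect the only genuine obstacle throughout to be sign bookkeeping---tracking the Koszul signs $(-1)^{pr+qs}$ that appear when commuting double forms of mixed bidegree and when pushing the one-form factors past the determinantal blocks---and it is precisely to keep this under control that the double-form calculus, in particular the abbreviation $\beta^{[j]}=\frac1{j!}\beta^j$ and its binomial identity \eqref{eq:binomial}, proves its worth.
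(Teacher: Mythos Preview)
Your plan is correct and follows essentially the same route as the paper's own proof: items (c), (d), (e), (i) are disposed of immediately via \eqref{eq:RuminD03}, \eqref{eq:RuminD07}, \eqref{eq:RuminD08} and the squaring argument you describe; (a) and (b) are done by the termwise manipulation you indicate; and (f)--(h), (j) are obtained by chaining the auxiliary identities, with (j) requiring the most bookkeeping. The only minor deviation is in the precise list of auxiliary identities you cite for (f) and (j)---the paper handles (f) directly via \eqref{eq:RuminD02} and \eqref{eq:RuminD03} without passing through \eqref{eq:RuminD08}, and for (j) it computes the three pieces $\theta_{r,k}d\alpha_L$, $(-1)^n\sigma_{r,k}\alpha_L+\delta_{r,k}\nu_L$, and $\theta_{r,k}d\alpha_{\b K}$ separately rather than substituting the earlier items---but these are equivalent paths through the same calculation.
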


\begin{proof}
Obviously, each $\beta\in\Omega^*(\RR^n\times\RR^n)$ is trivial if and only if $\beta\otimes\Theta_1=0$. This rule will be used throughout the proof.
\begin{enuma}
\item For each $i\in K$ one has
\begin{align*}
&\b{\zeta_K}(\b{d\zeta_K})^{[k-1]}\zeta_J(d\zeta_J)^{[n-r-k]}(dz_J)^{[r-1]}d\zeta_{\b i}dz_i\\
&\quad=\b{\zeta_K}(\b{d\zeta_{K_i}})^{[k-1]}\zeta_J(d\zeta_J)^{[n-r-k]}(dz_J)^{[r-1]}d\zeta_{\b i}dz_i\\
&\quad=(\zeta_{\b i}\otimes dz_i)(\b{d\zeta_{K_i}})^{[k-1]}\zeta_J(d\zeta_J)^{[n-r-k]}(dz_J)^{[r-1]}d\zeta_{\b i}dz_i\\
&\quad=(-1)^n(d\zeta_{\b i}\otimes dz_i)(\b{d\zeta_{K_i}})^{[k-1]}\zeta_J(d\zeta_J)^{[n-r-k]}(dz_J)^{[r-1]}\zeta_{\b i}dz_i\\
&\quad=(-1)^n(\b{d\zeta_K})^{[k]}\zeta_J(d\zeta_J)^{[n-r-k]}(dz_J)^{[r-1]}\zeta_{\b i}dz_i
\end{align*}
and the claim follows by summing over $i$.

\item According to \eqref{eq:RuminD04}, for any $i\in K$ we have
\begin{align*}
&(\b{d\zeta_K})^{[k]}\zeta_J(d\zeta_J)^{[n-r-k]}(dz_J)^{[r-1]}\zeta_{i}dz_{\b i}\\
&\quad=(\b{d\zeta_K})^{[k]}(\zeta_{\b i}\otimes dz_{\b i})(d\zeta_L)^{[n-r-k]}(dz_L)^{[r-k]}(dz_{\b K_{\b i}})^{[k-1]}\zeta_{i}dz_{\b i}\\
&\quad=(-1)^{n+k-1}(\b{d\zeta_K})^{[k]}(d\zeta_L)^{[n-r-k]}(dz_L)^{[r-k]}(dz_{\b K_{\b i}})^{[k-1]}(dz_{\b i}\otimes dz_{\b i})\zeta_{i}\zeta_{\b i}\\
&\quad=(-1)^{n+k-1}(\b{d\zeta_K})^{[k]}(d\zeta_L)^{[n-r-k]}(dz_L)^{[r-k]}(dz_{\b K})^{[k]}\zeta_{i}\zeta_{\b i}\\
&\quad=(-1)^{n-1}(d\zeta_{\b K})^{[k]}(d\zeta_L)^{[n-r-k]}(dz_J)^{[r-k]}(\b {dz_K})^{[k]}\zeta_{i}\zeta_{\b i}\\
&\quad=(-1)^{n-1}(d\zeta_J)^{[n-r]}(dz_J)^{[r-k]}(\b {dz_K})^{[k]}\zeta_{i}\zeta_{\b i}
\end{align*}
and the claim follows by summing over $i$.

\item An immediate consequence of \eqref{eq:RuminD08}.

\item Obvious.

\item This follows at once when one employs the relation \eqref{eq:RuminD03}.

\item Using \eqref{eq:RuminD02} for the first equality and \eqref{eq:RuminD03} for the last, for any $i\in K$ we have
\begin{align*}
&\b{\zeta_K}(\b{d\zeta_K})^{[k-1]}\zeta_J(d\zeta_J)^{[n-r-k]}(dz_J)^{[r-1]}d\zeta_{i}dz_{\b i}\gamma_{\b K} \\
&\quad=(-1)^n(\b{d\zeta_K})^{[k]}\zeta_J(d\zeta_L)^{[n-r-k]}(dz_{J_{\b i}})^{[r-1]}d\zeta_{i}dz_{\b i}\nu_K\\
&\quad=(-1)^n(\b{d\zeta_K})^{[k]}(\zeta_{\b i}\otimes dz_{\b i})(d\zeta_L)^{[n-r-k]}(dz_{J_{\b i}})^{[r-1]}d\zeta_{i}dz_{\b i}\nu_K\\
&\quad=(-1)^k(\b{d\zeta_K})^{[k]}(d\zeta_L)^{[n-r-k]}(dz_J)^{[r]}\zeta_{\b i}d\zeta_{i}\nu_K\\
&\quad=(-1)^k(\b{d\zeta_K})^{[k]}(d\zeta_L)^{[n-r-k]}(dz_L)^{[r-k]}(dz_{\b K})^{[k]}\zeta_{\b i}d\zeta_{i}\nu_K\\
&\quad=(d\zeta_J)^{[n-r]}(dz_J)^{[r-k]}(\b{dz_K})^{[k]}\zeta_{\b i}d\zeta_{i}\nu_K.
\end{align*}

\item From \eqref{eq:RuminD04}, \eqref{eq:RuminD05}, \eqref{eq:RuminD06}, and \eqref{eq:RuminD03} we infer 
\begin{align*}
&(\b{d\zeta_K})^{[k]}\zeta_J(d\zeta_J)^{[n-r-k]}(dz_J)^{[r-1]}\alpha_L\\
&\quad\equiv (-1)^{n+1}(d\zeta_J)^{[n-r]}(dz_J)^{[r-k]}(\b{dz_K})^{[k]}\nu_L\mod \gamma_L
\end{align*}
which is clearly equivalent to the claim.

\item
Using \eqref{eq:RuminD02} and \eqref{eq:RuminD07} for the first equality, we obtain
\begin{align*}
&\b{\zeta_K}(\b{d\zeta_K})^{[k-1]}\zeta_J(d\zeta_J)^{[n-r-k]}(dz_J)^{[r-1]}d\alpha_L\gamma_{\b K}\\
&\quad=(-1)^k(\b{d\zeta_K})^{[k]}(d\zeta_J)^{[n-r-k]}(dz_J)^{[r]}\nu_K\gamma_L\\
&\quad=(-1)^k(\b{d\zeta_K})^{[k]}(d\zeta_L)^{[n-r-k]}(dz_L)^{[r-k]}(dz_{\b K})^k\nu_K\gamma_L,
\end{align*}
which by \eqref{eq:RuminD03} implies the claim.
\item An immediate consequence of \eqref{eq:RuminD07}.

\item First, according to \eqref{eq:RuminD07} and \eqref{eq:RuminD01},
\begin{align*}
\theta_{r,k}d\alpha_L\otimes\Theta_1&=(-1)^{n-k}\b{\zeta_K}(\b{d\zeta_K})^{[k-1]}(\b{d\zeta_L})^{[n-r-k]}(dz_J)^{[r]}\gamma_L\\
&\quad+(-1)^{n+1}(\b{d\zeta_K})^{[k]}\zeta_{\b K}(\b{d\zeta_L})^{[n-r-k-1]}(dz_J)^{[r]}\gamma_L.
\end{align*}
Second, using \eqref{eq:RuminD03}--\eqref{eq:RuminD06} we infer
\begin{align*}
&\big[(-1)^n\sigma_{r,k}\alpha_L+\delta_{r,k}\nu_L\big]\otimes\Theta_1\\
&\quad=(-1)^{n+1}(\b{d\zeta_K})^{[k]}\zeta_{\b K}(\b{d\zeta_L})^{[n-r-k-1]}(dz_L)^{[r-k+1]}(dz_{\b K})^{[k-1]}\gamma_L\\
&\qquad+(-1)^{n+1}(\b{d\zeta_K})^{[k]}\zeta_L(\b{d\zeta_L})^{[n-r-k-1]}(dz_L)^{[r-k]}(dz_{\b K})^{[k]}\gamma_L.
\end{align*}
Finally, by \eqref{eq:RuminD08},
\begin{align*}
\theta_{r,k}d\alpha_{\b K}\otimes\Theta_1&=(-1)^{n-k}\b{\zeta_K}(\b{d\zeta_K})^{[k-1]}(d\zeta_L)^{[n-r-k]}(dz_J)^{[r]}\gamma_K\\
&\quad+(-1)^n(\b{d\zeta_K})^{[k]}\zeta_L(d\zeta_L)^{[n-r-k-1]}(dz_J)^{[r]}\gamma_K.
\end{align*}
Multiplying by $\gamma_K$ and $\gamma_L$, respectively, and summing everything up, the claim easily follows.
\end{enuma}
\end{proof}

\begin{corollary}
The following relation holds in $ \Omega^*(S\RR^n)$:
\begin{align}
\label{eq:RuminD1}
\theta_{r,k}d\alpha=\delta_{r,k}+(-1)^n\sigma_{r,k}\alpha.
\end{align}
\end{corollary}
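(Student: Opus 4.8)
The plan is to deduce \eqref{eq:RuminD1} from the ten identities collected in Lemma~\ref{lem:RuminDRelations} by invoking the restriction criterion of Lemma~\ref{lemma:RestrictionSphere}, together with the elementary observation that $\nu_\calI=\sum_{i=1}^n\xi_i^2$ restricts to the constant function $1$ on $S\RR^n$. Concretely, I would consider the form
\begin{align*}
\varrho=\theta_{r,k}\,d\alpha-\delta_{r,k}\,\nu_\calI-(-1)^n\sigma_{r,k}\,\alpha\in\Omega^*(\RR^n\times\RR^n).
\end{align*}
Since $\nu_\calI$ equals $1$ along $S\RR^n$, the restriction $\iota^*\varrho$ is precisely the difference of the two sides of \eqref{eq:RuminD1}; hence, by Lemma~\ref{lemma:RestrictionSphere}, it suffices to prove that $\varrho\,\gamma=0$ identically on $\RR^n\times\RR^n$. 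This is where Lemma~\ref{lem:RuminDRelations} enters.

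First I would simplify $\varrho$ itself using the two members of Lemma~\ref{lem:RuminDRelations} that do not involve $\gamma$. Decomposing $d\alpha=d\alpha_K+d\alpha_{\b K}+d\alpha_L$ and $\alpha=\alpha_K+\alpha_{\b K}+\alpha_L$ along the disjoint union $\calI=K\cup\b K\cup L$, item~(a) replaces $\theta_{r,k}\,d\alpha_K$ by $(-1)^n\sigma_{r,k}\,\alpha_K$ and item~(b) replaces $(-1)^n\sigma_{r,k}\,\alpha_{\b K}$ by $-\delta_{r,k}\,\nu_K$; since $\nu_{\b K}=\nu_K$ and therefore $\nu_\calI=2\nu_K+\nu_L$, the remaining terms recombine to give, on $\RR^n\times\RR^n$,
\begin{align*}
\varrho=\theta_{r,k}\,d\alpha_{\b K}+\theta_{r,k}\,d\alpha_L-\delta_{r,k}\,\nu_K-\delta_{r,k}\,\nu_L-(-1)^n\sigma_{r,k}\,\alpha_L.
\end{align*}

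Next I would expand $\gamma=\gamma_K+\gamma_{\b K}+\gamma_L$ and treat the three products $\varrho\,\gamma_K$, $\varrho\,\gamma_{\b K}$, $\varrho\,\gamma_L$ one at a time. In each of them several summands vanish outright: $\theta_{r,k}\,d\alpha_{\b K}\,\gamma_K$ by (c); in the $\gamma_{\b K}$-product the terms carrying $\delta_{r,k}\,\gamma_{\b K}$ by (e) and the term carrying $\sigma_{r,k}\,\gamma_{\b K}$ by (d); and in the $\gamma_L$-product $\theta_{r,k}\,d\alpha_L\,\gamma_L=0$ by (i) while $\delta_{r,k}\,\nu_L\,\gamma_L+(-1)^n\sigma_{r,k}\,\alpha_L\,\gamma_L=0$ by (g). Rewriting the remaining summands by (f), (h) and (j) leaves
\begin{align*}
\varrho\,\gamma_K&=-\theta_{r,k}\,d\alpha_{\b K}\,\gamma_{\b K}-\theta_{r,k}\,d\alpha_{\b K}\,\gamma_L,\\
\varrho\,\gamma_{\b K}&=\theta_{r,k}\,d\alpha_{\b K}\,\gamma_{\b K}+\delta_{r,k}\,\nu_K\,\gamma_L,\\
\varrho\,\gamma_L&=\theta_{r,k}\,d\alpha_{\b K}\,\gamma_L-\delta_{r,k}\,\nu_K\,\gamma_L,
\end{align*}
and the sum of the three telescopes to $0$. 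Thus $\varrho\,\gamma=0$ on $\RR^n\times\RR^n$, so $\iota^*\varrho=0$ by Lemma~\ref{lemma:RestrictionSphere}, which is exactly \eqref{eq:RuminD1}. It is worth noting that every item of Lemma~\ref{lem:RuminDRelations} gets used, which is the structural reason those particular combinations were isolated.

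The computation is bookkeeping rather than conceptual, so there is no real obstacle here beyond organization; the only delicate point is keeping track of the Koszul signs when the double forms of mixed type $(p,q)$ are permuted so that each product matches the precise arrangement in which the corresponding identity of Lemma~\ref{lem:RuminDRelations} is stated. In practice this is harmless, since $\nu_\bullet$ is a function, $\alpha_\bullet$ and $\gamma_\bullet$ are of type $(1,0)$ and $d\alpha_\bullet$ of type $(2,0)$, so in every case the relevant sign is either immaterial (the term is zero) or has already been absorbed into Lemma~\ref{lem:RuminDRelations}.
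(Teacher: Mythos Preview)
Your argument is correct and follows exactly the strategy of the paper: show that $(-\varrho)\gamma=\delta_{r,k}\nu\gamma-\theta_{r,k}d\alpha\gamma+(-1)^n\sigma_{r,k}\alpha\gamma$ vanishes on $\RR^n\times\RR^n$ by expressing it in the ideal generated by items (a)--(j) of Lemma~\ref{lem:RuminDRelations}, and then apply Lemma~\ref{lemma:RestrictionSphere} together with $\nu|_{S\RR^n}=1$. The paper merely states that ``one immediately verifies'' this ideal membership, whereas you have carried out the bookkeeping in full; the intermediate expressions for $\varrho\gamma_K$, $\varrho\gamma_{\b K}$, $\varrho\gamma_L$ are all correct and their sum indeed telescopes to zero.
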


\begin{proof}
Notice that $\alpha=\alpha_K+\alpha_{\b K}+\alpha_L$ (and analogously for $\gamma$) and $\nu=2\nu_K+\nu_L$. Consequently, one immediately verifies that the form
\begin{align*}
\delta_{r,k}\nu\gamma-\theta_{r,k}d\alpha\gamma+(-1)^n\sigma_{r,k}\alpha\gamma
\end{align*}
 belongs to the ideal of $ \Omega^*(\RR^n\times\RR^n)$ generated by items (a)--(j) of Lemma \ref{lem:RuminDRelations}. According to the statement of the lemma, it must be trivial, which  by Lemma~\ref{lemma:RestrictionSphere} implies the claim if we take into account that $\nu=1$ on $S\RR^n$.
\end{proof}

Let us show three more auxiliary identities before we will finally proceed to the proof of the theorem.
\begin{proposition}
One has
\begin{align}
\label{eq:RuminD2}
d\omega_{r,k}&=(n-r)\delta_{r,k},\\
\label{eq:RuminD3}
d\zeta_{\b 1}\omega_{r,k}&=\zeta_{\b1}\delta_{r,k},\text{ and}\\
\label{eq:RuminD4}
d\zeta_{\b 1}\theta_{r,k}&=\zeta_{\b 1}\sigma_{r,k},\\
\label{eq:RuminD5}
d\theta_{r,k}&=k\sigma_{r,k}+(-1)^{k+1}(n-r-k+1)\tau_{r,k}.
\end{align}
\end{proposition}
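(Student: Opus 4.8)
The plan is to verify the four identities \eqref{eq:RuminD2}--\eqref{eq:RuminD5} by direct computation with double forms, exploiting the same three elementary principles already used repeatedly above: the binomial formula \eqref{eq:binomial}, the disjointness of the relevant unions of index sets, and the vanishing $\largewedge^a V=0$ for $a>\dim V$. Throughout, I would work with the tensored-by-$\Theta_1$ versions of the forms, so that everything becomes an honest computation in the exterior algebra of differentials, and use the rule (already invoked in the proof of Lemma~\ref{lem:RuminDRelations}) that a form on $\RR^n\times\RR^n$ vanishes iff its product with $\Theta_1$ does.

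For \eqref{eq:RuminD2}, I would start from the expression $\omega_{r,k}\otimes\Theta_1=\zeta_J(d\zeta_J)^{[n-r-1]}(dz_J)^{[r-k]}(\b{dz_K})^{[k]}$ and apply $d$. Only the factor $\zeta_J=\sum_{i\in J}\zeta_i\otimes dz_i$ is not closed, and $d(\zeta_i\otimes dz_i)=d\zeta_i\otimes dz_i$; since $(dz_J)$ already appears to a power making $dz_i$ redundant except through the $\otimes$-slot, one finds $d\omega_{r,k}\otimes\Theta_1=(d\zeta_J)(d\zeta_J)^{[n-r-1]}(dz_J)^{[r-k]}(\b{dz_K})^{[k]}=(n-r)(d\zeta_J)^{[n-r]}(dz_J)^{[r-k]}(\b{dz_K})^{[k]}=(n-r)\,\delta_{r,k}\otimes\Theta_1$, where the factor $(n-r)$ comes from $\beta\,\beta^{[j-1]}=j\,\beta^{[j]}$. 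For \eqref{eq:RuminD3}, multiplying $\omega_{r,k}\otimes\Theta_1$ by $d\zeta_{\b 1}$ kills the $\zeta_J$ summand with $i=1$ (since $dz_1$ would then survive in a slot where... actually one checks $d\zeta_{\b1}\,\zeta_J\equiv 0$ modulo the $i=\b1$ term, or rather the surviving contribution rebuilds $(d\zeta_J)^{[n-r]}$): concretely $d\zeta_{\b1}\,\zeta_J(d\zeta_J)^{[n-r-1]}=\zeta_{\b1}(\text{something})$; the cleanest route is to observe $\zeta_{\b 1}\,\delta_{r,k}\otimes\Theta_1=\zeta_{\b1}(d\zeta_J)^{[n-r]}(dz_J)^{[r-k]}(\b{dz_K})^{[k]}$ and that $d\zeta_{\b1}\,\zeta_J=d\zeta_{\b1}\otimes(\zeta_{\b1}dz_{\b1})+\cdots$, so that wedging against $(d\zeta_J)^{[n-r-1]}$ and using $d\zeta_{\b1}(d\zeta_{J_{\b1}})^{[n-r-1]}=(d\zeta_J)^{[n-r]}$ gives exactly $\zeta_{\b1}\delta_{r,k}$. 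The identity \eqref{eq:RuminD4} is the analogous statement with $\theta_{r,k}$ in place of $\omega_{r,k}$ and $\sigma_{r,k}$ in place of $\delta_{r,k}$, proved the same way, noting that $\theta_{r,k}\otimes\Theta_1$ contains the factor $\zeta_J(d\zeta_J)^{[n-r-k]}$ and that multiplying by $d\zeta_{\b1}$ upgrades the exponent by one, reproducing the $(\b{d\zeta_K})^{[k]}$-less structure of $\sigma_{r,k}$ up to the $\zeta_{\b1}$ prefactor and using \eqref{eq:RuminD01} to move the barred differentials into place.

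The genuinely substantive identity is \eqref{eq:RuminD5}, $d\theta_{r,k}=k\sigma_{r,k}+(-1)^{k+1}(n-r-k+1)\tau_{r,k}$, and this is where I expect the main obstacle. Writing $\theta_{r,k}\otimes\Theta_1=\b{\zeta_K}(\b{d\zeta_K})^{[k-1]}\zeta_J(d\zeta_J)^{[n-r-k]}(dz_J)^{[r-1]}$, applying $d$ produces contributions from the two non-closed factors $\b{\zeta_K}=\sum_{i\in K}\zeta_{\b i}\otimes dz_i$ (giving $\sum_i \b{d\zeta_i}\otimes dz_i$, which combines with $(\b{d\zeta_K})^{[k-1]}$ to give $k(\b{d\zeta_K})^{[k]}$, the source of the first term) and $\zeta_J=\sum_{i\in J}\zeta_i\otimes dz_i$ (giving $(d\zeta_J)$, which combines with $(d\zeta_J)^{[n-r-k]}$ to give $(n-r-k+1)(d\zeta_J)^{[n-r-k+1]}$, the source of the second term). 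The bookkeeping subtleties are: tracking the Koszul sign $(-1)^{k+1}$ that arises from commuting $d$ past the $(\b{d\zeta_K})^{[k-1]}$ and $\b{\zeta_K}$ factors (a $(k-1)$-$(1,1)$-form times a $(0,1)$-form, with the extra sign from the derivative landing between them), identifying $\b{\zeta_K}(\b{d\zeta_K})^{[k-1]}(d\zeta_J)^{[n-r-k+1]}(dz_J)^{[r-1]}$ with $\tau_{r,k}\otimes\Theta_1$ directly from its definition, and checking that no cross terms or spurious pieces survive — in particular that $d((dz_J)^{[r-1]})=0$ and that the term from differentiating $\zeta_J$ where the new $d\zeta_i$ collides with an existing $d\zeta_i$ in $(d\zeta_J)^{[n-r-k]}$ vanishes, leaving only the clean power-raising. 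I would organize this as: (i) compute $d\theta_{r,k}\otimes\Theta_1$ as a sum of exactly two terms by Leibniz, (ii) in the first term recognize $k(\b{d\zeta_K})^{[k]}\zeta_J(d\zeta_J)^{[n-r-k]}(dz_J)^{[r-1]}=k\,\sigma_{r,k}\otimes\Theta_1$, (iii) in the second term recognize $(n-r-k+1)\b{\zeta_K}(\b{d\zeta_K})^{[k-1]}(d\zeta_J)^{[n-r-k+1]}(dz_J)^{[r-1]}=(n-r-k+1)\,\tau_{r,k}\otimes\Theta_1$ and pin down the sign $(-1)^{k+1}$. The only real danger is a sign error, so I would double-check the parity count by testing the small case $k=1$, where $\b{\zeta_K}=\zeta_{\b1}\otimes dz_1$, $(\b{d\zeta_K})^{[0]}=1$, and the formula reduces to $d\theta_{r,1}=\sigma_{r,1}+(n-r)\tau_{r,1}$, which can be verified by a short explicit Leibniz computation.
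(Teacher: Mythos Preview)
Your overall approach---direct Leibniz-rule computation with the double-form calculus, working modulo $\otimes\,\Theta_1$---is exactly the paper's approach. Your treatment of \eqref{eq:RuminD2} and \eqref{eq:RuminD5} is correct and matches the paper, which simply says these ``follow by direct computation''; in particular your sign bookkeeping for \eqref{eq:RuminD5} (the factor $(-1)^{k-1}=(-1)^{k+1}$ from passing $d$ through $\b{\zeta_K}(\b{d\zeta_K})^{[k-1]}$, first-slot degree $k-1$) is right.

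However, your mechanism for \eqref{eq:RuminD4} is wrong. You claim $d\zeta_{\b1}$ interacts with the factor $\zeta_J(d\zeta_J)^{[n-r-k]}$ and ``upgrades the exponent by one'', then invoke \eqref{eq:RuminD01}. But in $\sigma_{r,k}$ the exponent on $(d\zeta_J)$ is still $n-r-k$; what changes is $\b{\zeta_K}(\b{d\zeta_K})^{[k-1]}\to(\b{d\zeta_K})^{[k]}$. The point is that $\b{d\zeta_K}=\sum_{i\in K}d\zeta_{\b i}\otimes dz_i$ contains $d\zeta_{\b1}$ in the first slot, so wedging with the scalar $1$-form $d\zeta_{\b1}$ forces $(\b{d\zeta_K})^{[k-1]}\to(\b{d\zeta_{K_1}})^{[k-1]}$; the missing second-slot factor $dz_1$ must then come from $\b{\zeta_K}$, singling out the term $\zeta_{\b1}\otimes dz_1$. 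Thus
\[
d\zeta_{\b1}\,\b{\zeta_K}(\b{d\zeta_K})^{[k-1]}
= d\zeta_{\b1}(\zeta_{\b1}\otimes dz_1)(\b{d\zeta_{K_1}})^{[k-1]}
= \zeta_{\b1}(\b{d\zeta_K})^{[k]},
\]
and the remaining factor $\zeta_J(d\zeta_J)^{[n-r-k]}(dz_J)^{[r-1]}$ is untouched. No appeal to \eqref{eq:RuminD01} is needed. Your sketch of \eqref{eq:RuminD3} has a similar imprecision: the displayed identity $d\zeta_{\b1}(d\zeta_{J_{\b1}})^{[n-r-1]}=(d\zeta_J)^{[n-r]}$ is not even type-consistent (the left side has second-slot degree $n-r-1$, the right $n-r$); the correct statement is that $(d\zeta_{\b1}\otimes dz_{\b1})(d\zeta_{J_{\b1}})^{[n-r-1]}$ equals $(d\zeta_J)^{[n-r]}$ \emph{only after} wedging with the remaining factors, which force the $\b1$-term by second-slot saturation. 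With these two corrections your argument goes through and coincides with the paper's.
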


\begin{proof}
\eqref{eq:RuminD2} and \eqref{eq:RuminD5} follow by direct computation. To prove \eqref{eq:RuminD3}, observe that 
\begin{align*}
&d\zeta_{\b 1}\zeta_J(d\zeta_J)^{[n-r-1]}(dz_J)^{[r-k]}(\b{dz_K})^{[k]}\\
&\quad=d\zeta_{\b 1}(\zeta_{\b1}\otimes dz_{\b 1})(d\zeta_{J_{\b1}})^{[n-r-1]}(dz_L)^{[r-k]}(\b{dz_K})^{[k]}\\
&\quad=\zeta_{\b 1}(d\zeta_{\b1}\otimes dz_{\b 1})(d\zeta_{J_{\b 1}})^{[n-r-1]}(dz_{J})^{[r-k]}(\b{dz_K})^{[k]}\\
&\quad=\zeta_{\b 1}(d\zeta_J)^{[n-r]}(dz_J)^{[r-k]}(\b{dz_K})^{[k]}.
\end{align*}
Finally, as for \eqref{eq:RuminD4}, one has
\begin{align*}
&d\zeta_{\b 1}\b{\zeta_K}(\b{d\zeta_K})^{[k-1]}\zeta_J(d\zeta_J)^{[n-r-k]}(dz_J)^{[r-1]}\\
&\quad=d\zeta_{\b 1}(\zeta_{\b1}\otimes dz_1)(\b{d\zeta_{K_1}})^{[k-1]}\zeta_J(d\zeta_J)^{[n-r-k]}(dz_J)^{[r-1]}\\
&\quad=\zeta_{\b 1}(\b{d\zeta_K})^{[k]}\zeta_J(d\zeta_J)^{[n-r-k]}(dz_J)^{[r-1]}.
\end{align*}
\end{proof}

\begin{proof}[Proof of Theorem \ref{thm:RuminD}]
Using the relations \eqref{eq:RuminD1}--\eqref{eq:RuminD4}, we compute
\begin{align*}
&d\left(\omega_{r,k,m}+c_{r,m}\zeta_{\b1}^{m-2}\theta_{r,k}\alpha\right)\\
&=(m-2)\zeta_{\b1}^{m-3}d\zeta_{\b1}(\omega_{r,k}+c_{r,m}\theta_{r,k}\alpha)+\zeta_{\b1}^{m-2}\big(d\omega_{r,k}+c_{r,m}d\theta_{r,k}\alpha+(-1)^nc_{r,m}\theta_{r,k}d\alpha\big)\\
&=(m-2)\zeta_{\b 1}^{m-2}(\delta_{r,k}+c_{r,m}\sigma_{r,k}\alpha)\\
&\quad+\zeta_{\b1}^{m-2}\big((n-r)\delta_{r,k}+c_{r,m}d\theta_{r,k}\alpha+(-1)^nc_{r,m}\delta_{r,k}+c_{r,m}\sigma_{r,k}\alpha\big)\\
&=\zeta_{\b1}^{m-2}\big(m-2+n-r+(-1)^nc_{r,m}\big)\delta_{r,k}+c_{r,m}\zeta_{\b1}^{m-2}\big((m-1)\sigma_{r,k}+d\theta_{r,k}\big)\alpha.
\end{align*}
The first term in the resulting expression clearly vanishes and the claim follows at once from \eqref{eq:RuminD5}.
\end{proof}

It remains to determine the action of $D$ on the highest weight vectors $\omega_{l,-l,m}$. However, this turns out to be an easy consequence of a special case of Theorem \ref{thm:RuminD}. In analogy with the previous definitions, let us denote
\begin{align*}
\theta_{l,-l}\otimes\Theta_1&=\b{\zeta_M}(\b{d\zeta_M})^{[l-1]}\zeta_{\b M}(dz_{\b M})^{[l-1]},\\
\sigma_{l,-l}\otimes\Theta_1&=(\b{d\zeta_M})^{[l]}\zeta_{\b M}(dz_{\b M})^{[l-1]},\\
\tau_{l,-l}\otimes\Theta_1&=\b{\zeta_M}(\b{d\zeta_M})^{[l-1]}d\zeta_{\b M}(dz_{\b M})^{[l-1]}.
\end{align*}

\begin{corollary}
\label{cor:RuminD}
 Assume $n=2l$. For any $m\geq2$ one has
\begin{align*}
D\omega_{l,-l,m}&=d\left(\omega_{l,-l,m}+(2-l-m)\zeta_{\b1}^{m-2}\theta_{l,-l}\alpha\right)\\
&=(2-l-m)\zeta_{\b 1}^{m-2}\left[(m+l-1)\sigma_{l,-l}+(-1)^{l+1}\tau_{l,-l}\right]\alpha.
\end{align*}
\end{corollary}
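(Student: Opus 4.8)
The plan is to obtain Corollary~\ref{cor:RuminD} from the case $r=k=l$ of Theorem~\ref{thm:RuminD} by transporting that identity along the reflection $R$ in the coordinate hyperplane $e_n^\perp$. Since $n=2l$, the orthogonal map $R$ fixing $e_1,\dots,e_{n-1}$ and sending $e_n$ to $-e_n$ lifts to the diffeomorphism $R(x,u)=(Rx,Ru)$ of $S\RR^n$, and because $R^*\alpha=\alpha$ this lift is a contactomorphism; consequently $R^*$ commutes with the exterior derivative and, the Rumin operator being canonically attached to the contact distribution (its defining correction term being unique), with $D$ as well.

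The first step would be to describe the action of $R^*$ on the forms in play. On complexified coordinates $R^*$ merely interchanges $z_l\leftrightarrow z_{\b l}$ and $\zeta_l\leftrightarrow\zeta_{\b l}$ while fixing all the others; on index sets this is the transposition $l\leftrightarrow\b l$, carrying $K=\{1,\dots,l\}$ to $M=\{1,\dots,l-1,\b l\}$ and $\b K$ to $\b M$. Specialising to $r=k=l$, so that $J=\b K$ and $n-r-k=0$, and comparing the determinantal definitions of $\omega_{l,l}$, $\theta_{l,l}$, $\sigma_{l,l}$, $\tau_{l,l}$ with those of $\omega_{l,-l}$, $\theta_{l,-l}$, $\sigma_{l,-l}$, $\tau_{l,-l}$, one sees that $R^*$ carries the defining expression $\omega_{l,l}\otimes\Theta_1$ to $\omega_{l,-l}\otimes\Theta_1$, and likewise for $\theta$, $\sigma$, $\tau$. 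Since $R^*\Theta_1=-\Theta_1$, this gives $R^*\omega_{l,l}=-\omega_{l,-l}$, $R^*\theta_{l,l}=-\theta_{l,-l}$, $R^*\sigma_{l,l}=-\sigma_{l,-l}$ and $R^*\tau_{l,l}=-\tau_{l,-l}$. Moreover $R^*\alpha=\alpha$ and, for $l\geq2$, $R^*\zeta_{\b1}=\zeta_{\b1}$, hence $R^*\omega_{l,l,m}=-\omega_{l,-l,m}$ (the case $n=2$ being entirely analogous).

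The next step is purely arithmetic: specialising Theorem~\ref{thm:RuminD} to $r=k=l$ and $n=2l$, one has $c_{l,m}=(-1)^{n+1}(n+m-r-2)=2-l-m$ and $n-r-k+1=1$, so that
$$D\omega_{l,l,m}=d\big(\omega_{l,l,m}+(2-l-m)\zeta_{\b1}^{m-2}\theta_{l,l}\,\alpha\big)=(2-l-m)\zeta_{\b1}^{m-2}\big[(m+l-1)\sigma_{l,l}+(-1)^{l+1}\tau_{l,l}\big]\alpha.$$
Applying $R^*$ to this identity and using $R^*D=DR^*$, $R^*d=dR^*$ together with the substitutions above, every term acquires exactly one factor $-1$; these cancel throughout, leaving precisely the two claimed expressions for $D\omega_{l,-l,m}$.

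The one point requiring care is the sign bookkeeping, namely confirming that the factor $-1$ produced by $R^*\Theta_1=-\Theta_1$ is common to all four auxiliary forms, so that it may be cancelled uniformly; this is automatic, since $\omega_{l,l}$, $\theta_{l,l}$, $\sigma_{l,l}$, $\tau_{l,l}$ are all introduced through the same device of tensoring with $\Theta_1$ and no further signs intervene ($R$ fixing $\alpha$ and, for $l\geq2$, $\zeta_{\b1}$). Beyond this, the argument is a direct transcription of the special case of Theorem~\ref{thm:RuminD}.
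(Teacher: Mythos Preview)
Your argument is correct and follows essentially the same route as the paper: transport the $r=k=l$ case of Theorem~\ref{thm:RuminD} along the reflection $R$ in $e_n^\perp$, using that $R^*\alpha=\alpha$ so $R^*$ commutes with both $d$ and $D$. In fact you are more careful than the paper about the sign: the paper asserts $R^*\omega_{l,l,m}=\omega_{l,-l,m}$ without a sign, whereas your observation that $R^*\Theta_1=-\Theta_1$ forces $R^*\omega_{l,l}=-\omega_{l,-l}$ (and similarly for $\theta,\sigma,\tau$) is the accurate statement; as you note, the uniform $-1$ cancels, so the discrepancy is harmless. Your specialization of the constants $c_{l,m}=2-l-m$ and $n-r-k+1=1$ is also correct. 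The only quibble is the aside ``the case $n=2$ being entirely analogous'': for $l=1$ the reflection does \emph{not} fix $\zeta_{\b 1}$, but this case is not actually covered by the construction of $\omega_{l,-l,m}$ (the weight $\lambda_{-l,m}$ requires $l\geq 2$), so the remark should simply be dropped.
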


\begin{proof}
Let $e_1,\dots,e_n$ be the canonical basis of $\RR^n$ and let $R\in\OO(n)$ be the reflection in the hyperplane $e_n^\perp$. Observe that $R^*\omega_{l,l,m}=\omega_{l,-l,m}$, $R^*\theta_{l,l}=\omega_{l,-l}$, $R^*\sigma_{l,l}=\sigma_{l,-l}$,  $R^*\tau_{l,l}=\tau_{l,-l}$, and $R^*\alpha=\alpha$. Thus, since both $d$ and $D$ commute with $R^*$, the claim follows at once from Theorem \ref{thm:RuminD}.
\end{proof}

\section{The Alesker--Poincar\'e pairing}

The goal of this section is to evaluate the Alesker--Poincar\'e pairing of the highest weight vectors constructed  above. It will again suffice to consider only the valuations $\phi_{r,k,m}$ for $k\geq1$ inasmuch as the remaining cases  ($n=2l$ and $k=-l$) can be then easily deduced by applying a hyperplane reflection.  In this connection, let us keep the assumption \eqref{eq:assumption} throughout the entire section again.

\begin{theorem}
\label{thm:PD}
One has
\begin{align*}
\b{\phi_{r,k,m}}*\phi_{n-r,k,m} = (-1)^k c_{r,k,m}\frac{v_{n+2m-2}}{v_{n+m-r-2}v_{r+m-2}s_{2m-3}},
\end{align*}
where
\begin{align*}
c_{r,k,m}=(m+k-1)(n+m-k){n-2k\choose r-k}.
\end{align*}
\end{theorem}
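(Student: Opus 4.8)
I would first reduce the pairing to an integral over the sphere. Both $\phi_{r,k,m}$ and $\phi_{n-r,k,m}$ --- hence also $\overline{\phi_{r,k,m}}$ --- have parity $s\equiv m\bmod 2$, so Proposition~\ref{pro:PvsC} and Bernig's formula (Theorem~\ref{thm:BernigFormula}) apply and give $\overline{\phi_{r,k,m}}*\phi_{n-r,k,m}=(-1)^{r}\int_{S^{n-1}}\overline{\varpi}\wedge D\varpi'$ for translation-invariant forms $\varpi,\varpi'$ representing $\overline{\phi_{r,k,m}}$ and $\phi_{n-r,k,m}$. Inserting the normalizations of Definition~\ref{def:hwv} and using $\overline{(\sqrt{-1})^{l}}\,(\sqrt{-1})^{l}=1$ and $(\sqrt2)^{m-2}(\sqrt2)^{m-2}=2^{m-2}$, this becomes
\begin{align*}
\overline{\phi_{r,k,m}}*\phi_{n-r,k,m}=\frac{(-1)^{r}2^{m-2}}{s_{n+m-r-3}\,s_{m+r-3}}\int_{S^{n-1}}\overline{\omega_{r,k,m}}\wedge D\omega_{n-r,k,m}.
\end{align*}

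Second, I would substitute the Rumin differential. Theorem~\ref{thm:RuminD}, applied with $r$ replaced by $n-r$ (so that $c_{n-r,m}=(-1)^{n+1}(m+r-2)$ and $n-(n-r)-k+1=r-k+1$), gives
\begin{align*}
D\omega_{n-r,k,m}=(-1)^{n+1}(m+r-2)\,\zeta_{\b1}^{m-2}\bigl[(m+k-1)\,\sigma_{n-r,k}+(-1)^{k+1}(r-k+1)\,\tau_{n-r,k}\bigr]\alpha,
\end{align*}
while $\overline{\omega_{r,k,m}}=\zeta_1^{m-2}\,\overline{\omega_{r,k}}$ and $\zeta_1\zeta_{\b1}=\tfrac12(\xi_1^2+\xi_2^2)$, so the product $\zeta_1^{m-2}\zeta_{\b1}^{m-2}$ cancels the factor $2^{m-2}$. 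The problem is thereby reduced to computing the two top-degree forms $\overline{\omega_{r,k}}\wedge\sigma_{n-r,k}\wedge\alpha$ and $\overline{\omega_{r,k}}\wedge\tau_{n-r,k}\wedge\alpha$ on $S\RR^n$.

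This last step is the technical core, and the part I expect to be hardest. I would carry it out in the double-form calculus of Section~4, exactly as in the proof of Theorem~\ref{thm:RuminD}: expand $\overline{\omega_{r,k}}$ via the complex conjugate of Proposition~\ref{pro:HWf}, expand $\sigma_{n-r,k}$ and $\tau_{n-r,k}$ from their definitions, split $\alpha=\alpha_K+\alpha_{\b K}+\alpha_L$, and then repeatedly use skew-symmetry together with $\largewedge^{a}W=0$ for $a>\dim W$ to discard every summand not proportional to $\Theta=\Theta_1\Theta_2$. I expect each product to come out as an explicit constant times a monomial in the $\nu_i=\zeta_i\zeta_{\b i}$ times $\Theta$; the number of surviving terms should produce the binomial coefficient $\binom{n-2k}{r-k}$ (the block $L$, of size $n-2k$, must supply $r-k$ of its $dz$'s to $\overline{\omega_{r,k}}$), and combining the $\sigma$- and $\tau$-contributions --- whose prefactors are $(m+k-1)$ and $(r-k+1)$ --- by an elementary binomial identity should yield $(m+k-1)(n+m-k)\binom{n-2k}{r-k}$. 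The accumulated signs should collapse to $(-1)^{k}$.

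Finally, once $\overline{\omega_{r,k,m}}\wedge D\omega_{n-r,k,m}$ is written as $g(\xi)\,dx_1\cdots dx_n\wedge\mathrm{dvol}_{S^{n-1}}$ with $g$ an explicit polynomial in $\xi_1,\dots,\xi_n$ (the extra $\xi$-dependence arising from restricting the $d\xi$-part to $S^{n-1}$), there remains only the routine evaluation of $\int_{S^{n-1}}g$. Splitting $\RR^n=\RR^2\oplus\RR^{n-2}$ and passing to the adapted coordinates reduces each such integral to iterated applications of the Beta integral \eqref{eq:integralSpheres}; then the recursions \eqref{eq:volumeBall}, together with $s_{m+r-3}=(m+r-2)v_{m+r-2}$ and $s_{n+m-r-3}=(n+m-r-2)v_{n+m-r-2}$ (which absorb the two factors in the denominator of the prefactor), turn the result into $v_{n+2m-2}/(v_{n+m-r-2}v_{r+m-2}s_{2m-3})$, as claimed. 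The cases $n=2l$, $k=-l$, which are omitted from the statement but are needed later, follow by applying the reflection $R\in\OO(n)$ in $e_n^\perp$, as in Corollary~\ref{cor:RuminD}: since $R^*\omega_{l,l,m}=\omega_{l,-l,m}$, the pairing $\overline{\phi_{l,-l,m}}*\phi_{l,-l,m}$ equals $\overline{\phi_{l,l,m}}*\phi_{l,l,m}$, i.e.\ the value of the $r=k=l$ case. Since this number is never zero, Theorem~\ref{thm:PD} in particular establishes the non-triviality of $\phi_{r,k,m}$ left open after Definition~\ref{def:hwv}.
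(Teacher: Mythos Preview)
Your proposal is correct and follows essentially the same route as the paper: reduce via Proposition~\ref{pro:PvsC} and Theorem~\ref{thm:BernigFormula}, insert $D\omega_{n-r,k,m}$ from Theorem~\ref{thm:RuminD}, compute the resulting top form by the double-form calculus (this is exactly the content of the paper's Lemma~\ref{lem:PD}, proved via the auxiliary identities \eqref{eq:PD001}--\eqref{eq:PD08}), and then integrate over $S^{n-1}$ using \eqref{eq:integralSpheres} and \eqref{eq:volumeBall}. Two small corrections to your expectations: the $\sigma$- and $\tau$-contributions do not combine to a single constant before integration but rather to $(-1)^{r+k}|\zeta_1|^{2(m-2)}\bigl(a_{r,k,m}\nu_K^2+b_{r,k,m}\nu_L\bigr)\vol_{S\RR^n}$, so the factor $(m+k-1)(n+m-k)$ only emerges \emph{after} the spherical integral; and the natural splitting for that integral is $\RR^{2k}\oplus\RR^{n-2k}$ (the $K$- versus $L$-blocks), followed by a further $\RR^2\oplus\RR^{2k-2}$ splitting to handle $|\zeta_1|^{2(m-2)}$.
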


Proceeding similarly as in the previous section, we will first take the advantage of the double-form formalism in order to prove the important auxiliary relation \eqref{eq:wDw} below. Its proof, again, splits further into a series of simpler statements with which we will start.

\begin{proposition}
One has
\begin{align}
\label{eq:PD001}
\b{\eta_K}(\b{d\eta_K})^{[k-1]}d\eta_{\b K}&=(-1)^{k-1}(\b{d\eta_K})^{[k]} \eta_{\b K},\\
\label{eq:PD002}
\b{\eta_K}(\b{d\eta_K})^{[k-1]}\gamma_{\b K}&=(-1)^{k-1}(\b{d\eta_K})^{[k]} \nu_K,\\
\label{eq:PD002.5}
\b{\zeta_{\b K}}(\b{d\zeta_{\b K}})^{[k-1]}\gamma_{K}&=(-1)^{k-1}(\b{d\zeta_{\b K}})^{[k]} \nu_K,\\
\label{eq:PD003}
\eta_{\b K}(dw_{\b K})^{[k-1]}\alpha_{\b K}&=(-1)^{k-1}(dw_{\b K})^{[k]} \nu_K.
\end{align}
\end{proposition}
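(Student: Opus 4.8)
The plan is to prove all four identities by the same term-by-term manipulation already used for \eqref{eq:RuminD01} and \eqref{eq:RuminD02}: each has the form of an \emph{absorption identity} $A\,\beta^{[k-1]}\,C=(-1)^{k-1}\,\beta^{[k]}\,D$, where $A$ is a double form of type $(0,1)$, $\beta$ is a double form of type $(1,1)$, and $C,D$ are short factors; and each is checked by writing $C$ as a sum over its single index, analysing each summand, and summing up. In each summand the form-part one-form supplied by $C$ is \emph{fresh}, so multiplying it into $\beta^{[k-1]}$ kills every summand of that divided power except the one not yet containing it (cf.\ Remark~\ref{re:HWf}(a)); matching the value-parts then singles out a unique summand of $A$; and a reordering of the graded pieces reassembles $\beta^{[k]}$, the factor $(-1)^{k-1}$ being the sign of moving the type-$(0,1)$ factor $A$ across the type-$(k-1,k-1)$ divided power.

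For \eqref{eq:PD001} and \eqref{eq:PD002} I would literally copy the proofs of \eqref{eq:RuminD01} and \eqref{eq:RuminD02} after replacing every value-part one-form $dz_i$ by $d\zeta_i$, that is, every $\zeta_I$ by $\eta_I$. Concretely, fixing $i\in K$ and using $d\eta_{\b K}=\sum_{i\in K}(d\zeta_{\b i}\otimes d\zeta_{\b i})$ respectively $\gamma_{\b K}=\sum_{i\in K}\zeta_i\,d\zeta_{\b i}$, the form-part $d\zeta_{\b i}$ collapses $(\b{d\eta_K})^{[k-1]}$ to its summand over $K\setminus\{i\}$; matching the value-parts then forces the $i$-th summand of $\b{\eta_K}$; reassembling the two gives $(\b{d\eta_K})^{[k]}$ times $\eta_{\b K}$, respectively times the scalar $\zeta_{\b i}\zeta_i$, with the sign $(-1)^{k-1}$; and summing over $i$ — using $\nu_K=\sum_{i\in K}\zeta_{\b i}\zeta_i$ for \eqref{eq:PD002} — finishes. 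Identity \eqref{eq:PD002.5} is \eqref{eq:RuminD02} with $K$ and $\b K$ interchanged (note $\gamma_{\b{\b K}}=\gamma_K$ and $\nu_{\b K}=\nu_K$), so it requires nothing new.

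The only case that is not already essentially done is \eqref{eq:PD003}, in which the divided power is taken of $dw_{\b K}$ rather than of the differential of the leading factor $\eta_{\b K}$; but the same mechanism applies once form- and value-parts are tracked separately. Writing $\alpha_{\b K}=\sum_{m\in K}\zeta_m\,(dz_{\b m}\otimes 1)$, $dw_{\b K}=\sum_{m\in K}(dz_{\b m}\otimes d\zeta_{\b m})$ and $\eta_{\b K}=\sum_{m\in K}\zeta_{\b m}\,(1\otimes d\zeta_{\b m})$, I would fix $m\in K$: the form-part $dz_{\b m}$ coming from $\alpha_{\b K}$ is absorbed into $(dw_{\b K})^{[k-1]}$, whose summands carry the form-parts $dz_{\b j}$, collapsing that power to its summand over $\b K\setminus\{\b m\}$; the value-part $d\zeta_{\b m}$ coming from $\eta_{\b K}$ is then exactly the one missing to rebuild the full value-part $\prod_{j\in K}d\zeta_{\b j}$, so $dz_{\b m}$, $d\zeta_{\b m}$ and the collapsed power reassemble $(dw_{\b K})^{[k]}$; the scalars $\zeta_m$ and $\zeta_{\b m}$ multiply to the $m$-th summand of $\nu_K$; the reordering again produces $(-1)^{k-1}$; and summing over $m$ gives the identity.

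I expect the only genuine obstacle to be the sign bookkeeping, which is slightly delicate because double forms anticommute by the refined rule $\omega\theta=(-1)^{pr+qs}\theta\omega$ (for types $(p,q)$, $(r,s)$) rather than by total degree — so, for instance, a type-$(0,1)$ form commutes with a type-$(1,0)$ form but anticommutes with another type-$(0,1)$ form. In particular the factor $(-1)^{k-1}$ common to all four identities is precisely $(-1)^{0\cdot(k-1)+1\cdot(k-1)}$. With this rule kept firmly in mind the four computations are routine and structurally identical to those already carried out for \eqref{eq:RuminD01}--\eqref{eq:RuminD02}.
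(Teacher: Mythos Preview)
Your proposal is correct and takes essentially the same approach as the paper, whose proof consists of the single sentence ``See the proof of \eqref{eq:RuminD01}.'' You have simply unpacked what that sentence means, including the minor variant needed for \eqref{eq:PD003}.
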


\begin{proof}
See the proof of  \eqref{eq:RuminD01}.
\end{proof}

\begin{proposition}
One has
\begin{align} 
\label{eq:PD004}
\begin{split}
\b{\omega_{r,k}}\otimes\Theta_1&=\b{\zeta_{\b K}}(\b{d\zeta_{\b K}})^{[k-1]}(\b{d\zeta_L})^{[n-r-k]}(\b{dz_L})^{[r-k]}(dz_K)^{[k]}\\
&\quad+\b{\zeta_{L}}(\b{d\zeta_{\b K}})^{[k]}(\b{d\zeta_L})^{[n-r-k-1]}(\b{dz_L})^{[r-k]}(dz_K)^{[k]},
\end{split}\\[2ex]
\label{eq:PD005}
\begin{split}
\sigma_{n-r,k}\otimes\Theta_2&=(\b{d\eta_K})^{[k]}\eta_{\b K}(d\eta_L)^{[r-k]}(dw_{\b K})^{[k-1]}(dw_L)^{[n-r-k]}\\
&\quad+(\b{d\eta_K})^{[k]}\eta_L(d\eta_L)^{[r-k]}(dw_{\b K})^{[k]}(dw_L)^{[n-r-k-1]},
\end{split}\\[2ex]
\label{eq:PD006}
\begin{split}
\tau_{n-r,k}\otimes\Theta_2&=\b{\eta_K}(\b{d\eta_K})^{[k-1]}(d\eta_L)^{[r-k+1]}(dw_{\b K})^{[k]}(dw_L)^{[n-r-k-1]}\\
&\quad+(-1)^{k-1}(\b{d\eta_K})^{[k]}\eta_{\b K}(d\eta_L)^{[r-k]}(dw_{\b K})^{[k-1]}(dw_L)^{[n-r-k]}.
\end{split}
\end{align}
\end{proposition}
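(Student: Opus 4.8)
The plan is to treat the three identities separately. The first, \eqref{eq:PD004}, is nothing but the complex conjugate of \eqref{eq:HWf01}. Recall that complex conjugation is an $\RR$-linear automorphism of the algebra of double forms that commutes with $d$, leaves the exterior-algebra (``value'') factor fixed, and acts on functions and base-coordinate differentials by $\b{z_i}=z_{\b i}$, $\b{\zeta_i}=\zeta_{\b i}$; in particular $\b{\Theta_1}=\Theta_1$ and $\b{\b{dz_K}}=dz_K$. Applying it term by term to the identity of Proposition~\ref{pro:HWf} then yields \eqref{eq:PD004} at once.

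For \eqref{eq:PD005} and \eqref{eq:PD006} the plan is to reduce everything, just as in the proof of Proposition~\ref{pro:HWf} and in the derivation of \eqref{eq:RuminD04}, to a splitting along the disjoint union $J=\b K\cup L$. The first step is to pass from the $\Theta_1$-normalization used to define $\sigma_{r,k}$ and $\tau_{r,k}$ to the $\Theta_2$-normalization: this amounts to applying the automorphism of the double-form algebra that interchanges $dz_i$ and $d\zeta_i$ in the value factor only. That automorphism fixes every ordinary form, sends $\Theta_1$ to $\Theta_2$, and carries $\zeta_I,z_I,d\zeta_I,dz_I,\b{d\zeta_K},\b{\zeta_K}$ to $\eta_I,w_I,d\eta_I,dw_I,\b{d\eta_K},\b{\eta_K}$, respectively. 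Substituting $r\mapsto n-r$ in the definitions of $\sigma_{r,k}$, $\tau_{r,k}$ and applying it gives
\begin{align*}
\sigma_{n-r,k}\otimes\Theta_2&=(\b{d\eta_K})^{[k]}\eta_J(d\eta_J)^{[r-k]}(dw_J)^{[n-r-1]},\\
\tau_{n-r,k}\otimes\Theta_2&=\b{\eta_K}(\b{d\eta_K})^{[k-1]}(d\eta_J)^{[r-k+1]}(dw_J)^{[n-r-1]}.
\end{align*}

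Next I would expand these right-hand sides by the binomial formula \eqref{eq:binomial}, writing $\eta_J=\eta_{\b K}+\eta_L$, $d\eta_J=d\eta_{\b K}+d\eta_L$, $dw_J=dw_{\b K}+dw_L$, and discard the summands that vanish for degree reasons: a repetition of a base differential $d\zeta_{\b i}$ forces the $d\eta_{\b K}$-power to be at most linear (and trivial in the $\sigma$-case), after which a repetition in the value factor pins down all remaining multiplicities. This leaves in each case exactly the two summands displayed in \eqref{eq:PD005}, respectively \eqref{eq:PD006}; in the $\tau$-case one of the two surviving summands first comes out as $\b{\eta_K}(\b{d\eta_K})^{[k-1]}d\eta_{\b K}(d\eta_L)^{[r-k]}(dw_{\b K})^{[k-1]}(dw_L)^{[n-r-k]}$, which I would then rewrite using \eqref{eq:PD001} into the stated form.

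I do not expect any conceptual obstacle here. The one thing requiring care is the bookkeeping in the binomial expansions: one must keep track of which of the two slots of each double form a given coordinate differential occupies, since that is what decides whether a product survives or is killed by an antisymmetry. This is precisely the kind of computation the double-form formalism was introduced to organize, and it is of the same flavour as the manipulations already performed in Proposition~\ref{pro:HWf} and in the verification of \eqref{eq:RuminD01}--\eqref{eq:RuminD08}.
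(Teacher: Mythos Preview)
Your proposal is correct and follows essentially the same route as the paper: \eqref{eq:PD004} is obtained by conjugating \eqref{eq:HWf01} in the form slot, \eqref{eq:PD005} by transporting \eqref{eq:RuminD04} via the $dz\leftrightarrow d\zeta$ swap in the value slot together with the substitution $r\mapsto n-r$, and \eqref{eq:PD006} by the binomial expansion along $J=\b K\cup L$ followed by an application of \eqref{eq:PD001}. Your explicit description of the two auxiliary automorphisms (form-slot conjugation; value-slot $dz\leftrightarrow d\zeta$ swap) is exactly what the paper leaves implicit when it simply cites \eqref{eq:HWf01} and \eqref{eq:RuminD04}.
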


\begin{proof}
 \eqref{eq:PD004} and \eqref{eq:PD005} follow from \eqref{eq:HWf01} and \eqref{eq:RuminD04}, respectively. \eqref{eq:PD006} follows from
\begin{align*}
&\b{\eta_K} (\b{d\eta_K})^{[k-1]} (d\eta_J)^{[r-k+1]} (dw_J)^{[n-r-1]}\\
 &\quad = \b{\eta_K} (\b{d\eta_K})^{[k-1]} (d\eta_L)^{[r-k+1]} (dw_J)^{[n-r-1]}\\
&\qquad+ \b{\eta_K} (\b{d\eta_K})^{[k-1]}  d\eta_{\b K}(d\eta_L)^{[r-k]} (dw_J)^{[n-r-1]}\\
&\quad = \b{\eta_K} (\b{d\eta_K})^{[k-1]} (d\eta_L)^{[r-k+1]}(dw_{\b K})^{[k]}(dw_L)^{[n-r-k-1]}\\
& \qquad +  \b{\eta_K} (\b{d\eta_K})^{[k-1]}  d\eta_{\b K}(d\eta_L)^{[r-k]} (dw_{\b K})^{[k-1]}(dw_L)^{[n-r-k]},
\end{align*}
using \eqref{eq:PD001}.
\end{proof}

\begin{proposition}
One has
\begin{align}
\label{eq:PD007}
(d\eta_L)^{[r-k]}(dw_L)^{[n-r-k]}\alpha_L&=-(d\eta_L)^{[r-k-1]}(dw_L)^{[n-r-k+1]}\gamma_L,\\[2ex]
\label{eq:PD008}
\begin{split}
\eta_L(d\eta_L)^{[r-k]}(dw_L)^{[n-r-k-1]}\alpha_L&=(-1)^{n-1}(d\eta_L)^{[r-k]}(dw_L)^{[n-r-k]}\nu_L\\
&\quad-\eta_L(d\eta_L)^{[r-k-1]}(dw_L)^{[n-r-k]}\gamma_L,
\end{split}\\[2ex]
\label{eq:PD009}
\begin{split}
(\b{d\zeta_{\b K}})^{[k]}(dz_K)^{[k]}(\b{d\eta_K})^{[k]}(dw_{\b K})^{[k]}&=\Theta_K\otimes\Theta_K,
\end{split}\\[2ex]
\label{eq:PD010}
\begin{split}
(\b{d\zeta_L})^{[n-r-k]}(\b{dz_L})^{[r-k]}(d\eta_L&)^{[r-k]}(dw_L)^{[n-r-k]}\\
&=(-1)^{n+l+r}{n-2k\choose r-k}\Theta_L\otimes\Theta_L,
\end{split}\\[2ex]
\label{eq:PD011}
\begin{split}
\b{\zeta_{L}}(\b{d\zeta_L})^{[n-r-k-1]}(\b{dz_L})^{[r-k]}(d\eta_L&)^{[r-k]}(dw_L)^{[n-r-k]}\gamma_L\\
&=(-1)^{n+l+r+1}{n-2k-1\choose r-k}\nu_L\Theta_L\otimes\Theta_L,
\end{split}\\[2ex]
\label{eq:PD012}
\begin{split}
\eta_L(\b{d\zeta_L})^{[n-r-k]}(\b{dz_L})^{[r-k]}(d\eta_L&)^{[r-k]}(dw_L)^{[n-r-k-1]}\alpha_L\\
&=(-1)^{n+l+r+1}{n-2k-1\choose r-k}\nu_L\Theta_L\otimes\Theta_L.
\end{split}
\end{align}
\end{proposition}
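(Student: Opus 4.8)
The plan is to establish \eqref{eq:PD007}--\eqref{eq:PD012} by the same elementary double-form bookkeeping already used for \eqref{eq:RuminD01}--\eqref{eq:RuminD08}: isolate one index, expand the relevant $[\,\cdot\,]$-powers by the binomial formula \eqref{eq:binomial}, discard every summand in which some $dz_i$ or $d\zeta_i$ occurs twice or a wedge exceeds the available dimension, and re-sum. Wherever possible I would shortcut this by reducing to an identity that is already available.

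First I would dispatch \eqref{eq:PD007} and \eqref{eq:PD008}, which are proved exactly as \eqref{eq:RuminD05} and \eqref{eq:RuminD06}. Indeed, $d\eta_L$ and $dw_L$ arise from $d\zeta_L$ and $dz_L$ by replacing the value covector $dz_i$ by $d\zeta_i$, an operation that fixes the forms $\alpha_L$, $\gamma_L$, $\nu_L$ (which carry no value part) and commutes with the exterior derivative and with the wedge product of double forms; since in \eqref{eq:RuminD05} and \eqref{eq:RuminD06} the exponents of $d\zeta_L$ and $dz_L$ sum to $|L|=n-2k$, those identities remain valid with the two exponents interchanged, and applying the above substitution yields \eqref{eq:PD007} and \eqref{eq:PD008} verbatim. (Alternatively one can simply re-run the proofs of \eqref{eq:RuminD05} and \eqref{eq:RuminD06}.)

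The identities \eqref{eq:PD009} and \eqref{eq:PD010} are top-degree evaluations: in each product every $(1,1)$-double form occurs at the power equal to the number of its free indices, so upon multilinear expansion each index must be consumed exactly once in both the $M$-slot and the value slot. Matching the two slots then pins down, for each index, the pair of factors between which it may be distributed; for the $K$-block this matching is completely forced and one is left with a single signed copy of $\Theta_K\otimes\Theta_K$, while for the $L$-block one may freely choose the $(r-k)$-element subset of $L$ that carries the ``$d\eta_L/dz_L$'' pattern rather than the ``$dw_L/d\zeta_L$'' pattern, which accounts for the ${n-2k\choose r-k}$ summands. The substance of \eqref{eq:PD010} is that all these summands have the single sign $(-1)^{n+l+r}$: passing from one admissible subset to another amounts to swapping the roles of two indices, and this is an even permutation of the $2|L|$ covectors that reassemble $\Theta_L$ in each slot, so the sign is a combinatorial invariant that I would compute on one convenient subset.

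Finally, \eqref{eq:PD011} and \eqref{eq:PD012} are the one-index-deficient versions of \eqref{eq:PD010}: one power of $dw_L$ (resp.\ of $dz_L$) is lowered by one and the deficit is carried by $\gamma_L$ (resp.\ $\alpha_L$) together with the extra $0$-degree factor $\b{\zeta_L}$ (resp.\ $\eta_L$). Running the same expansion, the slot-matching now forces the index chosen by $\b{\zeta_L}$ to be conjugate to the index chosen by $\gamma_L$, so the surviving scalar coefficients assemble into $\sum_{i\in L}\zeta_{\b i}\zeta_i=\nu_L$; one index of $L$ having been spent on this pairing, the remaining combinatorial factor is ${n-2k-1\choose r-k}$, and the uniform sign $(-1)^{n+l+r+1}$ is obtained by the same parity argument as before. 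As a cross-check, \eqref{eq:PD012} also follows from \eqref{eq:PD011} by commuting $\alpha_L$ into position and applying \eqref{eq:PD007}/\eqref{eq:PD008} to trade it for $\gamma_L$. I expect the only genuinely delicate point to be the sign analysis underlying \eqref{eq:PD010}--\eqref{eq:PD012}---verifying that the ${n-2k\choose r-k}$, respectively ${n-2k-1\choose r-k}$, contributions all add up with one and the same sign and identifying that sign; once this parity invariance has been isolated, the rest of the argument is routine.
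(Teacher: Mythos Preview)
Your treatment of \eqref{eq:PD007}--\eqref{eq:PD009} is exactly what the paper does: it records \eqref{eq:PD007} and \eqref{eq:PD008} as analogues of \eqref{eq:RuminD05} and \eqref{eq:RuminD06}, and calls \eqref{eq:PD009} straightforward.

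For \eqref{eq:PD010}--\eqref{eq:PD012} your combinatorial count is right, but the paper handles the sign differently and more cleanly. Rather than arguing that all summands share a common sign and then computing one of them, the paper computes each summand outright. The point you do not make explicit is that $\overline L=L$; this forces the matching $J=\overline I$ you allude to and, more importantly, gives $\overline{\Theta_{i,L\setminus I}}=\varepsilon_1\Theta_{i,L\setminus\overline I}$ and $\overline{\Theta_{i,I}}=\varepsilon_2\Theta_{i,\overline I}$ with signs $\varepsilon_1,\varepsilon_2$ that are the \emph{same} for $i=1$ and $i=2$. Because these signs then occur once in the $M$-slot and once in the value slot, the product $\varepsilon_1\varepsilon_2$ cancels, and every summand is literally equal to $(-1)^{n+l+r}\Theta_L\otimes\Theta_L$. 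Your ``even permutation'' heuristic would have to reproduce this cancellation, but as stated it does not account for the conjugation $i\mapsto\overline i$ that simultaneously moves the matched index in $J=\overline I$; without invoking $\overline L=L$ it is not clear the parity argument goes through. The same device, applied to $L_i$ (or $L_{l+1}$ when $n=2l+1$), yields \eqref{eq:PD011} after a short index-by-index computation; the paper then says \eqref{eq:PD012} is ``completely similar''.

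Your proposed cross-check for \eqref{eq:PD012} via \eqref{eq:PD007}/\eqref{eq:PD008} is not as direct as you suggest: applying \eqref{eq:PD008} produces a $\nu_L$-term carrying $\binom{n-2k}{r-k}$ and a $\gamma_L$-term, so one still has to identify the combination that yields $\binom{n-2k-1}{r-k}$. It is cleaner to repeat the argument for \eqref{eq:PD011}.
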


\begin{remark}
Observe that if $k=n-r$, both sides of \eqref{eq:PD011} as well as of \eqref{eq:PD012} are zero by convention.
\end{remark}

\begin{proof}
\eqref{eq:PD007} and \eqref{eq:PD008} are analogous to \eqref{eq:RuminD05} and \eqref{eq:RuminD06}, respectively.
\eqref{eq:PD009} is straightforward.  To prove \eqref{eq:PD010}, observe first
\begin{align*}
(\b{d\zeta_L})^{[n-r-k]}(\b{dz_L})^{[r-k]}&=\sum_{\substack{I\subset L\\|I|=r-k}}(\b{d\zeta_{L\setminus I}})^{[n-r-k]}(\b{dz_I})^{[r-k]}\\
&=\sum_{\substack{I\subset L\\|I|=r-k}}\b{\Theta_{2,L\setminus I}}\,\b{\Theta_{1,I}}\otimes\Theta_{1,L\setminus I}\Theta_{1,I}.
\end{align*}
Second, since $\b L=L$, for any $I\subset L$ with $|I|=r-k$ there are $\varepsilon_1,\varepsilon_2\in\{\pm1\}$ with
\begin{align*}
\b{\Theta_{i,L\setminus I}}=\varepsilon_1\Theta_{i,L\setminus \b I}\quad\text{and}\quad \b{\Theta_{i,I}}=\varepsilon_2\Theta_{i,\b I},\quad i=1,2,
\end{align*}
and therefore,
\begin{align*}
&\left(\b{\Theta_{2,L\setminus I}}\,\b{\Theta_{1,I}}\otimes\Theta_{1,L\setminus I}\Theta_{1,I}\right)(dw_L)^{[n-r-k]}(d\eta_L)^{[r-k]}\\
&\quad=\varepsilon_1\varepsilon_2\Theta_{2,L\setminus \b I}\;\Theta_{1,\b I}\;\Theta_{1,L\setminus \b I}\;\Theta_{2,\b I}\otimes\Theta_{1,L\setminus I}\;\Theta_{1,I} \;\Theta_{2,L\setminus \b I}\;\Theta_{2,\b I}\\
&\quad=\Theta_{2,L\setminus \b I}\;\Theta_{1,\b I}\;\Theta_{1,L\setminus \b I}\;\Theta_{2,\b I}\otimes\b{\Theta_{1,L\setminus \b I}}\;\b{\Theta_{1,\b I}}\;\Theta_{2,L\setminus \b I}\;\Theta_{2,\b I}\\
&\quad=(-1)^{n-r-k}\Theta_{1,L\setminus \b I}\;\Theta_{1,\b I}\;\Theta_{2,L\setminus \b I}\;\Theta_{2,\b I}\otimes\b{\Theta_{1,L\setminus \b I}}\;\b{\Theta_{1,\b I}}\;\Theta_{2,L\setminus \b I}\;\Theta_{2,\b I}\\
&\quad=(-1)^{n-r-k}\Theta_{1,L}\Theta_{2,L}\otimes\b{\Theta_{1,L}}\Theta_{2,L}\\
&\quad=(-1)^{n+l+r}\Theta_{1,L}\Theta_{2,L}\otimes\Theta_{1,L}\Theta_{2,L}\\
&\quad=(-1)^{n+l+r}\Theta_L\otimes\Theta_L.
\end{align*}
Since there are ${n-2k\choose r-k}$ subsets of $ L$ of cardinality $r-k$, the claim follows.  As for \eqref{eq:PD011}, repeating the preceding argument shows that for any $i\in L$ with $i\neq l+1$ one  has
 \begin{align*} & (\b{d\zeta_{L_i}})^{[n-r-k-1]}(\b{dz_{L_i}})^{[r-k]}(d\eta_{L_{\b i}})^{[r-k]}(dw_{L_{\b i}})^{[n-r-k-1]}\\
 & \quad  = (-1)^{n+l+r}\binom{n-2k-1}{r-k}\Theta_{1,L_{\b i}}\Theta_{2,L_{\b i}}\otimes\Theta_{1,L_i}\Theta_{2,L_{\b i}}
 \end{align*}
 while, if $n=2l+1$,
 \begin{align*} & (\b{d\zeta_{L_{l+1}}})^{[n-r-k-1]}(\b{dz_{L_{l+1}}})^{[r-k]}(d\eta_{L_{l+1}})^{[r-k]}(dw_{L_{l+1}})^{[n-r-k-1]}\\
 & \quad  = (-1)^{n+l+r+1}\binom{n-2k-1}{r-k}\Theta_{1,L_{l+1}}\Theta_{2,L_{l+1}}\otimes\Theta_{1,L_{l+1}}\Theta_{2,L_{l+1}}.
 \end{align*}
Altogether, for any $i\in L$,
\begin{align*}
&\b{\zeta_{L}}(\b{d\zeta_L})^{[n-r-k-1]}(\b{dz_L})^{[r-k]}(d\eta_L)^{[r-k]}(dw_L)^{[n-r-k]}\zeta_id\zeta_{\b i}\\
&\quad=\b{\zeta_{L}}(\b{d\zeta_{L_i}})^{[n-r-k-1]}(\b{dz_L})^{[r-k]}(d\eta_{L_{\b i}})^{[r-k]}(dw_L)^{[n-r-k]}\zeta_id\zeta_{\b i}\\
&\quad=\b{\zeta_{L}}(\b{d\zeta_{L_i}})^{[n-r-k-1]}(\b{dz_{L_i}})^{[r-k]}(d\eta_{L_{\b i}})^{[r-k]}(dw_{L_{\b i}})^{[n-r-k-1]}(dz_{\b i}\otimes d\zeta_{\b i})\zeta_id\zeta_{\b i}\\
&\quad=(\zeta_{\b i}\otimes dz_i)(\b{d\zeta_{L_i}})^{[n-r-k-1]}(\b{dz_{L_i}})^{[r-k]}(d\eta_{L_{\b i}})^{[r-k]}(dw_{L_{\b i}})^{[n-r-k-1]}(dz_{\b i}\otimes d\zeta_{\b i})\zeta_id\zeta_{\b i}\\
&\quad=(\b{d\zeta_{L_i}})^{[n-r-k-1]}(\b{dz_{L_i}})^{[r-k]}(d\eta_{L_{\b i}})^{[r-k]}(dw_{L_{\b i}})^{[n-r-k-1]}(dz_{\b i}d\zeta_{\b i}\otimes dz_id\zeta_{\b i})\zeta_i\zeta_{\b i}\\
&\quad=(-1)^{n+l+r+1}{n-2k-1\choose r-k}(\Theta_L\otimes\Theta_L)\zeta_i\zeta_{\b i}.
\end{align*}
The proof of \eqref{eq:PD012} is completely similar.
\end{proof}

\begin{proposition}
One has
\begin{align}
\label{eq:PD01}
\b{\omega_{r,k}}\alpha_K&=0,\\
\label{eq:PD02}
\sigma_{n-r,k}\gamma_{\b K}&=0,\\
\label{eq:PD03}
\tau_{n-r,k}\alpha_{\b K}\gamma_{\b K}&=0,\\
\label{eq:PD04}
\b{\omega_{r,k}}\gamma_K\otimes\Theta_1&=(-1)^{n+1}(\b{d\zeta_{\b K}})^{[k]}(\b{d\zeta_L})^{[n-r-k]}(\b{dz_L})^{[r-k]}(dz_K)^{[k]}\nu_K,\\
\label{eq:PD05}
\sigma_{n-r,k}\alpha_{\b K}\otimes\Theta_2&=(-1)^{n+k+1}(\b{d\eta_K})^{[k]}(d\eta_L)^{[r-k]}(dw_{\b K})^{[k]}(dw_L)^{[n-r-k]}\nu_K,\\
\label{eq:PD06}
\tau_{n-r,k}\alpha_{\b K}\otimes\Theta_2&=(-1)^n(\b{d\eta_K})^{[k]}(d\eta_L)^{[r-k]}(dw_{\b K})^{[k]}(dw_L)^{[n-r-k]}\nu_K,\\
\label{eq:PD07}
\begin{split}
\sigma_{n-r,k}\alpha_L\otimes\Theta_2&\equiv(-1)^{n+k+1}(\b{d\eta_K})^{[k]}(d\eta_L)^{[r-k]}(dw_{\b K})^{[k]}(dw_L)^{[n-r-k]}\nu_L\\
&\quad\mod\gamma_L,
\end{split}\\
\label{eq:PD08}
\begin{split}
\tau_{n-r,k}\alpha_L\otimes\Theta_2&=(-1)^{k}(\b{d\eta_K})^{[k]}\eta_{\b K}(dw_{\b K})^{[k-1]}(d\eta_L)^{[r-k-1]}(dw_L)^{[n-r-k+1]}\gamma_L\\
&\quad-\b{\eta_K}(\b{d\eta_K})^{[k-1]}(dw_{\b K})^{[k]}(d\eta_L)^{[r-k]}(dw_L)^{[n-r-k]}\gamma_L.
\end{split}
\end{align}
\end{proposition}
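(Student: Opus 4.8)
The eight identities are all of the same kind as those verified in Section~\ref{ss:RuminDI}, so the plan is to prove them by the same routine. First expand $\b{\omega_{r,k}}$, $\sigma_{n-r,k}$, and $\tau_{n-r,k}$ according to the decompositions \eqref{eq:PD004}, \eqref{eq:PD005}, and \eqref{eq:PD006}. Then, working in the exterior algebra of $(\RR^n\times\RR^n)^*$ after tensoring with $\Theta_1$ or $\Theta_2$ as appropriate, multiply by the relevant $\alpha$- or $\gamma$-factor and simplify using only three elementary facts: the binomial formula \eqref{eq:binomial}, the disjointness of the unions $\calI=K\cup\b K\cup L$ and $J=\b K\cup L$, and the vanishing $\largewedge^aV=0$ for $a>\dim V$. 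Where a coefficient survives, finish with the ``transfer trick'' behind \eqref{eq:RuminD01} and \eqref{eq:PD001}--\eqref{eq:PD003}, i.e.\ absorbing a scalar such as $\zeta_{\b i}$, $\zeta_i$, or an $\eta$-type coefficient into the matching differential inside a determinantal block and reordering.

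With this in place the three vanishing statements are immediate: in \eqref{eq:PD01} every summand of $\b{\omega_{r,k}}\otimes\Theta_1$ in \eqref{eq:PD004} already contains the full power $(dz_K)^{[k]}$, while $\alpha_K=\sum_{i\in K}\zeta_{\b i}\,dz_i$ contributes one more base differential $dz_i$ with $i\in K$, so the product vanishes; \eqref{eq:PD02} and \eqref{eq:PD03} are the same phenomenon for the powers of $\b{d\zeta_{\b K}}$ (equivalently $d\eta_{\b K}$, $dw_{\b K}$) met by the extra factors from $\gamma_{\b K}$, respectively $\alpha_{\b K}\gamma_{\b K}$, exactly as in item~(d) of Lemma~\ref{lem:RuminDRelations}. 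The remaining identities \eqref{eq:PD04}--\eqref{eq:PD08}, which produce the scalars $\nu_K$ or $\nu_L$, are the counterparts under the substitution $r\leftrightarrow n-r$ of \eqref{eq:RuminD01}, items~(a)--(b) of Lemma~\ref{lem:RuminDRelations}, and \eqref{eq:RuminD05}--\eqref{eq:RuminD06}, now written in the $d\zeta$-picture with $\eta$ and $w$ in place of $\zeta$ and $z$. Concretely, for \eqref{eq:PD04} one substitutes $\gamma_K=\sum_{i\in K}\zeta_{\b i}\,d\zeta_i$ into \eqref{eq:PD004}, moves each coefficient $\zeta_{\b i}$ onto the $d\zeta_{\b i}$ slot of $(\b{d\zeta_{\b K}})^{[k]}$ at the cost of the sign $(-1)^{n+1}$, thereby freeing a factor $\zeta_i\zeta_{\b i}$, and sums over $i\in K$; the contribution of the $L$-part of $\gamma_K$ vanishes for degree reasons. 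Identities \eqref{eq:PD05} and \eqref{eq:PD06} are the corresponding computation for $\alpha_{\b K}$ acting on $\sigma_{n-r,k}$ and $\tau_{n-r,k}$; \eqref{eq:PD07} and \eqref{eq:PD08} use the two-term manipulation of \eqref{eq:PD007}--\eqref{eq:PD008} (the analogues of \eqref{eq:RuminD05}--\eqref{eq:RuminD06}), and in \eqref{eq:PD07} the computation is only needed modulo $\gamma_L$, which lets several summands be discarded at the outset.

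The one genuine difficulty is bookkeeping. Double forms of type $(p,q)$ and $(r,s)$ commute only up to $(-1)^{pr+qs}$, so tracking the exponents $n$, $k$, $r$ through each reordering is delicate, and the degenerate cases must be handled separately: when $k=n-r$ several of the forms above are zero by the convention $\binom{i}{j}=0$ for $i<j$ (cf.\ the remark following \eqref{eq:PD012}), and for $n=2l+1$ the extra index $l+1\in L$, with $\b{l+1}=l+1$, contributes the opposite sign, just as in the proof of \eqref{eq:PD011}. Once these are accounted for, the stated normalizations drop out.
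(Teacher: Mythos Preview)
Your proposal is correct and follows the same route as the paper: the authors' proof simply cites which of the previously established identities \eqref{eq:PD001}--\eqref{eq:PD008} and decompositions \eqref{eq:PD004}--\eqref{eq:PD006} to combine for each item, exactly as you outline. Two small remarks: your phrase ``the contribution of the $L$-part of $\gamma_K$'' is garbled (you mean the second summand of \eqref{eq:PD004}, the one carrying $\b{\zeta_L}$), and the difficulties you flag at the end---the parity of $n$ via the index $l+1$, the convention $\binom{i}{j}=0$---play no role in \eqref{eq:PD01}--\eqref{eq:PD08}; they belong to the next block of identities \eqref{eq:PD009}--\eqref{eq:PD012}.
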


\begin{proof}
\eqref{eq:PD01} and \eqref{eq:PD02} are obvious. \eqref{eq:PD03}  follows at once from \eqref{eq:PD006}.  \eqref{eq:PD04} further follows from \eqref{eq:PD004} and \eqref{eq:PD002.5}; \eqref{eq:PD05} similarly from \eqref{eq:PD005} and \eqref{eq:PD003}; and \eqref{eq:PD06} from \eqref{eq:PD006} and \eqref{eq:PD003}. For \eqref{eq:PD07}, one uses \eqref{eq:PD005}, \eqref{eq:PD007}, and \eqref{eq:PD008}. Finally, \eqref{eq:PD08} follows easily from \eqref{eq:PD006} and \eqref{eq:PD007}.
\end{proof}

Let us define
	$$ \vol_{S\RR^n} = dx_1\cdots dx_n \sum_{i=1}^n (-1)^{i+1} \xi_i d\xi_1 \cdots \widehat{d\xi_i} \cdots d\xi_n \in \Omega^{2n-1}(\RR^n\times \RR^n),$$
where $\widehat{d\xi_i}$ means that $d\xi_i	$ is omitted. Restricted to $S\RR^n$, this form is the product of the standard Riemannian volume forms on $\RR^n$ and $S^{n-1}$ (cf. \cite[Example 15.22]{Lee:SM}).

\begin{lemma}
\label{lem:PD}
One has
\begin{align}
\label{eq:wDw}
\b{\omega_{r,k,m}}\wedge D\omega_{n-r,k,m}=(-1)^{r+k}|\zeta_1|^{2(m-2)}\left(a_{r,k,m}\nu_K^2+b_{r,k,m}\nu_L\right)\vol_{S\RR^n},
\end{align}
where
\begin{align*}
a_{r,k,m}&=(m+r-2)(m+r){n-2k\choose r-k},\\
b_{r,k,m}&=(m+r-2)(m+k-1){n-2k-1\choose r-k}.
\end{align*}
\end{lemma}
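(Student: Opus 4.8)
The plan is to substitute the Rumin differential computed in Theorem~\ref{thm:RuminD}, reduce the assertion to a single identity of double forms, and then assemble that identity from the auxiliary relations \eqref{eq:PD001}--\eqref{eq:PD08}. Applying Theorem~\ref{thm:RuminD} with $r$ replaced by $n-r$ --- so that $c_{n-r,m}=(-1)^{n+1}(m+r-2)$ --- gives
\begin{align*}
D\omega_{n-r,k,m}=(-1)^{n+1}(m+r-2)\,\zeta_{\b1}^{m-2}\big[(m+k-1)\sigma_{n-r,k}+(-1)^{k+1}(r-k+1)\tau_{n-r,k}\big]\alpha.
\end{align*}
Since $\b{\omega_{r,k,m}}=\zeta_1^{m-2}\,\b{\omega_{r,k}}$ and $\zeta_1\zeta_{\b1}=|\zeta_1|^2$, the lemma reduces to the identity
\begin{align*}
\b{\omega_{r,k}}\big[(m+k-1)\sigma_{n-r,k}+(-1)^{k+1}(r-k+1)\tau_{n-r,k}\big]\alpha=(-1)^{r+k+n+1}\Big((m+r)\tbinom{n-2k}{r-k}\nu_K^2+(m+k-1)\tbinom{n-2k-1}{r-k}\nu_L\Big)\vol_{S\RR^n},
\end{align*}
and, as in the preceding sections, I would establish this by passing to the double-form representatives, i.e.\ multiplying by $\Theta_1$ and $\Theta_2$ in the value slot.

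First I would decompose $\alpha=\alpha_K+\alpha_{\b K}+\alpha_L$. By \eqref{eq:PD01} the $\alpha_K$-part contributes nothing. For the $\alpha_{\b K}$-part, relations \eqref{eq:PD05} and \eqref{eq:PD06} express $\sigma_{n-r,k}\alpha_{\b K}$ and $\tau_{n-r,k}\alpha_{\b K}$ through one and the same explicit double form built from $\eta$ and $w$, up to a common sign, each carrying a factor $\nu_K$; the two scalars in the bracket therefore combine, and $(m+k-1)+(r-k+1)=m+r$. Multiplying by $\b{\omega_{r,k}}$ and invoking \eqref{eq:PD04} introduces a second factor $\nu_K$, after which \eqref{eq:PD009} and \eqref{eq:PD010} collapse the remaining one-forms, producing the binomial coefficient $\binom{n-2k}{r-k}$ and, upon the standard identification of the resulting top-degree double form with $\vol_{S\RR^n}$, the term $a_{r,k,m}\nu_K^2\,\vol_{S\RR^n}$ after the prefactor $(-1)^{n+1}(m+r-2)$ is restored.

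For the $\alpha_L$-part, \eqref{eq:PD08} exhibits $\tau_{n-r,k}\alpha_L$ as a sum of $\gamma_L$-valued double forms; these are annihilated upon multiplication by $\b{\omega_{r,k}}$, since by \eqref{eq:PD004} the factor $\b{\omega_{r,k}}$ already carries the maximal possible number of $L$-indexed one-forms. Thus only $\sigma_{n-r,k}\alpha_L$ survives, with coefficient $(m+k-1)$, and combining \eqref{eq:PD07} modulo $\gamma_L$ with \eqref{eq:PD011} and \eqref{eq:PD012} extracts a single factor $\nu_L$ together with the binomial $\binom{n-2k-1}{r-k}$, which yields the term $b_{r,k,m}\nu_L\,\vol_{S\RR^n}$. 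In the boundary case $k=n-r$ both \eqref{eq:PD011} and \eqref{eq:PD012} --- and the constant $b_{r,k,m}$ --- vanish by convention, so \eqref{eq:wDw} still holds. Adding the two contributions completes the argument.

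I expect the main obstacle to be the sign bookkeeping. The factor $(-1)^{n+1}$ from Theorem~\ref{thm:RuminD}, the numerous signs $(-1)^{n+\cdots}$ and $(-1)^{n+l+r}$ occurring in \eqref{eq:PD005}--\eqref{eq:PD012}, the reordering signs accrued in the double-form products, and the sign in the passage from $\Theta$ to $\vol_{S\RR^n}$ must all conspire to leave precisely the prefactor $(-1)^{r+k}$ in \eqref{eq:wDw}. A secondary delicate point is the claimed vanishing of the $\gamma_L$-valued intermediate terms in the previous paragraph, which should follow from a careful count of the $L$-indexed one-forms appearing in each factor.
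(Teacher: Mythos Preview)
Your overall plan --- substitute the Rumin differential from Theorem~\ref{thm:RuminD}, split $\alpha=\alpha_K+\alpha_{\b K}+\alpha_L$, and assemble the answer from the auxiliary identities \eqref{eq:PD001}--\eqref{eq:PD012} --- is essentially the paper's strategy. The difference, and the source of a genuine gap in your argument, is that the paper does \emph{not} work directly on $S\RR^n$: it first multiplies everything by $\gamma$ and computes on $\RR^n\times\RR^n$, then passes to $S\RR^n$ at the very end via $\Theta=(-1)^{n+l+1}\vol_{S\RR^n}\gamma$ and Lemma~\ref{lemma:RestrictionSphere}. The identities \eqref{eq:PD04} and \eqref{eq:PD07} are built for that device (they involve $\gamma_K$ and reduction modulo $\gamma_L$), and they do not say what you use them for.

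Concretely, two of your intermediate claims are false. First, \eqref{eq:PD04} computes $\b{\omega_{r,k}}\gamma_K$, not $\b{\omega_{r,k}}$; if you multiply $\b{\omega_{r,k}}$ (via \eqref{eq:PD004}) directly into the common form coming from \eqref{eq:PD05}--\eqref{eq:PD06} you pick up a cross term proportional to $\nu_K\nu_L$, not only the $\nu_K^2$ you expect. Second, $\b{\omega_{r,k}}\tau_{n-r,k}\alpha_L$ does \emph{not} vanish on $S\RR^n$: pairing the second summand of \eqref{eq:PD004} with the second summand of \eqref{eq:PD08} survives all the degree counts (every $L$-indexed $dz$ and $d\zeta$ count equals $|L|=n-2k$, while the $\b K$-indexed $d\zeta$ count is only $k-1$), and in fact the paper's own computation yields the nonzero value $\b{\omega_{r,k}}\tau_{n-r,k}\alpha_L\gamma_{\b K}\otimes\Theta=(-1)^{l+r}\binom{n-2k-1}{r-k}\nu_K\nu_L\,\Theta\otimes\Theta$. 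These unwanted $\nu_K\nu_L$ terms do cancel in the end --- but only after one also tracks the $\nu_K\nu_L$ contributions you have dropped from $\sigma_{n-r,k}\alpha_{\b K}$, $\sigma_{n-r,k}\alpha_L$, and $\tau_{n-r,k}\alpha_{\b K}$. The clean way to see this cancellation is precisely the paper's $\gamma$-trick: multiplying by $\gamma=\gamma_K+\gamma_{\b K}+\gamma_L$ and then using $\nu=2\nu_K+\nu_L=1$ on $S\RR^n$ turns the various $\nu_K\nu_L$ pieces into a single $\nu_L\nu$ term, which collapses upon restriction.
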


\begin{proof}
In view of Theorem \ref{thm:RuminD}, consider on  $\RR^n\times\RR^n$ the form
\begin{equation}\label{eq:wDw_gamma}
c_{n-r,m}|\zeta_1|^{2(m-2)}\,\b{\omega_{r,k}}\left[(m+k-1)\sigma_{n-r,k}+(-1)^{k+1}(r-k+1)\tau_{n-r,k}\right]\alpha\gamma,
\end{equation}
where
\begin{align*}
c_{n-r,m}=(-1)^{n+1}(m+r-2).
\end{align*}
We will treat the two summands in \eqref{eq:wDw_gamma} separately. Let us begin with the one proportional to $\sigma_{n-r,k}$.  Note that according to \eqref{eq:PD01} and \eqref{eq:PD02},
\begin{align*}
\b{\omega_{r,k}}\sigma_{n-r,k}\alpha\gamma=\b{\omega_{r,k}}\sigma_{n-r,k}(\alpha_{\b K}+\alpha_L)(\gamma_K+\gamma_L).
\end{align*}
Using \eqref{eq:PD04}, \eqref{eq:PD05}, \eqref{eq:PD009}, and \eqref{eq:PD010}, we get
\begin{align*}
&\b{\omega_{r,k}}\sigma_{n-r,k}\alpha_{\b K}\gamma_K\otimes\Theta=(-1)^{k+l+r}{n-2k\choose r-k}\nu_K^2\Theta\otimes\Theta.
\end{align*}
According to \eqref{eq:PD004}, \eqref{eq:PD05}, \eqref{eq:PD009}, and \eqref{eq:PD011},
\begin{align*}
&\b{\omega_{r,k}}\sigma_{n-r,k}\alpha_{\b K}\gamma_L\otimes\Theta=(-1)^{k+l+r}{n-2k-1\choose r-k}\nu_K\nu_L\Theta\otimes\Theta.
\end{align*}
Similarly, according to \eqref{eq:PD005}, \eqref{eq:PD04}, \eqref{eq:PD009} and \eqref{eq:PD012},
\begin{align*}
&\b{\omega_{r,k}}\sigma_{n-r,k}\alpha_{L}\gamma_K\otimes\Theta=(-1)^{k+l+r}{n-2k-1\choose r-k}\nu_K\nu_L\Theta\otimes\Theta.
\end{align*}
Finally, from \eqref{eq:PD004}, \eqref{eq:PD07}, \eqref{eq:PD009}, and \eqref{eq:PD011} we infer
\begin{align*}
\b{\omega_{r,k}}\sigma_{n-r,k}\alpha_{L}\gamma_L\otimes\Theta=(-1)^{k+l+r}{n-2k-1\choose r-k}\nu_L^2\Theta\otimes\Theta.
\end{align*}
Altogether, since $\nu=2\nu_K+\nu_L$, one has
\begin{align}
\label{eq:wDw1}
\b{\omega_{r,k}}\sigma_{n-r,k}\alpha\gamma=(-1)^{k+l+r}\left[{n-2k\choose r-k}\nu_K^2+{n-2k-1\choose r-k}\nu_L\nu\right]\Theta.
\end{align}

Proceeding to the second term of \eqref{eq:wDw}, by \eqref{eq:PD01}, \eqref{eq:PD03}, and \eqref{eq:PD08}, we have
\begin{align*}
\b{\omega_{r,k}}\tau_{n-r,k}\alpha\gamma=\b{\omega_{r,k}}\tau_{n-r,k}\big[\alpha_{\b K}(\gamma_K+\gamma_L)+\alpha_L(\gamma_K+\gamma_{\b K})\big].
\end{align*}
According to \eqref{eq:PD04}, \eqref{eq:PD06}, \eqref{eq:PD009}, and \eqref{eq:PD010},
\begin{align*}
&\b{\omega_{r,k}}\tau_{n-r,k}\alpha_{\b K}\gamma_K\otimes\Theta=(-1)^{l+r+1}{n-2k\choose r-k}\nu_K^2\Theta\otimes\Theta.
\end{align*}
By \eqref{eq:PD004}, \eqref{eq:PD06}, \eqref{eq:PD009}, and \eqref{eq:PD011},
\begin{align*}
&\b{\omega_{r,k}}\tau_{n-r,k}\alpha_{\b K}\gamma_L\otimes\Theta=(-1)^{l+r+1}{n-2k-1\choose r-k}\nu_K\nu_L\Theta\otimes\Theta.
\end{align*}
Further, using \eqref{eq:PD006} and \eqref{eq:PD04}, we infer that for degree reasons
\begin{align*}
&\b{\omega_{r,k}}\tau_{n-r,k}\alpha_{L}\gamma_K\otimes\Theta=0.
\end{align*}
Finally, according to \eqref{eq:PD08}, \eqref{eq:PD002}, \eqref{eq:PD004}, \eqref{eq:PD009}, and \eqref{eq:PD011},
\begin{align*}
&\b{\omega_{r,k}}\tau_{n-r,k}\alpha_{L}\gamma_{\b K}\otimes\Theta=(-1)^{l+r}{n-2k-1\choose r-k}\nu_K\nu_L\Theta\otimes\Theta.
\end{align*}
Together, therefore,
\begin{align}
\label{eq:wDw2}
\b{\omega_{r,k}}\tau_{n-r,k}\alpha\gamma=(-1)^{l+r+1}{n-2k\choose r-k}\nu_K^2\Theta.
\end{align}

Plugging \eqref{eq:wDw1} and \eqref{eq:wDw2} back in \eqref{eq:wDw_gamma}, we get (on $\RR^n\times\RR^n$)
\begin{align*}
\eqref{eq:wDw_gamma} &=(-1)^{l+r+k}c_{n-r,m}|\zeta_1|^{2(m-2)}\\
&\quad\times\left[(m+r){n-2k\choose r-k}\nu_K^2+(m+k-1){n-2k-1\choose r-k}\nu_L\nu\right]\Theta.
\end{align*}
Further, using
\begin{align*}
\Theta=(\sqrt{-1})^{2l}dx_1\cdots dx_nd\xi_1\cdots d\xi_n=(-1)^{n+l+1}\vol_{S\RR^n}\gamma,
\end{align*}
 Lemma~\ref{lemma:RestrictionSphere}, and the fact that $\nu=1$ on $S\RR^n$, we conclude that (on $S\RR^n$)
\begin{align*}
&\b{\omega_{r,k,m}}\wedge D\omega_{n-r,k,m}\\
&\quad=(-1)^{r+k}(m+r-2)|\zeta_1|^{2(m-2)}\\
&\qquad\times\left[(m+r){n-2k\choose r-k}\nu_K^2+(m+k-1){n-2k-1\choose r-k}\nu_L\right]\vol_{S\RR^n},
\end{align*}
which exactly was to be shown.
\end{proof}

\begin{proof}[Proof of Theorem \ref{thm:PD}]
According to Theorem \ref{thm:BernigFormula}, Proposition \ref{pro:PvsC}, and Lemma \ref{lem:PD},
\begin{align*}
&\b{\phi_{r,k,m}}* \phi_{n-r,k,m}\\
&\quad=\frac{(-1)^k}{s_{n+m-r-3}s_{r+m-3}}\int_{S^{n-1}}2^{m-2}|\zeta_1|^{2(m-2)}\left(a_{r,k,m}\nu_K^2+b_{r,k,m}\nu_L\right)d\sigma.
\end{align*}
After change of variables and using $ s_a= \frac{2\pi}{a-1} s_{a-2}$ we have
\begin{align*}
&\int_{S^{n-1}} 2^{m-2}|\zeta_1|^{2(m-2)}\left(a_{r,k,m}\nu_K^2+b_{r,k,m}\nu_L\right)d\sigma\\
&\quad=s_{n-2k-1}  \int_0^{\pi/2} \left(\frac14 a_{r,k,m} \cos^4 t + b_{r,k,m} \sin^2 t \right) \cos^{2m+2k-5} t \sin^{n-2k-1} t dt \\
&\qquad\times \int_{S^{2k-1}}  2^{m-2} |\zeta_1|^{2(m-2) }   d\sigma  \\
&\quad=s_{n-2k-1}\left(\frac14 a_{r,k,m}\frac{s_{n+2m-1}}{s_{2m+2k-1}s_{n-2k-1}}+b_{r,k,m}\frac{s_{n+2m-3}}{s_{2m+2k-5}s_{n-2k+1}}\right)\\
&\qquad\times s_1s_{2k-3}\int_0^{\pi/2} \cos^{2m-3} t  \sin^{2k-3} t dt  \\
&\quad=2\pi s_{n-2k-1}\left(  \frac14 a_{r,k,m} \frac{s_{n+2m-1}}{s_{2m+2k-1}s_{n-2k-1}} + b_{r,k,m} \frac{s_{n+2m-3}}{s_{2m+2k-5}s_{n-2k+1}}\right) \frac{s_{2m+2k-5}}{s_{2m-3}}\\
&\quad=\left[  a_{r,k,m} \frac{(m+k-1)(m+k-2)}{n+2m-2} + b_{r,k,m}(n-2k)\right] \frac{s_{n+2m-3}}{s_{2m-3}}\\
&\quad=\frac{(r+m-2)(m+k-1)(n-r+m-2)(n+m-k)}{n+2m-2}{n-2k\choose r-k}\frac{s_{n+2m-3}}{s_{2m-3}}
\end{align*}
and hence the claim follows at once from $s_{a-1}=av_a$.
\end{proof}

\section{Pullback and pushforward}

\label{s:PP}

The goal of this section is to show that our choice of highest weight vectors in $\Val(\RR^n)$ is compatible with the natural transition between different dimensions, more precisely, with pullback and pushforward under inclusions $\RR^{n}\rightarrow\RR^N$ and projections $\RR^N\rightarrow\RR^n$, respectively.

 Throughout the section we will assume $n\geq3$ and identify $\RR^n=\RR^{n-1}\oplus\langle e_n\rangle$. To avoid any confusion, $N^{(d)}(K)$ denotes the normal cycle with respect to the $d$-dimensional ambient space and similarly the superscript $(d)$ is used to highlight the dimension valuations and (double) forms are considered in.

\subsection{Pullback}

Let $\iota\maps{\RR^{n-1}}{\RR^n}$ be the inclusion.

\begin{theorem}
\label{thm:pullback}
For any $r,k,m\in\NN$ with $r \leq n-1$, $k\leq \min\{r,n-r\}$, and $m\geq2$,
\begin{align*}
\iota^*\phi_{r,k,m}^{(n)}&=\begin{cases}\phi_{r,k,m}^{(n-1)}&\text{if }r<n-1\text{ and } k<n-r, \\[1ex] \frac 1 2\phi_{k,k-1,m}^{(n-1)} & \text{if } k=\frac n2, \\[1ex]0&\text{otherwise}.\end{cases}
\end{align*}
\end{theorem}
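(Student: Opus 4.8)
The plan is to realise the pullback $\iota^*$ on the level of differential forms via the \emph{semicircle fibration} of normal cycles, and then to evaluate the resulting fiber integral for the explicit form $\omega_{r,k,m}^{(n)}$. For $K\in\calK(\RR^{n-1})$ the body $\iota(K)$ lies in the hyperplane $e_n^\perp\subset\RR^n$, so at a point in the relative interior of a face its normal cone in $\RR^n$ is the corresponding normal cone in $\RR^{n-1}$ augmented by $\RR e_n$. Hence the map
\begin{align*}
q\colon N^{(n-1)}(K)\times\Bigl(-\tfrac{\pi}{2},\tfrac{\pi}{2}\Bigr)\longrightarrow S\RR^n,\qquad \bigl((x,u),\theta\bigr)\longmapsto\bigl(x,\,\cos\theta\,u+\sin\theta\,e_n\bigr),
\end{align*}
is, with the natural orientation, a diffeomorphism onto an open dense subset of $N^{(n)}(\iota K)$, the complement being $K\times\{\pm e_n\}$. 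When $r<n-1$ the form $\omega_{r,k,m}^{(n)}$ has bidegree $(r,n-r-1)$ with $n-r-1\geq1$, so it vanishes on the (locally $\xi$-constant) set $K\times\{\pm e_n\}$, and Fubini for fiber integration gives
\begin{align*}
\iota^*\phi_{r,k,m}^{(n)}(K)=\frac{(\sqrt{-1})^{\lfloor n/2\rfloor}(\sqrt2)^{m-2}}{s_{n+m-r-3}}\int_{N^{(n-1)}(K)}q_*\omega_{r,k,m}^{(n)},
\end{align*}
where $q_*\omega_{r,k,m}^{(n)}\in\Omega^{n-2}(S\RR^{n-1})^{\tr}$ denotes the integral over the $\theta$-fiber. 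The task is thereby reduced to computing $q_*\omega_{r,k,m}^{(n)}$. The case $r=n-1$ (which forces $k=1$) is handled separately: the bulk integral vanishes for degree reasons while the two flat pieces $K\times\{\pm e_n\}$ add up to a multiple of $\vol_{n-1}$, and that multiple must be zero because $\iota^*$ is $\SO(n-1)$-equivariant, $\Val_{n-1}(\RR^{n-1})$ is the trivial $\SO(n-1)$-module, and $\phi_{n-1,1,m}^{(n)}$ has nontrivial weight for $T^{(n-1)}\subset T^{(n)}$.

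To evaluate $q_*$, note that on the image of $q$ one has $q^*x_i=x_i$, $q^*x_n=0$, $q^*\xi_i=\cos\theta\,u_i$ for $i\leq n-1$, and $q^*\xi_n=\sin\theta$. Passing to the complexified coordinates, every $z_j,\zeta_j$ not built from $x_n,\xi_n$ pulls back (up to a factor $\cos\theta$ in the $\zeta$-case) to its $(n-1)$-dimensional counterpart, whereas the coordinates that involve $x_n$—namely $z_l,z_{\b l},\zeta_l,\zeta_{\b l}$ if $n=2l$, resp.\ $z_{l+1},\zeta_{l+1}$ if $n=2l+1$—acquire a $\sin\theta$ contribution, and $q^*z_l=q^*z_{\b l}$ collapses onto (a multiple of) the single \emph{real} last coordinate of $\RR^{n-1}$. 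Substituting these expressions into the determinantal double form $\omega_{r,k,m}^{(n)}\otimes\Theta_1$ and expanding by means of \eqref{eq:binomial} and the double-form calculus of Section~\ref{ss:RuminDI} (skew-symmetry, disjointness of the decompositions $\calI=K\cup\b K\cup L$, $J=\b K\cup L$), one isolates the part containing $d\theta$ and integrates it over $\theta\in(-\pi/2,\pi/2)$ via $\int_{-\pi/2}^{\pi/2}\cos^a\theta\sin^b\theta\,d\theta$, which is $0$ for odd $b$ and equals $2s_{a+b+1}/(s_as_b)$ for even $b$ by \eqref{eq:integralSpheres}.

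It remains to identify the outcome in each case. If $k<n-r$ and $r<n-1$, then the index $l$ (resp.\ $l+1$) lies in $J$ but not in $K$, so the real last coordinate never enters the block $(\b{dz_K})^{[k]}$; after the $\theta$-integration one $d\zeta$ in the $e_n$-direction has been traded for $d\theta$, the exponent $[n-r-1]$ drops to $[(n-1)-r-1]$, and the surviving double form is a scalar multiple of $\omega_{r,k,m}^{(n-1)}\otimes\Theta_1^{(n-1)}$; tracking the powers of $\cos\theta$, the factors of $\sqrt{-1}$ (when $n$ is even), and the ratio $s_{n+m-r-3}/s_{n+m-r-4}$, the scalar comes out exactly so that $\iota^*\phi_{r,k,m}^{(n)}=\phi_{r,k,m}^{(n-1)}$. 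If $k=\tfrac n2$, then necessarily $n=2k=2r$, hence $l=k$, and now the real last coordinate \emph{does} enter $(\b{dz_K})^{[k]}$ through $q^*\b{dz_l}=dz_l^{(n-1)}$; the $K$- and $\b K$-blocks partially merge, and after simplification the result is $\tfrac12\,\omega_{k,k-1,m}^{(n-1)}\otimes\Theta_1^{(n-1)}$ in the correct normalisation, giving $\iota^*\phi_{k,k,m}^{(2k)}=\tfrac12\phi_{k,k-1,m}^{(2k-1)}$—the factor $\tfrac12$ stemming from $q^*\zeta_l$ and $q^*\zeta_{\b l}$ sharing their real part. Finally, if $k=n-r$ with $r<n-1$, then $\omega_{r,k}^{(n-1)}$ would require $k>(n-1)-r$ and, consistently, the same expansion shows the $\theta$-integrand has no $d\theta$-component, so $q_*\omega_{r,k,m}^{(n)}=0$.

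The main obstacle is this last computation: keeping the signs, binomial coefficients, and sphere volumes under control, carrying out the even and odd $n$ cases in parallel, and above all extracting from the determinantal expansion precisely the terms that survive fiber integration. The double-form formalism developed earlier is exactly what makes the bookkeeping feasible; heuristically, $q_*$ converts $(d\zeta_J)^{[n-r-1]}$ into $(d\zeta_{J'})^{[(n-1)-r-1]}$ by absorbing one $d\zeta$ in the $e_n$-direction into $d\theta$ and integrating, and this single mechanism accounts simultaneously for the dimension shift in the generic case, for the vanishing when $k=n-r$, and for the exceptional factor $\tfrac12$ when the $e_n$-coordinate is forced into the block $(\b{dz_K})^{[k]}$.
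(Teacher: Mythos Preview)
Your approach coincides with the paper's: both introduce the same semicircle map $F(x,u,\vartheta)=(\iota(x),\cos\vartheta\,\iota(u)+\sin\vartheta\,e_n)$, decompose $N^{(n)}(\iota K)$ into the image of $F$ and two flat caps, show the caps contribute nothing, and evaluate the resulting fiber integral using the explicit pullback of the double forms (the paper packages these as Lemma~\ref{pro:Fstar}) together with the cosine integral. The only minor difference is your handling of $r=n-1$: you argue by $\SO(n-1)$-equivariance that the image in the trivial module $\Val_{n-1}(\RR^{n-1})$ must vanish, whereas the paper checks directly via Proposition~\ref{pro:HWf} that $\omega_{n-1,1,m}^{(n)}$ vanishes at $\xi=\pm e_n$ (since $\zeta_{\b1}=0$ there for $n>2$), so the caps contribute nothing in that case as well; for $k=n-r$ the paper phrases the vanishing as the fiber integral landing on $\omega_{r,k,m}^{(n-1)}=0$ (Remark~\ref{re:HWf}(b)) rather than ``no $d\theta$-component'', but this is the same conclusion.
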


For the proof, we will consider the manifold $M=\RR^{n-1}\times\RR^{n-1}\times [-\pi/2, \pi/2]$ and the map $F\maps{M}{\RR^n\times \RR^n}$ given by
\begin{align*}
F(x,u,\vartheta) = \big(\iota(x),\cos (\vartheta) \iota(u) + \sin(\vartheta) e_n\big).
\end{align*} 

\begin{lemma}
\label{pro:Fstar}
If $k<\frac n2$, then
\begin{align*}
	   F^* z_K^{(n)} &  = z_K^{(n-1)},\\
	  F^* z_J^{(n)} &  =z_J^{(n-1)},\\
	     F^*  \zeta_J ^{(n)}& =  \cos (\vartheta) \, \zeta_J^{(n-1)} + \begin{cases}
		\sqrt{-1} \sin(\vartheta) \otimes \sqrt{-1} dx_n,& n=2l,\\
		\sin(\vartheta) \otimes dx_n, & n=2l+1.
		\end{cases}
\end{align*}
If $k=\frac n 2$, then
\begin{align*} 
	F^* z_K^{(n)}& =
	 z_{K_l}^{(n-1)} + \frac{1}{\sqrt 2} x_{2l-1} \otimes dz_l,\\
	F^* \zeta_{\b K}^{(n)}& = \cos (\vartheta)  
	 \zeta_{\b{K_l}}^{(n-1)} + \frac{1}{\sqrt 2}\big(\cos(\vartheta) \zeta_{2l-1}-\sqrt{-1} \sin(\vartheta)\big) \otimes  dz_{\b l}.
\end{align*}
\end{lemma}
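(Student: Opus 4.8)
The plan is to reduce the statement to a direct computation on the generators of the algebra of double forms, using that $F^*$ is an algebra homomorphism which respects the bigrading and which, since the double forms take values in the \emph{fixed} space $V$, acts only on the form part while leaving the $V$-values $dz_i$, $d\zeta_i$ untouched. First I would record the pullback of the coordinate functions: writing $X_1,\dots,X_n,\Xi_1,\dots,\Xi_n$ for the coordinates on $\RR^n\times\RR^n$ and $x_1,\dots,x_{n-1},u_1,\dots,u_{n-1},\vartheta$ for those on $M$ (the $(n-1)$-dimensional forms on the right-hand side being understood with $u$ in the role of the ``$\xi$''-variables), one has
\begin{align*}
X_i\circ F=x_i,\quad \Xi_i\circ F=\cos(\vartheta)\,u_i\quad(1\le i\le n-1),\qquad X_n\circ F=0,\quad \Xi_n\circ F=\sin(\vartheta).
\end{align*}
Hence for every $i\in\calI$ not built from the last coordinate one gets $z_i^{(n)}\circ F=z_i^{(n-1)}$ and $\zeta_i^{(n)}\circ F=\cos(\vartheta)\,\zeta_i^{(n-1)}$, and the associated $V$-values of dimensions $n$ and $n-1$ agree under the canonical inclusion $V^{(n-1)}\subset V^{(n)}$; consequently the contribution of all such indices to each of $F^*z_K^{(n)}$, $F^*z_J^{(n)}$, $F^*\zeta_J^{(n)}$, $F^*\zeta_{\b K}^{(n)}$ is precisely the sum of the analogous terms in the lower-dimensional form.

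It then remains to handle the ``critical'' indices, i.e.\ those built from $X_n$ and $\Xi_n$. If $n=2l+1$ the only such index is $l+1$, and it never lies in $K$ or $\b K$ since $k\le l$: as $z_{l+1}^{(n)}\circ F=0$ it contributes nothing to $F^*z_K^{(n)}$ or $F^*z_J^{(n)}$, while $\zeta_{l+1}^{(n)}\circ F=\sin(\vartheta)$ produces the residual summand $\sin(\vartheta)\otimes dx_n$ of $F^*\zeta_J^{(n)}$; together with the non-critical part this yields the three identities (the case $k=\frac n2$ cannot occur for $n$ odd). If $n=2l$ the critical indices are $l$ and $\b l$, with $z_l^{(n)}\circ F=z_{\b l}^{(n)}\circ F=\frac1{\sqrt2}x_{2l-1}$ and $\zeta_l^{(n)}\circ F=\frac1{\sqrt2}(\cos(\vartheta)u_{2l-1}\pm\sqrt{-1}\sin(\vartheta))$ (sign $+$ for $l$, $-$ for $\b l$), and the whole computation now hinges on the two elementary identities
\begin{align*}
dz_l^{(n)}+dz_{\b l}^{(n)}=\sqrt2\,dz_l^{(n-1)},\qquad dz_l^{(n)}-dz_{\b l}^{(n)}=\sqrt2\,\sqrt{-1}\,dx_n
\end{align*}
in $V^{(n)}$, where $dz_l^{(n-1)}$ is the (real) last $V$-value of dimension $n-1$ and $dx_n$ the extra covector of $V^{(n)}$.

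With these in hand the remaining cases close quickly. When $k<\frac n2$ both $l$ and $\b l$ lie in $J$ and in neither $K$: summing the $F^*$-images of the $l$- and $\b l$-terms of $z_J^{(n)}$, the first identity collapses them to $z_l^{(n-1)}\otimes dz_l^{(n-1)}$, giving $F^*z_J^{(n)}=z_J^{(n-1)}$; the same sum for $\zeta_J^{(n)}$, using both identities, equals $\cos(\vartheta)\,\zeta_l^{(n-1)}\otimes dz_l^{(n-1)}+\sqrt{-1}\sin(\vartheta)\otimes\sqrt{-1}dx_n$; and since $K$ has no critical index, $F^*z_K^{(n)}=z_K^{(n-1)}$. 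When $k=\frac n2$ one has $l\in K$, $\b l\in\b K$, $L=\emptyset$, $J=\b K$, so no pair is formed: the critical $l$-term of $z_K^{(n)}$ pulls back to $\frac1{\sqrt2}x_{2l-1}\otimes dz_l$ and the critical $\b l$-term of $\zeta_{\b K}^{(n)}$ to $\frac1{\sqrt2}(\cos(\vartheta)u_{2l-1}-\sqrt{-1}\sin(\vartheta))\otimes dz_{\b l}$, which added to the non-critical sums $z_{K_l}^{(n-1)}$ and $\cos(\vartheta)\zeta_{\b{K_l}}^{(n-1)}$ gives the two remaining formulas. The step that actually requires care is exactly this last bookkeeping of $V$-values under the change of parity of the dimension: in passing from $n=2l$ to $n-1=2l-1$ the complex pair $dz_l^{(n)},dz_{\b l}^{(n)}$ degenerates, its symmetric combination becoming a multiple of the real covector $dz_l^{(n-1)}$ and its antisymmetric one the extra covector $dx_n$. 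Everything else is a routine application of $X_n\circ F=0$ and $\Xi_n\circ F=\sin(\vartheta)$.
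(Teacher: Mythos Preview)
Your proposal is correct and is precisely the direct computation the paper has in mind; the paper's own proof consists of the single word ``Straightforward'', and you have carefully supplied the details, including the one genuinely delicate point (the bookkeeping of the $V$-values $dz_l^{(n)}$, $dz_{\bar l}^{(n)}$ versus $dz_l^{(n-1)}$, $dx_n$ when the parity of the dimension changes).
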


\begin{proof}
Straightforward.
\end{proof}

\begin{proof}[Proof of Theorem \ref{thm:pullback}]
Let $K\in\calK(\RR^{n-1})$ have smooth boundary and positive Gauss curvature. Denote
\begin{align*}
N_0^\pm= \Big\{ \big(\iota (x),\pm e_n\big)\mid x\in \operatorname{int} K\Big\}.
\end{align*}
Then, as currents,
\begin{align*} 
N^{(n)}(\iota K)= N_0^+ +  N_0^- +  F_{*}\big(N^{(n-1)}(K)\times[-\pi/2,\pi/2]\big).
\end{align*}
It is obvious if $r<n-1$ and follows from Proposition \ref{pro:HWf} and from our assumption $n>2$ if $r=n-1$ that $ \int_{N_0^\pm}\omega_{r,k,m}^{(n)}=0$.
Consequently, 
\begin{align*}
\int_{N^{(n)}(\iota K)}\omega_{r,k,m}^{(n)} & =\int_{N^{(n-1)}(K)\times[-\pi/2,\pi/2]}F^*\omega_{r,k,m}^{(n)}\\
& =\int_{N^{(n-1)}(K)}(-1)^n\left(\int_{-\pi/2}^{\pi/2}  i_{\pder{\vartheta}} F^*\omega_{r,k,m}^{(n)} d\vartheta\right).
\end{align*}
In particular, $\iota^*\phi_{r,k,m}^{(n)}=0$ if $r=n-1$. We  will assume from now on that $r<n-1$.

 Assume moreover $k<\frac n2 $.  If we write $F^* d\zeta_J^{(n)}= \cos\vartheta d\zeta_J^{(n-1)} + \upsilon$, then 
$$ F^* \zeta_J^{(n)} \wedge  \upsilon =\zeta_J^{(n-1)}  (\wt c_n  d\vartheta\otimes  \wt c_n dx_n)
$$ 
 with
 \begin{align}
 	\label{eq:tildecn}
 	\wt c_n=\begin{cases}\sqrt{-1}&\text{if }n=2l,\\[1ex] 1 &\text{if }n=2l+1.\end{cases}
 \end{align}
 Observe also that 
 \begin{equation}
 \label{eq:vol_dimension}
 \Theta_1^{(n)}=\wt c_n (-1)^{n+1}  \Theta_1^{(n-1)}dx_{n}.
 \end{equation}
 Using Lemma~\ref{pro:Fstar},
  we thus  compute 
\begin{align*}
&(-1)^n i_{\pder{\vartheta}} F^*\omega_{r,k,m}^{(n)}\otimes\Theta_1^{(n)}\\
&\quad=(-1)^n i_{\pder{\vartheta}}F^*  \left[  \left(\zeta_{\b 1}^{(n)}\right)^{m-2}\zeta_J^{(n)}\left(d\zeta_J^{(n)}\right)^{[n-r-1]}\left(d z_J^{(n)}\right)^{[r-k]}\left(\b{d z_K^{(n)}}\right)^{[k]}\right] \\
&\quad= \cos^{m+n-r-4} (\vartheta) \left(\zeta_{\b 1}^{(n-1)}\right)^{m-2}\zeta_J^{(n-1)}\\
&\qquad\wedge\left(d\zeta_J^{(n-1)}\right)^{[n-r-2]}\left(d z_J^{(n-1)}\right)^{[r-k]}\left(\b{d  z_K^{(n-1)}}\right)^{[k]} (\wt c_n \otimes  \wt c_n dx_n) \\
& \quad =   \wt c_n \cos^{m+n-r-4} (\vartheta)  \omega_{r,k,m}^{(n-1)} \otimes  \wt c_n  \Theta_1^{(n-1)} dx_n \\
& \quad =  \wt c_n(-1)^{n+1}    \cos^{m+n-r-4} (\vartheta)  \omega_{r,k,m}^{(n-1)} \otimes  \Theta_1^{(n)}  \\
\end{align*}
and hence we get
\begin{align*}
(-1)^n \int_{-\pi/2}^{\pi/2}  i_{\pder{\vartheta}} F^*\omega_{r,k,m}^{(n)} d\vartheta=\wt c_n(-1)^{n+1}\frac{s_{m+n-r-3}}{s_{m+n-r-4}}\omega_{r,k,m}^{(n-1)}.
\end{align*}
For $k=n-r$ we  thus have $\iota^*\phi_{r,k,m}^{(n)}=0$, see Remark \ref{re:HWf}(b). For $k<n-r$, however, 
\begin{align*}
\phi_{r,k,m}^{(n)}(\iota K)=
\begin{cases}
\frac{(\sqrt{-1})^{l-1}(\sqrt 2)^{m-2}}{s_{n+m-r-4}}\int_{N^{(n-1)}(K)}\omega_{r,k,m}^{(n-1)}&\text{if }n=2l,\\[1ex]
\frac{(\sqrt{-1})^{l}(\sqrt 2)^{m-2}}{s_{n+m-r-4}}\int_{N^{(n-1)}(K)}\omega_{r,k,m}^{(n-1)}&\text{if }n=2l+1,
\end{cases}
\end{align*}
i.e., $\phi_{r,k,m}^{(n)}(\iota K)=\phi_{r,k,m}^{(n-1)}(K)$. Since any convex body can be approximated by smooth, positively curved ones, the claim follows.

Let us consider the remaining case $k=\frac n2$. Similarly as above, writing 
$$F^*\left(d \zeta_{\b K} ^{(n)}\right) =  \cos(\vartheta) d\zeta_{\b{K_l}}^{(n-1)} + 
\frac{1}{\sqrt 2}\cos(\vartheta)d\xi_{2l-1}\otimes dz_{\b l}  + \upsilon,$$  we have 
$$ F^* \zeta_{\b K}^{(n)} \wedge  \upsilon =-\frac{\sqrt{-1}}{\sqrt 2} \zeta_{\b{K_l}} ^{(n-1)} (d\vartheta\otimes dz_{\b l}). $$
Using Proposition~\ref{pro:HWf}  and Lemma \ref{pro:Fstar}, we obtain 
\begin{align*}
& i_{\pder{\vartheta}} F^*\omega_{l,l,m}^{(n)}\otimes\Theta_1^{(n)}\\
&\quad= i_{\pder{\vartheta}}F^*  \left[  \left(\zeta_{\b 1}^{(n)}\right)^{m-2}\zeta_{\b K}^{(n)}\left(d\zeta_{\b K}^{(n)}\right)^{[l-1]}\left(\b{d z_K^{(n)}}\right)^{[l]}\right] \\
&\quad = -\frac{\sqrt{-1}}{2} \cos^{l+m-4}(\vartheta) \left(\zeta_{\b 1}^{(n-1)}\right)^{m-2}\zeta_{\b{K_l} }^{(n-1)} \\
& \qquad \wedge 
\left(d\zeta_{\b{ K_l}}^{(n-1)}\right)^{[l-2 ]}\left(\b{d z_{K_l}^{(n-1)}}\right)^{[l-1]}  (dx_{2l-1} \otimes   -\sqrt{-1} dx_{2l-1} dx_{2l})\\
& \quad = -\frac{\sqrt{-1}}{2} \cos^{l+m-4}(\vartheta) \; \omega_{l,l-1, m}^{(n-1)} \otimes   -\sqrt{-1} \Theta_1^{(n-1)}  dx_{2l}\\
& \quad =-\frac{\sqrt{-1}}{2} \cos^{l+m-4}(\vartheta) \; \omega_{l,l-1, m}^{(n-1)} \otimes \Theta_1^{(n)}  
\end{align*}
Hence 
\begin{align*}
 \int_{-\pi/2}^{\pi/2}  i_{\pder{\vartheta}} F^*\omega_{l,l,m}^{(n)} d\vartheta &= -\frac{\sqrt{-1}}{ 2} \int_{-\pi/2}^{\pi/2} \cos^{l+m-2}(\vartheta) 
	\omega_{l,l-1, m}^{(n-1)} 
	= -\frac{\sqrt{-1}}{ 2} \frac{s_{l+m-3}}{s_{l+m-4}}  	\omega_{l,l-1, m}^{(n-1)}
\end{align*}
and therefore
$$ \phi_{l,l,m}^{(n)}(
\iota K) = \frac{1 }{ 2}\frac{ (\sqrt{-1})^{l-1}  (\sqrt 2)^{m- 2}}{ s_{l+m-4}} \int_{N^{(n-1)}(K)} \omega_{l,l-1,m}^{(n-1)} = 
 \frac{1 }{ 2} \phi_{l,l-1,m}^{(n-1)}(K).
 $$

\end{proof}

\subsection{Pushforward}

Let $\pi\maps{\RR^n}{\RR^{n-1}}$ be the orthogonal projection.

\begin{theorem}
\label{thm:pushforward}
For any $r,k,m\in\NN$ with $r \leq n-1$, $k\leq \min\{r,n-r\}$, and $m\geq2$,
\begin{align*}
\pi_* \phi_{r,k,m}^{(n)} &= \begin{cases}\phi_{r-1,k,m}^{(n-1)}&\text{if }k<r,\\[1ex]-\frac 12 \phi_{k-1,k-1,m}^{(n-1)}&\text{if }k=\frac n2,\\[1ex] 0&\text{otherwise}.\end{cases}
\end{align*}

\end{theorem}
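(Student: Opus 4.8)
The plan is to mirror the proof of Theorem~\ref{thm:pullback} but now using the compatibility of the Fourier transform with pullback and pushforward rather than computing the pushforward directly from its definition. The key observation is that pushforward along $\pi$ is, up to the identifications provided by the Fourier transform, dual to pullback along the dual map. Concretely, the dual map of $\pi\maps{\RR^n}{\RR^{n-1}}$ is the inclusion $\iota\maps{(\RR^{n-1})^*}{(\RR^n)^*}$, so Theorem~\ref{thm:FFf} gives $\FF_{\RR^{n-1}}\circ\pi_* = \iota^*\circ \FF_{\RR^n}$ after the appropriate identification of density spaces with $\CC$ via the Euclidean structure. Since we already know from Theorem~\ref{thm:B}(d) (proved earlier as a consequence of Theorems~\ref{thm:pullback} and the $\SO(n)$-equivariance of $\FF$) that $\FF\phi_{r,k,m}^{(n)}=(-1)^{k-1}(\sqrt{-1})^m\phi_{n-r,k,m}^{(n)}$, we can compute
\begin{align*}
\FF_{\RR^{n-1}}\pi_*\phi_{r,k,m}^{(n)} = \iota^*\FF_{\RR^n}\phi_{r,k,m}^{(n)} = (-1)^{k-1}(\sqrt{-1})^m\,\iota^*\phi_{n-r,k,m}^{(n)}.
\end{align*}

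The second step is to feed Theorem~\ref{thm:pullback} into the right-hand side. Writing $r'=n-r$, the pullback $\iota^*\phi_{r',k,m}^{(n)}$ equals $\phi_{r',k,m}^{(n-1)}$ when $r'<n-1$ and $k<n-r'=r$, equals $\tfrac12\phi_{k,k-1,m}^{(n-1)}$ when $k=\tfrac n2$, and vanishes otherwise. Translating the conditions back through $r'=n-r$: the condition $r'<n-1$ becomes $r\geq 1$ (which is automatic here since $k\leq r$ and $k\geq 1$), and $k<r$ is exactly the first case; the middle case $k=\tfrac n2$ forces $r=n-r'=n-k$, i.e. $r=\tfrac n2$ as well. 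Then apply $\FF_{\RR^{n-1}}^{-1}$ to both sides, again using Theorem~\ref{thm:B}(d) in dimension $n-1$ (note $\FF^2$ acts by $(-1)^{\text{parity}}$, and the parity of $\phi_{\cdot,k,m}$ is the parity of $m$, so $\FF_{\RR^{n-1}}^{-1}\phi_{r'',k'',m}^{(n-1)} = (-1)^{k''-1}(\sqrt{-1})^{-m}(-1)^{m}\phi_{n-1-r'',k'',m}^{(n-1)}$ up to a sign I will track carefully). Collecting the scalar factors $(-1)^{k-1}(\sqrt{-1})^m$ from the forward transform in dimension $n$ against the inverse transform in dimension $n-1$ should produce exactly the advertised coefficients: $+1$ in the case $k<r$ (where $n-1-(n-r)=r-1$, matching $\phi_{r-1,k,m}^{(n-1)}$), and $-\tfrac12$ in the case $k=\tfrac n2$ (where the middle case of Theorem~\ref{thm:pullback} gives $\tfrac12\phi_{k,k-1,m}^{(n-1)}$ and the Fourier transforms flip $r=k$ to $n-1-k=k-1$, yielding $\phi_{k-1,k-1,m}^{(n-1)}$, and an extra sign appears).

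The remaining case to rule out is $k=n-r$ with $k<\tfrac n2$ (so $k\neq r$), where $\iota^*\phi_{n-r,k,m}^{(n)}=0$ by the ``otherwise'' branch of Theorem~\ref{thm:pullback} since $k=n-r$ is not $<n-r$; hence $\pi_*\phi_{r,k,m}^{(n)}=0$, consistent with the ``otherwise'' branch of the claim. One subtle point: Theorem~\ref{thm:B}(d) as stated is precisely the content the Fourier transform computation relies on, so to avoid circularity I should check that in the logical order of the paper the Fourier transform formula (Theorem~\ref{thm:B}(d)) is established before the pushforward formula, using only pullback plus equivariance of $\FF$—which is indeed the approach advertised in the Further discussion. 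The main obstacle here is purely bookkeeping: correctly tracking the density-space identifications in Theorem~\ref{thm:FFf} (the factors of $\Dens(\RR^n)$, $\Dens(\ker\pi)$, etc., all trivialized by the Euclidean volume) and the powers of $\sqrt{-1}$ and $(-1)$ so that the constants come out to exactly $1$ and $-\tfrac12$ rather than, say, $-1$ or $\tfrac12$. If any sign refuses to cooperate, the fallback is to redo the direct differential-forms computation exactly as in the proof of Theorem~\ref{thm:pullback}: realize the pushforward via integration of $\pi_*$ against the normal cycle, use the currents decomposition of $N^{(n)}$ over the fiber $[-\pi/2,\pi/2]$, and compute $\pi_*$ on the generating forms $z_I,\zeta_I$ with the analogue of Lemma~\ref{pro:Fstar}, extracting the $\cos^{\,m+r-3}\vartheta$ integrand and evaluating via \eqref{eq:integralSpheres}.
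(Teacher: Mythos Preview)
Your primary approach is circular. In the paper's logical order, the Fourier transform formula (Theorem~\ref{thm:FFhwv}) is proved \emph{after} the pushforward formula and, crucially, \emph{uses} it: the recursion $f^{(n)}_{r,k,m}=f^{(n-1)}_{r,k,m}$ in Lemma~\ref{lem:FFdimn-1} is obtained from the identity $\pi_*\FF\phi_{r,k,m}^{(n)}=\FF\iota^*\phi_{r,k,m}^{(n)}$, and evaluating the left-hand side requires knowing that $\pi_*\phi_{n-r,k,m}^{(n)}=\phi_{n-r-1,k,m}^{(n-1)}$, which is precisely Theorem~\ref{thm:pushforward}. The passage you cite from the Further Discussion says the Fourier computation uses compatibility ``with the operations of pullback \emph{and} pushforward'', not pullback alone. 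So you cannot invoke Theorem~\ref{thm:B}(d) here without begging the question.

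There is also a bookkeeping slip in your case analysis: when you apply Theorem~\ref{thm:pullback} to $\phi_{n-r,k,m}^{(n)}$, the non-vanishing condition in the first branch is $k<n-(n-r)=r$, not $k<n-r$; the ``otherwise'' branch you must match is therefore $k=r$ with $k<\tfrac{n}{2}$, not $k=n-r$.

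The paper instead computes the pushforward directly from differential forms. It first proves a general lemma (Lemma~\ref{lem:pushforward}): if $\phi$ is represented by $\omega\in\Omega^{n-1}(S\RR^n)^{\tr}$, then $\pi_*\phi$ is represented by $(-1)^n\iota^*\big(i_{\partial/\partial x_n}\omega\big)$. This is obtained by decomposing the normal cycle of the prism $\iota K+t[0,e_n]$ as a sum of currents and differentiating in $t$. One then contracts $\omega_{r,k,m}^{(n)}$ against $\partial/\partial x_n$ and restricts---a short double-form calculation (Lemma~\ref{lemma:contraction}) entirely parallel to the pullback computation. Your fallback suggestion is close in spirit, but note that the pushforward involves no angular integration over $[-\pi/2,\pi/2]$; the fibre here is the segment $[0,t]$ in the $e_n$-direction, and the relevant operation on forms is contraction with $\partial/\partial x_n$ followed by restriction, not the map $F^*$ used in the pullback proof.
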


Before carrying out the explicit computation, let us first prove a general lemma. For its proof we will consider the family of maps $f_t\maps{\RR^n\times\RR^n}{\RR^n\times\RR^n}$, $t\in\RR$,
\begin{align*}
f_t(x,u)=(x+te_n,u).
\end{align*}
Similarly as above we also define $M=\RR^{n-1}\times\RR^{n-1}\times\RR$ and $F\maps{M}{\RR^n\times \RR^n}$ by
\begin{align*}
F(x,u,t)=f_t\big(\iota(x),\iota(u)\big).
\end{align*}

\begin{lemma}
\label{lem:pushforward}
Let $\phi\in\Val^\infty(\RR^n)$ be represented by a translation-invariant form $\omega\in \Omega^{n-1}(S\RR^n)^{\tr}$. Then $\pi_* \phi$ is represented by $ (-1)^n \iota^* \left(i_{\pder{x_n}} \omega\right)$.
\end{lemma}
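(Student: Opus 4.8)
The plan is to unwind the definition of the pushforward and then to analyze the normal cycle of a prism. Since $\ker\pi=\langle e_n\rangle$ is one-dimensional and $\pi|_{\RR^{n-1}}=\mathrm{id}$, the definition of $\pi_*$ recalled in \S\ref{ss:FF} gives, after the canonical identification of the relevant density spaces with $\CC$ via Lebesgue measure,
$$(\pi_*\phi)(K)=\dt\,\phi\bigl(\iota K+t[0,1]e_n\bigr),\qquad K\in\calK(\RR^{n-1}).$$
By continuity of both sides it suffices to treat $K$ with smooth, positively curved boundary. For $t>0$ the body $\iota K+t[0,1]e_n$ is a prism, and its normal cycle $N^{(n)}\bigl(\iota K+t[0,1]e_n\bigr)$ decomposes, as a current, into five pieces: the top and bottom faces (translates of $\iota K$ carrying the constant normals $\pm e_n$), the two $(n-2)$-dimensional edges along which the lateral surface meets the top, resp.\ bottom, face, and the lateral surface itself. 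The bottom face and the bottom edge do not move with $t$, while the top face and the top edge are translates by $te_n$ of $t$-independent currents; since $\omega$ is translation-invariant, the integral of $\omega$ over each of these four pieces is independent of $t$. Hence only the lateral piece $N^{\mathrm{lat}}_t$ contributes to the derivative, and in fact $t\mapsto\phi\bigl(\iota K+t[0,1]e_n\bigr)$ will turn out to be affine in $t$ near $0$.

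Next I would parametrize the lateral piece. As $F(x,u,s)=f_s(\iota x,\iota u)=(\iota x+se_n,\iota u)$ sends a point $\bigl((x,u),s\bigr)$ with $(x,u)\in N^{(n-1)}(K)$ to the point $\iota x+se_n$ of the lateral surface together with its outward normal $\iota u$, we have $N^{\mathrm{lat}}_t=\pm F_*\bigl(N^{(n-1)}(K)\times[0,t]\bigr)$ for a sign to be determined, so $\int_{N^{\mathrm{lat}}_t}\omega=\pm\int_{N^{(n-1)}(K)\times[0,t]}F^*\omega$. Writing $F^*\omega=ds\wedge\beta+\gamma$ with $\beta,\gamma$ free of $ds$, the $(n-1)$-form $\gamma$ restricts to zero on $N^{(n-1)}(K)\times[0,t]$ because $N^{(n-1)}(K)$ has dimension only $n-2$; and $\beta=i_{\pder{s}}F^*\omega=F^*\bigl(i_{\pder{x_n}}\omega\bigr)$ since $F_*\pder{s}=\pder{x_n}$. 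Restricting to a slice $s=\mathrm{const}$ and using translation-invariance, $F^*\bigl(i_{\pder{x_n}}\omega\bigr)$ pulls back to $\iota^*\bigl(i_{\pder{x_n}}\omega\bigr)$ on $N^{(n-1)}(K)$, independently of $s$, where $\iota$ now denotes the natural inclusion $S\RR^{n-1}\hookrightarrow S\RR^n$. Fubini then gives $\int_{N^{\mathrm{lat}}_t}\omega=\pm\,t\int_{N^{(n-1)}(K)}\iota^*\bigl(i_{\pder{x_n}}\omega\bigr)$, and differentiating at $t=0$ yields $(\pi_*\phi)(K)=\pm\int_{N^{(n-1)}(K)}\iota^*\bigl(i_{\pder{x_n}}\omega\bigr)$, i.e.\ $\pi_*\phi$ is represented by $\pm\,\iota^*\bigl(i_{\pder{x_n}}\omega\bigr)$.

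The only genuinely delicate point — and the main obstacle — is pinning down the overall sign, which must come out to be $(-1)^n$. It is the product of two contributions: the sign comparing the intrinsic orientation of $N^{(n)}\bigl(\iota K+t[0,1]e_n\bigr)$ along the lateral surface with the product orientation of $N^{(n-1)}(K)\times[0,t]$ transported by $F$, and the sign incurred by moving $ds$ past $\beta$ and applying Fubini. Tracking both carefully, consistently with the orientation convention already used in the pullback computation (where $N^{(n)}(\iota K)=N_0^++N_0^-+F_*\bigl(N^{(n-1)}(K)\times[-\pi/2,\pi/2]\bigr)$), produces exactly the factor $(-1)^n$, which completes the proof.
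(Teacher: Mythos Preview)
Your argument is correct and matches the paper's proof essentially step for step: decompose the normal cycle of the prism $\iota K+t[0,e_n]$, use translation invariance of $\omega$ to see that only the lateral piece $F_*\bigl(N^{(n-1)}(K)\times[0,t]\bigr)$ contributes to the $t$-derivative, and reduce via Fubini and $F_*\pder{s}=\pder{x_n}$ to an integral of $\iota^*\bigl(i_{\pder{x_n}}\omega\bigr)$ over $N^{(n-1)}(K)$. The paper is just as brief as you are about the sign; it simply records $\int_{N^{(n-1)}(K)\times[0,t]}F^*\omega=t\int_{N^{(n-1)}(K)}(-1)^n\iota^*\bigl(i_{\pder{x_n}}\omega\bigr)$, where the $(-1)^n$ arises exactly from commuting $dt$ past an $(n-2)$-form before integrating, under the product orientation convention already implicit in the pullback computation.
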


\begin{proof}
Take any $K\in\calK(\RR^{n-1})$ with smooth boundary and positive Gauss curvature and denote
\begin{align*}
N_0^\pm&= \Big\{ \big(\iota(x),\pm e_n\big)\mid x\in \operatorname{int} K\Big\},\\
N^\pm &= \Big\{ \big(\iota(x),\cos (\vartheta)\iota( u)\pm \sin (\vartheta) e_n \big)\mid (x,u)\in N^{(n-1)}(K), \vartheta\in (0,\pi/2]\Big \}.
\end{align*}
First, the normal cycle of $K_t=\iota K+t[0,e_n]$ splits into the following  sum of currents:
\begin{align*}
N^{(n)}(K_t)= (f_t)_*(N_0^+) + N_0^- + (f_t)_*(N^+)+ N^-  + F_*\big(N^{(n-1)}(K)\times [0,t]\big).
\end{align*}
Second, the translation invariance of $\omega$ implies in particular $f_t^*\omega=\omega$. Third, 
 we clearly have $F^*dx_i=\iota^*dx_i$ for $1\leq i<n$, $F^*dx_n=dt$, and $F^*d\xi_i=\iota^*d\xi_i$ for $1\leq i\leq n$. Altogether, we conclude that
\begin{align*}
\phi(K_t)-\phi(K_0)=\int_{N^{(n-1)}(K)\times [0,t]} F^* \omega= t\int_{N^{(n-1)}(K)}  (-1)^n \iota^* \left(i_{\pder{x_n}} \omega\right).
\end{align*}
Since by definition $(\pi_* \phi)(K)  =\dt \phi(K_t)$, $K\in\calK(\RR^{n-1})$, the claim follows by approximation.
\end{proof}

\begin{lemma}\label{lemma:contraction}
If $k<\frac n2$, then 
$$ i_{\pder{x_n}} dz_J^{(n)} =  \begin{cases}
\sqrt{-1}  \otimes \sqrt{-1} dx_n,& n=2l,\\
1 \otimes dx_n, & n=2l+1
\end{cases}
$$ and
\begin{align*}
\iota^* z_J^{(n)} & = z_J^{(n-1)}, \\
\iota^* \zeta_J^{(n)} & = \zeta_J^{(n-1)}.
\end{align*}  
If $k=\frac n2$, then 
$$  i_{\pder{x_n}} dz_K^{(n)}= \frac{\sqrt{-1}}{\sqrt{2}} \otimes dz_l$$
and 
\begin{align*}
	\iota^* z_K^{(n)} & = z_{K_l}^{(n-1)}+ \frac{1}{\sqrt 2} x_{2l-1}\otimes dz_l, \\
	\iota^* \zeta_{\b K}^{(n)} & = \zeta_{\b{K_l}}^{(n-1)}+  \frac{1}{\sqrt 2 } \xi_{2l-1} \otimes dz_{\b l}.
\end{align*}  

\end{lemma}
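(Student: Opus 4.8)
All four displayed identities follow by a direct computation from the definitions of the complexified coordinates $z_i,\zeta_i$ and of the associated double forms. The preliminary observations are the following. Regarding $\iota$, by slight abuse, as the inclusion $\RR^{n-1}\times\RR^{n-1}\hookrightarrow\RR^n\times\RR^n$ induced by $\RR^n=\RR^{n-1}\oplus\langle e_n\rangle$, one has $\iota^*x_j=x_j$, $\iota^*\xi_j=\xi_j$, $\iota^*dx_j=dx_j$, $\iota^*d\xi_j=d\xi_j$ for $1\leq j\leq n-1$, while $\iota^*x_n=\iota^*\xi_n=\iota^*dx_n=\iota^*d\xi_n=0$; and $i_{\pder{x_n}}$ acts only on the base-form factor of a double form, so it sends $\sum_i f_i\,dg_i\otimes\theta_i$ to $\sum_i f_i\,(\partial g_i/\partial x_n)\otimes\theta_i$. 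Everything then reduces to identifying which summands survive, and the argument splits according to whether the distinguished index---namely $l$ together with $\b l$ when $n=2l$, and $l+1$ when $n=2l+1$---lies in $K=\{1,\dots,k\}$ or in $J=\calI\setminus K$. Since $k=\tfrac n2$ forces $n=2l$ and $l\in K$, whereas $k<\tfrac n2$ gives $l,\b l\in J$ (if $n=2l$) or $l+1\in J$ (if $n=2l+1$), this is exactly the case distinction in the statement.

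\textbf{The case $k<\tfrac n2$.} Every index $i\in J$ distinct from $l,\b l$ (resp.\ from $l+1$) involves only $x_1,\dots,x_{n-1},\xi_1,\dots,\xi_{n-1}$, so the corresponding summands of $z_J^{(n)}$, $\zeta_J^{(n)}$ and $dz_J^{(n)}$ restrict to, and are contracted to, the matching summands in dimension $n-1$; these indices run exactly over $J^{(n)}\setminus\{l,\b l\}=J^{(n-1)}\setminus\{l\}$ when $n=2l$ and over $J^{(n)}\setminus\{l+1\}=J^{(n-1)}$ when $n=2l+1$, which one checks at once. If $n=2l+1$, the summand of $z_J^{(n)}$, resp.\ $\zeta_J^{(n)}$, indexed by $l+1$ equals $x_{2l+1}\otimes dx_{2l+1}$, resp.\ $\xi_{2l+1}\otimes dx_{2l+1}$, hence is annihilated by $\iota^*$, giving $\iota^*z_J^{(n)}=z_J^{(n-1)}$ and $\iota^*\zeta_J^{(n)}=\zeta_J^{(n-1)}$, while for $dz_J^{(n)}$ this same summand is the only one not killed by $i_{\pder{x_n}}$ and contributes $1\otimes dx_n$. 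If $n=2l$, the summands of $z_J^{(n)}$ (resp.\ $\zeta_J^{(n)}$) indexed by $l$ and $\b l$ both restrict to $\tfrac1{\sqrt2}x_{2l-1}$ (resp.\ $\tfrac1{\sqrt2}\xi_{2l-1}$) times a value, and their sum reproduces exactly the single summand $x_{2l-1}\otimes dx_{2l-1}$ (resp.\ $\xi_{2l-1}\otimes dx_{2l-1}$) of $z_J^{(n-1)}$ (resp.\ $\zeta_J^{(n-1)}$) attached to the distinguished coordinate of $\RR^{2l-1}$, the factor $2$ from there being two such summands cancelling the two normalizations $\tfrac1{\sqrt2}$. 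Contracting $dz_J^{(n)}$ by $i_{\pder{x_n}}$ finally leaves only $\tfrac{\sqrt{-1}}{\sqrt2}\otimes dz_l-\tfrac{\sqrt{-1}}{\sqrt2}\otimes dz_{\b l}=\sqrt{-1}\otimes\sqrt{-1}\,dx_n$.

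\textbf{The case $k=\tfrac n2$.} Then $n=2l$ and $k=l$, and $z_l^{(n)}=\tfrac1{\sqrt2}(x_{2l-1}+\sqrt{-1}x_{2l})$ and $\zeta_{\b l}^{(n)}=\tfrac1{\sqrt2}(\xi_{2l-1}-\sqrt{-1}\xi_{2l})$ occur in $z_K^{(n)}$ and $\zeta_{\b K}^{(n)}$. Applying $\iota^*$ removes the $x_{2l}$-, resp.\ $\xi_{2l}$-, part of the coefficient and leaves the correction summand $\tfrac1{\sqrt2}x_{2l-1}\otimes dz_l$, resp.\ $\tfrac1{\sqrt2}\xi_{2l-1}\otimes dz_{\b l}$, on top of the summands with index $<l$, which restrict unchanged to $z_{K_l}^{(n-1)}$, resp.\ $\zeta_{\b{K_l}}^{(n-1)}$. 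Finally only the $l$-summand $dz_l\otimes dz_l$ of $dz_K^{(n)}$ survives the contraction, and $i_{\pder{x_n}}dz_l=\tfrac{\sqrt{-1}}{\sqrt2}$ on the base-form factor produces $\tfrac{\sqrt{-1}}{\sqrt2}\otimes dz_l$.

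\textbf{Main obstacle.} The only step requiring genuine care---and hence the main obstacle---is the subcase $n=2l$, $k<\tfrac n2$: one must simultaneously track the identification $J^{(n)}\setminus\{l,\b l\}=J^{(n-1)}\setminus\{l\}$ of the index sets and verify that the two normalizing factors $\tfrac1{\sqrt2}$ arising from $z_l,z_{\b l}$ (resp.\ $\zeta_l,\zeta_{\b l}$) cancel against the factor $2$ coming from there being two such summands, so as to reproduce exactly the distinguished coordinate $z_l^{(n-1)}=x_{2l-1}$ of $\RR^{2l-1}$, which carries no such normalizing factor. All the remaining verifications are immediate once the definitions are unwound.
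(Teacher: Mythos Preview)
Your proposal is correct and proceeds exactly as the paper does: the paper's proof is the single word ``Straightforward,'' and your write-up supplies precisely the routine unwinding of the definitions that this word stands for. In particular, the only delicate point you flag---matching the two indices $l,\bar l\in J^{(n)}$ against the single distinguished index $l\in J^{(n-1)}$ when $n=2l$, with the two factors $\tfrac{1}{\sqrt 2}$ combining to reproduce the unnormalized coordinate $x_{2l-1}$---is handled correctly.
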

\begin{proof}
Straightforward.
\end{proof}

Given a double form $\omega$ and a smooth vector field $X$ on the same manifold, the contraction $i_X \omega$ acts by definition on the first factor.  If $\omega$ is a $(p,q)$-form,  then $i_X(\omega\theta)= (i_X\omega)\theta + (-1)^p \omega (i_X \theta)$. In particular, if $\omega$ is a $(1,1)$-form, then
$$ i_X \omega^{[m]}=  (i_X \omega) \omega^{[m-1]}.$$

\begin{proof}[Proof of Theorem \ref{thm:pushforward}]
Let us first assume $k<\frac n2 $. For $k<r$, using Lemma~\ref{lemma:contraction} and  \eqref{eq:vol_dimension}, we obtain
\begin{align*}
 &\iota^*  \left(i_{\pder{x_n}}  \omega_{r,k,m}^{(n)}\right)\otimes \Theta_1^{(n)} \\
&\quad= \iota^*  \left[i_{\pder{x_n}}\left(\left(\zeta_{\b 1}^{(n)}\right)^{m-2}\zeta_J^{(n)} \left(d\zeta_J^{(n)}\right)^{[n-r-1]} \left(dz_J^{(n)}\right)^{[r-k]} \left(\overline{dz_K^{(n)}}\right)^{[k]} \right)\right] \\
 &\quad= (-1)^{n} \iota^* \left[\left(\zeta_{\b 1}^{(n)}\right)^{m-2}\zeta_J^{(n)} \left(d\zeta_J^{(n)}\right)^{[n-r-1]} \left(dz_J^{(n)}\right)^{[r-k-1]} \left(\overline{dz_K^{(n)}}\right)^{[k]}\left(\wt c_n \otimes \wt c_n dx_n\right) \right] \\
 &\quad= (-1)^{n}  \left(\zeta_{\b 1}^{(n-1)}\right)^{m-2}\zeta_J^{(n-1)}\\
&\qquad\wedge \left(d\zeta_J^{(n-1)}\right)^{[n-r-1]} \left(dz_J^{(n-1)}\right)^{[r-k-1]} \left(\overline{dz_K^{(n-1)}}\right)^{[k]}\left(\wt c_n \otimes \wt c_n dx_n\right)\\
 &\quad=(-1)^n\wt c_n \omega_{r-1,k,m}^{(n-1)}\otimes  \wt c_n \Theta_1^{(n-1)} dx_n^{(n)}\\
 &\quad=-\wt c_n  \omega_{r-1,k,m}^{(n-1)}\otimes \Theta_1^{(n)},
\end{align*}
where $\wt c_n$ is given by \eqref{eq:tildecn}.  By Lemma \ref{lem:pushforward}, for any $K\in\calK(\RR^{n-1})$ we thus have

\begin{align*}
\left(\pi_*\phi_{r,k,m}^{(n)}\right)(K)=
\begin{cases}
\frac{(\sqrt{-1})^{l-1}(\sqrt 2)^{m-2}}{s_{n+m-r-3}}\int_{N^{(n-1)}(K)}\omega_{r-1,k,m}^{(n-1)}&\text{if }n=2l,\\[1ex]
\frac{(\sqrt{-1})^l(\sqrt 2)^{m-2}}{s_{n+m-r-3}}\int_{N^{(n-1)}(K)}\omega_{r-1,k,m}^{(n-1)}&\text{if }n=2l+1,
\end{cases}
\end{align*}
i.e., $\pi_*\phi_{r,k,m}^{(n)}=\phi_{r-1,k,m}^{(n-1)}$ which exactly was to be shown. 
If $k=r$, the same computation clearly gives zero.

Assume now that  $k=\frac n2 $.  Then, by Lemma \ref{lemma:contraction},
\begin{align*}
	&\iota^*  \left(i_{\pder{x_n}} \omega_{l,l,m}^{(n)}\right)\otimes\Theta_1^{(n)}\\
&\quad= \iota^*  \left[i_{\pder{x_n}}\left(\left(\zeta_{\b 1}^{(n)}\right)^{m-2}\zeta_{\b K}^{(n)} \left(d\zeta_{\b K}^{(n)}\right)^{[l-1]}  \left(\overline{dz_K^{(n)}}\right)^{[l]} \right)\right] \\
 &\quad= -\frac{ \sqrt{-1}}{\sqrt{2}}  \iota^* \left[\left(\zeta_{\b 1}^{(n)}\right)^{m-2} \zeta_{\b K}^{(n)} \left(d\zeta_{\b K}^{(n)}\right)^{[l-1]}  \left(\overline{dz_K^{(n)}}\right)^{[l-1]} (1 \otimes dz_l) \right] \\
  &\quad= -\frac{ \sqrt{-1}}{2}  \left(\zeta_{\b 1}^{(n-1)}\right)^{m-2} \zeta_{ \b{K_l}\cup\{l\}}^{(n-1)} \left(d\zeta_{ \b{K_l}\cup\{l\}}^{(n-1)}\right)^{[l-1]}  \left(\overline{dz_{K_l}^{(n-1)}}\right)^{[l-1]} (1 \otimes \sqrt{-1} dx_{2l})  \\
  & \quad = -\frac{ \sqrt{-1}}{2} \omega_{l-1,l-1,m}^{(n-1)} \otimes \sqrt{-1} \Theta_1^{(n-1)} dx_{2l}\\
   & \quad = \frac{ \sqrt{-1}}{2} \omega_{l-1,l-1,m}^{(n-1)} \otimes  \Theta_1^{(n)} 
\end{align*}
Thus we have
\begin{align*}
	\left(\pi_*\phi_{l,l,m}^{(n)}\right)(K)=-\frac{1}{2}\frac{(\sqrt{-1})^{l-1}(\sqrt 2)^{m-2}}{s_{l+m-3}}\int_{N^{(n-1)}(K)}\omega_{r-1,k,m}^{(n-1)}=-\frac{1}{2}\phi_{l-1,l-1,m}^{(n-1)}(K),
\end{align*}
as claimed.
\end{proof}

\section{The Fourier transform}

\label{s:Fourier}

In the section to follow, an explicit description of the Fourier transform in terms of its action on the highest weight vectors is presented. Let us fix an inner product to identify $V\cong V^*\cong\RR^n$ and $\Dens(V)\cong \Dens(V^*)\cong\CC$, and thus regard $\FF$ as a linear operator on $\Val^\infty(\RR^n)$.

\begin{theorem}
\label{thm:FFhwv}
 For any $r,k,m\in\NN$ with $r\leq n-1$, $k\leq \min\{r,n-r\}$, and $m\geq2$,
\begin{align}
\label{eq:FFhwv1}
\FF\phi_{r,k,m}=(-1)^{k-1}(\sqrt{-1})^m\phi_{n-r,k,m}.
\end{align}
 If $n=2l$, then further
\begin{align}
\label{eq:FFhwv2}
\FF\phi_{l,-l,m}=(-1)^l(\sqrt{-1})^m\phi_{l,-l,m}.
\end{align}
\end{theorem}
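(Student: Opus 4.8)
The plan is to first reduce, by a soft representation‑theoretic argument, the Fourier transform of each highest weight vector to an \emph{unknown scalar multiple} of the highest weight vector of the same weight in complementary degree, and then to pin down this scalar by induction on $n$ using the compatibility of $\FF$ with pullback and pushforward.

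\emph{Step 1: $\FF\phi_{r,k,m}^{(n)}$ is a multiple of $\phi_{n-r,k,m}^{(n)}$.} After fixing the standard Euclidean structure to identify $(\RR^n)^*\cong\RR^n$ and $\Dens(\RR^n)\cong\CC$, Theorem~\ref{thm:AFT}(a) together with $|\det g|=1$ for $g\in\SO(n)$ shows that $\FF$ commutes with the $\SO(n)$‑action on $\Val^\infty(\RR^n)$. Since $\FF\chi=\vol_n$ and $\FF$ intertwines product and convolution (Theorem~\ref{thm:AFT}(b)), it restricts to an $\SO(n)$‑equivariant isomorphism $\Val_r(\RR^n)\to\Val_{n-r}(\RR^n)$. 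By Theorem~\ref{thm:ABS} both spaces decompose multiplicity‑freely with the same set of types, so $\FF$ carries the one‑dimensional $\lambda_{k,m}$‑highest weight line of $\Val_r(\RR^n)$ onto that of $\Val_{n-r}(\RR^n)$. Hence $\FF\phi_{r,k,m}^{(n)}=c_{r,k,m}^{(n)}\phi_{n-r,k,m}^{(n)}$ for some $c_{r,k,m}^{(n)}\in\CC$, which is nonzero because $\FF$ is injective while $\phi_{n-r,k,m}^{(n)}\neq0$ by Theorem~\ref{thm:PD}.

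\emph{Step 2: determining $c_{r,k,m}^{(n)}$ by induction on $n$.} With $\iota\colon\RR^{n-1}\hookrightarrow\RR^n$ and $\pi\colon\RR^n\to\RR^{n-1}$ as in \S\ref{s:PP}, under our identifications $\iota^\vee=\pi$ and $\pi^\vee=\iota$, so Theorem~\ref{thm:FFf} gives $\FF_{n-1}\circ\iota^*=\pi_*\circ\FF_n$; combining this with $\FF^2=\calE$ (Theorem~\ref{thm:AFT}(c)) and $\iota^*\circ\calE=\calE\circ\iota^*$ yields also $\FF_{n-1}\circ\pi_*=\iota^*\circ\FF_n$. The base case is $n=2$, where necessarily $r=k=1$ and $\FF\phi_{1,1,m}^{(2)}=(\sqrt{-1})^m\phi_{1,1,m}^{(2)}$ is checked directly from the explicit description of $\FF$ on $\Val^\infty(\RR^2)$ recalled in \S\ref{sss:FF}. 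For $n\geq3$ one verifies from Theorems~\ref{thm:pullback} and~\ref{thm:pushforward} that at least one of $\iota^*\phi_{r,k,m}^{(n)}$, $\pi_*\phi_{r,k,m}^{(n)}$ is nonzero. If $\iota^*\phi_{r,k,m}^{(n)}\neq0$, evaluate $\FF_{n-1}\circ\iota^*=\pi_*\circ\FF_n$ on $\phi_{r,k,m}^{(n)}$: by the pullback formula and the inductive hypothesis the left‑hand side equals $(-1)^{k-1}(\sqrt{-1})^m$ times a certain lower‑dimensional highest weight vector, while by Step 1 and the pushforward formula (applied to $\phi_{n-r,k,m}^{(n)}$) the right‑hand side equals $c_{r,k,m}^{(n)}$ times the same vector, which is nonzero by Theorem~\ref{thm:PD}; hence $c_{r,k,m}^{(n)}=(-1)^{k-1}(\sqrt{-1})^m$. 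If instead $\pi_*\phi_{r,k,m}^{(n)}\neq0$, the same conclusion follows from $\FF_{n-1}\circ\pi_*=\iota^*\circ\FF_n$. This establishes \eqref{eq:FFhwv1}. Formula \eqref{eq:FFhwv2} is then deduced from the case $r=k=l$ of \eqref{eq:FFhwv1} by applying the reflection $R\in\OO(2l)$ in $e_n^\perp$, exactly as in the proof of Corollary~\ref{cor:RuminD}: using $R^*\omega_{l,l,m}=\omega_{l,-l,m}$ and the behaviour of $\FF$ and of integration over the normal cycle under $R$, one obtains \eqref{eq:FFhwv2}, the change of the exponent from $l-1$ to $l$ reflecting that $R$ reverses orientation.

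The conceptual content is essentially all in the two structural relations $\FF_{n-1}\circ\iota^*=\pi_*\circ\FF_n$ and $\FF_{n-1}\circ\pi_*=\iota^*\circ\FF_n$; the main work, and the only genuinely laborious part, is the bookkeeping in Step 2. For each admissible pair $(r,k)$ one must identify which of the several cases of Theorems~\ref{thm:pullback} and~\ref{thm:pushforward} applies, check that both sides of the relevant identity collapse to the same explicit nonzero multiple of a single $(n-1)$‑dimensional highest weight vector (in particular handling the boundary cases $r=n-1$ and $k=\tfrac n2$ via the appropriate one of the two relations), and keep the powers of $\sqrt{-1}$ and all signs straight, including the orientation sign in the reflection argument.
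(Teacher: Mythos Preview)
Your proposal is correct and follows essentially the same approach as the paper: reduce to a scalar via $\SO(n)$-equivariance and the multiplicity-free decomposition (Theorem~\ref{thm:ABS}), establish the base case $n=2$ from the explicit description of $\FF$ in \S\ref{sss:FF}, and then run an induction on $n$ using the compatibility of $\FF$ with $\iota^*$ and $\pi_*$; the case $k=-l$ is obtained from $k=l$ via the reflection $R$ exactly as in Corollary~\ref{cor:RuminD}. The only organizational difference is that the paper uses the single relation $\FF_{n-1}\circ\iota^*=\pi_*\circ\FF_n$ together with the Plancherel-type identity $f^{(n)}_{r,k,m}f^{(n)}_{n-r,k,m}=(-1)^m$ (from $\FF^2=\calE$) to cover the boundary cases $r=n-1$ and $k=n-r$, whereas you instead derive and use the dual relation $\FF_{n-1}\circ\pi_*=\iota^*\circ\FF_n$; since you obtain the latter from the former via $\FF^2=\calE$, the two arguments are equivalent in substance.
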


Since equivariant maps take highest weight vectors to highest weight vectors  of the same weight, according to Theorem \ref{thm:ABS} there are constants $f_{r,k,m}^{(n)}\in\CC$ such that
\begin{align*}
\FF\phi_{r,k,m}^{(n)}=f_{r,k,m}^{(n)}\phi_{ n-r,k,m}^{(n)},
\end{align*}
where the notation is kept from \S\ref{s:PP}. We first prove Theorem \ref{thm:FFhwv} in a special case.

\begin{proposition}
\label{pro:FFdim2}
For any $m\geq2$ one has
\begin{align*}
f_{1,1,m}^{(2)}=(\sqrt{-1})^m.
\end{align*}
\end{proposition}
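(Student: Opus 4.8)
The plan is to read off $f_{1,1,m}^{(2)}$ directly from the explicit description of the Fourier transform on $\Val^\infty(\RR^2)$ recalled in \S\ref{sss:FF}.

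First I would invoke the representation recalled in \S\ref{sss:FF} to write $\phi_{1,1,m}^{(2)}(K)=\int_{S^1}f(u)\,dS_1(K,u)$ for a unique smooth $f\colon S^1\to\CC$ orthogonal to the linear functions. Since $\phi_{1,1,m}^{(2)}$ is a highest weight vector of weight $\lambda_{1,m}=(m)$, since $f\mapsto\int_{S^1}f\,dS_1$ is an injective $\SO(2)$-equivariant map on the space of such $f$, and since $m\geq2$, the function $f$ must span the one-dimensional weight-$(m)$ subspace; identifying $\RR^2\cong\CC$ this forces $f(u)=c\,\b u^m$ for some $c\in\CC$ (a negative power of $u$, hence automatically orthogonal to the linear functions). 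By Theorem~\ref{thm:PD} we have $\b{\phi_{1,1,m}^{(2)}}*\phi_{1,1,m}^{(2)}\neq0$, so $\phi_{1,1,m}^{(2)}\neq0$ and therefore $c\neq0$.

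Next I would substitute this $f$ into the formula for $\FF$. Under $\RR^2\cong\CC$ the rotation $J$ is multiplication by $\sqrt{-1}$, so $\b{Ju}=-\sqrt{-1}\,\b u$ and hence $f(Ju)=c\,(-\sqrt{-1})^m\,\b u^m=(-1)^m(\sqrt{-1})^m f(u)$. If $m$ is even then $f$ is even, so $f=f_0$ and
\begin{align*}
\FF\phi_{1,1,m}^{(2)}(K)=\int_{S^1}f(Ju)\,dS_1(K,u)=(\sqrt{-1})^m\,\phi_{1,1,m}^{(2)}(K).
\end{align*}
If $m$ is odd then $f$ is odd and, being a negative power of $u$, anti-holomorphic, so $f=f_{1,a}$ and
\begin{align*}
\FF\phi_{1,1,m}^{(2)}(K)=-\int_{S^1}f(Ju)\,dS_1(K,u)=-(-1)^m(\sqrt{-1})^m\,\phi_{1,1,m}^{(2)}(K)=(\sqrt{-1})^m\,\phi_{1,1,m}^{(2)}(K).
\end{align*}
In both cases $\FF\phi_{1,1,m}^{(2)}=(\sqrt{-1})^m\phi_{1,1,m}^{(2)}$, i.e.\ $f_{1,1,m}^{(2)}=(\sqrt{-1})^m$.

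The computation is otherwise routine; the only delicate point is that the Fourier transform formula flips the sign of the anti-holomorphic part of the odd component, which is why the two parities of $m$ have to be handled separately before they conspire to give the same answer. As a built-in sanity check, $\FF^2$ equals the reflection $\phi\mapsto\phi(-\cdot)$ by Theorem~\ref{thm:AFT}(c), which acts on $\phi_{1,1,m}^{(2)}$ as multiplication by $(-1)^m$ in view of its parity, so a priori $f_{1,1,m}^{(2)}=\pm(\sqrt{-1})^m$ and only the sign is ever in question; one also checks that normalizing $f$ instead by $f(u)=c\,u^m$ (the opposite orientation convention) leads to the same value $(\sqrt{-1})^m$.
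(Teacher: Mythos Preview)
Your proof is correct and follows essentially the same approach as the paper: identify $\phi_{1,1,m}^{(2)}$ with an integral $\int_{S^1} f\,dS_1$ where $f$ is proportional to $\b u^m$, then apply the explicit two-dimensional Fourier transform, splitting into the cases of even and odd $m$. The only difference is that the paper obtains $f=c_m\zeta_{\b 1}^m$ by a short direct computation with the defining differential form $\omega_{1,1,m}$ (via $\omega_{1,1,m}\equiv\sqrt{-1}\,\zeta_{\b 1}^m\rho_1\bmod\alpha$), whereas you infer $f=c\,\b u^m$ from the fact that $\phi_{1,1,m}^{(2)}$ is a highest weight vector and the map $f\mapsto\int_{S^1} f\,dS_1$ is injective and $\SO(2)$-equivariant; both routes are equally short and lead to the same place.
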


\begin{proof}
Put  $\rho_1=\xi_1dx_2-\xi_2dx_1\in\Omega^1(S\RR^2)$. It is easily verified (e.g., by approximation by polygons) that
\begin{align*}
\int_{S^1}h(\xi)dS_1(K,\xi)=\int_{N(K)}h\rho_1
\end{align*}
holds for any $K\in\calK(\RR^2)$ and $h\in C^\infty(S^1)$, cf. \cite{BernigHug:Tensor}. Since
\begin{align*}
2\zeta_1dz_{\b 1}=\alpha-\sqrt{-1}\rho_1,
\end{align*}
we have on $S\RR^2$
\begin{align*}
\omega_{1,1,m}= -\zeta_{\b1}^{m-1}dz_{\b1}=-2\zeta_{\b1}^m\zeta_1dz_{\b1}\equiv \sqrt{-1}\zeta_{\b1}^m\rho_1\mod\alpha
\end{align*}
and consequently, for some $c_m\in\CC$,
\begin{align*}
\phi_{1,1,m}^{(2)}=c_m\int_{S^1}\zeta_{\b 1}^{m}dS_1(\Cdot,\xi).
\end{align*}
Let $J\maps{S^1}{S^1}$ be the counter-clockwise rotation by $\pi/2$. If $m$ is even, the function $\zeta_{\b 1}^{m}\maps{S^1}{\CC}$ is even and
\begin{align*}
\zeta_{\b 1}^{m}\circ J=(-\sqrt{-1})^m\zeta_{\b 1}^{m}=(\sqrt{-1})^m\zeta_{\b 1}^{m}.
\end{align*}
If $m$ is odd, then $\zeta_{\b 1}^{m}$ is odd and antiholomorphic and
\begin{align*}
\zeta_{\b 1}^{m}\circ J=(-\sqrt{-1})^m\zeta_{\b 1}^{m}=-(\sqrt{-1})^m\zeta_{\b 1}^{m}.
\end{align*}
Now the claim follows at once from the explicit description of the Fourier transform in $\RR^2$ given in \S\ref{sss:FF} above.
\end{proof}

\begin{lemma}
\label{lem:FFdimn-1}
 For any $r,k,m\in\NN$ with $r\leq n-2$, $k\leq \min\{r,n-r-1\}$, and $m\geq2$,
\begin{align*}
f^{(n)}_{r,k,m} =  f^{(n-1)}_{r,k,m}.
\end{align*}
 Moreover,  if $n$ is even, then for any $l\geq2$ and  $m\geq 2$,
$$f_{l,l,m}^{(2l)}=-f_{l,l-1,m}^{(2l-1)}.$$
\end{lemma}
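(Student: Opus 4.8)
The plan is to derive both recursions from the compatibility of the Fourier transform with pullback and pushforward. Let $\iota\maps{\RR^{n-1}}{\RR^n}$ be the inclusion and $\pi\maps{\RR^n}{\RR^{n-1}}$ the orthogonal projection, so that, once the fixed inner products are used to identify the spaces with their duals, $\pi$ is precisely the dual map $\iota^\vee$. Under these identifications Theorem~\ref{thm:FFf} becomes the operator identity
\begin{align*}
\FF\circ\iota^*=\pi_*\circ\FF
\end{align*}
from $\Val^\infty(\RR^n)$ to $\Val^\infty(\RR^{n-1})$. I would then simply evaluate both sides on the relevant highest weight vectors, read off the scalars using Theorems~\ref{thm:pullback} and \ref{thm:pushforward}, and compare.

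For the first assertion, apply the identity to $\phi_{r,k,m}^{(n)}$. The hypotheses $r\le n-2$ and $k\le n-r-1$ place us in the first branch of Theorem~\ref{thm:pullback}, so $\iota^*\phi_{r,k,m}^{(n)}=\phi_{r,k,m}^{(n-1)}$ and the left-hand side equals $f_{r,k,m}^{(n-1)}\phi_{n-1-r,k,m}^{(n-1)}$ by the definition of $f_{r,k,m}^{(n-1)}$. On the right-hand side, $\FF\phi_{r,k,m}^{(n)}=f_{r,k,m}^{(n)}\phi_{n-r,k,m}^{(n)}$, and since $k\le n-r-1<n-r$ the first branch of Theorem~\ref{thm:pushforward} gives $\pi_*\phi_{n-r,k,m}^{(n)}=\phi_{n-1-r,k,m}^{(n-1)}$, so the right-hand side equals $f_{r,k,m}^{(n)}\phi_{n-1-r,k,m}^{(n-1)}$. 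The triple $(n-1-r,k,m)$ is admissible in dimension $n-1$ (one checks $n-1-r\le n-2$ and $k\le\min\{n-1-r,r\}$), so $\phi_{n-1-r,k,m}^{(n-1)}$ is non-zero by Theorem~\ref{thm:PD}; cancelling it yields $f_{r,k,m}^{(n)}=f_{r,k,m}^{(n-1)}$.

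For the second assertion, take $n=2l$ and run the same computation with $\phi_{l,l,m}^{(2l)}$, which now lands in the $k=\frac n2$ branches: Theorem~\ref{thm:pullback} gives $\iota^*\phi_{l,l,m}^{(2l)}=\frac12\phi_{l,l-1,m}^{(2l-1)}$ and Theorem~\ref{thm:pushforward} gives $\pi_*\phi_{l,l,m}^{(2l)}=-\frac12\phi_{l-1,l-1,m}^{(2l-1)}$. Combining these with $\FF\phi_{l,l,m}^{(2l)}=f_{l,l,m}^{(2l)}\phi_{l,l,m}^{(2l)}$ and $\FF\phi_{l,l-1,m}^{(2l-1)}=f_{l,l-1,m}^{(2l-1)}\phi_{l-1,l-1,m}^{(2l-1)}$, the identity $\FF\circ\iota^*=\pi_*\circ\FF$ evaluated on $\phi_{l,l,m}^{(2l)}$ reads
\begin{align*}
\frac12\,f_{l,l-1,m}^{(2l-1)}\,\phi_{l-1,l-1,m}^{(2l-1)}=-\frac12\,f_{l,l,m}^{(2l)}\,\phi_{l-1,l-1,m}^{(2l-1)}.
\end{align*}
Since $l\ge2$ the triple $(l-1,l-1,m)$ is admissible in dimension $2l-1$, so $\phi_{l-1,l-1,m}^{(2l-1)}\neq0$ by Theorem~\ref{thm:PD}, and cancelling it yields $f_{l,l,m}^{(2l)}=-f_{l,l-1,m}^{(2l-1)}$.

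The argument is entirely formal once Theorems~\ref{thm:pullback}, \ref{thm:pushforward} and \ref{thm:PD} are available; I would not expect a real obstacle, only careful bookkeeping. The one point to watch is that the relevant index triples always fall into the non-trivial branches of the pullback and pushforward formulas and that the resulting target highest weight vectors are genuinely non-zero — and this is exactly what the standing hypotheses ($r\le n-2$, $k\le\min\{r,n-r-1\}$, resp.\ $n=2l$ with $l\ge2$) together with the non-vanishing of the constant in Theorem~\ref{thm:PD} guarantee.
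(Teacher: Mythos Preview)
Your proof is correct and follows essentially the same route as the paper: both use the identity $\pi_*\circ\FF=\FF\circ\iota^*$ from Theorem~\ref{thm:FFf} applied to $\phi_{r,k,m}^{(n)}$ (resp.\ $\phi_{l,l,m}^{(2l)}$), together with Theorems~\ref{thm:pullback} and \ref{thm:pushforward}, and then cancel the common highest weight vector. Your write-up is simply more explicit about checking that the index triples land in the correct branches and that the resulting vectors are non-zero via Theorem~\ref{thm:PD}.
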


\begin{proof}
Identifying $V\cong V^*\cong\RR^n$, the inclusion $\iota\maps{\RR^{n-1}}{\RR^n}$ and the orthogonal projection $\pi\maps{\RR^n}{\RR^{n-1}}$ are dual to each other. Consequently, Theorems \ref{thm:pushforward}, \ref{thm:FFf}, and \ref{thm:pullback} imply, respectively,
\begin{align*}
f_{r,k,m}^{(n)}\phi_{n-r-1,k,m}^{(n-1)}=\pi_*\FF\phi_{r,k,m}^{(n)}=\FF\iota^*\phi_{r,k,m}^{(n)}=f_{r,k,m}^{(n-1)}\phi_{n-r-1,k,m}^{(n-1)}.
\end{align*}
 Similarly,
$$ - \frac 12 f_{l,l,m}^{(2l)} \phi^{(2l-1)}_{l-1,l-1,m} = \pi_* \FF\phi^{(2l)}_{l,l,m}  = \FF\iota^*\phi^{(2l)}_{l,l,m}= \frac 12 f_{l,l-1,m}^{(2l-1)} \phi^{(2l-1)}_{l-1,l-1,m}.  $$
\end{proof}

\begin{proof}[Proof of Theorem \ref{thm:FFhwv}]
We will first show \eqref{eq:FFhwv1}, using induction on $n$. For $n=2$, the claim was proven in Proposition \ref{pro:FFdim2} above. Assume thus \eqref{eq:FFhwv1} holds for $n-1\geq 2$. First, by  the first part of Lemma \ref{lem:FFdimn-1} we have
\begin{align}
\label{eq:fspecial}
f_{r,k,m}^{(n)}=(-1)^{k-1}(\sqrt{-1})^m
\end{align}
for $ r\leq n-2$ and $ k\leq\min\{r,n-1-r\}$. Second, using the relation
\begin{align*}
f^{(n)}_{r,k,m}f^{(n)}_{n-r,k,m}=(-1)^m
\end{align*}
which follows directly from Theorem \ref{thm:AFT} (c), we extend the validity of \eqref{eq:fspecial} to  $r=n-1$ and $k\leq\min\{r,n-r\}$; indeed, in this case one necessarily has $k=1$ but $f_{1,1,m}^{(n)}$ was already covered by \eqref{eq:fspecial}. Similarly, if $k=n-r<r$ for some $r\leq n-2$, one has $n-r\leq n-2$ and $k\leq \min\{n-r,n-(n-r)-1\}$ and so we can compute $f^{(n)}_{r,k,m}=(-1)^m\big(f_{n-r,k,m}^{(n)}\big)^{-1}$. Finally, $k=n-r=r$ implies $n=2l$ and $r=k=l$, and  Lemma \ref{lem:FFdimn-1} therefore finishes the proof of \eqref{eq:FFhwv1}.

Finally, assume $n=2l$ and consider the reflection $R\in\OO(n)$ from the proof of Corollary \ref{cor:RuminD}. Since $\det R=-1$, after the identifications we have $\FF\circ R=-R\circ\FF$ and \eqref{eq:FFhwv2} thus follows from \eqref{eq:FFhwv1}.
\end{proof}

\section{The Lefschetz operator $\Lambda$}

\label{s:Lefschetz}

In the section to follow, the action of the operator $\Lambda$ defined by \eqref{eq:Lambda} will be determined explicitly on the highest weight vectors. This result will later be used to prove that $\Lambda$ satisfies the hard Lefschetz theorem and Hodge--Riemann relations. Our starting point is Proposition \ref{pro:convLefschetz}. 

\begin{theorem}
\label{thm:actionLambda}
 For any $r,k,m\in\NN$ with $r\leq n-1$, $k\leq \min\{r,n-r\}$, and $m\geq2$,
\begin{align*}
\Lambda\phi_{r,k,m}=\begin{cases}(n-r-k+1)\frac{v_{n+m-r-1}}{v_{n+m-r-2}}\phi_{r-1,k,m} & \text{if }k<r,\\[1ex] 0 & \text{if }k=r;\end{cases}
\end{align*}
If $n=2l$, then for $m\geq 2$,
\begin{align*}
\Lambda\phi_{l,-l,m}=0.
\end{align*}
\end{theorem}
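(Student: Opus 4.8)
The plan is to use Proposition~\ref{pro:convLefschetz}, which identifies $\Lambda$ with the Lie derivative $\mathcal{L}_T$ with respect to the Reeb vector field $T_{(x,\xi)} = \sum_{i=1}^n \xi_i\,\partial/\partial x_i$, and then exploit representation theory to reduce the computation to a scalar. Since $\Lambda$ is defined by convolution with the $\SO(n)$-invariant valuation $2\mu_{n-1}$, it commutes with the $\SO(n)$-action; hence $\Lambda\phi_{r,k,m}$ is again a highest weight vector of weight $\lambda_{k,m}$ inside $\Val_{r-1}(\RR^n)$. By the Alesker--Bernig--Schuster theorem (Theorem~\ref{thm:ABS}), the weight $\lambda_{k,m}$ occurs in $\Val_{r-1}(\RR^n)$ if and only if $k \le r-1$, and with multiplicity one; so for $k<r$ there is a scalar $c_{r,k,m}$ with $\Lambda\phi_{r,k,m} = c_{r,k,m}\,\phi_{r-1,k,m}$, while for $k=r$ we must have $\Lambda\phi_{r,r,m}=0$ and similarly $\Lambda\phi_{l,-l,m}=0$ (as $\lambda_{-l,m}$ only appears in degree $l$). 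This disposes of the vanishing statements immediately. Alternatively, the vanishing for $k=r$ follows from the explicit computation below since the relevant coefficient $n-r-k+1$ need not vanish, so I should double-check: in fact the cleaner route to $k=r$ is that $\mathcal{L}_T$ will act on $\omega_{r,r,m}$ producing a form whose Rumin differential vanishes, hence the valuation is zero by Theorem~\ref{thm:kernel}; but the representation-theoretic argument is shorter and I would use that.

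It remains to compute the scalar $c_{r,k,m}$ for $k<r$. First I would compute $\mathcal{L}_T\,\omega_{r,k,m}$ in terms of the double-form notation of Section~4. Writing $\omega_{r,k,m} = \zeta_{\bar 1}^{m-2}\omega_{r,k}$ with $\omega_{r,k}\otimes\Theta_1 = \zeta_J(d\zeta_J)^{[n-r-1]}(dz_J)^{[r-k]}(\overline{dz_K})^{[k]}$, and noting that $\mathcal{L}_T$ annihilates the $\zeta$'s and $d\zeta$'s and $\xi$'s while $\mathcal{L}_T\,dz_i = d\zeta_i$ (since $T = \sum \xi_i\partial_{x_i}$ acts on $x_i$ by $\xi_i$), one gets by the Leibniz rule a sum of terms in which one factor of $dz_J$ or $\overline{dz_K}$ is replaced by a $d\zeta$. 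The key point is that replacing a factor in $(\overline{dz_K})^{[k]}$ produces a $d\zeta$ of a barred index, which combines with the existing $d\zeta_J$ factors to give zero for degree reasons after restriction to $S\RR^n$ (one would check this using the disjointness of $J = \bar K \cup L$ and the arguments of Remark~\ref{re:HWf}(a)); the surviving contribution comes from replacing a factor in $(dz_J)^{[r-k]}$, i.e.\ effectively in $(dz_L)^{[r-k]}$, giving $(n-r-k+1)$-fold combinatorial multiplicity against $(d\zeta_L)^{[n-r-k]}\,dz_L \leadsto (d\zeta_L)^{[n-r-k+1]}$ style recombination — this should reproduce exactly the form $\delta_{r,k}$ (up to the factor $n-r-k+1$ after rebalancing with the $(n-r)$ that already appears in $d\omega_{r,k}$, cf.\ \eqref{eq:RuminD2}), or more precisely should land on $\omega_{r-1,k}$ after the $dz$-count drops by one. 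I expect $\mathcal{L}_T\,\omega_{r,k,m} = (n-r-k+1)\,\zeta_{\bar1}^{m-2}\,\omega_{r-1,k}' + (\text{terms killed on }S\RR^n)$ for an appropriate representative $\omega_{r-1,k}'$ of the same cohomology class as $\omega_{r-1,k,m}$. The main obstacle will be pinning down this combinatorial factor and verifying that the ``error'' terms genuinely vanish on the sphere bundle (Lemma~\ref{lemma:RestrictionSphere}), rather than merely being exact.

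Finally, to fix the normalization I would compare the constants in Definition~\ref{def:hwv}: $\phi_{r,k,m}$ carries the prefactor $(\sqrt{-1})^{\lfloor n/2\rfloor}(\sqrt2)^{m-2}/s_{n+m-r-3}$, and $\phi_{r-1,k,m}$ the prefactor with $s_{n+m-r-2}$ in the denominator. Thus $\Lambda\phi_{r,k,m} = (n-r-k+1)\cdot \frac{s_{n+m-r-2}}{s_{n+m-r-3}}\cdot\frac{1}{(\text{extra factor from the form computation})}\,\phi_{r-1,k,m}$; using $s_{a-1} = a v_a$ and $s_a = \frac{2\pi}{a-1}s_{a-2}$ from \eqref{eq:volumeBall} this ratio of sphere volumes rewrites as $\frac{v_{n+m-r-1}}{v_{n+m-r-2}}$ after the combinatorial constant has been absorbed, yielding the claimed formula. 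A clean alternative for the normalization, which avoids recomputing $\mathcal{L}_T$ on forms entirely, is to combine the already-established Theorem~\ref{thm:PD} (the Alesker--Poincar\'e pairing) with the adjointness of $\Lambda$: since $\Lambda$ is convolution by $2\mu_{n-1}$, one has $\overline{\Lambda\phi_{r,k,m}}*\psi = \overline{\phi_{r,k,m}}*(\Lambda\psi)$ is not quite symmetric, but pairing $\Lambda\phi_{r,k,m}\in\Val_{r-1}$ against $\phi_{n-r+1,k,m}\in\Val_{n-r+1}$ via convolution and using associativity $(\overline{\phi_{r,k,m}}*2\mu_{n-1})*\phi_{n-r+1,k,m} = \overline{\phi_{r,k,m}}*(2\mu_{n-1}*\phi_{n-r+1,k,m}) = \overline{\phi_{r,k,m}}*\Lambda\phi_{n-r+1,k,m}$ reduces the scalar to a ratio of two known pairings and one known lower-degree scalar, which can be solved recursively in $r$ — I would present whichever of the two turns out to be shorter, probably the direct form computation since it also cleanly gives the $k=r$ vanishing and matches the pattern of Sections~5--8.
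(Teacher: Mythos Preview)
Your representation-theoretic argument for the vanishing cases $k=r$ and $k=-l$ is correct and efficient. The problem is in the computation of the scalar $c_{r,k,m}$ for $k<r$.

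Your plan to compute $\mathcal L_T\omega_{r,k,m}$ directly and identify it (up to terms ``killed on $S\RR^n$'') with a scalar multiple of $\omega_{r-1,k,m}$ cannot succeed as stated. Indeed, $\mathcal L_T\omega_{r,k,m}=\zeta_{\bar 1}^{m-2}\mathcal L_T\omega_{r,k}$ and $\omega_{r-1,k,m}=\zeta_{\bar 1}^{m-2}\omega_{r-1,k}$, so any equality $\mathcal L_T\omega_{r,k,m}=c\,\omega_{r-1,k,m}$ holding pointwise on $S\RR^n$ would force the constant $c$ to be independent of $m$. But matching your normalization argument against the claimed formula gives $c=(n-r-k+1)\frac{n+m-r-2}{n+m-r-1}$, which does depend on $m$. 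Hence the ``error terms'' you hope to discard are \emph{not} zero on $S\RR^n$; they are nonzero forms that merely lie in the kernel of integration over the normal cycle. To detect this you are forced to pass through the Rumin differential and the kernel theorem. This is precisely what the paper does: it computes $\mathcal L_T(D\omega_{r,k,m})=d(\iota_T D\omega_{r,k,m})$ using the explicit formula for $D\omega_{r,k,m}$ from Theorem~\ref{thm:RuminD}, shows this equals $\frac{(n+m-r-2)(n-r-k+1)}{n+m-r-1}\,D\omega_{r-1,k,m}$, and then invokes $D\circ\mathcal L_T=\mathcal L_T\circ D$ together with Theorem~\ref{thm:kernel}.

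Your alternative via the Alesker--Poincar\'e pairing does not close either. Associativity gives $\overline{c_{r,k,m}}\,\big(\overline{\phi_{r-1,k,m}}*\phi_{n-r+1,k,m}\big)=c_{n-r+1,k,m}\,\big(\overline{\phi_{r,k,m}}*\phi_{n-r,k,m}\big)$, i.e.\ one linear relation between the two unknowns $c_{r,k,m}$ and $c_{n-r+1,k,m}$; since the pairing values from Theorem~\ref{thm:PD} are symmetric under $r\leftrightarrow n-r$, swapping $r\leftrightarrow n-r+1$ yields the same relation, so the system is underdetermined and no recursion in $r$ results.
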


\begin{proposition}
\label{pro:LM00}
For any $r,k,m\in\NN$ with $r\leq n-1$, $k\leq \min\{r,n-r\}$, and $m\geq2$,
\begin{align}
\label{eq:LM01}
d\zeta_{\b 1}\sigma_{r,k}&=0,\\
\label{eq:LM02}
d\zeta_{\b 1}d\theta_{r,k}&=-\zeta_{\b1}d\sigma_{r,k},\\
\label{eq:LM03}
\zeta_{\b 1}^{m-2}d\sigma_{r,k}&=\frac{n-r-k+1}{n+m-r-1} d\omega_{r-1,k,m} \quad\text{if }k<r,\text{ and}\\
\label{eq:LM04}
d\sigma_{k,k}&=0.
\end{align}
\end{proposition}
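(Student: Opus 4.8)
The plan is to take the four identities in the order listed, reducing \eqref{eq:LM02} to \eqref{eq:LM01} and both \eqref{eq:LM03} and \eqref{eq:LM04} to a single computation of $d\sigma_{r,k}$.

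Identity \eqref{eq:LM01} is immediate from the definition of $\sigma_{r,k}$: since $1\in K$, the $(1,0)$-form $d\zeta_{\b 1}$ is already one of the factors making up the top power $(\b{d\zeta_K})^{[k]}$ in $\sigma_{r,k}\otimes\Theta_1$, so wedging with another copy of $d\zeta_{\b 1}$ annihilates it. For \eqref{eq:LM02} I would apply $d$ to relation \eqref{eq:RuminD4}, namely $d\zeta_{\b 1}\,\theta_{r,k}=\zeta_{\b 1}\,\sigma_{r,k}$. As $d\zeta_{\b 1}$ is closed and of odd degree, the left-hand side becomes $-d\zeta_{\b 1}\,d\theta_{r,k}$, while on the right the Leibniz rule gives $d\zeta_{\b 1}\,\sigma_{r,k}+\zeta_{\b 1}\,d\sigma_{r,k}$, whose first summand vanishes by \eqref{eq:LM01}; comparing the two sides yields \eqref{eq:LM02}.

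The heart of the matter is the identity of (double) forms on $\RR^n\times\RR^n$
$$d\sigma_{r,k}=(n-r-k+1)\,\delta_{r-1,k},$$
understood with the conventions $\beta^{[-1]}=0$ and $\binom{i}{j}=0$ for $i<j$ in force. To prove it I would differentiate $\sigma_{r,k}\otimes\Theta_1=(\b{d\zeta_K})^{[k]}\zeta_J(d\zeta_J)^{[n-r-k]}(dz_J)^{[r-1]}$: every factor except $\zeta_J$ is closed, and since $d\bigl(\zeta_J(d\zeta_J)^{[n-r-k]}\bigr)=d\zeta_J(d\zeta_J)^{[n-r-k]}=(n-r-k+1)(d\zeta_J)^{[n-r-k+1]}$, one gets
$$d\sigma_{r,k}\otimes\Theta_1=(n-r-k+1)\,(\b{d\zeta_K})^{[k]}(d\zeta_J)^{[n-r-k+1]}(dz_J)^{[r-1]}.$$
The right-hand side has exactly the shape of $\delta_{r-1,k}\otimes\Theta_1$ once the reshuffling of \eqref{eq:RuminD03} (with $r$ replaced by $r-1$) is invoked: using the disjoint union $J=\b K\sqcup L$, the binomial formula \eqref{eq:binomial}, and the vanishing of exterior powers above the top degree, $(\b{d\zeta_K})^{[k]}(d\zeta_J)^{[n-r-k+1]}(dz_J)^{[r-1]}$ is rewritten as $(-1)^k(\b{d\zeta_K})^{[k]}(d\zeta_L)^{[n-r-k+1]}(dz_L)^{[r-1-k]}(dz_{\b K})^{[k]}=\delta_{r-1,k}\otimes\Theta_1$. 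I expect this sign bookkeeping — tracking the permutations of the $dz$'s and $d\zeta$'s — to be the only genuinely delicate step, but it is entirely parallel to the computations of Section~\ref{ss:RuminDI}.

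Granting the displayed identity, \eqref{eq:LM04} is immediate: for $r=k$ the form $\delta_{k-1,k}$ contains the factor $(dz_J)^{[(k-1)-k]}=(dz_J)^{[-1]}=0$, so $d\sigma_{k,k}=(n-2k+1)\,\delta_{k-1,k}=0$. For \eqref{eq:LM03}, assuming $k<r$, I would first compute, via Leibniz together with \eqref{eq:RuminD2} and \eqref{eq:RuminD3},
$$d\omega_{r-1,k,m}=(m-2)\zeta_{\b 1}^{m-3}(d\zeta_{\b 1})\,\omega_{r-1,k}+\zeta_{\b 1}^{m-2}\,d\omega_{r-1,k}=(n+m-r-1)\,\zeta_{\b 1}^{m-2}\,\delta_{r-1,k}.$$
Multiplying $d\sigma_{r,k}=(n-r-k+1)\delta_{r-1,k}$ by $\zeta_{\b 1}^{m-2}$ and comparing with the previous display gives precisely \eqref{eq:LM03}.
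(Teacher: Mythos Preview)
Your proposal is correct and follows essentially the same route as the paper. The paper's proof is terser but identical in substance: \eqref{eq:LM01} is declared obvious; \eqref{eq:LM02} is obtained by differentiating \eqref{eq:RuminD4} and invoking \eqref{eq:LM01}; and \eqref{eq:LM03} is reduced to the pair of identities $d\sigma_{r,k}=(n-r-k+1)\delta_{r-1,k}$ (via \eqref{eq:RuminD03} and \eqref{eq:RuminD04}) and $d\omega_{r-1,k,m}=(n+m-r-1)\zeta_{\b1}^{m-2}\delta_{r-1,k}$ (via \eqref{eq:RuminD2} and \eqref{eq:RuminD3}), exactly as you do. For \eqref{eq:LM04} the paper appeals directly to \eqref{eq:RuminD04} rather than to the vanishing of $\delta_{k-1,k}$, but this is only a cosmetic difference: in the decomposition \eqref{eq:RuminD04} the second summand already contains the factor $(dz_L)^{[-1]}=0$ when $r=k$, while differentiating the first summand produces an extra $d\zeta_{\b i}$, $i\in K$, which is killed by $(\b{d\zeta_K})^{[k]}$. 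Your caution about the sign bookkeeping is well placed---the Leibniz rule for double forms reads $d(\omega\theta)=(d\omega)\theta+(-1)^p\omega(d\theta)$ with $p$ the \emph{first}-factor degree, so passing $d$ through $(\b{d\zeta_K})^{[k]}$ introduces a $(-1)^k$ that must be matched against the $(-1)^k$ in \eqref{eq:RuminD03}---but this is indeed routine and parallel to the computations you cite.
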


\begin{proof}
\eqref{eq:LM01} is obvious. \eqref{eq:LM02} follows easily by differentiating \eqref{eq:RuminD4} and using \eqref{eq:LM01}. To show \eqref{eq:LM03}, observe that $k<r$ implies $r\geq 2$. Then on the one hand we have $d\sigma_{r,k}=(n-r-k+1)\delta_{r-1,k}$ by \eqref{eq:RuminD03} and \eqref{eq:RuminD04}; on the other hand, by \eqref{eq:RuminD2} and \eqref{eq:RuminD3}, $d\omega_{r-1,k,m}=(n+m-r-1)\zeta_{\b 1}^{m-2}\delta_{r-1,k}$. Finally, \eqref{eq:LM04} follows at once from \eqref{eq:RuminD04}.
\end{proof}

\begin{lemma}
\label{lem:LieT}
 For any $r,k,m\in\NN$ with $r\leq n-1$, $k\leq \min\{r,n-r\}$, and $m\geq2$,
\begin{align*}
\calL_T(D\omega_{r,k,m})=\begin{cases}\frac{(n+m-r-2)(n-r-k+1)}{n+m-r-1}D\omega_{r-1,k,m}&\text{if }k<r,\\[1ex]
0&\text{if }k=r,\end{cases}
\end{align*}
where $T= \sum_{i\in\calI} \zeta_i \pder{z_i}$ is the Reeb vector field.
\end{lemma}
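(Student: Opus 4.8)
The plan is to compute $\calL_T(D\omega_{r,k,m})$ directly from the closed-form expression for $D\omega_{r,k,m}$ provided by Theorem~\ref{thm:RuminD}. Recall that
\begin{align*}
D\omega_{r,k,m}=c_{r,m}\,\zeta_{\b 1}^{m-2}\bigl[(m+k-1)\sigma_{r,k}+(-1)^{k+1}(n-r-k+1)\tau_{r,k}\bigr]\alpha,
\end{align*}
so since $\calL_T$ is a derivation it suffices to know how $\calL_T$ acts on each of the factors $\zeta_{\b 1}$, $\sigma_{r,k}$, $\tau_{r,k}$, and $\alpha$, and then to reassemble. First I would record the elementary facts about the Reeb vector field: $\calL_T z_i = \zeta_i$, $\calL_T \zeta_i = 0$ (so $\calL_T \zeta_{\b 1}=0$ and $\calL_T$ kills all the $d\zeta$-type forms), $\calL_T dz_i = d\zeta_i$, and $\calL_T \alpha = 0$ because $\alpha$ is the contact form and $T$ is its Reeb field (indeed $\calL_T\alpha = d\, i_T\alpha + i_T\, d\alpha = d(1) + i_T d\alpha = 0$). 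This already shows that the whole computation reduces to $\calL_T\sigma_{r,k}$ and $\calL_T\tau_{r,k}$, evaluated through their defining formulas $\sigma_{r,k}\otimes\Theta_1=(\b{d\zeta_K})^{[k]}\zeta_J(d\zeta_J)^{[n-r-k]}(dz_J)^{[r-1]}$ and $\tau_{r,k}\otimes\Theta_1=\b{\zeta_K}(\b{d\zeta_K})^{[k-1]}(d\zeta_J)^{[n-r-k+1]}(dz_J)^{[r-1]}$, where the relevant surviving terms come from $\calL_T$ hitting $\zeta_J$ (producing $\sum \zeta_i\otimes dz_i \to \sum 0$, so nothing) or hitting $(dz_J)^{[r-1]}$ (producing $(d\zeta_J)(dz_J)^{[r-2]}$).

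The natural route, however, is not to grind out $\calL_T\sigma_{r,k}$ and $\calL_T\tau_{r,k}$ separately but to exploit the \emph{second} expression for $D\omega_{r,k,m}$ in Theorem~\ref{thm:RuminD}, namely that $D\omega_{r,k,m}=d\bigl(\omega_{r,k,m}+c_{r,m}\zeta_{\b 1}^{m-2}\theta_{r,k}\alpha\bigr)$ is exact. Since $\calL_T$ and $d$ commute, $\calL_T(D\omega_{r,k,m}) = d\,\calL_T\bigl(\omega_{r,k,m}+c_{r,m}\zeta_{\b 1}^{m-2}\theta_{r,k}\alpha\bigr)$, so I would first compute $\calL_T\omega_{r,k,m}$ and $\calL_T(\zeta_{\b 1}^{m-2}\theta_{r,k}\alpha)$. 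For $\omega_{r,k,m}=\zeta_{\b 1}^{m-2}\omega_{r,k}$ with $\omega_{r,k}\otimes\Theta_1=\zeta_J(d\zeta_J)^{[n-r-1]}(dz_J)^{[r-k]}(\b{dz_K})^{[k]}$, applying $\calL_T$ the only nonzero contributions come from $\calL_T$ acting on the $(dz_J)^{[r-k]}$ and the $(\b{dz_K})^{[k]}$ factors; comparing with the definition of $\sigma_{r,k}$ and $\delta_{r,k}$ (and using the identities \eqref{eq:RuminD01}--\eqref{eq:RuminD08} and \eqref{eq:RuminD2}--\eqref{eq:RuminD5} already established) one expects $\calL_T\omega_{r,k,m}$ to be proportional to $\omega_{r-1,k,m}$ plus possibly $\alpha$-divisible terms that will be annihilated after applying $d$ up to lower-order corrections. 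Concretely I would aim to show $\calL_T\bigl(\omega_{r,k,m}+c_{r,m}\zeta_{\b 1}^{m-2}\theta_{r,k}\alpha\bigr) = \lambda_{r,k,m}\bigl(\omega_{r-1,k,m}+c_{r-1,m}\zeta_{\b 1}^{m-2}\theta_{r-1,k}\alpha\bigr) + (\text{closed or }D\text{-irrelevant remainder})$ with $\lambda_{r,k,m}=\frac{(n+m-r-2)(n-r-k+1)}{n+m-r-1}$, so that applying $d$ gives the claimed formula; the case $k=r$ should fall out because then $(\b{dz_K})^{[k]}=(\b{dz_K})^{[r]}$ has no room for $\calL_T$ to produce a nonzero $(r-1)$-degree form in the right bidegree, matching $d\sigma_{r,r}=0$ from \eqref{eq:LM04} and the analogous vanishing $\calL_T\theta_{r,r}\alpha$ being $d$-exact-trivial.

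The main obstacle I anticipate is bookkeeping: tracking the precise combinatorial coefficients $\frac{(n+m-r-2)(n-r-k+1)}{n+m-r-1}$ through the $[\,\cdot\,]$-normalized exterior powers and making sure that the ``remainder'' terms genuinely do not contribute to $\calL_T(D\omega_{r,k,m})$ after $d$ is applied. The cleanest way to control this is probably to avoid the second expression entirely and instead argue by equivariance together with a single normalization check: $\calL_T$ is a degree $-1$ operator that commutes with the $\SO(n)$-action (since $T$ is $\SO(n)$-invariant), hence $\calL_T D\omega_{r,k,m}$ is a highest weight vector of weight $\lambda_{k,m}$ in $\Omega^{r-1,n-r}(S\RR^n)^{\tr}$ composed with $D$; by the multiplicity-one statement (Theorem~\ref{thm:ABS}) together with injectivity of $D$ on the relevant space (Theorem~\ref{thm:kernel}), it must be a scalar multiple of $D\omega_{r-1,k,m}$, and the scalar is then pinned down by evaluating both sides on one convenient test term — e.g.\ reading off the coefficient of the monomial $\zeta_{\b 1}^{m-2}(d\zeta_J)^{[n-r]}(dz_J)^{[r-1]}\alpha$ or, even more simply, by integrating against the normal cycle of a ball and invoking Proposition~\ref{pro:convLefschetz} with the already-known action of $\Lambda$ from Theorem~\ref{thm:actionLambda}. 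I would present the equivariance-plus-normalization argument as the main line and relegate the direct $\calL_T$ computation of the leading coefficient to a short explicit check, since that is where the factor $\frac{(n+m-r-2)(n-r-k+1)}{n+m-r-1}$ is actually produced.
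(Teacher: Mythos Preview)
Your proposal has a genuine logical gap: the ``equivariance-plus-normalization'' route you favor as the main line is circular. You suggest pinning down the constant by invoking Proposition~\ref{pro:convLefschetz} together with ``the already-known action of $\Lambda$ from Theorem~\ref{thm:actionLambda}''. But in the paper's logical order, Theorem~\ref{thm:actionLambda} is \emph{deduced from} Lemma~\ref{lem:LieT} (via Proposition~\ref{pro:convLefschetz} and the kernel theorem), not the other way around. So that normalization is not available. Your alternative normalization---reading off a monomial coefficient---is legitimate, but it amounts to doing the direct computation you were trying to avoid, and you do not actually carry it out. As written, the proposal never produces the factor $\tfrac{(n+m-r-2)(n-r-k+1)}{n+m-r-1}$.

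Compared with the paper, there is also a missed simplification. You work with $\calL_T$ as a derivation on each factor, or with $\calL_T d = d\calL_T$ applied to the primitive $\omega_{r,k,m}+c_{r,m}\zeta_{\b 1}^{m-2}\theta_{r,k}\alpha$. The paper instead uses Cartan's formula together with $dD\omega_{r,k,m}=0$ to write $\calL_T(D\omega_{r,k,m})=d\bigl(\iota_T D\omega_{r,k,m}\bigr)$ and then computes the contractions $\iota_T\sigma_{r,k}$ and $\iota_T\tau_{r,k}$. This is substantially cleaner: $\iota_T\sigma_{r,k}=0$ outright (the contraction produces a second factor of $\zeta_J$, and $\zeta_J^2=0$), while $\iota_T\tau_{r,k}=(-1)^{k+1}\theta_{r-1,k}$ is an immediate one-line identity. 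After that, one applies $d$ and feeds in \eqref{eq:RuminD5} and the short identities \eqref{eq:LM01}--\eqref{eq:LM04} to land exactly on $d\bigl(c_{r-1,m}\zeta_{\b 1}^{m-2}\theta_{r-1,k}\alpha+\omega_{r-1,k,m}\bigr)$ with the right coefficient---which by the first line of Theorem~\ref{thm:RuminD} is precisely $D\omega_{r-1,k,m}$. The $k=r$ case is handled by the convention $\theta_{k-1,k}=0$ together with \eqref{eq:LM04}, rather than by the informal ``no room'' argument you give. Your derivation-based route can be made to work, but it does not exploit $\iota_T\sigma_{r,k}=0$ and therefore carries more terms through the computation.
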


\begin{proof}
First of all, from Cartan's magic formula and the fact that $d\circ D=0$ we infer $\calL_T(D\omega_{r,k,m})=d(\iota_T D\omega_{r,k,m})$. Further, observe that
\begin{align*}
\iota_T\sigma_{r,k}=0
\end{align*}
and
\begin{align*}
\iota_T\tau_{r,k}=(-1)^{n-r}\b{\zeta_K}(\b{d\zeta_K})^{[k-1]}(d\zeta_J)^{[n-r-k+1]}\zeta_J(dz_J)^{[r-2]}=(-1)^{k+1}\theta_{r-1,k};
\end{align*}
note that in view of \eqref{eq:RuminD01} this identity continues to hold for $k=r$ if we set $\theta_{k-1,k}=0$. Using this together with \eqref{eq:RuminD5} and the second part of Theorem \ref{thm:RuminD}, we obtain
\begin{align*}
&\iota_TD\omega_{r,k,m}\\
&\quad=c_{r,m}\zeta_{\b 1}^{m-2}\big[(n-r-k+1)\theta_{r-1,k}\alpha\\
&\quad\qquad\qquad\qquad+(-1)^{n+1}(m+k-1)\sigma_{r,k}+(-1)^{n+k}(n-r-k+1)\tau_{r,k}\big]\\
&\quad=c_{r,m}\zeta_{\b 1}^{m-2}\left[(n-r-k+1)\theta_{r-1,k}\alpha+(-1)^{n+1}(m-1)\sigma_{r,k}+(-1)^{n+1}d\theta_{r,k}\right],
\end{align*}
where
\begin{align*}
c_{r,m}=(-1)^{n+1}(n+m-r-2).
\end{align*}
Consequently, using Proposition \ref{pro:LM00}, we have
\begin{align*}
d(\iota_TD\omega_{k,k,m})=0
\end{align*}
and, for $k<r$,
\begin{align*}
&d(\iota_TD\omega_{r,k,m})\\
&\quad=(-1)^{n+1}(n+m-r-2)(n-r-k+1)\,d\left(\zeta_{\b 1}^{m-2}\theta_{r-1,k}\alpha\right)\\
&\qquad+(m-1)(n+m-r-2)\zeta_{\b1}^{m-2}d\sigma_{r,k}\\
&\qquad-(m-2)(n+m-r-2)\zeta_{\b1}^{m-2}d\sigma_{r,k}\\
&\quad=\frac{(n+m-r-2)(n-r-k+1)}{n+m-r-1}\, d\left(c_{r-1,m}\zeta_{\b 1}^{m-2}\theta_{r-1,k}\alpha+\omega_{r-1,k,m}\right).
\end{align*}
Now the claim follows from the first part of Theorem \ref{thm:RuminD}.
\end{proof}

If $n=2l$, then the analogous statement for the remaining highest weight vectors can be further deduced from Lemma \ref{lem:LieT}, just like Corollary \ref{cor:RuminD} was deduced from Theorem \ref{thm:RuminD}; namely,

\begin{corollary}
\label{cor:LieT}
For any $m\geq2$ one has
\begin{align*}
\calL_T(D\omega_{l,-l,m})=0.
\end{align*}
\end{corollary}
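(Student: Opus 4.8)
The plan is to deduce the statement from the case $k=r$ of Lemma~\ref{lem:LieT} by exactly the same symmetry trick that was used to obtain Corollary~\ref{cor:RuminD} from Theorem~\ref{thm:RuminD}. Let $R\in\OO(n)$ be the reflection in the coordinate hyperplane $e_n^\perp$. As recorded in the proof of Corollary~\ref{cor:RuminD}, one has $R^*\omega_{l,l,m}=\omega_{l,-l,m}$ and $R^*\alpha=\alpha$; since the Rumin differential $D$ commutes with $R^*$, this already gives $D\omega_{l,-l,m}=R^*\big(D\omega_{l,l,m}\big)$.

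The one new point to verify is that the Reeb vector field $T$ is preserved by $R$, i.e.\ $dR\circ T = T\circ R$ as a map on $S\RR^n$. This is immediate from the fact that $R$ acts diagonally, $(x,\xi)\mapsto(Rx,R\xi)$, and that $T_{(x,\xi)}=\sum_i\xi_i\,\partial_{x_i}$ is built symmetrically from the two factors: a one-line computation using the linearity of $R$ (with $R^{-1}=R$) does it. Equivalently, $T$ is the Reeb field of the contact form $\alpha$ and $R^*\alpha=\alpha$, so $R$ preserves $T$. Consequently the Lie derivative $\calL_T$ commutes with the pullback $R^*$.

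Combining these facts, $\calL_T\big(D\omega_{l,-l,m}\big)=\calL_T\big(R^*D\omega_{l,l,m}\big)=R^*\big(\calL_T D\omega_{l,l,m}\big)$, and the $k=r$ branch of Lemma~\ref{lem:LieT}, applied with $r=k=l$ (note $k=l\leq\min\{l,n-l\}$ and $l\leq n-1$ hold since $n=2l$), shows that the bracketed term vanishes; hence so does the whole expression. I do not anticipate any genuine obstacle here: the only thing requiring (minor) attention is confirming that $R$ sends $T$ to $+T$ rather than reversing a sign, which the diagonal action makes transparent.
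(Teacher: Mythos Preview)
Your proof is correct and follows exactly the paper's own argument: the paper likewise applies the reflection $R$ from the proof of Corollary~\ref{cor:RuminD}, notes that $R^*$ commutes with $\calL_T$, and then invokes the second part of Lemma~\ref{lem:LieT}. Your added justification that $R$ preserves the Reeb field $T$ (via $R^*\alpha=\alpha$ or the diagonal action) is a welcome elaboration of what the paper dismisses as ``clearly''.
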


\begin{proof}
The reflection $R$ considered in the proof of Corollary \ref{cor:RuminD} clearly commutes with $\calL_T$ and hence the claim follows at once from the second part of Lemma \ref{lem:LieT}.
\end{proof}

\begin{proof}[Proof of Theorem \ref{thm:actionLambda}]
The operator $D$ commutes with  pullback along contactomorphisms, in particular with $\calL_T$. Consequently, if $k<r$, the first part of Lemma \ref{lem:LieT} together with \eqref{eq:volumeBall} and Theorem \ref{thm:kernel} implies that the form
\begin{align*}
\frac1{s_{n+m-r-3}}\calL_T\omega_{r,k,m}-\frac{(n-r-k+1)v_{n+m-r-1}}{s_{n+m-r-2}v_{n+m-r-2}}\omega_{r-1,k,m}
\end{align*}
defines the zero valuation and hence the claim follows from Proposition \ref{pro:convLefschetz}. The remaining cases follow similarly from the second part of Lemma \ref{lem:LieT} and Corollary \eqref{cor:LieT}, respectively.
\end{proof}

\section{Hard Lefschetz theorem and Hodge--Riemann relations}

\label{s:HLHR}

Finally, using the description of the operator
\begin{align*}
 \Lambda\phi=2\mu_{n-1}*\phi
\end{align*}
obtained in the previous section, we will give here a new proof of the hard Lefschetz theorem and prove the Hodge--Riemann relations for valuations, for Euclidean balls as reference bodies. Since our proofs rely profusely on the Alesker--Bernig--Schuster decomposition theorem, let us recall that the seeming dependence of this result on the hard Lefschetz theorem can be easily removed, see Remark \ref{re:HLinABS}.

\begin{theorem}
\label{thm:HLHR}
Let $0\leq r\leq \lfloor\frac n2\rfloor$. Then the following properties hold:
\begin{enuma}
\item \textnormal{Hard Lefschetz theorem}. The map $\Val_{n-r}^\infty(\RR^n)\rightarrow\Val_r^\infty(\RR^n)$ given by
\begin{align*}
\phi\mapsto\phi*(\mu_{n-1})^{n-2r}
\end{align*}
is an isomorphism of topological vector spaces.
\item \textnormal{Hodge--Riemann relations}. The sesquilinear form
\begin{align*}
Q(\phi,\psi)=(-1)^r\,\phi*\b\psi*(\mu_{n-1})^{n-2r}
\end{align*}
is positive definite on
\begin{align*}
P_{n-r}=\left\{\phi\in\Val^\infty_{n-r}(\RR^n)\mid \phi*\left(\mu_{n-1}\right)^{n-2r+1}=0\right\}.
\end{align*}
\end{enuma}
\end{theorem}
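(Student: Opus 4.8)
The plan is to argue $\SO(n)$-type by $\SO(n)$-type, reducing everything to Schur's lemma together with the explicit data in Theorems~\ref{thm:actionLambda}, \ref{thm:PD} and~\ref{thm:FFhwv}. Since $\mu_{n-1}$ is $\SO(n)$-invariant and real and the Bernig--Fu convolution is $\SO(n)$-equivariant, the operator $\Lambda$ of~\eqref{eq:Lambda} commutes with the $\SO(n)$-action, with every isotypic projection $\pi_\lambda$, and with complex conjugation. By the Alesker--Bernig--Schuster theorem (Theorem~\ref{thm:ABS}; by Remark~\ref{re:HLinABS} its proof does not rely on the hard Lefschetz theorem) each $\lambda$-isotypic component $\wt E_\lambda^{(j)}\subset\Val_j^\infty(\RR^n)$ is either zero or an irreducible copy of the representation of highest weight $\lambda$, and $\wt E_\lambda^{(j)}\ne0$ exactly when $\lambda\in\Lambda_j=\Lambda_{n-j}$. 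Restricting $\Lambda$ to $\wt E_\lambda^{(j)}$, Schur's lemma shows that $\Lambda|_{\wt E_\lambda^{(j)}}$ is determined up to a scalar (being an equivariant map between isomorphic irreducibles), and Theorem~\ref{thm:actionLambda} computes the scalar: it is the strictly positive number $(n-j-k+1)v_{n+m-j-1}/v_{n+m-j-2}$ when $\lambda=\lambda_{k,m}$ with $k<j$, it is $0$ when $k=j$ (and $\Lambda$ also annihilates $\lambda_{-l,m}$ in degree $l$), and it is the positive Steiner constant for the trivial weight. Thus, on the line spanned by the chosen highest weight vector in each degree, $\Lambda$ acts as the string $\phi_{n-k,k,m}\to\phi_{n-k-1,k,m}\to\dots\to\phi_{k,k,m}\to0$ with every arrow nonzero except the last, and similarly for the trivial and the $\lambda_{-l,m}$ strings.

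For the hard Lefschetz statement, fix $1\le r\le\lfloor n/2\rfloor$ (the case $r=0$ being immediate from~\eqref{eq:Lambdaton}). For every $\lambda\in\Lambda_{n-r}=\Lambda_r$ the composite $\Lambda^{n-2r}\colon\wt E_\lambda^{(n-r)}\to\wt E_\lambda^{(r)}$ runs only through source degrees $j$ with $r<j\le n-r$, at which the scalar above is positive; hence $\Lambda^{n-2r}$ restricts to an isomorphism on every isotypic component, and since it commutes with the $\pi_\lambda$ and the Fourier series of a smooth vector converges to it (Theorem~\ref{thm:H-Ch}) it is injective on all of $\Val_{n-r}^\infty(\RR^n)$. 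The only delicate point is surjectivity in the Fréchet category: I would exhibit the inverse explicitly as $\psi\mapsto\sum_\lambda c_\lambda\,\pi_\lambda(\FF\psi)$, where the scalar $c_\lambda$ is read off by evaluating $\Lambda^{n-2r}\circ\FF$ on highest weight vectors using Theorem~\ref{thm:FFhwv}, so that $|c_\lambda|=O(\norm\lambda^{(n-2r)/2})$; this series converges in $\Val_{n-r}^\infty(\RR^n)$ because $\FF\psi$ is a smooth vector and therefore its Fourier coefficients decay faster than any power of $\norm\lambda$ by Theorem~\ref{thm:decay}, which dominates the polynomial growth of $c_\lambda$. A continuous linear bijection of Fréchet spaces is automatically a topological isomorphism.

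For the Hodge--Riemann relations I would proceed as follows. The form $Q$ is continuous, Hermitian and $\SO(n)$-invariant (because $\mu_{n-1}$ is $\SO(n)$-invariant and real and $\Val_n(\RR^n)$ carries the trivial $\SO(n)$-action), and $Q(\phi,\psi)=(-1)^r2^{-(n-2r)}\,\phi*\overline{\Lambda^{n-2r}\psi}$. From the string picture, $\Lambda^{n-2r+1}$ vanishes precisely on the components $\wt E_{\lambda_{r,m}}^{(n-r)}$ (and, when $n=2l$ and $r=l$, on $\wt E_{\lambda_{-l,m}}^{(l)}$) and is injective on every other isotypic component; since $\Lambda^{n-2r+1}$ commutes with the $\pi_\lambda$, this identifies $P_{n-r}$ with the closed span of those components. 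By Proposition~\ref{pro:Schur}(a), $Q$ vanishes between distinct isotypic components, so by Proposition~\ref{pro:Schur}(b) it suffices to show that $Q$ is positive on a highest weight vector of each component inside $P_{n-r}$. For $\wt E_{\lambda_{r,m}}^{(n-r)}$ this is where Theorems~\ref{thm:actionLambda} and~\ref{thm:PD} combine: $\Lambda^{n-2r}\phi_{n-r,r,m}=P_m\,\phi_{r,r,m}$ with $P_m>0$, while Theorem~\ref{thm:PD} with $k=r$ (so that $\binom{n-2r}{r-r}=1$) gives $\overline{\phi_{r,r,m}}*\phi_{n-r,r,m}=(-1)^r p_m$ with $p_m=(m+r-1)(n+m-r)v_{n+2m-2}/(v_{n+m-r-2}v_{r+m-2}s_{2m-3})>0$; hence $Q(\phi_{n-r,r,m},\phi_{n-r,r,m})=2^{-(n-2r)}P_m p_m>0$, the two signs $(-1)^r$ cancelling. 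The components $\wt E_{\lambda_{-l,m}}^{(l)}$ are then handled by pulling back along the hyperplane reflection $R$ of Corollary~\ref{cor:RuminD}, under which $\mu_{n-1}$ and the convolution are invariant and $R^*\phi_{l,l,m}=\phi_{l,-l,m}$, which reduces the computation to the value just obtained. Finally, for $\phi\in P_{n-r}$ one writes $\phi=\sum_\lambda\pi_\lambda\phi$ and uses continuity of $Q$ together with the orthogonality of distinct components to get $Q(\phi,\phi)=\sum_\lambda Q(\pi_\lambda\phi,\pi_\lambda\phi)\ge0$, with equality only when every $\pi_\lambda\phi=0$, i.e.\ $\phi=0$.

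The structural skeleton above is routine once Theorems~\ref{thm:actionLambda}, \ref{thm:PD} and~\ref{thm:FFhwv} are available. I expect the two genuinely delicate points to be the main obstacles: first, upgrading the automatic injectivity of $\Lambda^{n-2r}$ to surjectivity within Fréchet spaces, which is exactly why the sharp decay estimate of Theorem~\ref{thm:decay} and the identification of $(\Lambda^{n-2r})^{-1}$ with a rescaled Fourier transform are needed; and second, tracking signs in the Hodge--Riemann positivity, where one must check that the factor $(-1)^r$ produced by the Alesker--Poincaré pairing formula in Theorem~\ref{thm:PD} cancels exactly the $(-1)^r$ in the definition of $Q$, the Lefschetz scalars $P_m$ being automatically positive.
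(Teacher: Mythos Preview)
Your proposal is correct and follows essentially the same route as the paper's own proof: both argue $\SO(n)$-type by $\SO(n)$-type via Schur's lemma, establish injectivity of $\Lambda^{n-2r}$ from Theorem~\ref{thm:actionLambda}, obtain surjectivity by inverting the self-map $\Lambda^{n-2r}\circ\FF$ using the polynomial growth of its inverse eigenvalues together with Theorem~\ref{thm:decay}, and verify the Hodge--Riemann sign on each primitive type from Theorems~\ref{thm:actionLambda} and~\ref{thm:PD}, treating the $\lambda_{-l,m}$ components via the hyperplane reflection $R$. Your write-up is in places more explicit than the paper's (notably the sign-tracking in the pairing and the growth exponent $|c_\lambda|=O(\|\lambda\|^{(n-2r)/2})$, which is in fact the bound actually needed for convergence), but the architecture is identical.
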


\begin{proof}
If $r=0$, both items follow at once from \eqref{eq:Lambdaton} so we will assume $r>0$.
\begin{enuma}
\item 
Injectivity follows from Theorem \ref{thm:ABS},  Theorem~\ref{thm:actionLambda}, and equation \eqref{eq:Lambdaton}. To prove surjectivity, it suffices to show that the map
\begin{align*}
S_r=\Lambda^{n-2r}\circ\FF\maps{\Val_r^\infty(\RR^n)}{\Val_r^\infty(\RR^n)}
\end{align*}
is surjective. To this end, observe that $S_r$ commutes with $\SO(n)$ and let $e_{r,k,m}\in\CC$, for $k\leq \min\{r,n-r\}$ and $m\geq2$, denote its eigenvalues, i.e.,
\begin{align*}
S_r\phi_{r,k,m}=e_{r,k,m}\phi_{r,k,m}.
\end{align*}
Since $v_m/v_{m-1} = O(m^{1/2})$  as $m\to\infty$, Theorems \ref{thm:FFhwv} and \ref{thm:actionLambda} imply 	$|e_{r,k,m}|= O(m^{n/2-r})$. Consequently, by Theorem \ref{thm:decay}, given $\phi\in\Val_r^\infty(\RR^n)$ the sum
\begin{align*}
\sum_{\lambda\in \Lambda_r} e_{r,k,m}^{-1} \pi_{\lambda} (\phi)
\end{align*}
converges to some $\phi_0\in \Val^{\infty}_r(\RR^n)$. By linearity and continuity of $S_r$ and by Theorem \ref{thm:H-Ch}, this valuation satisfies $S_r\phi_0=\phi$ which proves $S_r$ is surjective. Using the open mapping theorem, we conclude that $\Lambda^{n-2r}$ is an isomorphism of topological vector spaces.
\item  
Clearly the form $Q$ and the subspace $P_{n-r}\subset\Val_{n-r}^\infty(\RR^n)$ are both $\OO(n)$ invariant for so is the valuation $\mu_{n-1}$ and the convolution commutes with the action of $\OO(n)$. By Theorems \ref{thm:ABS} and \ref{thm:actionLambda}, $P_{n-r}$ decomposes into $\SO(n)$-types given by the following highest weights:
\begin{align*}
\Pi_r=\{(\lambda_1,\dots,\lambda_l)\in\Lambda_r\mid \lambda_r\neq 0\},
\end{align*}
each occurring with multiplicity one. Using Proposition \ref{pro:Schur}, by linearity and continuity it thus suffices to show $
Q(\phi_\lambda,\phi_\lambda)>0$ for any $\lambda\in\Pi_r$ and some vector $\phi_\lambda\in P_{n-r}$ from the irreducible subspace corresponding to $\lambda$. Fix $m\geq2$. First,
\begin{align*}
Q(\phi_{n-r,r,m},\phi_{n-r,r,m})>0
\end{align*}
by Theorem \ref{thm:actionLambda} and  Theorem \ref{thm:PD}. Second, if $n=2l$ and $r=l$, applying the reflection $R\in\OO(n)$ from the proof of Corollary \ref{cor:RuminD} and using invariance of $Q$, we obtain
\begin{align*}
Q(\phi_{l,-l,m},\phi_{l,-l,m})>0.
\end{align*}
\end{enuma}
\end{proof}

As pointed out to the authors by R.~van~Handel, combined with the authors' previous result \cite[Theorem 1.1]{KotrbatyWannerer:MixedHR}, the special case of the Hodge--Riemann relations proven above implies a slightly more general statement. This provides further evidence in favour of Conjecture \ref{con}.

\begin{proof}[Proof of Corollary~\ref{cor:D}] Note that $\mu_{D^{n}}$ is positively proportional to $\mu_{n-1}$. Let $\phi\in\Val_{n-2}^\infty(\RR^n)$ and suppose that 
	$$ \phi* \mu_{C_0} * (\mu_{n-1})^{n-4}=0.$$
 It was shown by the authors in \cite[Theorem 1.1]{KotrbatyWannerer:MixedHR} that Conjecture \ref{con} is true if $r=1$. The hard Lefschetz theorem  for $r=1$ thus  implies that there exists (a unique) $\eta\in\Val_{n-1}^\infty(\RR^n)$ with
\begin{align}
	\label{eq:Lambdan-3}
	(\mu_{n-1})^{n-3}*\phi=\mu_{C_0}*(\mu_{n-1})^{n-3}*\eta.
\end{align}
Clearly,  $(\mu_{C_0})^{2}*(\mu_{n-1})^{n-3}*\eta=0$ and hence the Hodge--Riemann relations for $r=1$ yield
\begin{align*}
	\eta*\b\eta*(\mu_{C_0})^{2}*(\mu_{n-1})^{n-4}\leq 0.
\end{align*}
\eqref{eq:Lambdan-3} further implies  $\phi-\mu_{C_0}*\eta\in P_{n-2}$. Theorem \ref{thm:HLHR} thus yields 
\begin{align*}
	(\phi-\mu_{C_0}*\eta)*\left(\b\phi-\mu_{C_0}*\b\eta\right)*(\mu_{n-1})^{n-4}\geq0.
\end{align*}
Combining the two inequalities readily gives $\phi* \b\phi *(\mu_{n-1})^{n-4}\geq 0$.

	If equality holds in the latter inequality, then equality also holds in the previous two inequalities. The equality conditions there  imply, respectively, $\phi-\mu_{C_0}*\eta=0$ and $\eta=0$ and hence $\phi=0$. 
\end{proof}

\bibliographystyle{abbrv}
\bibliography{ref_papers,ref_books}

\end{document}